\theoremstyle{plain}
\newtheorem{theorem}{Theorem}[section]
\newtheorem{thm}[theorem]{Theorem}
\newtheorem{lem}[theorem]{Lemma}
\newtheorem{prop}[theorem]{Proposition}
\newtheorem{conj}[theorem]{Conjecture}
\theoremstyle{definition}
\newtheorem{defn}[theorem]{Definition}
\newtheorem{rmk}[theorem]{Remark}
\theoremstyle{remark}
\definecolor{titlecol}{named}{BrickRed}
\definecolor{headcol}{named}{Violet}
\definecolor{seccol}{named}{Red}
\definecolor{sseccol}{named}{Bittersweet}
\definecolor{pbcol}{named}{Black}
\definecolor{sncol}{named}{Brown}
\definecolor{acol1}{named}{Red}
\definecolor{acol2}{named}{Apricot}
\newcommand\bR{{\mathbb R}}
\newcommand\bZ{{\mathbb Z}}
\def\bR{{\mathbb R}}
\def\bZ{{\mathbb Z}}
\begin{document}

\title{The Calabi flow with rough initial data}
\date{\today}

\author{Weiyong He}
\address{Department of Mathematics, University of Oregon, Eugene, OR 97403. }
\email{whe@uoregon.edu}

\author{Yu Zeng}
\address{Department of Mathematics, University of Rochester, Rochester, NY 14627.}
\email{yu.zeng@rochester.edu}

\begin{abstract}
In this paper, we prove that there exists a dimensional constant $\delta > 0$ such that given any background K\"ahler metric $\omega$, the Calabi flow with initial data $u_0$ satisfying
\begin{equation*}
\partial \bar \partial u_0 \in L^\infty (M) \text{ and } (1- \delta )\omega < \omega_{u_0} < (1+\delta )\omega, 
\end{equation*}
admits a unique short time solution and it becomes smooth immediately, where $\omega_{u_0} : = \omega +\sqrt{-1}\partial \bar\partial u_0$. The existence time depends on initial data $u_0$ and the metric $\omega$. As a corollary, we get that Calabi flow has short time existence for any initial data satisfying
\begin{equation*}
\partial \bar \partial u_0 \in C^0(M) \text{ and } \omega_{u_0} > 0, 
\end{equation*}  
which should be interpreted as a ``continuous K\"ahler metric". A main technical ingredient  is  Schauder-type estimates for biharmonic heat equation on Riemannian manifolds with time weighted H\"older norms. 
\end{abstract}

\maketitle
\tableofcontents

\section{Introduction}

In 1980s, Calabi(\cite{C1}) initiated a program to find a canonical representative of any given K\"ahler class on a closed K\"ahler manifold. To this end, he suggests to  seek  ``extremal" metrics as critical points of the $L^2$ norm of curvature. As a natural geometric flow approach to the existence of such metrics, he introduced the Calabi flow, given by the equation
\begin{equation}\label{eqn8001}
\frac{\partial \varphi }{\partial t} = R_\varphi - \underline R, 
\end{equation} 
where $R_\varphi$ is the scalar curvature of the metric $(g_\varphi)_{i\bar j} = g_{i\bar j} + \frac{\partial^2 \varphi}{\partial z_i \partial \bar z_j}$ and $\underline R$ is the integral average of $R_\varphi$, a topological constant. This equation is a fourth order nonlinear parabolic equation. Given any smooth initial K\"ahler potential, equation $(\ref{eqn8001})$ has short time solution by standard theory of parabolic equation. Chen has conjectured that the Calabi flow exists for all time for any smooth initial K\"ahler potential.
\begin{conj}[Chen] The Calabi flow exists for all time ($t>0$) for any smooth initial data.
\end{conj}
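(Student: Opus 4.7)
The plan is to attack the conjecture by the standard bootstrap strategy for quasilinear parabolic equations: short-time existence for smooth data is classical (the linearization of \eqref{eqn8001} at a K\"ahler metric is, up to lower order terms, the biharmonic heat operator, which generates an analytic semigroup on suitable H\"older spaces), so on the maximal existence interval $[0,T)$ the problem reduces to proving that no singularity forms as $t \to T$. Concretely, I would aim to establish uniform $C^{4,\alpha}$ bounds on $\varphi(t)$ on $[0,T]$, together with a two-sided bound on $\omega_\varphi$, and then restart the flow at time $T$.

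First, I would collect the monotone quantities that come for free: the Calabi energy $\int_M (R_\varphi - \underline R)^2\,\omega_\varphi^n$ is non-increasing, the Mabuchi $K$-energy is monotone along the flow, and in particular the $L^2$-norm of the scalar curvature is uniformly bounded on $[0,T)$. The first substantive step would be to promote these weak controls to a bound on Ricci curvature: once $|\Ric_{\varphi(t)}| \le C$ uniformly, parabolic Schauder theory (applied via the evolution equation satisfied by $R_\varphi$, which takes the schematic form $\partial_t R_\varphi = -\Delta_\varphi^2 R_\varphi +$ lower order) yields smoothness in the interior and thus extension past $T$. The intermediate tool would be a Moser-type iteration on the equations satisfied by curvature quantities along the flow, using the Sobolev constant controlled by the non-collapsing assumption one hopes to derive from $K$-energy bounds.

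The main obstacle, and the reason this is an important open problem, is that \eqref{eqn8001} is fourth order, so there is no maximum principle available to bound $\ddbar\varphi$ or to prevent the metrics $\omega_{\varphi(t)}$ from degenerating. Neither the conservation laws nor the monotone energies are strong enough, a priori, to rule out concentration of curvature in finite time or collapsing of the metric. Any proof will therefore have to supply a new geometric input that forbids such concentration, for example a pseudolocality-type statement saying that if the Calabi energy is small on a geodesic ball, the curvature cannot blow up there, or a compactness theorem for sequences of Calabi flow metrics with bounded $K$-energy and bounded Calabi energy.

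Given the difficulty of the full conjecture, a realistic intermediate plan is to prove conditional long-time existence under geometric hypotheses that can be checked in cases: uniform equivalence $C^{-1}\omega \le \omega_{\varphi(t)} \le C\omega$ together with an $L^\infty$ bound on $\ddbar \varphi$ (where the short-time theorem of the present paper becomes relevant for restarting), or a two-sided Ricci bound. These reduce the conjecture to controlling precisely the quantities that the fourth order structure does not give for free, and they are the natural stepping stones toward the full statement.
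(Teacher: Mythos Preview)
The statement you were asked to prove is a \emph{conjecture}, not a theorem: the paper does not prove it, and in fact states explicitly that ``the long time existence conjecture remains wild open and it is out of reach in the current status.'' So there is no proof in the paper to compare against.

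Your proposal is, appropriately, not a proof either but an honest research outline. You correctly identify the central obstruction: the equation is fourth order, so no maximum principle is available to control $\partial\bar\partial\varphi$ or to prevent degeneration of $\omega_{\varphi(t)}$, and the monotone quantities (Calabi energy, $K$-energy) are too weak on their own to rule out finite-time curvature concentration. Your suggested intermediate step---promoting $L^2$ scalar curvature control to a Ricci bound via Moser iteration---is exactly where the argument breaks down in practice, and you flag this yourself. The conditional long-time existence results you mention (under uniform metric equivalence or two-sided Ricci bounds) are indeed known and are the natural stepping stones; the paper's short-time theorem is relevant for restarting the flow from rough data in such conditional settings, as you note.

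In short: there is no gap to point to because you have not claimed a proof. Your assessment of the difficulty and the shape of a potential attack is accurate and matches the state of the art as described in the paper.
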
 
Except for few special cases , the long time existence conjecture remains wild open and it is out of reach in the current status. A related problem  is the short time existence for rough initial datum, with rather weak regularity.  In this paper, we aim to seek an optimal regularity of initial datum for which the Calabi flow admits short time solution. Such problem was first studied by Chen-He(\cite{CH}) and they showed that the Calabi flow has short time solution for $C^{3,\alpha}$ initial datum. Later, He(\cite{He}) showed that if the initial data has  $C^{2,\alpha}$ norm smaller than some constant depending on (high order derivatives of) $g$, then the Calabi flow could start.  Chen has made the following conjecture,

\begin{conj}[Chen \cite{Ch}]
Given any initial $C^{1,1}$ K\"ahler potential, the Calabi flow admits short time solution and it immediately becomes smooth when $t>0$. 
\end{conj}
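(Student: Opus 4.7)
The plan is to recast the Calabi flow as a quasilinear fourth-order parabolic equation whose linearization at the initial K\"ahler metric is a perturbation of the biharmonic heat operator, and then to run a contraction-mapping argument in function spaces engineered to tolerate the low regularity of the initial datum. Writing the scalar curvature as
\begin{equation*}
R_\varphi = -g_\varphi^{i\bar j}\partial_i\partial_{\bar j}\log\det\bigl(g_{k\bar l}+\varphi_{k\bar l}\bigr),
\end{equation*}
the flow becomes $\partial_t\varphi = -\tfrac12\Delta_{g_\varphi}^2\varphi + Q(\nabla^3\varphi,\nabla^2\varphi,\nabla\varphi)-\underline R$, with $Q$ algebraic in its arguments. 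Its linearization at $u_0$ has principal part $-\tfrac12\Delta_{g_{u_0}}^2$, and since $\partial\bar\partial u_0\in L^\infty$ only, the leading coefficients are merely bounded and measurable, which places the linearization outside classical parabolic Schauder theory.

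The analytic heart of the argument is the Schauder-type estimate for the biharmonic heat equation $\partial_t\varphi+\Delta_g^2\varphi=f$ advertised in the abstract, formulated in time-weighted parabolic H\"older norms of the form
\begin{equation*}
\|\varphi\|_X = \sup_{0<t\le T}\Bigl(\|\varphi(\cdot,t)\|_{C^{1,1}} + t^{\alpha/4}[\partial\bar\partial\varphi(\cdot,t)]_{C^\alpha}+\cdots\Bigr),
\end{equation*}
so that the expected parabolic smoothing from $t=0^+$ is built into the ambient norm. Granting such an estimate, one arranges the flow as $\partial_t\varphi+\tfrac12\Delta_g^2\varphi = N(\varphi)$ with $\varphi(0)=u_0$, where $N$ collects the lower-order terms together with the discrepancy $(\Delta_g^2-\Delta_{g_\varphi}^2)\varphi$. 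One then checks that $N$ is Lipschitz from $X$ into a companion space $Y$ with small constant on a small ball and short time interval---the positive powers of $t$ in $X$ being exactly what absorbs the H\"older seminorms of $\partial\bar\partial\varphi$ that appear in $N$. A Banach fixed-point argument yields a unique solution, and instant $C^\infty$ regularity for $t>0$ follows by iterating the same Schauder estimate on progressively smoother data together with a standard bootstrap.

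The hardest step is reconciling the measurable leading-order coefficients produced by $\omega_{u_0}\in L^\infty$ with a Schauder-type estimate which, in its classical form, demands H\"older control of coefficients. The natural workaround is to freeze the principal part at a smooth reference biharmonic heat operator (say $\Delta_g^2$) and to treat the residual $(\Delta_{g_{u_0}}^2-\Delta_g^2)\varphi$ as a perturbation inside the fixed-point contraction; this gives a closed argument precisely when $\omega_{u_0}$ is uniformly close to $\omega$, which is the origin of the smallness hypothesis $(1-\delta)\omega<\omega_{u_0}<(1+\delta)\omega$ advertised in the abstract and of the corresponding corollary for $\partial\bar\partial u_0\in C^0$. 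Pushing through to the bare $C^{1,1}$ conjecture appears to require either a genuine Schauder or VMO-type theory for fourth-order operators with merely bounded coefficients, or a preliminary reduction (for instance, by a short auxiliary flow or a continuity argument) that first brings an arbitrary $C^{1,1}$ potential into the small-perturbation regime handled by the method above.
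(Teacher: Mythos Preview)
The statement you were given is a \emph{conjecture} in the paper, not a theorem the authors prove. The paper explicitly states, immediately after the conjecture, ``While we cannot fully confirm Chen's conjecture in this paper\ldots'' and then proves only the weaker results of Theorem~\ref{techthm} (initial data in a small $L^\infty$ neighborhood of a smooth metric) and Theorem~\ref{mainthm} (continuous K\"ahler metric initial data). So there is no ``paper's own proof'' of this statement to compare against.

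That said, your proposal is a fair description of the paper's actual strategy for its weaker theorems: freeze the principal part at the smooth background biharmonic operator $\Delta_g^2$, absorb the discrepancy $(\Delta_g^2-\Delta_{g_\varphi}^2)\varphi$ into the nonlinearity, run a contraction in time-weighted H\"older spaces $X_T$, $Y_T$, and bootstrap for smoothness. You also correctly identify the obstruction to reaching the full $C^{1,1}$ case: the perturbative treatment of the leading coefficients requires $\omega_{u_0}$ close to $\omega$, which is exactly the $\delta$-smallness hypothesis in the paper's main theorem. Your final paragraph is honest about this gap---the proposal is a program, not a proof. The suggestions you make (VMO-type Schauder theory, or a preliminary reduction to the small-perturbation regime) are reasonable directions, but neither is carried out, and the conjecture remains open in the paper as well.
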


While we cannot fully confirm Chen's conjecture in this paper, given any K\"ahler metric $\omega$, we will prove the short time existence of Calabi flow for any initial data:
\begin{equation}
\partial \bar \partial u_0 \in L^\infty (M) \text{ and } (1-\delta) \omega < \omega_{u_0} < (1+\delta ) \omega, 
\end{equation}
for some dimensional constant $\delta > 0$, where $\omega_{u_0} : = \omega + \sqrt{-1}\partial \bar \partial u_0$. This initial condition should be understood as an ``$L^\infty$-K\"ahler metric" in a small $L^\infty$ neighborhood of a smooth K\"ahler metric. More precisely, we have the main theorem of the paper as follows. 

\begin{thm}[Theorem $\ref{thm802}$ and Theorem $\ref{thmsmooth}$]\label{techthm}
On a closed K\"ahler manifold $(M, \omega)$, there exists a dimensional constant $\delta > 0$ such that, for any $u_0$ satisfying 
$$\partial \bar \partial u_0 \in L^\infty(M) \text{ and } (1-\delta) \omega < \omega_{u_0} < (1+\delta ) \omega, $$ 
there exists a constant $T_{g, u_0}>0$ such that the Calabi flow with initial value $u_0$ admits a short time unique solution $\varphi \in C^\infty\big(M \times (0, T_{g, u_0})\big)$. Moreover, for any $\gamma \in (0,1)$
$$\lim_{t \rightarrow 0^+ }  \|\varphi(t) - u_0\|_{C^{1,\gamma}(M)}  = 0, $$  
and for any integers $k,l \geq 0$,
$$\limsup_{t \rightarrow 0^+ }  t^{\frac{k}{4}+ l}\big\||(\frac{\partial }{\partial t})^l \nabla_g^k\partial \bar \partial \varphi(t)|_g \big\|_{C^0(M)}  \leq C_{\frac{k}{4}+l , n} \big\||\partial \bar \partial u_0|_g\big\|_{L^\infty(M)}.$$
If we further assume that $\partial \bar \partial u_0 \in C^0(M)$, then
$$\lim_{t \rightarrow 0^+ }  \|\varphi(t) - u_0\|_{C^{1}(M)}  + \|\partial \bar \partial \varphi(t) - \partial \bar \partial u_0\|_{C^0(M)} = 0, $$ 
and for any $k, l \geq 0$ with $l+ k > 0$, 
$$\lim_{t \rightarrow 0^+ }  t^{\frac{k}{4}+ l}\|(\frac{\partial }{\partial t})^l \nabla_g^k\partial \bar \partial \varphi(t)\|_{C^0(M)}  =0.$$
\end{thm}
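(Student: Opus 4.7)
The strategy is to reformulate the Calabi flow as a semilinear perturbation of the background biharmonic heat equation on $(M,\omega)$ and to solve it by a fixed-point argument in a time-weighted H\"older space adapted to the fourth-order parabolic scaling. Writing $\omega_\varphi = \omega + \sqrt{-1}\partial \bar\partial \varphi$ and expanding the scalar curvature around $\omega$ yields
$$R_\varphi - \underline R \;=\; -\Delta_g^2 \varphi + Q(\varphi),$$
where $Q(\varphi)$ collects all remaining terms, each of which contains either a factor of $g_\varphi^{-1}-g^{-1}$ or is nonlinear of order at least two in $\nabla^k \partial \bar \partial \varphi$, $k\leq 2$. Under the hypothesis $(1-\delta)\omega<\omega_{u_0}<(1+\delta)\omega$ the metric $g_\varphi$ stays in a small $L^\infty$-neighborhood of $g$ on a short time interval, so that $Q(\varphi)$ really plays the role of a small perturbation of the smooth principal operator $\Delta_g^2$, with smallness controlled by $\delta$ and the time length $T$.

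\textbf{Linear theory.} The main technical input is a Schauder-type estimate for the linear biharmonic heat equation $\partial_t v + \Delta_g^2 v = f$ on $M$ in a time-weighted H\"older norm of the form
$$\|v\|_{X_T} := \sup_{0<t<T} \Big(\|v(t)\|_{C^0} + \|\partial \bar \partial v(t)\|_{C^0} + t^{\alpha/4}[\partial \bar \partial v(t)]_{C^{0,\alpha}}\Big),$$
with matching weighted norms for $f$, producing the estimate $\|v\|_{X_T}\leq C\big(\|\partial \bar \partial v(0)\|_{L^\infty}+\|f\|_{Y_T}\big)$ with $C$ independent of $T\in(0,1]$. This mimics the classical parabolic Schauder theory but with parabolic distance $d(x,y)+t^{1/4}$ in place of $d(x,y)+t^{1/2}$, and it can be derived from pointwise bounds on the biharmonic heat kernel on $(M,g)$ together with a Campanato-type characterization of H\"older norms.

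\textbf{Fixed point, smoothness, and limits.} Define $\Phi:\varphi\mapsto\tilde\varphi$ by letting $\tilde\varphi$ solve $\partial_t\tilde\varphi+\Delta_g^2\tilde\varphi = Q(\varphi)$ with $\tilde\varphi(0)=u_0$. Using the linear estimate and the structural description of $Q$, one verifies that $\Phi$ maps a small ball of $X_T$ into itself and is a contraction, provided $\delta$ is a sufficiently small dimensional constant and $T=T_{g,u_0}>0$ is sufficiently small; this produces a unique short-time solution in $X_T$. Smoothness for $t>0$ then follows by bootstrap: on any subinterval $[\tau,T]$ with $\tau>0$ the coefficients of the linearization are H\"older continuous, so standard Schauder theory applied to successive derivatives of $\varphi$ gives the pointwise bounds $t^{k/4+l}\|\nabla_g^k(\partial_t)^l\partial \bar \partial \varphi\|_{C^0}\leq C_{k,l,n}\|\partial \bar \partial u_0\|_{L^\infty}$. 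The $C^{1,\gamma}$-convergence $\varphi(t)\to u_0$ is obtained by interpolating a uniform $C^{1,1}$ bound (from $\partial \bar \partial \varphi\in L^\infty$) with $C^0$-convergence of $\varphi(t)$ to $u_0$, which follows by integrating the weighted estimate on $\partial_t\varphi$ in time. Under the stronger hypothesis $\partial \bar \partial u_0\in C^0(M)$, I would approximate $u_0$ by smooth potentials $u_0^\varepsilon$, apply the contraction to each, and pass to the limit using stability of the fixed-point construction; this upgrades the convergence to $C^0$ of the full Hessian and yields the vanishing of the weighted norms when $k+l>0$.

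\textbf{Main obstacle.} The decisive step is the linear theory: the biharmonic heat equation on a curved manifold lacks a maximum principle, so the weighted Schauder estimates must be extracted from delicate kernel analysis, and the time weights must be sharp enough to both absorb the nonlinear interaction in the contraction and to be compatible with initial data whose Hessian is only in $L^\infty$. A secondary but important point is to ensure that the existence time $T_{g,u_0}$ depends only on $\omega$, on $\|\partial \bar \partial u_0\|_{L^\infty}$, and on the positivity constant $\delta$, not on any higher regularity of $u_0$; this quantitative dependence is what makes the approximation argument in the continuous case go through.
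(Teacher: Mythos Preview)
Your overall strategy---rewrite the Calabi flow as $\partial_t\varphi+\Delta_g^2\varphi=R(\varphi)$, prove a weighted Schauder estimate for the biharmonic heat operator, and close by contraction---is exactly the paper's. The main gap is in the function space: your $X_T$ only tracks $\partial\bar\partial v$ and its spatial H\"older seminorm, but $R(\varphi)$ contains the fourth-order piece $(g_\varphi^{i\bar j}g_\varphi^{k\bar l}-g^{i\bar j}g^{k\bar l})\varphi_{,i\bar j k\bar l}$ and quadratic third-order terms, so the contraction cannot close unless $X_T$ controls $\nabla^4\varphi$, $\partial_t\varphi$, and their H\"older seminorms in both space and time. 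The paper places weights $t^{-1/2+k/4}$ on $\|\nabla^k u(t)\|_{C^0}$ for $0\le k\le 4$, and $t^{1/2+\alpha/4}$ on $[\nabla^4 u(t)]_{C^\alpha}$, on $[\partial_t u(t)]_{C^\alpha}$, and on the time-H\"older seminorm of $\nabla^4 u$ (see \eqref{space2}). These exact exponents make the norms invariant under the parabolic rescaling $u_\lambda(x,t)=\lambda^{-1/2}u(\lambda^{1/4}x,\lambda t)$, and that invariance is what forces the Schauder constant in Theorem~\ref{thm702} to be \emph{dimensional} for $T$ small relative to the $C^{3,\alpha}$-harmonic radius of $g$; this is precisely why $\delta$ comes out dimensional rather than $g$-dependent. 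With your norm, even if the contraction could be closed, the Schauder constant would a priori depend on $g$ and the conclusion would be weaker.

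A few further differences worth noting. The paper does not iterate on $\varphi$ with nonzero initial data; it writes $\varphi=S_gu_0+\psi$ with $S_g$ the biharmonic semigroup and runs the contraction on $\psi$ with zero data, because the $X_T$-weights (note the factor $t^{-1/2}$ on $\|\psi(t)\|_{C^0}$) are tailored to zero initial data. The behavior of $S_gu_0$ is controlled separately (Theorems~\ref{thm001} and~\ref{thm3.9}), and the $C^0$ case $\partial\bar\partial u_0\in C^0$ is not handled by approximation but by the direct refinement in Theorem~\ref{thm3.9}: the weighted derivatives of $S_gu_0$ then actually vanish as $t\to0^+$, which feeds into Lemma~\ref{lem803}. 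Finally, the fixed point only yields uniqueness within $X_T$; the paper's unconditional uniqueness relies on the Calabi--Chen distance-decreasing property, which you do not invoke, and your assertion that $T_{g,u_0}$ depends only on $\omega$, $\|\partial\bar\partial u_0\|_{L^\infty}$ and $\delta$ is stronger than what the paper claims---Remark~\ref{rmkuniform} explicitly allows $T_{g,u_0}$ to depend on $u_0$ more finely.
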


\begin{rmk}\label{rmkuniform}
This theorem asserts the existence of a uniform sized neighborhood around the center metric $g$ such that for any initial ``$L^\infty$-K\"ahler metric" in the neighborhood, the Calabi flow admits a unique short time solution. It has uniform size in the sense that it consists of all $\sup_{M } |\partial \bar \partial u_0|_{g} < \delta $ for some dimensional constant $\delta >0$ independent of $g$. This is the key for results like Theorem $\ref{mainthm}$ to hold. The existence time $T_{g, u_0}$ may vary depending on high order derivatives of $g$ and $ u_0$. 
\end{rmk}

Following the last remark,  these uniform sized neighborhoods, varying the center metric, cover all the metrics satisfying
\begin{equation}\label{asm}
\partial \bar \partial u_0 \in C^0(M) \text{ and } \omega_{u_0} > 0, 
\end{equation} 
which should be understood as a ``continuous K\"ahler metric". One can prove this fact by a standard smooth approximation argument. Therefore, we have the following theorem.

\begin{thm}\label{mainthm}
On a closed K\"ahler manifold $(M, \omega)$, given any initial data $u_0$ satisfying $(\ref{asm})$, Calabi flow admits short time unique solution $\varphi \in C^\infty(M \times (0,T))$ for some $T > 0$ depends on $g, u_0$. Moreover, 
$$\lim_{t \rightarrow 0^+ } \big( \|\varphi(t) - u_0\|_{C^1(M)} + \|\partial \bar \partial \varphi(t) - \partial \bar \partial u_0\|_{C^0(M)} \big) = 0, $$  
and for any integers $k,l \geq 0$ with $k+l >0$,
$$\lim_{t \rightarrow 0^+ }  t^{\frac{k}{4}+ l}\|(\frac{\partial }{\partial t})^l \nabla^k\partial \bar \partial \varphi(t)\|_{C^0(M)}  = 0.$$
\end{thm}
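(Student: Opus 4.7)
The plan is to reduce Theorem~\ref{mainthm} to the already-proved Theorem~\ref{techthm} by changing the reference K\"ahler metric within the class $[\omega]$. The crucial point, emphasized in Remark~\ref{rmkuniform}, is that the $\delta$-neighborhood in Theorem~\ref{techthm} has \emph{dimensional} size, independent of the choice of smooth background. Thus it suffices to produce a smooth K\"ahler metric $\tilde\omega \in [\omega]$ with $\omega_{u_0}$ lying in the $(1\pm\delta)$-tube around $\tilde\omega$; existence, uniqueness, and regularization for the Calabi flow started from $u_0$ will then come directly from Theorem~\ref{techthm}.

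The construction of $\tilde\omega$ goes by smoothing $u_0$. Since $\tr_g(\partial\bar\partial u_0) = \Delta_g u_0 \in C^0(M)$, standard elliptic regularity places $u_0$ in $C^{1,\alpha}(M)$ for every $\alpha \in (0,1)$. Using a finite atlas of holomorphic coordinate charts, a subordinate partition of unity, and a standard Euclidean mollifier applied chart by chart, I construct a one-parameter family $\{v_\epsilon\}_{\epsilon>0}$ of smooth functions on $M$ with
$$\|v_\epsilon - u_0\|_{C^1(M)} + \|\partial\bar\partial v_\epsilon - \partial\bar\partial u_0\|_{C^0(M)} \longrightarrow 0 \quad \text{as } \epsilon \to 0^+.$$
For $\epsilon$ small, $\tilde\omega := \omega + \sqrt{-1}\partial\bar\partial v_\epsilon$ is a smooth K\"ahler metric in $[\omega]$ uniformly comparable to the positive continuous form $\omega_{u_0}$. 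Setting $\tilde u_0 := u_0 - v_\epsilon$, one has $\partial\bar\partial\tilde u_0 \in C^0(M) \subset L^\infty(M)$, and the $C^0$ convergence above, combined with uniform equivalence of $g$, $\tilde g$ and $g_{u_0}$, yields
$$(1-\delta)\tilde\omega < \omega_{u_0} = \tilde\omega + \sqrt{-1}\partial\bar\partial\tilde u_0 < (1+\delta)\tilde\omega$$
on $M$ for $\epsilon$ sufficiently small.

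Applying Theorem~\ref{techthm} with background $\tilde\omega$ and initial data $\tilde u_0$ produces a unique smooth solution $\tilde\varphi \in C^\infty(M \times (0, T))$ to the Calabi flow with the listed regularization properties. Since adding a time-independent smooth function to $\tilde\varphi$ changes neither the evolving K\"ahler metric nor the scalar curvature, $\varphi(t) := \tilde\varphi(t) + v_\epsilon$ is a solution of the Calabi flow with background $\omega$ and initial data $u_0$; uniqueness transfers via the same shift. All of the convergence statements in the conclusion are then immediate from their counterparts for $\tilde\varphi$: the difference $\partial\bar\partial\varphi - \partial\bar\partial u_0 = \partial\bar\partial\tilde\varphi - \partial\bar\partial\tilde u_0$ is pointwise unchanged by the shift, time derivatives $(\partial_t)^l\varphi$ coincide with $(\partial_t)^l\tilde\varphi$ for $l \geq 1$, and for fixed $k$ the covariant derivatives $\nabla_g^k$ and $\nabla_{\tilde g}^k$ of any tensor differ by bounded smooth connection factors; the additional drift $t^{k/4}\|\nabla_g^k\partial\bar\partial v_\epsilon\|_{C^0}$ from the constant shift vanishes as $t \to 0^+$.

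The substantive work has been done in Theorem~\ref{techthm}; the only point requiring care in the present reduction is that the approximation step genuinely lands $\omega_{u_0}$ in the size-$\delta$ tube around a \emph{smooth} centre. This is possible precisely because $\delta$ is a dimensional constant — if $\delta$ had depended on derivatives of the background, smoothing $u_0$ would force $\delta$ to shrink together with $\epsilon$ and the argument would collapse. The one calculation worth checking explicitly is that $|\partial\bar\partial(u_0 - v_\epsilon)|_{\tilde g_\epsilon}$, measured in the \emph{perturbed} metric rather than $g$, can be made smaller than $\delta$; this follows from the uniform equivalence of $g$, $\tilde g_\epsilon$ and $g_{u_0}$ for small $\epsilon$ together with the $C^0(g)$ convergence $\partial\bar\partial v_\epsilon \to \partial\bar\partial u_0$.
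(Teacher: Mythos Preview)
Your proposal is correct and is precisely the ``standard smooth approximation argument'' the paper alludes to in the paragraph preceding Theorem~\ref{mainthm}: change the background to a smooth mollification of $u_0$ so that the continuous K\"ahler metric $\omega_{u_0}$ lands inside the dimensional $\delta$-tube of Theorem~\ref{techthm}, then transfer back by the additive shift. The paper does not write out the details you supply (the partition-of-unity mollification, the check that $|\partial\bar\partial\tilde u_0|_{\tilde g}<\delta$ in the perturbed metric, and the translation of the asymptotics through the shift), but these are exactly the steps implicit in its one-line reduction.
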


\begin{rmk}Recently Berman-Darvas-Lu \cite{BDL} has proposed to study the Calabi flow with initial datum in $\mathcal{E}^1$ (some energy class of $\omega$-plurisubharmoninc functions, which is a metric completion of the space of K\"ahler potentials), built upon J. Streets' work \cite{streets, streets1} on minimizing movement of Mabuchi's $\mathcal{K}$-energy; the later could be viewed as a weak Calabi flow and defines a long time path in $\mathcal{E}^1$ and exhibits nice large time  properties in metric topology, see \cite{streets, streets1, BDL}. In particular, when the Calabi flow has a smooth solution, it coincides such a path. However, such a long time minimizing path has very weak regularity and  its regularity is way beyond the authors' scope at the moment. 
\end{rmk}

By treating the Calabi flow equation as a small perturbation of the biharmonic heat equation (for short time),  we construct its short time solution near the solution of the initial value problem of the biharmonic heat equation via a fixed point argument. This type of argument for short time existence of parabolic equations is rather standard. For this argument to work, in particular for rough initial conditions, it is crucial to work on proper functional spaces adapted to the problem.  Koch-Lamm \cite{KL} considered parabolic equations and systems (geometric flows) with rough initial data, and they designed the functional spaces based on basic notions in harmonic analysis, certain weighted Sobolev spaces (in space time). 

Our observation is that one can actually work on classical H\"older norms, even for rough initial data. The essential point is to use H\"older norms with time weights and choose the weights correctly, according to the parabolic scaling. We introduce the \emph{weighted parabolic H\"older spaces}(see $Y_T$, $X_T$ in Section $\ref{sec3.3}$) and establish a Schauder estimate of the linear parabolic equation between these weighted parabolic H\"older spaces. These results are analogous to the standard parabolic Schauder theory between classical parabolic H\"older spaces ( $C^{\alpha, \frac{\alpha}{4}}(M \times [0,T])$ and $C^{4+\alpha, 1+\frac{\alpha}{4}}(M \times [0,T])$). The method is similar to the classical approach. But the results seem to be new for linear parabolic equations, see Theorem \ref{thm701}.

In these weighted spaces, instead of assuming ``parabolic H\"older norm at time $t$" bounded for all $t \in [0,T]$ as in the standard case, we put appropriate time weights on each component allowing them to blow up at certain rates when $t \rightarrow 0^+$. These prescribed blow up rates come naturally from the solution of initial value problem. For instance, if given initial value $u_0 \in C^2(M)$, then the solution of the initial value problem of biharmonic heat equations can be viewed as a smooth approximation of $u_0$ as $t \rightarrow 0^+$. Its derivatives up to second order approach the corresponding derivatives of $u_0$ and thus are bounded as $t \rightarrow 0^+$. Its derivatives of order $(k+2)$th blow up at rate $t^{-\frac{k}{4}}$ as $t \rightarrow 0^+$ for $k \geq 0$. For more details on the weighted spaces, see Section $\ref{sec3.3}$.

Another key feature of our approach is that these estimates are uniform (independent of center metric) for the initial value problem as well as the nonhomogeneous problem on time interval $(0,T)$, if $T$ sufficiently small.  This observation leads to the existence of a  neighborhood with uniform size around any center metric mentioned in Theorem $\ref{techthm}$ and Remark $\ref{rmkuniform}$. For the initial value problem, conceptually as a smooth approximation of the initial data as $t \rightarrow 0^+$, its solution should be controlled solely by the initial data for $t>0$ sufficiently small(see Theorem $\ref{thm001}$ and Theorem $\ref{thm3.9}$). While for the nonhomogeneous problem, surprisingly we find out that the parabolic Schauder estimate between weighted parabolic H\"older spaces becomes independent of $g$ when $T$ is sufficiently small with respect to the $C^{3,\alpha}$-harmonic radius of $g$ (see Theorem $\ref{thm702}$ and its proof in Section $\ref{sec3.4}$). The appropriate weights play an essential role here and the desired uniform estimates follow by the scaling invariance. 

The smoothness of the short time solution to the Calabi flow for $t>0$ follows from the standard parabolic Schauder estimate by restricting to time subinterval $[\epsilon, T]$ for any $\epsilon >0$. Uniqueness for nonlinear parabolic equations with rough initial data is always a very important problem and it can be subtle. It is essential for applications in particular. In our case the Calabi flow enjoys very nice properties and it always has a unique solution. This is essentially due to Calabi-Chen \cite{CC} that the Calabi flow decreases the distance between K\"ahler metrics in the space of K\"ahler metrics. 

Our proof is essentially based on weighted Schauder-type estimates for linear parabolic equations, developed in the paper. To construct a short time solution for the Calabi flow (a nonlinear parabolic equation), one applies a contracting mapping (fixed point) argument for the corresponding linearized equation.  The Calabi flow equation, or rather the structure of scalar curvature only plays a particular role in the construction of a desired contracting mapping. Such contracting mapping (fixed point) argument works for most geometric flows  (nonlinear parabolic equations), once the corresponding linear theory has been developed; for example see \cite{KL}. Hence our method should work in a much more general setting for geometric flows, in particular for higher order parabolic equations. For brevity of presentation and also for our main interest, we will limit our consideration to the Calabi flow. 

As an application, we study the regularity of a weak solution of its elliptic version, namely, the constant scalar curvature equation. As a direct consequence, we have the following, 

\begin{thm}\label{kminimizer}Suppose a $L^\infty$ metric $\omega_u$ minimizes K-energy, and if $\omega_u$ is in $\delta$-neighborhood of any smooth K\"ahler metric in $L^\infty$ norm, then it is a smooth K\"ahler metric with constant scalar curvature. In particular, if a K-energy minimizer is a continuous K\"ahler metric, then it is smooth K\"ahler metric with constant scalar curvature. 
\end{thm}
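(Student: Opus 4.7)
The plan is to run the Calabi flow from $u$ itself and use the fact that the $K$-energy is a Lyapunov function for the flow to force stationarity, then transport regularity backwards to $t=0$ via the convergence statement in Theorem \ref{techthm}.

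First, apply Theorem \ref{techthm} with initial datum $u_0:=u$, which is admissible precisely because $\omega_u$ sits in the $L^\infty$ $\delta$-neighborhood of a smooth K\"ahler metric $\omega$ (with $\delta$ the dimensional constant of that theorem). This gives a solution $\varphi\in C^\infty(M\times(0,T))$ of the Calabi flow with $\|\varphi(t)-u\|_{C^{1,\gamma}(M)}\to 0$ as $t\to 0^+$ and a uniform bound $\|\partial\bar\partial\varphi(t)\|_{C^0(M)}\le C\|\partial\bar\partial u\|_{L^\infty(M)}$ on a maximal small interval $(0,T)$. In particular, $\omega_{\varphi(t)}$ remains comparable to $\omega$ uniformly in $t$.

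For $t\in(0,T)$ the potential $\varphi(t)$ is smooth, hence the classical identity $\frac{d}{dt}\mathcal{K}(\varphi(t))=-\int_M(R_{\varphi(t)}-\underline R)^2\omega_{\varphi(t)}^n\le 0$ holds, so $t\mapsto\mathcal{K}(\varphi(t))$ is nonincreasing on $(0,T)$. The minimizing hypothesis on $u$ (compared against the smooth competitors $\varphi(t)$) yields $\mathcal{K}(\varphi(t))\ge\mathcal{K}(u)$. If I can show the continuity $\mathcal{K}(\varphi(t))\to\mathcal{K}(u)$ as $t\to 0^+$, monotonicity then forces $\mathcal{K}(\varphi(t))\equiv\mathcal{K}(u)$ on $(0,T)$; differentiating gives $R_{\varphi(t)}\equiv\underline R$, i.e., every $\omega_{\varphi(t)}$ is cscK. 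The Calabi flow equation \eqref{eqn8001} then reads $\partial_t\varphi\equiv 0$, so $\varphi(\cdot,t)$ is independent of $t\in(0,T)$; call it $\varphi_*$. But $\varphi_*\in C^\infty(M)$ and, by the $C^{1,\gamma}$-convergence from Theorem \ref{techthm}, $\varphi_*=u$. Hence $u\in C^\infty(M)$ and $\omega_u$ is cscK. For the second assertion, a continuous K\"ahler potential satisfying \eqref{asm} lies in the $L^\infty$ $\delta$-neighborhood of any smooth K\"ahler potential obtained by a mollification (the density argument already used in deducing Theorem \ref{mainthm} from Theorem \ref{techthm}), so the first assertion applies.

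The main obstacle is the continuity $\mathcal{K}(\varphi(t))\to\mathcal{K}(u)$ at $t=0^+$ with only $L^\infty$ data. I would handle this through the Chen--Tian decomposition, writing $\mathcal{K}$ as a sum of an entropy term $\int_M\log(\omega_\varphi^n/\omega^n)\,\omega_\varphi^n$ and several ``energy type'' pieces that are multilinear in $\varphi$ and $\partial\bar\partial\varphi$ integrated against smooth background forms. The energy pieces are continuous in $\varphi$ with respect to $C^1$ convergence together with a uniform $L^\infty$ bound on $\partial\bar\partial\varphi$, both supplied by Theorem \ref{techthm}. For the entropy, the uniform two-sided bound $c\,\omega\le\omega_{\varphi(t)}\le C\,\omega$ on $(0,T)$ (from the uniform $C^0$ bound on $\partial\bar\partial\varphi(t)$ combined with the $L^\infty$ hypothesis on $\omega_u$) gives a pointwise bound on the integrand and the weak $L^\infty$-convergence $\omega_{\varphi(t)}^n\to\omega_u^n$, so dominated convergence yields continuity. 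Once this continuity is in place, the rest of the argument is purely formal.
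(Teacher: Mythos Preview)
Your strategy diverges from the paper's, which does not attempt to verify continuity of $\mathcal{K}$ at $t=0^+$. Instead, the paper appeals to the weak Calabi flow of Streets (a minimizing-movement construction in the metric completion $\mathcal{E}^1$): a theorem of Mayer on gradient flows in nonpositively curved metric spaces shows that any $K$-energy minimizer is a stationary point of the weak flow, and Streets' identification of the weak flow with the smooth one (carried over to the $L^\infty$-initiated flow constructed here) then forces $\omega_{u(t)}\equiv\omega_u$. Smoothness for $t>0$ transfers back immediately. This bypasses any limit argument at $t=0^+$.

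Your direct route has a real gap at exactly the step you flag as the obstacle. For $\partial\bar\partial u_0\in L^\infty$, Theorem~\ref{techthm} only gives $\varphi(t)\to u$ in $C^{1,\gamma}$ together with a uniform $C^0$ bound on $\partial\bar\partial\varphi(t)$; it does \emph{not} give $\partial\bar\partial\varphi(t)\to\partial\bar\partial u$ in $C^0$ (that conclusion requires the extra hypothesis $\partial\bar\partial u_0\in C^0$). From $C^{1,\gamma}$ convergence of the potentials you do get weak convergence of the Monge--Amp\`ere measures $\omega_{\varphi(t)}^n\to\omega_u^n$ via Bedford--Taylor, but weak convergence of measures does not imply pointwise a.e.\ convergence of the densities, so dominated convergence is unavailable for the entropy integrand. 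Worse, entropy is only \emph{lower} semicontinuous under weak convergence, which yields $\liminf_{t\to 0^+}\mathcal K(\varphi(t))\ge\mathcal K(u)$ --- precisely the wrong inequality for your sandwich (you need $\limsup\le\mathcal K(u)$). Your argument does go through unchanged for the continuous case in the second assertion, since there $\partial\bar\partial\varphi(t)\to\partial\bar\partial u$ uniformly and every term in the Chen--Tian decomposition is genuinely continuous; but for the general $L^\infty$ statement you would need either an independent argument for upper semicontinuity of $\mathcal K$ along this particular flow, or the metric-space machinery the paper invokes.
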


This confirms partially a conjecture of Chen, which asserts that a $C^{1, 1}$ minimizer of $K$-energy is a smooth constant scalar curvature K\"ahler metric. 

Our paper is organized as follows. In Section $\ref{sec2}$, we introduce some basic notations and the notion of \emph{harmonic radius} from Riemannian geometry. In Section $\ref{sec3}$, we study the linear equation, namely the biharmonic heat equation. We start in Section $\ref{sec3.1}$ with an overview of the \emph{biharmonic heat kernel} on closed manifolds and in Section $\ref{sec32}$ some uniform integral bounds on the biharmonic heat kernel. Based on estimates of the biharmonic heat kernel, we then study the initial value problem and the nonhomogeneous problem in Section $\ref{sec3.2}$ and Section $\ref{sec3.3}$ \& $\ref{sec3.4}$ respectively. In particular, in Section $\ref{sec3.3}$ we introduce the \emph{weighted parabolic H\"older spaces} and in Section $\ref{sec3.4}$ we derive a uniform Schauder estimate between weighted spaces on time interval $(0,T)$ for $T$ sufficiently small. In Section $\ref{sec4}$, we construct the short time solution to the Calabi flow via a fixed point argument on weighted parabolic H\"older space. In Section $\ref{sec5}$, we prove the solution in weighted space is actually smooth whenever $t>0$ and also we prove Theorem $\ref{kminimizer}$.

In Appendix $\ref{secA}$ \& $\ref{secB}$, we provide a self-contained proof of the existence and estimates of biharmonic heat kernel on closed manifolds stated in Section $\ref{sec3.1}$.  The fundamental solution of general high order parabolic operators on domains of $\bR^n$ has been studied intensely back in 1950's(see \cite[p335]{Fr} and the references therein), but none of them dealt with the case on closed manifolds. Even though it essentially follows the same technique, \emph{the parametrix method}, it's still quite tricky and involving to confirm the same expectation on closed manifolds. These results are classical and well-known to experts. But since we cannot find a direct reference of these results, we take this opportunity to write down the arguments in detail. Our argument essentially follows the books (\cite{Ei}, \cite{Fr}).

\subsection*{Acknowledgement} Both authors would like to thank their PhD adviser Prof. Chen, Xiuxiong for suggesting to work on this problem as well as his warm encouragements. Part of the work was done during this summer when the second author was a graduate student at Stony Brook University. He is very grateful for the summer support from Department of Mathematics at Stony Brook University. Especially, he wants to thank Prof. LeBrun, Claude for his efforts in coordinating various funds to make the summer support possible. The second author also wants to thank Yuanqi Wang for many helpful discussions on the parabolic Schauder theory as well as on the parametrix method, Song Sun and Gao Chen for discussions on harmonic radius. Both authors would like to thank Prof. Angenent, Sigurd and Prof. Chen, Jingyi for their interests on this work. The first author is supported in part by NSF, award no. 1611797.

\section{Preliminaries}\label{sec2}

\subsection{Notations}

Suppose $M$ is a closed $n$-dimensional Riemannian manifold with a fixed smooth background metric $g$. Denote $\nabla$ to be the Levi-Civita connection with respect to metric $g$. Locally in a coordinate chart $\{x_i\}_{i=1}^n$, we also have ordinary differentials with respect to the coordinate functions. We denote it as $D$. They differs by the Christoffel symbols which depends on (derivatives of) $g$ only. 

Given any multi index $\alpha= (\alpha_1, \alpha_2, \cdots, \alpha_n) \in \bZ_{\geq 0}^n$, in local coordinate $\{x_i\}_{i=1}^n$ we denote
\begin{equation}
D^\alpha := (\frac{\partial }{\partial x_1})^{\alpha_1}(\frac{\partial }{\partial x_2})^{\alpha_2} \cdots (\frac{\partial }{\partial x_n})^{\alpha_n}.
\end{equation}
We also denote $|\alpha| := \sum_{i=1}^n \alpha_i $ and sum of two multi indices $\alpha$ and $\beta$ by $\alpha+ \beta$ given by adding up each component.

For two points $x, y$ in a single local chart, we use $|x-y|$ to denote the Euclidean distance while we use $\rho(x, y)$ to denote the geodesic distance with respect to $g$. 

Given any tensor $T$ on $M$. We denote the $C^0$ norm of $T$ as
\begin{equation}
\|T\|_{C^0(M) } : = \sup_{M} |T|_g. 
\end{equation}
We denote $C^k$ norm of $T$ for integer $k >0$ as 
\begin{equation}
\|T\|_{C^k(M)} = \sum_{i=0}^k \|\nabla^i T\|_{C^0(M)}
\end{equation}
Fix a finite open cover $\{(U_\nu, p_\nu)\}_{\nu}$ of $M$,  where for each $\nu$, $U_\nu $ is a open subset of $M$ with point $p_\nu \in U_\nu$. Then we denote the $C^\alpha$ semi norm of $T$ as
\begin{equation}
[T]_{C^\alpha(M)} = \sup_{\nu} \sup_{x \neq y \in U_\nu} \frac{|T(x) - T(y)|_{g(p_\nu)}}{|x-y|^\alpha}.  
\end{equation} 

Without further notice, every constant $C>0$ in this paper is a dimensional constant unless specified.

\subsection{Normal coordinates and Harmonic radius}

Suppose $M$ is a smooth closed manifold and $\text{dim}_{\bR} M = n$ and $g$ is a smooth Riemannian metric on M. We denote $k_0 = \max_M |\text{Rm}(g)|_g$, $i_0 = $ the injectivity radius of $(M, g)$. Given any point $p \in M$, we know that the exponential map $\exp_p: \bR^n \rightarrow M$ maps the Euclidean ball $B_r(0)$ to the geodesic ball $B_r(p, g)$ diffeomorphically  for any $r < i_0$. As a result, the exponential map gives rise to a local coordinate called normal coordinate. In the rest of this paper, we will use $x \in B_r(0)$ to denote a point in the Euclidean ball as well as its image under the exponential map when there's no confusion.   

\begin{lem}\label{lem201}
There exists a constant $r_0 > 0 $ depending on $k_0$, $i_0$ and dimension $n$ such that at any point $p \in M$ under the normal coordinate 
\begin{equation}
\frac{1}{2}\delta_{i j} \leq g_{ij}(x) \leq 2 \delta_{ij} \text{ for any } x \in B_{r_0}(0) \cong B_{r_0}(p, g).
\end{equation}
Moreover, we have
\begin{equation}
\frac{1}{2}|x-y| \leq \rho(x, y) \leq 2 |x- y| \text{ for any } x, y \in B_{r_0} (0). 
\end{equation}
where $\rho(x, y)$ denotes the geodesic distance induced by the Riemannian metric $g$ between $x$ and $y$.  
\end{lem}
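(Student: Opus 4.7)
The plan is to first establish the metric comparison $\tfrac{1}{2}\delta_{ij} \leq g_{ij}(x) \leq 2\delta_{ij}$ via Jacobi field analysis (Rauch's comparison theorem), and then deduce the distance comparison as a corollary of the pointwise metric bound.

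For the metric comparison, I would start from the fact that in normal coordinates centered at $p$ one has $g_{ij}(0) = \delta_{ij}$ and $\partial_k g_{ij}(0) = 0$, together with the standard Taylor expansion
$$g_{ij}(x) = \delta_{ij} - \tfrac{1}{3} R_{ikjl}(p)\, x^k x^l + O(|x|^3).$$
The quantitative version is Rauch's comparison theorem: for $|v|_g = 1$ and $w \in T_p M$, the differential $(\exp_p)_*|_{tv}(w)$ has $g$-norm equal to $|J(t)|$, where $J$ is the Jacobi field along $\gamma_v$ with $J(0)=0$, $J'(0)=w$. Comparing with the constant-curvature models of sectional curvature $\pm k_0$ gives $|J(t)|^2 = t^2 |w|^2 \bigl(1 + O(k_0 t^2)\bigr)$, uniformly in $v$ and $w$, on the interval $t < \min\bigl(i_0, \pi/(2\sqrt{k_0})\bigr)$. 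Hence there is a dimensional constant $c(n)>0$ such that choosing
$$R_0 := c(n)\,\min\bigl(i_0,\; k_0^{-1/2}\bigr)$$
yields the pointwise bound $|g_{ij}(x) - \delta_{ij}| \leq \tfrac{1}{2n}$ on $B_{R_0}(0)$, and in particular $\tfrac{1}{2}\delta_{ij} \leq g_{ij} \leq 2\delta_{ij}$ as bilinear forms.

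For the distance comparison I would further shrink the radius, setting $r_0 := R_0/8$. For any $x, y \in B_{r_0}(0)$, the $g$-length of the radial Euclidean segment from $0$ to $x$ is bounded by $\sqrt{2}\,|x|$ using the metric bound, so $\rho(p,x), \rho(p,y) \leq \sqrt{2}\,r_0$. The minimizing $g$-geodesic $\gamma$ from $x$ to $y$ then has $g$-length $\rho(x,y) \leq 2\sqrt{2}\,r_0 < i_0$, hence is unique, lies inside the normal chart, and by the triangle inequality stays in $B_{R_0}(0)$ where the metric comparison applies. The upper bound $\rho(x,y) \leq \sqrt{2}\,|x-y| \leq 2|x-y|$ follows by integrating $|\dot\sigma|_g \leq \sqrt{2}\,|\dot\sigma|_\delta$ along the Euclidean segment $\sigma$ from $x$ to $y$ (which is contained in $B_{r_0}(0)$ by convexity). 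The lower bound $\tfrac{1}{2}|x-y| \leq \rho(x,y)$ follows by integrating the reverse inequality $|\dot\gamma|_\delta \leq \sqrt{2}\,|\dot\gamma|_g$ along $\gamma$ and noting that its Euclidean length is at least the Euclidean straight-line distance $|x-y|$.

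The only delicate point is the bookkeeping: one must verify that the minimizing $g$-geodesic between two points of $B_{r_0}(0)$ remains inside the region on which the metric comparison has been established. This is the reason for working with two nested balls $B_{r_0}(0) \subset B_{R_0}(0)$ and running Rauch's estimate on the larger one; everything else is routine.
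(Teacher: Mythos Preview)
The paper states this lemma without proof, treating it as a standard fact from Riemannian geometry. Your argument via Rauch comparison for the metric bound and the nested-ball trick for the distance comparison is the standard proof and is correct.

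One small simplification: in normal coordinates the radial Euclidean segment from $0$ to $x$ \emph{is} a unit-speed $g$-geodesic (Gauss's lemma), so $\rho(p,x) = |x|$ exactly rather than merely $\rho(p,x) \leq \sqrt{2}\,|x|$. This tightens the bookkeeping slightly (any point on the minimizing geodesic from $x$ to $y$ lies within $g$-distance $3r_0$ of $p$, hence within Euclidean distance $3r_0$ of the origin), but your version with the crude bound is already sufficient.
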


We will also need a uniform control on high order derivatives of metric $g$ in a fixed sized geodesic ball. For this reason, we introduce the notion of harmonic radius. There are very rich literature on $C^{1,\alpha}$-harmonic radius and we refer interested readers to \cite{JK},\cite{GW},\cite{P},\cite{K} and \cite{An}. Among these results, Anderson(\cite{An}) showed that the $C^{1,\alpha}$-harmonic radius has a lower bound depending on $\max_M |\text{Ric}(g)|_g$ and the injectivity radius $i_0$. In this paper, we will mainly use the $C^{3,\alpha}$-harmonic radius defined as below.
\begin{defn}
On a Riemannian manifold $(M, g)$, for any given $Q >1$, we define the \emph{$C^{3,\alpha}$-harmonic radius at $p \in M$} denoted as $r_h(p, Q)$ to be the supremum of all $r >0$ such that in geodesic ball $B_r(p, g)$ there exists a harmonic coordinates system $\{u_j\}_{j=1}^n$ with the corresponding metric tensor $g_{ij} = g(\frac{\partial }{\partial u_i}, \frac{\partial }{\partial u_j})$ satisfying
\begin{equation}
Q^{-1} \delta_{ij} \leq g_{ij} \leq Q \delta_{ij},
\end{equation}
and
\begin{equation}
\sum_{l=1}^3 r^l \|D^l g_{ij}\|_{C^0} + r^{3+\alpha} [D^3 g_{ij}]_{C^\alpha} \leq Q -1, 
\end{equation}
on $B_r(p, g)$, where the derivatives and norms are taken with respect to the coordinates $\{u_j\}_{j=1}^n$. For $Q >1$ as above, define the \emph{$C^{3, \alpha}$-harmonic radius of $(M, g)$} as $r_h(Q) := \inf_{p \in M} r_h(p, Q)$. 
\end{defn}

As a direct consequence to Anderson's result, we have the following lemma. 

\begin{lem}\label{lem202}
Suppose $(M,g)$ is a closed $n$-dimensional Riemannian manifold. Given any $Q>1$, there exists a constant $r_1 > 0$ depending on $Q$, $\|\text{Ric}(g)\|_{C^{1,\alpha}(M)}$, the injectivity radius $i_0$ and dimension $n$ such that the $C^{3,\alpha}$-harmonic radius on $(M, g)$, $r_h(Q) \geq r_1$. 

\end{lem}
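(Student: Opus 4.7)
The plan is to deduce this from Anderson's $C^{1,\alpha}$-harmonic radius estimate together with elliptic regularity bootstrapping in harmonic coordinates, exploiting the well-known formula
$$
\Delta_g g_{ij} \;=\; -2\,\mathrm{Ric}_{ij} \;+\; Q_{ij}(g,\partial g),
$$
which holds in any harmonic coordinate system, where $Q_{ij}$ is a polynomial expression, linear in $g^{-1}$ and quadratic in $\partial g$, with smooth dependence on $g$.

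First, I would apply Anderson's theorem: since $\|\mathrm{Ric}(g)\|_{C^0}$ is controlled by $\|\mathrm{Ric}(g)\|_{C^{1,\alpha}}$, there exist a radius $r_* = r_*(\|\mathrm{Ric}(g)\|_{C^0}, i_0, n, Q')$ and, around every $p \in M$, a harmonic coordinate chart on $B_{r_*}(p,g)$ in which
$$
(Q')^{-1}\delta_{ij} \le g_{ij} \le Q'\,\delta_{ij}, \qquad r_*\|Dg_{ij}\|_{C^0} + r_*^{1+\alpha}[Dg_{ij}]_{C^\alpha} \le Q'-1,
$$
for any prescribed $Q' \in (1,Q)$. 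Working inside such a chart, view the above identity as the elliptic system
$$
a^{kl}(x)\,\partial_k\partial_l g_{ij} \;=\; F_{ij}(x), \qquad a^{kl} := g^{kl}, \qquad F_{ij} := -2\,\mathrm{Ric}_{ij} + Q_{ij}(g,\partial g).
$$
By Anderson's bound, $a^{kl}$ is uniformly elliptic with $C^{0,\alpha}$ coefficients, while $F_{ij} \in C^{0,\alpha}$ because $Q_{ij}$ is quadratic in $\partial g \in C^{0,\alpha}$ and $\mathrm{Ric} \in C^{1,\alpha} \subset C^{0,\alpha}$. Interior Schauder estimates on a slightly smaller ball promote $g_{ij}$ to $C^{2,\alpha}$, whence $\partial g \in C^{1,\alpha}$, hence $Q_{ij}(g,\partial g) \in C^{1,\alpha}$. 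Combined with $\mathrm{Ric} \in C^{1,\alpha}$, a second round of Schauder gives $g_{ij} \in C^{3,\alpha}$ on a somewhat smaller ball, together with a quantitative bound
$$
\sum_{l=0}^{3} \|D^l g_{ij}\|_{C^0} + [D^3 g_{ij}]_{C^\alpha} \;\le\; C\bigl(n, Q', i_0, \|\mathrm{Ric}(g)\|_{C^{1,\alpha}}\bigr),
$$
in that smaller coordinate ball, say of radius $r_{**} \le r_*/2$.

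Finally, I would perform a scaling reduction: replace the coordinate $x$ by $\tilde x = x/r$ for a small $r \le r_{**}$ and rewrite the conditions in the definition of $C^{3,\alpha}$-harmonic radius in terms of $\tilde g_{ij}(\tilde x) := g_{ij}(r\tilde x)$. Since the $l$-th derivative gains a factor $r^l$ and the $C^\alpha$-seminorm of $D^3$ gains $r^{3+\alpha}$, the left-hand side
$$
\sum_{l=1}^3 r^l \|D^l g_{ij}\|_{C^0} + r^{3+\alpha}[D^3 g_{ij}]_{C^\alpha}
$$
is then majorized by $r\cdot C$, which is less than $Q-1$ as soon as $r \le (Q-1)/C$. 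Taking the infimum over $p$ and setting
$$
r_1 := \min\Bigl\{\tfrac{r_{**}}{2},\; \tfrac{Q-1}{C}\Bigr\},
$$
proves $r_h(Q) \ge r_1$, with $r_1$ depending only on $n$, $Q$, $i_0$ and $\|\mathrm{Ric}(g)\|_{C^{1,\alpha}}$.

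The main technical obstacle is transferring Anderson's $C^{1,\alpha}$ chart into a genuinely $C^{3,\alpha}$-controlled chart without losing the harmonicity of the coordinates; the key is that the identity $\Delta_g g_{ij} = -2\mathrm{Ric}_{ij} + Q_{ij}(g,\partial g)$ does not require switching coordinates, so the bootstrap is carried out in the same harmonic chart provided by Anderson. Everything else is routine Schauder theory and scaling.
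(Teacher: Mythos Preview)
Your argument is correct and is exactly the standard bootstrap one has in mind when the paper says the lemma is ``a direct consequence to Anderson's result'': the paper does not actually supply a proof, so there is nothing to compare against beyond this remark. Your use of the harmonic-coordinate identity $\Delta_g g_{ij} = -2\,\mathrm{Ric}_{ij} + Q_{ij}(g,\partial g)$ together with two rounds of interior Schauder estimates, followed by a scaling to achieve the $Q-1$ bound, is precisely the intended mechanism.

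One minor point worth making explicit: the hypothesis $\|\mathrm{Ric}(g)\|_{C^{1,\alpha}(M)}$ is stated in the paper with respect to covariant derivatives, whereas in the bootstrap you need the \emph{coordinate} components $\mathrm{Ric}_{ij}$ to lie in $C^{1,\alpha}$. This conversion uses $\partial_k\mathrm{Ric}_{ij} = (\nabla\mathrm{Ric})_{kij} + \Gamma * \mathrm{Ric}$, which is $C^{0,\alpha}$ once Anderson's bound gives $\Gamma \in C^{0,\alpha}$ and the assumed bound gives $\mathrm{Ric}$ Lipschitz; so there is no circularity, but it is worth a sentence.
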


\begin{rmk}
Note that we could choose constant $Q>1$ arbitrarily close to $1$ by possibly shrinking $r_h(Q)$. 
\end{rmk}

\section{Biharmonic heat equation on closed manifolds}\label{sec3}

\subsection{The biharmonic heat kernel}\label{sec3.1}

Suppose $M$ is a closed smooth manifold of dimension $n$. Given a smooth Riemannian metric $g$ on $M$, we define the \emph{biharmonic heat kernel} with respect to metric $g$ to be a smooth function $b \in C^{\infty}\big(M \times M \times (0, \infty)\big)$ such that $b_g (x, y; t)$ as a function of $(x, t) \in M\times (0,\infty)$ satisfies
\begin{equation}\label{eqn31}
( \frac{\partial }{\partial t} + \Delta_g^2 ) b_g = 0,    
\end{equation}
and for any continuous function $u$ on $M$, 
\begin{equation}
\lim_{t \rightarrow 0^+} \int_M b_g(x, y; t) u(y) \mathrm d V_g(y) = u(x), 
\end{equation}
uniformly for $x \in M$.  

The operator $(\frac{\partial }{\partial t} + \Delta_g^2)$ is parabolic in the sense of Petrowski and then one can construct the biharmonic heat kernel via the parametrix method for pseudo differential operators. We refer to \cite[Chapter 9]{Fr} and \cite[Chapter I]{Ei} for general construction procedures on heat kernels of domains in $\bR^n$. We summarize the properties of biharmonic heat kernel as the following theorems.

\begin{thm}\label{thm301}
Given a smooth Riemannian metric $g$ on M, there exists a unique biharmonic heat kernel with respect to $g$ denoted as $b \in C^\infty\big(M \times M \times (0, \infty)\big)$. Moreover, for any integers $k, p, q \geq 0 $, we have for any $(x,y, t) \in M \times M \times(0,T)$, 
\begin{equation}\label{eqn32}
|\partial_t^k \nabla_x^{p} \nabla_y^{q} b_g(x, y;  t)|_g \leq C t^{- \frac{n + 4k + p+ q}{4}} \exp\{- \delta \big(t^{-\frac{1}{4}} \rho(x, y)\big)^{\frac{4}{3}} \},  
\end{equation}
where $\nabla_x$ and $\nabla_y$ are covariant derivatives with respect to $g$. Constants $C, \delta >0$ depend on $T$, $g$ and $p + q +4k$.  
\end{thm}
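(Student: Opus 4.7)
The plan is to construct $b_g$ via Levi's parametrix method, adapted to the closed manifold setting, essentially following the $\bR^n$ construction in \cite{Fr, Ei}. First I would recall the explicit biharmonic heat kernel on $\bR^n$, i.e.\ the fundamental solution $E(x,t)$ of $\partial_t + \Delta^2$, given by $E(x,t) = t^{-n/4} F(t^{-1/4} x)$ for a Schwartz profile $F$ defined via Fourier inversion of $e^{-t|\xi|^4}$. A standard stationary phase / contour shift analysis gives the sub-Gaussian bound $|\partial_t^k D^\alpha E(x,t)| \leq C\, t^{-(n+4k+|\alpha|)/4} \exp\{-\delta (t^{-1/4}|x|)^{4/3}\}$, where the exponent $4/3 = 4/(4-1)$ is forced by the order of the operator.

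Next I would build a parametrix $P(x,y;t)$ on $M$. For each $y\in M$, pull back $E$ under the $g$-exponential map at $y$, multiply by a cutoff supported in the normal ball $B_{r_0/2}(y,g)$ (using $r_0$ from Lemma~\ref{lem201}) and by the square root of the volume density, so that $P$ is smooth on $M\times M\times(0,\infty)$ and agrees with $E$ to leading order on the diagonal. A direct computation using normal coordinates shows that $L_g P := (\partial_t+\Delta_g^2)P = -K_0(x,y;t)$, where the error $K_0$ gains a factor of $\rho(x,y)^2$ (from the expansion $g_{ij}=\delta_{ij}+O(\rho^2)$) or a cutoff derivative (supported away from the diagonal), both of which can be absorbed into the exponential factor to give $|K_0|\le C t^{-(n-1)/4}\exp\{-\delta(t^{-1/4}\rho(x,y))^{4/3}\}$, i.e.\ a gain of $t^{1/4}$ over the parametrix itself.

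I would then construct the correction $\Phi$ so that $b_g = P + P * \Phi$, where $*$ denotes space-time convolution on $M\times(0,t)$. The required equation $L_g(P*\Phi) = -K_0 * \Phi - \Phi$ forces the Volterra integral equation $\Phi = K_0 + K_0 * \Phi$, solved by the Neumann series $\Phi = \sum_{j\ge 1} K_0^{*j}$. The key convolution estimate is that if $A$ and $B$ satisfy bounds of the form (\ref{eqn32}) with extra time factors $t^{a}$ and $t^{b}$ (with $a,b>-1$), then $A*B$ satisfies a similar bound with factor $t^{a+b+1}$ and the same sub-Gaussian $4/3$-exponent; this is the step where one has to check carefully that the exponent $4/3$ survives convolution in the endpoint inequality $\rho(x,z)^{4/3}+\rho(z,y)^{4/3}\ge c\,\rho(x,y)^{4/3}$ after optimising over $z$. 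The gain of $t^{1/4}$ in each iteration gives absolute convergence of the series on any finite time interval $[0,T]$, and delivers the claimed bound (\ref{eqn32}) for $k=p=q=0$. Higher derivatives are obtained by differentiating the convolution $P*\Phi$ under the integral, placing at most two derivatives on $\Phi$ (which is only Hölder in $t$) by exploiting $L_g P = -K_0$ to trade one time derivative for four space derivatives, and bootstrapping.

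Finally, the delta-function initial condition is inherited from $P$ since $E$ is a mollifier and $P*\Phi = O(t^{1/4})$ uniformly in $y$; this gives existence. Uniqueness of the heat kernel reduces to uniqueness of the Cauchy problem $L_g u=0$, $u(\cdot,0)=0$ on $M$, which follows from the energy identity $\frac{d}{dt}\int_M u^2\,dV_g = -2\int_M |\Delta_g u|^2\,dV_g\le 0$. Smoothness of $b_g$ for $t>0$ follows from classical interior Schauder regularity for the linear parabolic equation $L_g b_g=0$ applied on $[\ep,T]$. The main technical obstacle I expect is the convolution bookkeeping: keeping the sub-Gaussian factor with the right exponent $4/3$ through arbitrarily many iterations and through differentiation in both $x$ and $y$, which forces one to work with a two-parameter family of kernel bounds and prove a stable convolution inequality; this is carried out in detail in Appendices \ref{secA}--\ref{secB}.
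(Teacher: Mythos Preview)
Your plan is essentially the paper's proof (Appendices~\ref{secA}--\ref{secB}): Levi's parametrix method, with a local approximate kernel built from the Euclidean fundamental solution, the error corrected by a Neumann series solving the Volterra equation $\Phi=K_0+K_0*\Phi$, and uniqueness via the $L^2$ energy identity. The only cosmetic difference is that the paper patches frozen--coefficient kernels $G_\nu(x-y;t;y)$ with a partition of unity (Proposition~\ref{prop302}) rather than pulling back along $\exp_y$; both variants are standard.

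Two points deserve tightening. First, your bound $|K_0|\le Ct^{-(n-1)/4}\exp\{\cdots\}$ is too strong and is not what your stated reasoning gives: in normal coordinates the leading fourth--order coefficient of $\Delta_g^2-\Delta^2$ is indeed $O(\rho^2)$, but the lower--order coefficients (in particular the second--order ones, which involve curvature) are only $O(1)$ at the center, so after absorption one gets at best $|K_0|\le Ct^{-(n+2)/4}\exp\{\cdots\}$. The paper, with its frozen--coefficient parametrix, records the weaker but still sufficient $t^{-(n+3)/4}$ (Proposition~\ref{prop302}~ii)); either way the Neumann series converges, but your numerology should be corrected. Second, and more substantively, ``differentiating under the integral and bootstrapping'' is where the real work lies for the mixed $\nabla_x^p\nabla_y^q$ estimates: once the total order reaches four the naive convolution diverges, and one needs a mechanism to shift derivatives off the singular factor. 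The paper's device is the estimate on $(D_x+D_y)^\alpha Z$ and $(D_x+D_y)^\alpha K$ (Proposition~\ref{prop302}~iv)), used to move $y$-- and $x$--derivatives onto the $\xi$--variable and integrate by parts (Lemma~\ref{lemma1}, Proposition~\ref{prop502}). Your plan should anticipate this step explicitly rather than defer it to ``bootstrapping''.
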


\begin{thm}\label{thm302}
Suppose $b_g(x, y; t)$ is the biharmonic heat kernel introduced in Theorem $\ref{thm301}$. Then at any given point $p \in M$, there exists a local coordinate chart $U \subset \bR^n \rightarrow M$ near $p \in M$ such that for any $x, y \in U$ and any multi indices $\alpha, \beta \geq 0$, we have for any $(x,y, t) \in (x,y, t) \in M \times M \times(0,T)$,
\begin{equation}\label{eqn301}
|D_x^\beta (D_x + D_y)^\alpha b_g(x, y; t) | \leq  C t^{- \frac{n + |\beta|}{4}} \exp\{- \delta \big(t^{-\frac{1}{4}} \rho(x,y) \big)^{\frac{4}{3}} \}, 
\end{equation}
where $D_x$ and $D_y$ are ordinary derivatives in $\bR^n$. Constants $C>0$ and $\delta > 0$ depend on $T, g, |\alpha|+ |\beta|$. 
\end{thm}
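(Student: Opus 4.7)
The plan is to choose normal (or harmonic) coordinates around $p$ and then exploit the explicit parametrix structure used to construct $b_g$ in Appendix \ref{secA}. Let $U \cong B_{r_0}(0) \subset \bR^n$ be such a chart centered at $p$. By Lemma \ref{lem201}, $|x-y|$ and $\rho(x,y)$ are comparable on $U$, so the exponential decay factors in Theorem \ref{thm301} and in the target estimate can be interchanged (up to renaming $\delta$). Moreover, ordinary derivatives $D^\gamma$ are expressible as linear combinations of covariant derivatives $\nabla^j$, $j \leq |\gamma|$, with smooth coefficients built from the Christoffel symbols.

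First I would dispatch the base case $\alpha = 0$: the bound $|D_x^\beta b_g| \leq C t^{-(n+|\beta|)/4} e^{-\delta(|x-y|/t^{1/4})^{4/3}}$ is immediate from Theorem \ref{thm301}, since $D_x^\beta b_g$ is a combination of $\nabla_x^j b_g$ for $j \leq |\beta|$, and the lower-order contributions $t^{-(n+j)/4}$ are absorbed using $t \leq T$.

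For the inductive step on $|\alpha|$, I would use the parametrix construction. In the chosen chart, the leading parametrix takes the explicit form
\[
Z_0(x,y;t) = \Phi(x, x-y, t), \qquad \Phi(p,\eta,t) := \frac{1}{(2\pi)^n}\int_{\bR^n} e^{i\eta\cdot\xi - t(g^{ij}(p)\xi_i\xi_j)^2}\,d\xi,
\]
and by rescaling $\xi \mapsto t^{-1/4}\xi$ we get $\Phi(p,\eta,t) = t^{-n/4} F(p, \eta/t^{1/4})$ for some $F$ smooth in $p$ and of Schwartz class in $\zeta$, with the quantitative bound $|D_p^\mu D_\zeta^\nu F(p,\zeta)| \leq C_{\mu,\nu}\, e^{-\delta|\zeta|^{4/3}}$. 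The key observation is that, by the chain rule,
\[
(D_{x^j} + D_{y^j})\, Z_0(x,y;t) = (\partial_{p^j}\Phi)(x, x-y, t),
\]
so the operator $D_x + D_y$ differentiates only the smooth ``parameter'' $p$ of $\Phi$ (which enters through the frozen coefficients $g^{ij}(p)$), rather than the singular scaling variable $\eta = x-y$. Iterating, each application of $D_x + D_y$ preserves the factor $t^{-n/4}$, while each extra $D_x$ still contributes one factor of $t^{-1/4}$; the same structure persists for the higher-order parametrices $Z_k$ in the expansion.

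For the remainder $R(x,y;t) = \int_0^t\!\!\int_M Z(x,z;t-s) W(z,y;s)\, dV_g(z)\, ds$, where $W$ solves the associated Volterra integral equation, I would use the splitting
\[
D_{x^j} + D_{y^j} = (D_{x^j} + D_{z^j}) + (D_{y^j} + D_{z^j})
\]
acting on the integrand, distributing the operator $(D_x + D_y)^\alpha$ between the two factors. The bounds on $Z$ established above, combined with analogous estimates for $W$ (proved by the same scheme applied to the Volterra iteration), propagate through the convolution to yield the desired bound on $R$. The main technical obstacle is precisely this step: carrying the ``diagonal-translation'' estimate through the Volterra series requires carefully tracking the interplay between the auxiliary $D_z$ terms produced by the splitting, the volume density $\sqrt{\det g}$, and the existing $z$-dependence in $W$, and checking that the improved time-scaling $t^{-n/4}$ (rather than $t^{-(n+|\alpha|)/4}$) is preserved uniformly after each convolution.
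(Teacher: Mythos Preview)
Your plan is essentially the paper's: the parametrix estimate uses exactly your chain-rule observation that $(D_x+D_y)$ hits only the frozen-coefficient parameter (the paper freezes at $y$ rather than $x$, but this is immaterial---see its equation for $D_x^\beta(D_x+D_y)^\alpha Z_\nu$ in the proof of Proposition~\ref{prop302}(iv)), and the Volterra remainder is treated by distributing $(D_x+D_y)$ across the convolution via integration by parts, carried inductively through the Neumann series $\Psi=\sum K_m$ (Lemmas~\ref{lemma1}--\ref{lemma3}). One caution: your displayed ``splitting'' $D_{x}+D_{y}=(D_{x}+D_{z})+(D_{y}+D_{z})$ is false as an operator identity; the correct mechanism---which you evidently intend, given your remark about auxiliary $D_z$ terms and the volume density---is to add the total-derivative term $\int D_{z}[Z\,\Psi\,\psi\,dV_g]=0$ to $\int (D_xZ)\Psi + Z(D_y\Psi)$, yielding $\int [(D_x+D_z)Z]\,\Psi\,\psi\,dV_g + \int Z\,[(D_z+D_y)\Psi]\,\psi\,dV_g + \int Z\,\Psi\,D_z(\psi\,dV_g)$, the last term being harmless.
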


\begin{rmk}
Estimate $(\ref{eqn301})$ is valid independent of choice of local coordinates but the constants $C>0$ and $\delta >0$ in $(\ref{eqn301})$ depend an actual choice of the local coordinate. For example, if $x =\psi (x')$ where $\psi: V \rightarrow U$ is a coordinate change near $p$ smooth up to the boundary, then we have
\begin{equation}
\begin{split}
|D_{x'}^\beta (D_{x'} + D_{y'})^\alpha \big(b_g(\psi(x'), \psi(y'); t)| \leq  C' t^{- \frac{n+ |\beta|}{4}} \exp\{- \delta' \big(t^{-\frac{1}{4}} \rho(x', y') \big)^{\frac{4}{3}} \}
\end{split}
\end{equation}
for any $x' ,y' \in V$ and any multiple indexes $\alpha, \beta \geq 0$. The constants $C' >0$ and $\delta' > 0$ depends on $C, \delta >0$ as well as on the transition function $\psi$. 
\end{rmk}

\begin{rmk}
Theorem $\ref{thm301}$ and Theorem $\ref{thm302}$ are general facts valid for any differential operator with smooth coefficients and parabolic in the Petrowski sense. We refer interested readers to \cite[Theorem 8 and 9 on p263]{Fr}, \cite[Chapter I.3]{Ei} for their proofs on domains of $\bR^n$. In the appendix, we gave a detailed proof of Theorem $\ref{thm301}$ and Theorem $\ref{thm302}$ for biharmonic heat kernel on closed manifold. This constructional method could be applied to (Petrowski) parabolic operators of any order.
\end{rmk}

\begin{thm}\label{thm303}
Suppose $b_g(x, y; t)$ is the biharmonic heat kernel introduced in Theorem $\ref{thm301}$. We have that for any $(x, t) \in M \times (0,T)$
\begin{equation}
\int_M b_g(x, y; t) \mathrm d V_g(y) = 1
\end{equation}
As a consequence, for any multi index $\alpha > 0$, 
\begin{equation}
\int_M D_x^{\alpha} b_g(x, y ; t) \mathrm d V_g(y)= 0. 
\end{equation}

\end{thm}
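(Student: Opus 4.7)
The plan is to exhibit the function $v(x,t) := \int_M b_g(x, y; t)\, dV_g(y)$ as a solution of the biharmonic heat equation on $M \times (0, T)$ with initial value $u_0 \equiv 1$, and then invoke uniqueness of the initial value problem on the closed manifold $M$. Since the constant function $1$ is trivially such a solution, this forces $v \equiv 1$. The derivative identities then follow by differentiating $v \equiv 1$ with respect to $x$ in a local coordinate chart and passing the derivatives inside the integral.

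The Gaussian bound \eqref{eqn32} of Theorem \ref{thm301}, together with the volume estimate obtained by a partition of unity and the change of variables $z = t^{-1/4}(y - x)$ in normal coordinates centered at $x$, shows that $b_g(x, y; t)$ and all of its $x$-derivatives (covariant or coordinate, to any finite order) are uniformly integrable in $y$ on compact subsets of $M \times (0, T)$. Consequently differentiation under the integral sign is justified, and since $b_g$ satisfies $(\partial_t + \Delta_g^2) b_g = 0$ in $(x, t)$ for each $y$, so does $v$. The initial condition $v(x, t) \to 1$ uniformly as $t \to 0^+$ is nothing but the defining property of the biharmonic heat kernel applied to the continuous function $u \equiv 1$.

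For uniqueness, set $w := v - 1$. Then $w$ is smooth on $M \times (0, T)$, satisfies $(\partial_t + \Delta_g^2)w = 0$, and $w(\cdot, t) \to 0$ uniformly as $t \to 0^+$. Integrating by parts twice on the closed manifold $M$, with no boundary contributions, yields
$$\frac{d}{dt} \int_M w^2 \, dV_g = -2 \int_M w\, \Delta_g^2 w \, dV_g = -2 \int_M (\Delta_g w)^2 \, dV_g \leq 0,$$
so $t \mapsto \int_M w^2 \, dV_g$ is nonincreasing on $(0, T)$. The uniform convergence $w(\cdot, t) \to 0$ forces its limit as $t \to 0^+$ to vanish, hence $\int_M w^2 \, dV_g \equiv 0$ and $v \equiv 1$. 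For the derivative statement, in any local coordinate chart the finer Gaussian bound \eqref{eqn301} of Theorem \ref{thm302} permits passing any $D_x^\alpha$ inside the integral, so differentiating $v \equiv 1$ gives $\int_M D_x^\alpha b_g(x, y; t)\, dV_g(y) = D_x^\alpha v(x, t) = 0$ for every multi-index $\alpha > 0$. The only mild subtlety is coordinating the justification of differentiation under the integral sign with the energy argument, but both rely solely on the Gaussian decay supplied by Theorems \ref{thm301} and \ref{thm302}, making the whole argument essentially routine.
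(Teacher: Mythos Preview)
Your proof is correct and follows essentially the same approach as the paper: both define the integral as a function of $(x,t)$, observe it solves the biharmonic heat equation with initial value $1$, and then use the energy identity $\frac{d}{dt}\int_M w^2\,dV_g = -2\int_M (\Delta_g w)^2\,dV_g \le 0$ on the closed manifold to force $w\equiv 0$. Your version is in fact slightly more careful, since you explicitly justify differentiation under the integral sign via the Gaussian bounds, whereas the paper simply asserts that the integral satisfies the equation ``by definition of the biharmonic heat kernel''.
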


\begin{proof}
Proof of Theorem $\ref{thm303}$. Denote $u(x; t) = \int_M b_g(x, y; t) \mathrm d V_g (y)$. Thus by definition of the biharmonic heat kernel, we have $u \in C^{\infty}\big(M \times (0,T)\big)$ satisfies 
\begin{equation}
(\frac{\partial }{\partial t} + \Delta_g^2) u = 0,
\end{equation}
and 
\begin{equation}
\lim_{t \rightarrow 0^+} \|u(t) -1\|_{C^0(M)} = 0. 
\end{equation}
Consider for any $t >0$, 
\begin{equation}
\frac{\partial }{\partial t } \int_{M } |u(t)-1|^2 \mathrm d V_g = - 2 \int_{M}\big( \Delta_g^2 u(t)\big)^2 \mathrm d V_g  \leq 0. 
\end{equation}
Moreover, we have $\lim_{t \rightarrow 0^+} \int_M |u(t)-1|^2 \mathrm d V_g = 0$. Therefore we have $\int_M |u(t)-1|^2 \mathrm d V_g = 0$ for any $0< t <T$. It implies that $u \equiv 1$ and this ends the proof of Theorem $\ref{thm303}$. 

\end{proof}

\subsection{Uniform integral bounds of the biharmonic heat kernel}\label{sec32}

Given a smooth Riemannian metric $g$ on closed manifold $M$, we denote $b_g \in C^\infty\big(M \times M \times (0,\infty) \big) $ to be the unique biharmonic heat kernel with respect to metric $g$ introduced in last subsection. In this subsection, we study the uniform space integral bounds on $b_g$ for all metric $g$ for sufficiently small time.

We use $b_0 \in C^\infty\big(\bR^n \times \bR^n \times (0, \infty)\big)$ to denote the (unique) biharmonic heat kernel of the Euclidean metric on $\bR^n $. Then we have the following uniform bounds of $b_g$ for arbitrary metric $g$ for sufficiently small time.

\begin{thm}\label{thm2-1}
For any integer $k \geq 0$, any Riemannian metric $g$ and any $x\in M$, we have 
\begin{equation}\label{eqn201}
 \lim_{t \rightarrow 0^+}  \int_M t^{\frac{k}{4}} |\nabla_{g}^k b_g(x, y; t)|_{g(x)} \mathrm d V_g(y)  = \int_{\bR^n}  |D^k b_0(0, y; 1)| \mathrm d y, 
\end{equation}
where the covariant derivative $\nabla_g$ above is taken on variable $x$. 
\end{thm}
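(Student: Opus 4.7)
The plan is to prove the identity by a parabolic blow-up argument centered at $x$. Fix $x\in M$ and choose normal coordinates $\{y^i\}$ on $B_{r_0}(x,g)$ as in Lemma~\ref{lem201}, chosen so that the orthonormal frame at $x$ makes $g_{ij}(x)=\delta_{ij}$. Pick a small radius $R>0$ with $R<r_0/2$. I would split
\begin{equation*}
t^{k/4}\!\int_M |\nabla_g^k b_g(x,y;t)|_{g(x)}\,dV_g(y)
= t^{k/4}\!\int_{B_R(x,g)} \!+\, t^{k/4}\!\int_{M\setminus B_R(x,g)},
\end{equation*}
and first dispose of the tail. By the Gaussian estimate \eqref{eqn32} of Theorem~\ref{thm301}, on $M\setminus B_R(x,g)$ the integrand is bounded by $Ct^{-(n+k)/4}\exp(-\delta (t^{-1/4}R)^{4/3})$ times the finite volume of $M$, which decays faster than any polynomial in $t$.

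For the main piece, I would introduce the parabolic blow-up $y=t^{1/4}z$ in the normal coordinate chart (identifying $x$ with the origin). This yields $dV_g(y)=t^{n/4}\sqrt{\det g(t^{1/4}z)}\,dz$, and the integral becomes
\begin{equation*}
\int_{|z|\le R t^{-1/4}}
t^{(n+k)/4}\bigl|\nabla_g^k b_g(x,t^{1/4}z;t)\bigr|_{g(x)}\sqrt{\det g(t^{1/4}z)}\,dz.
\end{equation*}
The heart of the argument is the claim that the rescaled kernel $\tilde b_t(z):=t^{n/4}b_g(x,t^{1/4}z;t)$ converges to $b_0(0,z;1)$ in $C^k_{\mathrm{loc}}(\bR^n)$ as $t\to 0^+$. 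Morally, $\tilde b_t$ solves the biharmonic heat equation with respect to the rescaled metric $g_t(z):=g(t^{1/4}z)$ at time $s=1$, with initial data $\delta_0$; since $g_t\to g(x)=\delta$ smoothly on compact sets, uniqueness of the biharmonic heat kernel forces the limit. Rigorously, one reads this off from the parametrix construction indicated in the appendix (and carried out on $\bR^n$ in \cite{Ei,Fr}): the leading-order parametrix is precisely the constant-coefficient kernel for $\Delta_{g(x)}^2$ evaluated at frozen coefficients, which after the identification $g_{ij}(x)=\delta_{ij}$ is $b_0$, while the correction terms gain an extra factor of at least $t^{1/4}$ after rescaling.

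To pass from ordinary to covariant derivatives at $x$, I would note that in normal coordinates all Christoffel symbols vanish at the origin, so the discrepancy between $\nabla_g^k$ and $D^k$ at $x$ is a linear combination of terms (derivatives of Christoffels)$\times$(lower-order derivatives of $b_g$). After rescaling by $t^{1/4}$, each such term carries an extra factor of $t^{1/2}$ relative to the leading contribution, and therefore disappears in the limit. Combining these facts, the integrand converges pointwise to $|D^k b_0(0,z;1)|$, while the Gaussian upper bound \eqref{eqn32} after rescaling provides the integrable majorant $C\exp(-\delta|z|^{4/3})$ on all of $\bR^n$ (with the indicator of $|z|\le Rt^{-1/4}$ increasing to $1$). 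Dominated convergence then delivers \eqref{eqn201}.

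The main obstacle is the locally uniform convergence $\tilde b_t\to b_0(\,\cdot\,,\cdot\,;1)$: this is the only step where the construction of $b_g$ truly enters. Everything else (the tail estimate, the Jacobian $\sqrt{\det g(t^{1/4}z)}\to 1$, the Christoffel corrections, and the application of dominated convergence) is standard once this convergence is in hand. I expect this step can be handled either by invoking the parametrix expansion directly, or by viewing $\tilde b_t$ as the fundamental solution of a family of parabolic operators whose coefficients converge to those of $\partial_s+\Delta_0^2$ in $C^\infty_{\mathrm{loc}}$, combined with the Gaussian bounds that ensure pre-compactness.
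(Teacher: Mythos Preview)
Your proposal is correct and follows essentially the same route as the paper: split off the tail using the Gaussian bound \eqref{eqn32}, perform the parabolic rescaling in normal coordinates, prove the rescaled kernel converges to $b_0$ in $C^\infty_{\mathrm{loc}}$, and close with dominated convergence. The paper carries out your second suggested justification of the key convergence (compactness from the uniform Gaussian estimates, then identification of the limit by uniqueness), packaged as Lemma~\ref{lem2-2}; the one ingredient you do not name explicitly is Theorem~\ref{thm303} ($\int_M b_g\,dV_g\equiv 1$), which the paper uses to verify that the limit kernel satisfies the initial condition $\int_{\bR^n} b(u,v;t)\,dv\to 1$ as $t\to 0^+$.
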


\begin{proof}
By Lemma $\ref{lem201}$, there exists a constant $r_0>0$ depending on the injectivity radius and the Riemannian curvature bounds such that under the local normal coordinate $B_{r_0}(0) \cong B_{r_0}(p, g)$ at any given $p \in M$, we have for any $x \in B_{r_0}(0)$, 
\begin{equation}
\frac{1}{2} \delta_{ij} \leq g_{ij}(x) \leq 2 \delta_{ij}, 
\end{equation}
and for any $x, y \in B_{r_0}(0)$,
\begin{equation}
\frac{1}{2}|x- y| \leq d_g(x, y) \leq 2|x-y|.  
\end{equation}

Next consider 
\begin{equation}
\begin{split}
&  \int_M t^{\frac{k}{4}}| \nabla_g^k b_{ g}(x, y; t)|_{g(x)}  \mathrm d V_g(y) \\
 = & \int_{B_{r_0}(x, g)} t^{\frac{k}{4}}| \nabla_g^k b_{ g}(x, y; t)|_{g(x)}  \mathrm d V_g(y) + \int_{M - B_{r_0}(x, g)} t^{\frac{k}{4}}| \nabla_g^k b_{ g}(x, y; t)|_{g(x)}  \mathrm d V_g(y).
 \end{split}
\end{equation}
The later integral goes to zero as $t \rightarrow 0^+$, since we have by Theorem $\ref{thm301}$ that 
\begin{equation}
\begin{split}
\int_{M - B_{r_0}(x, g)} t^{\frac{k}{4}}| \nabla_g^k b_{ g}(x, y; t)|_{g(x)}  \mathrm d V_g(y) & \leq C t^{-\frac{n}{4}} \exp\{- \delta (t^{-\frac{1}{4}} r_0)^{\frac{4}{3}}\} \text{Vol}(M, g), \\
 & \rightarrow 0, \text{ as } t \rightarrow 0^+, 
 \end{split}
\end{equation}
where $C, \delta > 0$ are constants depending on metric $g$ and $k$.

It remains to compute
\begin{equation}\label{eqn00026}
\lim_{t \rightarrow 0^+}\int_{B_{r_0}(x, g)} t^{\frac{k}{4}}| \nabla_g^k b_{ g}(x, y; t)|_{g(x)} \mathrm d V_g(y). 
\end{equation}
Restricted to geodesic ball $B_{r_0}(x, g)$, under the normal coordinate $ B_{r_0}(0) \cong B_{r_0}(x, g)$, we have $g_{ij} = g(\partial_i, \partial_j) \in C^\infty(B_{r_0}(0))$ and $b_g \in C^\infty \big(B_{r_0}(0) \times B_{r_0}(0) \times (0,\infty) \big)$

Given $\epsilon > 0$, on Euclidean ball $B_{\epsilon^{-\frac{1}{4}} r_0}(0)$, we define metric $(g_\epsilon)_{ij}(u) = g_{ij} (\epsilon^{\frac{1}{4}} u)$, where $g_{ij}(\cdot)$ is the metric $g$ written under the local normal coordinate. Thus $(g_\epsilon)_{ij} \rightarrow \delta_{ij}$ as $\epsilon \rightarrow 0^+$ smoothly and uniformly on any compact subset of $\bR^n$. We also define $b_\epsilon \in C^\infty\big(B_{\epsilon^{-\frac{1}{4}}r_0} (0) \times B_{\epsilon^{-\frac{1}{4}} r_0}(0)  \times (0, \infty) \big)$ as 
\begin{equation}
b_\epsilon(u, v; t) = \epsilon^{\frac{n}{4}} b_g \big(\epsilon^{\frac{1}{4}}u, \epsilon^{\frac{1}{4}} v; \epsilon t \big)
\end{equation}
Thus we have for any $u, v \in B_{\epsilon^{-\frac{1}{4}} r_0}(0)$ and $t> 0$
\begin{equation}\label{eqn2-11}
( \frac{\partial }{\partial t} + \Delta_{g_\epsilon}^2 ) b_\epsilon(u, v;t) = 0. 
\end{equation}

Recall we denote $b_0 \in C^\infty \big(\bR^n \times \bR^n \times (0,\infty) \big)$ to be the biharmonic heat kernel of the Euclidean metric on $\bR^n$. Then we have the following lemma.

\begin{lem}\label{lem2-2}
As $\epsilon \rightarrow 0^+$, 
\begin{equation}
  b_\epsilon  \rightarrow  b_0 , 
\end{equation}
smoothly and uniformly on any compact subset of $\bR^n \times \bR^n \times (0,\infty)$. 
\end{lem}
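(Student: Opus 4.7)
The plan is a compactness-plus-uniqueness argument. First I would establish uniform-in-$\epsilon$ bounds for $b_\epsilon$ and all its derivatives on bounded subsets of $\bR^n \times \bR^n \times (0,T)$. Starting from Theorems $\ref{thm301}$ and $\ref{thm302}$ applied to $b_g$ on $(M,g)$, the defining rescaling $b_\epsilon(u,v;t) = \epsilon^{n/4} b_g(\epsilon^{1/4}u, \epsilon^{1/4}v; \epsilon t)$ together with the comparability $\rho_g(\epsilon^{1/4}u, \epsilon^{1/4}v) \sim \epsilon^{1/4}|u-v|$ from Lemma $\ref{lem201}$ yields
\begin{equation*}
|D_u^\alpha D_v^\beta \partial_t^k b_\epsilon(u,v;t)| \leq C\, t^{-(n+4k+|\alpha|+|\beta|)/4} \exp\!\Big(-\delta\,(t^{-1/4}|u-v|)^{4/3}\Big),
\end{equation*}
on the domain of definition of $b_\epsilon$, with constants $C,\delta >0$ depending only on $g$, $T$ and $k,|\alpha|,|\beta|$, but \emph{not} on $\epsilon$.

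Next I would apply Arzel\`a--Ascoli. On any compact $K \subset \bR^n \times \bR^n \times (0,\infty)$ the above bounds make $\{b_\epsilon\}$ precompact in $C^\infty$. Extracting a diagonal subsequence $\epsilon_k \to 0$, one obtains $b_{\epsilon_k} \to b_\infty$ smoothly on compact subsets, with $b_\infty$ satisfying the same Gaussian estimates. Since $(g_{\epsilon})_{ij}(u) = g_{ij}(\epsilon^{1/4}u) \to \delta_{ij}$ smoothly on compact sets, the identity $(\partial_t + \Delta_{g_{\epsilon_k}}^2) b_{\epsilon_k} = 0$ passes to the limit to give $(\partial_t + \Delta_0^2) b_\infty = 0$ on all of $\bR^n \times \bR^n \times (0,\infty)$.

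To identify $b_\infty$ with $b_0$, I would verify that $b_\infty$ is the fundamental solution of the Euclidean biharmonic heat equation. Theorem $\ref{thm303}$ gives $\int_M b_g(x,y;t)\,dV_g(y) = 1$; splitting the integral into the parts inside and outside $B_{r_0}(x,g)$, the far part is exponentially small in $t$, while the near part, after rescaling, becomes
\begin{equation*}
\int_{B_{\epsilon^{-1/4} r_0}(0)} b_\epsilon(u,v;t) \sqrt{\det g_\epsilon(v)}\, dv = 1 + o(1) \quad (\epsilon \to 0^+).
\end{equation*}
Combining the uniform Gaussian tail decay of $b_\epsilon$ with $\sqrt{\det g_\epsilon} \to 1$ smoothly on compacts, one passes to the limit to obtain $\int_{\bR^n} b_\infty(u,v;t)\,dv = 1$ for every $t>0$. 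Together with the Gaussian bounds, this forces $b_\infty(u,\cdot;t)$ to concentrate at $u$ as $t \to 0^+$, so $b_\infty$ is a fundamental solution of $\partial_t + \Delta_0^2$ with Gaussian decay. Uniqueness of such a fundamental solution then gives $b_\infty = b_0$. Since every subsequence produces the same limit, the full convergence $b_\epsilon \to b_0$ in $C^\infty_{\mathrm{loc}}$ follows.

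The main obstacle is the identification step: ensuring that the Arzel\`a--Ascoli limit is really the canonical fundamental solution $b_0$ and not some other solution of the same equation. This depends both on the global mass identity of Theorem $\ref{thm303}$ and on interchanging limits in integrals over $\bR^n$, which in turn is justified by the uniform Gaussian tails produced in the first step; consequently, carrying out the first step with constants genuinely independent of $\epsilon$ is the crucial technical point that makes the whole argument go through.
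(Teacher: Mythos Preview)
Your proposal is correct and follows essentially the same route as the paper: uniform-in-$\epsilon$ estimates from Theorem~\ref{thm301} after rescaling, Arzel\`a--Ascoli to extract a subsequential $C^\infty_{\mathrm{loc}}$ limit solving $(\partial_t+\Delta_0^2)b_\infty=0$, identification of $b_\infty$ with $b_0$ via the mass identity from Theorem~\ref{thm303}, and then full convergence by uniqueness of the limit. The paper carries out exactly these steps, including the same splitting of the mass integral into near and far parts and the same passage to the limit by dominated convergence; your emphasis on the $\epsilon$-independence of the constants in the first step is precisely the point the paper exploits in its estimate~(\ref{eqn2-14}).
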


\begin{proof}
Proof of Lemma $\ref{lem2-2}$. By Theorem $\ref{thm301}$, we have for any $x, y\in M$ and $0<t <T$, 
\begin{equation}
|(\frac{\partial }{\partial t})^l \nabla_{g,x}^p \nabla_{g,y}^q b_g(x,y;t)|_g \leq C t^{-\frac{n+p+q+4l}{4}} \exp\{- \delta \big(t^{-\frac{1}{4}} d_{g}(x,y)\big)^{\frac{4}{3}} \}, 
\end{equation}
where constants $C, \delta > 0$ depend on $g, l, p, q$ and $T$. Therefore we have for any $u,v \in B_{\epsilon^{-\frac{1}{4}}r_0}(0)$ and $0< t < T$
\begin{equation}\label{eqn2-14}
\begin{split}
&|(\frac{\partial }{\partial t})^l \nabla_{g_\epsilon, u }^p \nabla_{g_\epsilon ,v}^q b_\epsilon(u,v;t)|_{g_\epsilon} \\
=& \big( |(\frac{\partial }{\partial t})^l \nabla_{g,x}^p \nabla_{g,y}^q b_g|_g \big) (\epsilon^{\frac{1}{4}}u,  \epsilon^{\frac{1}{4}}v;  \epsilon t) \times \epsilon^{\frac{n+p+q+4l}{4}}\\
\leq & C t^{-\frac{n+p+q+4l}{4}} \exp\{- \delta \big( (\epsilon t)^{-\frac{1}{4}} d_g(\epsilon^{\frac{1}{4}} u,  \epsilon^{\frac{1}{4}} v )\big)^{\frac{4}{3}}\} \\
\leq & C t^{-\frac{n+p+q+4l}{4}} \exp\{- 2^{- \frac{4}{3}} \delta \big( (\epsilon t)^{-\frac{1}{4}} |\epsilon^{\frac{1}{4}} u-   \epsilon^{\frac{1}{4}} v |\big)^{\frac{4}{3}}\} \\
\leq & C t^{-\frac{n+p+q+4l}{4}} \exp\{- 2^{- \frac{4}{3}} \delta \big(  t^{-\frac{1}{4}} |u-    v |\big)^{\frac{4}{3}}\}.
\end{split}
\end{equation}
Together with the fact $(g_\epsilon)_{ij} \rightarrow \delta_{ij}$ smoothly and uniformly on any compact set of $\bR^n$, we can conclude that $b_\epsilon$'s admit a convergent subsequence smoothly and uniformly converging to some limit function $b \in C^\infty\big(\bR^n \times \bR^n \times (0, \infty) \big)$ as $\epsilon \rightarrow 0^+$ on any compact subset of $\bR^n \times \bR^n \times (0, \infty)$. 

Next we will show all such sequential limit $b$ must be $b_0$. Since the biharmonic heat kernel on $\bR^n$ is unique, it suffices to check that $b \in C^\infty\big(\bR^n \times \bR^n \times (0,\infty)\big)$ meets the definition of biharmonic heat kernel of $\bR^n$. First, by letting $\epsilon \rightarrow 0^+$ in $(\ref{eqn2-11})$, we have
\begin{equation}\label{eqn2-15}
(\frac{\partial}{\partial t} + \Delta^2) b = 0 \text{ on } \bR^n \times \bR^n \times (0,\infty). 
\end{equation}
Second, we need to show that for any bounded continuous function $\varphi$ on $\bR^n$, 
\begin{equation}\label{eqn2-16}
\lim_{t \rightarrow 0^+} \int_{\bR^n} b_g(u, v;t ) \varphi(v) \mathrm d v = \varphi(u).
\end{equation}
Note by taking $\epsilon \rightarrow 0^+$ in $(\ref{eqn2-14})$ we have
\begin{equation}
 | b_g(u,v;t ) | \leq  C t^{-\frac{n}{4}} \exp\{- 2^{- \frac{4}{3}} \delta \big(  t^{-\frac{1}{4}} |u-    v |\big)^{\frac{4}{3}}\}
\end{equation} 
and thus 
\begin{equation}
\lim_{t \rightarrow 0^+} \int_{\bR^n} b_g(u, v;t ) \varphi(v) \mathrm d v = \varphi(u)  \lim_{t \rightarrow 0^+} \int_{\bR^n} b_g(u, v;t )  \mathrm d v.
\end{equation}
Thus it suffices to show 
\begin{equation}
\lim_{t \rightarrow 0^+} \int_{\bR^n} b_g(u, v;t )  \mathrm d v =1. 
\end{equation}
By dominated convergence theorem, we have
\begin{equation}
\begin{split}
\int_{\bR^n} b_g(u, v; t) \mathrm dv =  & \lim_{\epsilon \rightarrow 0^+} \int_{B_{\epsilon^{-\frac{1}{4}} r_0} (0)}b_\epsilon(u, v; t)  \mathrm d V_{g_{\epsilon}} (v) = \lim_{\epsilon \rightarrow 0^+} \int_{B_{r_0} (x, g) } b_g(\epsilon^{\frac{1}{4}} u, y; \epsilon t) \mathrm d V_g(y)\\
= & \lim_{\epsilon \rightarrow 0^+} \big( \int_M  b_g(\epsilon^{\frac{1}{4}} u, y; \epsilon t) \mathrm d V_g(y) - \int_{M -B_{r_0} (x, g)} b_g(\epsilon^{\frac{1}{4}} u, y; \epsilon t) \mathrm d V_g(y) \big) \\
\end{split}
\end{equation}
By Theorem $\ref{thm303}$, we have the first integral is always $1$ and as we discussed before, the later integral goes to zero. 
To summarize, we showed $b \in C^\infty \big(\bR^n \times \bR^n \times (0,\infty) \big)$ satisfies $(\ref{eqn2-15})$ and $(\ref{eqn2-16})$ and therefore $b = b_0$. This ends the proof of the lemma.  

\end{proof}

Continue $(\ref{eqn00026})$ in the proof of Theorem $\ref{thm2-1}$. We have
\begin{equation}
\begin{split}
\int_{B_{r_0}(x, g)} t^{\frac{k}{4}}| \nabla_g^k b_{ g}(x, y; t)|_{g(x)} \mathrm d V_g(y) & = \int_{B_{t^{-\frac{1}{4}}r_0}(0)} |\nabla^k_{g_{t}} b_{t} (0, v; 1)|_{g_{t}(0)} \mathrm d V_{g_{t}} (v)\\
& \rightarrow \int_{\bR^n} | D^k  b_0(0,v;1)| \mathrm d v, \text{ as } t \rightarrow 0^+,
\end{split}
\end{equation}
by Lemma $\ref{lem2-2}$ and dominated convergence theorem. Thus we have
\begin{equation}
 \lim_{t \rightarrow 0^+} \int_{M} t^{\frac{k}{4}}| \nabla_g^k b_{ g}(x, y; t)|_{g(x)} \mathrm d V_g(y) =  \int_{\bR^n} | D^k  b_0(0,v;1)| \mathrm d v,
\end{equation}
where the right hand side is a constant depending on $k$ and dimension $n$ only.

\end{proof}

\subsection{Initial value problem}\label{sec3.2}

Given the biharmonic heat kernel $b_g \in C^\infty\big(M \times M \times (0,\infty)\big)$ with respect to background metric $g$, and given a continuous function $u_0 \in C^0(M)$, we define for $(x, t) \in M \times (0,\infty)$
\begin{equation}\label{eqn4001}
S_g u_0 (x, t) = \int_M b_g(x, y; t) u_0(y) \mathrm d V_g(y). 
\end{equation} 
Since the biharmonic heat kernel is smooth when $t>0$, we get $S_g u_0 \in C^{\infty}\big(M \times (0,\infty)\big)$ and 
\begin{equation}\label{eqn4002}
(\frac{\partial }{\partial t} + \Delta_g^2) S_g u_0 (x, t) = 0. 
\end{equation}
Moreover, by the definition of the biharmonic heat kernel, we have that
\begin{equation}
\lim_{t \rightarrow 0^+} \|S_g u_0(t) - u_0\|_{C^0(M)} = 0. 
\end{equation}

We are mainly interested in the cases when the initial value $u_0 \in C^{1,1}(M)$ and $u_0 \in C^2(M)$. In both cases, $S_g u_0$ defined as in $(\ref{eqn4001})$ is a smooth function on $M \times (0,\infty)$ and satisfies equation $(\ref{eqn4002})$ smoothly. In this section, we aim to study the following two problems: i) How does the initial value $u_0 \in C^{1,1}(M)$ control the higher order derivatives of $S_g u_0(t)$ for $t>0$ and ii) Given $u_0 \in C^2(M)$, how does $S_g u_0(t)$ converge to $u_0$ as $t \rightarrow 0^+$. 

To answer these questions, we introduce the following theorem.

\begin{thm}\label{thm001}
Suppose $u_0 \in C^{1,1}(M)$. Then 
\begin{equation}
\lim_{t \rightarrow 0^+ } \|S_g u_0(t) - u_0\|_{C^1(M)} = 0.
\end{equation}
For any nonnegative integers $l$ and $k$, we have 
\begin{equation}
\sup_{t \in (0,T)}t^{l + \frac{k}{4}} \big\| | (\frac{\partial }{\partial t})^l \nabla_g^{k+2} S_g u_0(t) |_g \big\|_{C^0(M)} \leq C_{l+ \frac{k}{4}} \|u_0\|_{C^{1,1}(M)}, 
\end{equation}
for some constant $C_{l+ \frac{k}{4}} >0$ depending on $l + \frac{k}{4}$, $g$ and $T$. 
If we further assume that $u \in C^2(M)$, then
\begin{equation}
\lim_{t \rightarrow 0^+} \| S_g u_0(t) - u_0\|_{C^{2}(M)} = 0,
\end{equation}
and for any nonnegative integers $l$ and $k$ with $l+k>0$, 
\begin{equation}
\lim_{t \rightarrow 0^+} t^{l+ \frac{k}{4} } \big\| |(\frac{\partial }{\partial t})^l \nabla_g^{k+2} S_g u_0(t)|_g \big\|_{C^0(M)} = 0. 
\end{equation}

\end{thm}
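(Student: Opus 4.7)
The plan is to reduce the whole theorem to the single spatial estimate
\[
\|\nabla_g^{j+2} S_g u_0(t)\|_{C^0(M)} \leq C_j\, t^{-j/4}\, \|u_0\|_{C^{1,1}(M)}, \quad 0 < t < T,\ j\geq 0.
\]
Since $\partial_t S_g u_0 = -\Delta_g^2 S_g u_0$, the operator $\partial_t^l \nabla_g^{k+2}$ is a combination of spatial derivatives of order $4l + k + 2$ with metric-dependent (and hence bounded) coefficients, so applying the displayed bound with $j = 4l + k$ yields the $t^{l + k/4}$-weighted estimate in the theorem.

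For the core spatial estimate, I would work in a finite cover of $M$ by normal coordinate charts (Lemma \ref{lem201}), in which $|\nabla_g^{j+2} S_g u_0|_g$ is controlled by $|D_x^{j+2} S_g u_0|$ plus lower-order coordinate derivatives with curvature-dependent coefficients. Differentiating under the integral and then exploiting the algebraic identity
\[
D_x^{j+2} = \sum_{|\gamma| + |\alpha| = j+2} c_{\gamma, \alpha}\, (-D_y)^\gamma (D_x + D_y)^\alpha,
\]
valid because $D_x$ and $D_y$ commute as operators on $M \times M$, reduces matters to bounding summands $\int_M (-D_y)^\gamma (D_x + D_y)^\alpha b_g(x,y;t)\, u_0(y)\, \mathrm d V_g(y)$. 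Theorem \ref{thm302}, together with the symmetry $b_g(x,y;t) = b_g(y,x;t)$ coming from self-adjointness of $\Delta_g^2$, gives
\[
|D_y^\gamma (D_x + D_y)^\alpha b_g(x,y;t)| \leq C\, t^{-(n+|\gamma|)/4}\, \exp\bigl\{-\delta(t^{-1/4}\rho(x,y))^{4/3}\bigr\}.
\]
The critical feature is that the $(D_x + D_y)$-derivatives carry no cost in $t$; the time weight depends only on the number $|\gamma|$ of ``bad'' pure $y$-derivatives.

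On each summand I would then integrate by parts in $y$ exactly $\min(|\gamma|, 2)$ times --- there is no boundary since $M$ is closed, and the $\sqrt{\det g}$-correction terms are absorbed as bounded lower-order pieces. When $|\gamma|\geq 2$, two integrations put $D_y^2 u_0 \in L^\infty$ into the integrand and, after the rescaling $z = t^{-1/4}(y - x)$ in the Gaussian-type integral, the summand is bounded by $C\, t^{-(|\gamma|-2)/4}\|u_0\|_{C^{1,1}} \leq C\, t^{-j/4}\|u_0\|_{C^{1,1}}$. The summands with $|\gamma|=0$ or $1$ are bounded by $C\|u_0\|_{C^1}$, which is also $\leq C\, t^{-j/4}\|u_0\|_{C^{1,1}}$ for $t \in (0,T)$. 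Summing over all summands and handling the lower-order coordinate-to-covariant corrections (which satisfy the same estimate with smaller $j$) completes the core estimate.

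For the limits, I would approximate $u_0$ in $C^1(M)$ (respectively in $C^2(M)$ when $u_0 \in C^2$) by a smooth sequence $u_0^\epsilon$ with $\|u_0^\epsilon\|_{C^{1,1}} \leq C\, \|u_0\|_{C^{1,1}}$, via local mollification and a partition of unity. For smooth $u_0^\epsilon$, standard parabolic regularity gives $S_g u_0^\epsilon(t) \to u_0^\epsilon$ in $C^\infty(M)$ as $t \to 0^+$, so in particular $t^{l + k/4}\|\partial_t^l \nabla_g^{k+2} S_g u_0^\epsilon\|_{C^0(M)} \to 0$ for $l + k > 0$ (all derivatives stay bounded at $t=0$). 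The error $S_g (u_0 - u_0^\epsilon)$ is then controlled by the core bound, contributing at most $C\|u_0 - u_0^\epsilon\|_{C^1}$ (respectively $C\|u_0 - u_0^\epsilon\|_{C^{1,1}} = C\|u_0 - u_0^\epsilon\|_{C^2}$), which vanishes as $\epsilon \to 0$. The main obstacle is the core estimate itself: one must ensure that of the $j+2$ derivatives applied to $b_g$, at most two need to land on $u_0$ via integration by parts, with the remaining ones being either $(D_x + D_y)$-type (free in $t$ by Theorem \ref{thm302}) or pure $y$-derivatives transferable by integration by parts. This is exactly what makes $C^{1,1}$ the natural regularity threshold.
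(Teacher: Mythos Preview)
Your core estimate is correct and rests on the same mechanism as the paper: $(D_x+D_y)$-derivatives of $b_g$ carry no $t$-weight by Theorem~\ref{thm302}, and up to two $D_y$-derivatives can be moved onto $u_0$ by integration by parts on the closed manifold. The paper organizes this a bit differently: instead of binomially expanding all $j+2$ derivatives and then invoking the symmetry $b_g(x,y;t)=b_g(y,x;t)$ to bound the residual $D_y^{\gamma'}(D_x+D_y)^\alpha b_g$, it first writes out $D_x^2 S_g u_0$ by expanding $D_x=(D_x+D_y)-D_y$ twice and integrating by parts, obtaining integrands of the form $(D_x+D_y)^\beta b_g\cdot D_y^{2-|\beta|}(u_0\,dV_g)$ with $|\beta|\le 2$ plus boundary and far-region terms; the remaining $j$ derivatives are then applied as $D_x^\alpha$ to this formula, so the kernel factors are $D_x^\alpha(D_x+D_y)^\beta b_g$ and Theorem~\ref{thm302} applies verbatim without any appeal to symmetry. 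Both routes are valid; the paper's ordering simply avoids the extra (though true) input $b_g(x,y;t)=b_g(y,x;t)$.

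For the limit statements your approach genuinely differs. The paper does not approximate by smooth data. It analyzes each term of the explicit second-order formula directly: integrals over $M\setminus U_i$ and boundary integrals over $\partial U_i$ vanish as $t\to 0^+$ by exponential decay of the kernel, and the surviving interior terms are identified with $u_0,\nabla u_0,\nabla^2 u_0$ via Theorem~\ref{thm303} (which gives $\int_M b_g\,dV_g\equiv 1$ and hence $\int_M D_x^\alpha b_g\,dV_g\equiv 0$). Your smooth-approximation argument is a clean alternative that bypasses Theorem~\ref{thm303} entirely, at the cost of needing strong continuity of $S_g$ on smooth data and the easy companion bound $\|S_g v(t)\|_{C^1}\le C\|v\|_{C^1}$ (which you implicitly use but did not list alongside the $\nabla^{j+2}$ estimate).
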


\begin{proof}
Proof of Theorem $\ref{thm001}$. Suppose $u_0 \in C^{1,1}(M)$. By Theorem $\ref{thm301}$ we have $\|b_g(x, \cdot; t)\|_{L^1(M)} \leq C_0$ where $C_0 > 0$ depends on $g$ and $T$ only. Thus
\begin{equation}
\sup_{t \in (0,T)}\|S_g u_0(t)\|_{C^0(M)} \leq C_0 \|u_0\|_{C^0(M)}. 
\end{equation}
Moreover, $\lim_{t \rightarrow 0^+ } \|S_g u_0(t) - u_0\|_{C^0(M)} = 0$.

Next we examine $\nabla_g S_g u_0(t)$. By Theorem $\ref{thm302}$, $M$ admits a finite open cover $M = \bigcup V_i$ such that $\bar V_i \subset U_i$ and within each $U_i$ we have estimates $(\ref{eqn301})$. Denote $d_1 = \min_i \{\text{dist}(V_i, \partial U_i)\} >0$. By Lemma $\ref{lem201}$, without loss of generality, we can assume in local coordinate of each $U_i$, $\frac{1}{2}|x-y| \leq \rho(x, y) \leq 2 |x- y|$. Suppose $x \in V_i$ for some $i$. Then within local coordinates of $U_i$, we have 
\begin{equation}
\begin{split}
&\frac{\partial }{\partial x_l} S_g u_0(x, t) = \int_M  \frac{\partial }{\partial x_l}b_g(x, y; t) u_0(y) \mathrm d V_g(y) \\
=& \int_{U_i} (\frac{\partial}{\partial x_l} + \frac{\partial }{\partial y_l}) b_g(x, y; t) u_0(y) \mathrm d V_g(y) - \int_{U_i} \frac{\partial }{\partial y_l } b_g(x, y; t) u_0(y) \mathrm d V_g(y) \\
&+ \int_{M -U_i} \frac{\partial }{\partial x_l}b_g(x, y; t) u_0(y) \mathrm d V_g(y)\\
=&  \int_{U_i} (\frac{\partial}{\partial x_l} + \frac{\partial }{\partial y_l}) b_g(x, y; t) u_0(y) \mathrm d V_g(y) + \int_{U_i}  b_g(x, y; t) \frac{\partial }{\partial y_l }\big(u_0(y) \mathrm d V_g(y)\big) \\
&+ \int_{\partial U_i} b_g(x, y; t) u_0(y) \mathrm d \sigma_l(y)+ \int_{M -U_i} \frac{\partial }{\partial x_l}b_g(x, y; t) u_0(y) \mathrm d V_g(y), 
\end{split}
\end{equation}
where $\mathrm d \sigma_l(y)$ is a smooth $(n-1)$ form on submanifold $\partial U_i$ determined by $g,l$ only. Thus by Theorem $\ref{thm301}$ and $\ref{thm302}$ we have 
\begin{equation}
|\frac{\partial }{\partial x_l} S_g u_0(x, t)| \leq C ( \|u_0\|_{C^0(M)} + \|\nabla_g u_0\|_{C^0(M)}),
\end{equation}
where constant $C >0$ depends on $g, T$. Therefore $\sup_{t \in (0,T)}\|\nabla_g S_g u_0(t)\|_{C^0(M)} \leq C_1 \|u_0\|_{C^1(M)}$ for some constant $C_1 >0$ depending on $g,T$.

When $t \rightarrow 0^+$, clearly the later two integrals in $\frac{\partial }{\partial x_l} S_g u_0(t)$ go to zero, since as $t \rightarrow 0^+$ we have
\begin{equation}
|\int_{\partial U_i} b_g(x, y; t) u_0(y) \mathrm d \sigma_l(y)| \leq C\|u_0\|_{C^0(M)} \text{Vol}(\partial U_i) t^{- \frac{n}{4}} \exp\{- \delta (t^{-\frac{1}{4}} d_1)^{\frac{4}{3}}\} \rightarrow 0,
\end{equation}
and
\begin{equation}
|\int_{M -U_i} \frac{\partial }{\partial x_l}b_g(x, y; t) u_0(y) \mathrm d V_g(y)| \leq  C\|u_0\|_{C^0(M)} \text{Vol}(M)t^{- \frac{n+1}{4}} \exp\{- \delta (t^{-\frac{1}{4}} d_1)^{\frac{4}{3}}\} \rightarrow 0. 
\end{equation}
While for the first two integrals in $\frac{\partial }{\partial x_l} S_g u_0(t)$, we have for any $0<r \ll 1$
\begin{equation}
\begin{split}
&   |\int_{U_i} (\frac{\partial}{\partial x_l} + \frac{\partial }{\partial y_l}) b_g(x, y; t) u_0(y)   \mathrm d V_g(y) - u_0(x) \int_{U_i} (\frac{\partial}{\partial x_l} + \frac{\partial }{\partial y_l}) b_g(x, y; t) \mathrm d V_g(y)  |\\
= & |\int_{U_i} (\frac{\partial}{\partial x_l} + \frac{\partial }{\partial y_l}) b_g(x, y; t) \big( u_0(y) - u_0(x) \big)  \mathrm d V_g(y)|\\
\leq &  C \|u_0\|_{C^0(M)} \int_{|y-x| >  r, y \in U_i} t^{-\frac{n}{4}}e^{-\delta(t^{-\frac{1}{4}}|x-y|)^{\frac{4}{3}}} \mathrm d y \\
& + C \sup_{|x-y| \leq r} |u_0(x) - u_0(y)| \int_{|x-y| \leq r}  t^{-\frac{n}{4}}e^{-\delta(t^{-\frac{1}{4}}|x-y|)^{\frac{4}{3}}} \mathrm d y \\
\leq & C \|u_0\|_{C^0(M)} \int_{|\xi | > t^{-\frac{1}{4}} r} e^{-\delta |\xi|^{\frac{4}{3}}} \mathrm d \xi + C \sup_{\rho(x,y) \leq 2 r} |u_0(x) - u_0(y)| \int_{\bR^n} e^{-\delta |\xi|^{\frac{4}{3}}} \mathrm d \xi, 
\end{split}
\end{equation}
and 
\begin{equation}
\begin{split}
& | \int_{U_i}  b_g(x, y; t) \frac{\partial }{\partial y_l }\big(u_0(y) \mathrm d V_g(y)\big) - \frac{\partial }{\partial x_l }u_0(x) \int_{U_i}  b_g(x, y; t)  \mathrm d V_g(y) - u_0(x) \int_{U_i} b_g(x, y; t) \frac{\partial }{\partial y_l} \mathrm d V_g(y) \}|\\
= & |\int_{U_i}  b_g(x, y; t)\{ \big(\frac{\partial }{\partial y_l } u_0(y) - \frac{\partial }{\partial x_l }u_0(x) \big) \mathrm d V_g(y) + \big( u_0(y) - u_0(x) \big) \frac{\partial }{\partial y_l} \mathrm d V_g(y) \}|\\
\leq & C (\|u_0\|_{C^0(M)}+ \|\nabla u_0\|_{C^0(M)} ) \int_{|\xi | > t^{-\frac{1}{4}} r} e^{-\delta |\xi|^{\frac{4}{3}}} \mathrm d \xi \\
&+  C \sup_{\rho(x,y) \leq 2 r} (|u_0(x) - u_0(y)| + |\frac{\partial }{\partial y_l } u_0(y) - \frac{\partial }{\partial x_l }u_0(x) |) \int_{\bR^n} e^{-\delta |\xi|^{\frac{4}{3}}} \mathrm d \xi. 
\end{split}
\end{equation}
Choose $r = t^{\frac{1}{8}}$, then we have the sum of first two integral equals to 
\begin{equation}
u_0(x) I_1  \\
+  \frac{\partial }{\partial x_l }u_0(x) I_2 + o(1),
\end{equation}
as $t \rightarrow 0^+$, where 
\begin{equation}
\begin{split}
I_1 = & \int_{U_i}   (\frac{\partial}{\partial x_l} + \frac{\partial }{\partial y_l}) b_g(x, y; t) \mathrm d V_g(y)+ \int_{U_i} b_g(x, y; t) \frac{\partial }{\partial y_l} \mathrm d V_g(y)\\
  = & \int_{U_i}  \frac{\partial}{\partial x_l} b_g(x, y; t) \mathrm d V_g(y)+ \int_{U_i} \frac{\partial }{\partial y_l} \big( b_g(x,y;t)\mathrm d V_g(y) \big),\\
I_2 =  & \int_{U_i}  b_g(x, y; t)  \mathrm d V_g(y).
\end{split}
\end{equation}
We will show next $I_1 \rightarrow 0$ and $I_2 \rightarrow 1$ uniformly as $t \rightarrow 0^+$. By Theorem $\ref{thm303}$, we have
\begin{equation}
\begin{split}
|I_1| & = |\int_{M}  \frac{\partial}{\partial x_l} b_g(x, y; t) \mathrm d V_g(y)- \int_{M- U_i}  \frac{\partial}{\partial x_l} b_g(x, y; t) \mathrm d V_g(y)+ \int_{\partial U_i}  b_g(x,y;t)\mathrm d \sigma_l (y) |\\
& =  |- \int_{M- U_i}  \frac{\partial}{\partial x_l} b_g(x, y; t) \mathrm d V_g(y)+ \int_{\partial U_i}  b_g(x,y;t)\mathrm d \sigma_l (y) |\\
& \leq C \text{Vol}(M) t^{-\frac{n+1}{4}} \exp\{- \delta (t^{\frac{1}{4}}d_1)^{\frac{4}{3}}\} + C \text{Vol}(\partial U_i) t^{-\frac{n}{4}} \exp\{- \delta (t^{\frac{1}{4}}d_1)^{\frac{4}{3}}\}\\
& \rightarrow 0,
\end{split}
\end{equation}
and
\begin{equation}
\begin{split}
|I_2 -1 | & =  |  \int_{U_i} b_g(x, y; t) \mathrm d V_g(y) - \int_{M} b_g(x,y;t) \mathrm d V_g(y)  |\\
 & = |  - \int_{M- U_i} b_g(x, y; t) \mathrm d V_g(y)| \leq C \text{Vol}(M) t^{-\frac{n}{4}} \exp\{- \delta (t^{\frac{1}{4}}d_1)^{\frac{4}{3}}\} \rightarrow 0, 
\end{split}
\end{equation}
as $t \rightarrow 0^+$. Therefore, we have $\frac{\partial }{\partial x_l} S_g u_0(t) \rightarrow \frac{\partial }{\partial x_l} u_0$ uniformly as $t \rightarrow 0^+$. Thus
\begin{equation}
\lim_{t \rightarrow 0^+} \|\nabla_g S_g u_0(t) - \nabla_g u_0\|_{C^0(M)} = 0. 
\end{equation}

As for the second order derivative of $S_g u_0(t)$, we compute for $x\in V_i$
\begin{equation}\label{eqn601}
\begin{split}
&\frac{\partial^2 }{\partial x_k \partial x_l} S_g u_0(x, t) = \int_M  \frac{\partial^2 }{\partial x_l \partial x_k}b_g(x, y; t) u_0(y) \mathrm d V_g(y) \\
=&  \int_{U_i}  (\frac{\partial}{\partial x_l} + \frac{\partial }{\partial y_l} - \frac{\partial }{\partial y_l})(\frac{\partial}{\partial x_k} + \frac{\partial }{\partial y_k} -\frac{\partial }{\partial y_k} ) b_g(x, y; t) u_0(y) \mathrm d V_g(y) \\
& +   \int_{M -U_i} \frac{\partial^2 }{\partial x_l \partial x_k}b_g(x, y; t) u_0(y) \mathrm d V_g(y) \\
=& \int_{U_i} (\frac{\partial}{\partial x_l} + \frac{\partial }{\partial y_l})(\frac{\partial}{\partial x_k} + \frac{\partial }{\partial y_k}) b_g(x, y; t) u_0(y) \mathrm d V_g(y) \\
&- \int_{U_i} \frac{\partial }{\partial y_l }(\frac{\partial}{\partial x_k} + \frac{\partial }{\partial y_k}) b_g(x, y; t) u_0(y) \mathrm d V_g(y) \\
&-  \int_{U_i} \frac{\partial }{\partial y_k }(\frac{\partial}{\partial x_l} + \frac{\partial }{\partial y_l}) b_g(x, y; t) u_0(y) \mathrm d V_g(y) \\
& +\int_{U_i} \frac{\partial^2 }{\partial y_l \partial y_k } b_g(x, y; t) u_0(y) \mathrm d V_g(y) + \int_{M -U_i} \frac{\partial^2 }{\partial x_l \partial x_k}b_g(x, y; t) u_0(y) \mathrm d V_g(y)\\
=& \int_{U_i} (\frac{\partial}{\partial x_l} + \frac{\partial }{\partial y_l})(\frac{\partial}{\partial x_k} + \frac{\partial }{\partial y_k}) b_g(x, y; t) u_0(y) \mathrm d V_g(y) \\
&+\int_{U_i} (\frac{\partial}{\partial x_k} + \frac{\partial }{\partial y_k}) b_g(x, y; t)   \frac{\partial }{\partial y_l } \big( u_0(y) \mathrm d V_g(y) \big) + \int_{\partial U_i} (\frac{\partial}{\partial x_k} + \frac{\partial }{\partial y_k}) b_g(x, y; t)  u_0(y) \mathrm d \sigma_l(y)\\
&+\int_{U_i} (\frac{\partial}{\partial x_l} + \frac{\partial }{\partial y_l}) b_g(x, y; t)   \frac{\partial }{\partial y_k } \big( u_0(y) \mathrm d V_g(y) \big) + \int_{\partial U_i} (\frac{\partial}{\partial x_l} + \frac{\partial }{\partial y_l}) b_g(x, y; t)  u_0(y) \mathrm d \sigma_k(y)\\
& +\int_{U_i}  b_g(x, y; t) \frac{\partial^2 }{\partial y_l \partial y_k } \big( u_0(y) \mathrm d V_g(y) \big) + \int_{\partial U_i} \frac{\partial }{\partial y_k} b_g(x, y; t)  u_0(y) \mathrm d \sigma_l(y) \\
&+ \int_{\partial U_i}  b_g(x, y; t)  \frac{\partial }{\partial y_l} u_0(y) \mathrm d \sigma_k(y) + \int_{\partial U_i}  b_g(x, y; t)  u_0(y) \mathrm d \sigma_{kl}(y) \\
&+ \int_{M -U_i} \frac{\partial^2 }{\partial x_l \partial x_k}b_g(x, y; t) u_0(y) \mathrm d V_g(y).
\end{split}
\end{equation}
As we did before, we have that $\sup_{t \in (0,T)} \|\nabla_g^2 S_g u_0(t)\|_{C^0 (M)} \leq C_2  \|u_0\|_{C^{1,1}(M)}$ for some constant $C_2 >0$ depending on $g,T$. 

When $t \rightarrow 0^+$, we have that all the boundary integrals on $\partial U_i$ and the integral on $(M - U_i)$ go to zero by a similar argument as described above for $\nabla S_g u_0(t)$. If we further assume that $u_0 \in C^2(M)$, namely $\nabla_g^2 u_0 \in C^0(M)$, then similar to what we did above for $\nabla_g S_g u_0(t)$ when $t \rightarrow 0^+$, $\frac{\partial^2 }{\partial x_k \partial x_l} S_g u_0(t)$ equals to
\begin{equation}
u_0(x) I_4 + \frac{\partial }{\partial x_l}u_0(x) I_5 + \frac{\partial }{\partial x_k} u_0(x) I_6 + \frac{\partial^2 }{\partial x_k \partial x_l}u_0(x) I_2 + o(1),  
\end{equation}
where 
\begin{equation}
\begin{split}
I_4 =& \int_{U_i} (\frac{\partial}{\partial x_l} + \frac{\partial }{\partial y_l})(\frac{\partial}{\partial x_k} + \frac{\partial }{\partial y_k}) b_g(x, y; t) \mathrm d V_g(y) + \int_{U_i} (\frac{\partial}{\partial x_k} + \frac{\partial }{\partial y_k}) b_g(x, y; t)   \frac{\partial }{\partial y_l }\mathrm d V_g(y)\\
& + \int_{U_i} (\frac{\partial}{\partial x_l} + \frac{\partial }{\partial y_l}) b_g(x, y; t)   \frac{\partial }{\partial y_k }  \mathrm d V_g(y) +  \int_{U_i}  b_g(x, y; t) \frac{\partial^2 }{\partial y_l \partial y_k }  \mathrm d V_g(y) \\
= & \int_{U_i} \frac{\partial^2}{\partial x_k \partial x_l} b (x,y;t) \mathrm d V_g(y) +\int_{U_i} \frac{\partial }{\partial y_l} \big( \frac{\partial}{\partial x_k} b (x,y;t) \mathrm d V_g(y) \big)\\
&+  \int_{U_i} \frac{\partial }{\partial y_k}\big(\frac{\partial}{\partial x_l  } b (x,y;t) \mathrm d V_g(y) \big)+ \int_{U_i} \frac{\partial^2}{\partial y_k \partial y_l} \big( b (x,y;t) \mathrm d V_g(y) \big), \\
I_5 =& \int_{U_i} (\frac{\partial}{\partial x_k} + \frac{\partial }{\partial y_k}) b_g(x, y; t)  \mathrm d V_g(y) + \int_{U_i}  b_g(x, y; t) \frac{\partial }{\partial y_k }  \mathrm d V_g(y) \\ 
 = & \int_{U_i} \frac{\partial }{\partial x_k} b_g(x,y;t) \mathrm d V_g(y) + \int_{U_i}  \frac{\partial }{\partial y_k} \big(  b_g(x,y;t) \mathrm d V_g(y) \big),\\
I_6 = & \int_{U_i} \frac{\partial }{\partial x_l} b_g(x,y;t) \mathrm d V_g(y) + \int_{U_i}  \frac{\partial }{\partial y_l} \big(  b_g(x,y;t) \mathrm d V_g(y) \big), \\ 
\end{split}
\end{equation}
Following Theorem $\ref{thm303}$ and ideas of the proof developed for $\nabla S_g u_0(t)$, we have $I_4, I_5, I_6 \rightarrow 0$ and $I_2 \rightarrow 1$ uniformly as $t \rightarrow 0^+$. Therefore we have $$\lim_{t \rightarrow 0^+}\|\nabla_g^2 S_g u_0(t) - \nabla_g^2 u_0\|_{C^0(M)} = 0,$$
 for any $u_0 \in C^2(M)$. 

As for the higher order derivatives of $S_g u_0(t)$, for any multi index $\alpha \geq 0$, we take $D_x^\alpha$ on $(\ref{eqn601})$ and get
\begin{equation}\label{eqn000062}
\begin{split}
&D_x^\alpha \frac{\partial^2 }{\partial x_k \partial x_l} S_g u_0(t)  \\
=& \int_{U_i} D_x^\alpha (\frac{\partial}{\partial x_l} + \frac{\partial }{\partial y_l})(\frac{\partial}{\partial x_k} + \frac{\partial }{\partial y_k}) b_g(x, y; t) u_0(y) \mathrm d V_g(y) \\
&+\int_{U_i} D_x^\alpha (\frac{\partial}{\partial x_k} + \frac{\partial }{\partial y_k}) b_g(x, y; t)   \frac{\partial }{\partial y_l } \big( u_0(y) \mathrm d V_g(y) \big) + \int_{\partial U_i} D_x^\alpha(\frac{\partial}{\partial x_k} + \frac{\partial }{\partial y_k}) b_g(x, y; t)  u_0(y) \mathrm d \sigma_l(y)\\
&+\int_{U_i} D_x^\alpha(\frac{\partial}{\partial x_l} + \frac{\partial }{\partial y_l}) b_g(x, y; t)   \frac{\partial }{\partial y_k } \big( u_0(y) \mathrm d V_g(y) \big) + \int_{\partial U_i} D_x^\alpha(\frac{\partial}{\partial x_l} + \frac{\partial }{\partial y_l}) b_g(x, y; t)  u_0(y) \mathrm d \sigma_k(y)\\
& +\int_{U_i}  D_x^\alpha b_g(x, y; t) \frac{\partial^2 }{\partial y_l \partial y_k } \big( u_0(y) \mathrm d V_g(y) \big) + \int_{\partial U_i} D_x^\alpha \frac{\partial }{\partial y_k} b_g(x, y; t)  u_0(y) \mathrm d \sigma_l(y) \\
&+ \int_{\partial U_i}  D_x^\alpha b_g(x, y; t)  \frac{\partial }{\partial y_l} u_0(y) \mathrm d \sigma_k(y) + \int_{\partial U_i}  D_x^\alpha b_g(x, y; t)  u_0(y) \mathrm d \sigma_{kl}(y) \\
&+ \int_{M -U_i} D_x^\alpha \frac{\partial^2 }{\partial x_l \partial x_k}b_g(x, y; t) u_0(y) \mathrm d V_g(y).
\end{split}
\end{equation}
Thus we have that $\sup_{t \in (0,T)} t^{\frac{k-2}{4}}\|\nabla_g^{k} S_g u_0(t)\|_{C^0(M)} \leq C_k \|u_0\|_{C^{1,1}(M)}$ for $k>2$. 

When $t \rightarrow 0^+$, again we have all the boundary integrals and integral on $(M -U_i)$ going to zero. While for the integrals on $U_i$ in the expression of $D_x^\alpha \frac{\partial^2 }{\partial x_k \partial x_l} S_g u_0(t)$, we notice that $$|t^{\frac{|\alpha|}{4}} D_x^\alpha M(x,y;t)| \leq C t^{-\frac{n}{4}}\exp\{- \delta (t^{\frac{1}{4}} \rho(x,y))^{\frac{4}{3}} \}$$ where $M(x,y;t)$ could be any $(D_x + D_y)^\beta b_g(x, y; t) $ for $|\beta| \leq 2$. Given $u_0 \in C^2(M)$, this concentration condition above is exactly what we need to show that when $t \rightarrow 0^+$, $t^{\frac{|\alpha|}{4}} (D_x^\alpha \frac{\partial^2 }{\partial x_k \partial x_l} S_g u_0(t))$ equals to 
\begin{equation}
\begin{split}
 &u_0(x)( t^{\frac{|\alpha|}{4}} D_x^\alpha I_4) + \frac{\partial }{\partial x_l}u_0(x) (t^{\frac{|\alpha|}{4}}D_x^\alpha I_5) + \frac{\partial }{\partial x_k} u_0(x) (t^{\frac{|\alpha|}{4}} D_x^\alpha I_6)\\
 & +\frac{\partial^2 }{\partial x_k \partial x_l}u_0(x) (t^{\frac{|\alpha|}{4}}D_x^\alpha I_2 )+ o(1), 
 \end{split}
\end{equation}
where $I_4, I_5, I_6, I_2$ are as we defined before. When $\alpha > 0$, by Theorem $\ref{thm303}$ one can show easily that $D_x^\alpha I_4, D_x^\alpha I_5, D_x^\alpha I_6, D_x^\alpha I_2$ all go to zero uniformly as $t \rightarrow 0^+$. As a result we have that $\lim_{t \rightarrow 0^+} t^{\frac{k}{4} -\frac{1}{2}}\|\nabla^k S_g u_0(t)\|_{C^0(M)} = 0$ for any $u_0\in C^2(M)$ and $k>2$. Lastly, the estimates on derivative of $S_g u_0$ with respect to $t$ can be obtained by rewriting $\frac{\partial}{\partial t} S_g u_0 = - \Delta_g^2 S_g u_0$. Thus it ends the proof of Theorem $\ref{thm001}$

\end{proof}

When $g$ is K\"ahler, as an analog to Theorem $\ref{thm001}$, we have the following theorem.

\begin{thm}\label{thm3.9}
For any $\partial \bar \partial u_0 \in L^\infty(M)$, any smooth K\"ahler metric $g$ and any integer $k, l \geq 0$, we have 
\begin{equation}\label{eqn3-26}
\limsup_{t \rightarrow 0^+} \big( t^{l + \frac{k}{4}} \big\|  |(\frac{\partial }{\partial t})^l \nabla_g^k \partial \bar \partial \big( S_g u_0(t) \big) |_g \big\|_{C^0(M)} \big) \leq  C_{l + \frac{k}{4},n}  \big\| |\partial \bar\partial u_0|_g \big\|_{L^\infty (M)}, 
\end{equation}
for some constant $C_{l + \frac{k}{4}, n} > 0$ depending on $l + \frac{k}{4}$ and dimension $n$ only. If we assume further that $\partial \bar \partial u_0 \in C^0(M)$, then
\begin{equation}
\lim_{t \rightarrow 0^+} \big( \|S_g u_0(t) - u_0\|_{C^1(M)} + \| \partial \bar \partial S_g u_0(t) - \partial \bar \partial u_0 \|_{C^0(M)} \big) = 0, 
\end{equation}
and for any integers $k , l \geq 0$ with $l+k >0$, we have
\begin{equation}
\lim_{t \rightarrow 0^+} ( t^{l + \frac{k}{4}} \big\|  |(\frac{\partial }{\partial t})^l \nabla_g^k \partial \bar \partial \big( S_g u_0(t) \big) |_g \big\|_{C^0(M)}) = 0. 
\end{equation}
\end{thm}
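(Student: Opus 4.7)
The plan is to mirror Theorem~\ref{thm001}, with two adaptations for the K\"ahler setting: I work in K\"ahler normal coordinates at each $x\in M$ and arrange the integration-by-parts so the main term is an integral of $\partial_{z_k}\bar\partial_{\bar l}u_0$ (the one quantity we control). First I would reduce to smooth $u_0$ by mollification in local holomorphic charts, glued via a partition of unity. This preserves $\|\partial\bar\partial u_{0,\epsilon}\|_{L^\infty}\le C_g\|\partial\bar\partial u_0\|_{L^\infty}$, and since $b_g$ is smooth for $t>0$ all integrals depend continuously on the data, so any estimate proved for smooth $u_{0,\epsilon}$ passes to $u_0$ by distributional integration by parts (valid because $\partial\bar\partial u_0\in L^\infty$ and $u_0\in L^\infty$ imply $u_0\in W^{2,p}_{\mathrm{loc}}$ for every $p<\infty$ by Calder\'on--Zygmund).

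Assume $u_0$ smooth. Fix $x$ and choose K\"ahler normal coordinates there, so $g_{i\bar j}(x)=\delta_{ij}$ and all first derivatives of $g$, hence of $w=\det g_{i\bar j}$, vanish at $x$. With the ``tangential'' operators $T_{z_k}=\partial_{z_k}^x+\partial_{z_k}^y$ and $T_{\bar l}=\bar\partial_{\bar l}^x+\bar\partial_{\bar l}^y$ (costing no $t^{-1/4}$ by Theorem~\ref{thm302}), I use
\[
\partial_{z_k}^x\bar\partial_{\bar l}^x = T_{z_k}T_{\bar l}-T_{z_k}\bar\partial_{\bar l}^y-\partial_{z_k}^yT_{\bar l}+\partial_{z_k}^y\bar\partial_{\bar l}^y
\]
under $\int(\,\cdot\,)b_g(x,y;t)u_0(y)\,dV_g(y)$ and integrate by parts in $y$ on each $y$-derivative. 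After expansion this produces the main term $\int b_g(x,y;t)\partial_{z_k}\bar\partial_{\bar l}u_0(y)\,dV_g(y)$, bounded by $\|b_g(x,\cdot;t)\|_{L^1}\|\partial\bar\partial u_0\|_{L^\infty}\to\|\partial\bar\partial u_0\|_{L^\infty}$ via Theorem~\ref{thm2-1}, plus remainders of three types: (i) $\int T_{z_k}T_{\bar l}b_g\cdot u_0\,dV_g$; (ii) cross terms $\int T_{z_k}b_g\cdot\bar\partial u_0\,dV_g$ and $\int T_{\bar l}b_g\cdot\partial u_0\,dV_g$; (iii) terms with a weight factor $\partial w/w$, $\bar\partial w/w$, or $\partial\bar\partial w/w$. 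In K\"ahler normal coordinates the first two weight factors vanish at $y=x$ and are $O(|y-x|)$ locally, so combined with the $t^{1/4}$ concentration of $b_g$ the corresponding type (iii) contributions are $O(t^{1/4})$. Type (ii) is handled by Taylor expansion: writing $\partial u_0(y)=\partial u_0(x)+(\partial u_0(y)-\partial u_0(x))$, the constant factor pairs with $\int T_{\bar l}b_g\,dV_g=-\int b_g\bar\partial w/w\,dV_g$ (an $O(t^{1/4})$ quantity), and the oscillation is $O(|y-x|)$ by smoothness of $u_0$, yielding another $O(t^{1/4})$. Type (i) is handled by the key cancellation obtained by testing the IBP identity against $u_0\equiv 1$ (which must give $I=\partial\bar\partial(1)=0$), forcing $\int T_{z_k}T_{\bar l}b_g\,dV_g+\int b_g\partial\bar\partial w/w\,dV_g=O(t^{1/4})$; writing $u_0(y)=u_0(x)+(u_0(y)-u_0(x))$ in type (i) together with the matching part of type (iii) absorbs the $u_0(x)$ contribution, while the $u_0(y)-u_0(x)$ part is again $O(t^{1/4})$ by smoothness. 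Higher spatial derivatives come from additional $x$-derivatives on the integrand, and time derivatives from substituting $(\partial_t)^l=(-\Delta_g^2)^l$; in both cases the $t^{l+k/4}$ weights are exactly the scaling factors of Theorem~\ref{thm2-1} applied to higher-derivative biharmonic kernels, and the dimensional constants $C_{l+k/4,n}$ are the resulting limit values.

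The second part follows from the same decomposition: when $\partial\bar\partial u_0\in C^0$, uniform continuity gives $\int b_g(x,y;t)\partial\bar\partial u_0(y)dV_g(y)\to\partial\bar\partial u_0(x)$ uniformly in $x$, and all remainders still vanish; for $l+k>0$ the leading coefficient $(\partial_t)^l\nabla_g^k(1)=0$ annihilates the pointwise limit, giving $\lim=0$. The main obstacle throughout is that $\|\partial\bar\partial u_0\|_{L^\infty}$ controls neither $\partial u_0$, $\bar\partial u_0$, nor $u_0$ itself; only the combination of K\"ahler normal coordinates (forcing $\partial w$ and $\bar\partial w$ to vanish at the integration center) with the cancellation identity obtained by testing the IBP expansion against $u_0\equiv 1$ closes the estimate inside the limsup.
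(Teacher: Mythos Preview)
Your strategy is correct and coincides with the paper's: push two derivatives onto $u_0$ via integration by parts so that the main surviving term is $\int_U t^{k/4}\nabla_g^k b_g(z,w;t)\,\partial_{z_i}\bar\partial_{\bar z_j}u_0(w)\,dV_g(w)$, bound it by Theorem~\ref{thm2-1}, and show all remaining terms vanish as $t\to 0^+$. The difference is only in how the remainders are organized, and here the paper's route is considerably shorter.

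The paper works in an \emph{arbitrary} holomorphic chart (no K\"ahler normal coordinates) and simply groups the error contributions so that each becomes an exact $y$-derivative plus a piece to which Theorem~\ref{thm303} applies. Concretely one obtains
\[
t^{k/4}\nabla_g^k\partial_{z_i}\bar\partial_{\bar z_j}S_gu_0(z;t)=\int_U t^{k/4}\nabla_g^k b_g\,\partial\bar\partial u_0\,dV_g
\;+\;u_0(z)\bigl(t^{k/4}\nabla_g^k I_1\bigr)+\bar\partial_{\bar z_j}u_0(z)\bigl(t^{k/4}\nabla_g^k I_2\bigr)+\partial_{z_i}u_0(z)\bigl(t^{k/4}\nabla_g^k I_3\bigr)+o(1),
\]
where each $I_j$ is a sum of $\int_U\partial_{z}^x b_g\,dV_g$-type terms and boundary terms on $\partial U$. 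By Theorem~\ref{thm303} the former equal $-\int_{M\setminus U}\partial_z^x b_g\,dV_g$, and together with the boundary pieces they decay exponentially; hence $t^{k/4}\nabla_g^k I_j\to 0$ exactly as $I_4,I_5,I_6$ do in Theorem~\ref{thm001}. No special coordinates, no test-function identity.

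This also explains why your ``main obstacle'' is not an obstacle. You worry that $\|\partial\bar\partial u_0\|_{L^\infty}$ does not control $u_0$ or $\partial u_0$, but you never need it to: those quantities appear only as the finite prefactors $u_0(z),\partial u_0(z),\bar\partial u_0(z)$ (bounded on compact $M$, since $\partial\bar\partial u_0\in L^\infty$ forces $u_0\in C^{1,\alpha}$ by elliptic regularity), multiplied by $I_j$'s that tend to zero. In a $\limsup_{t\to 0^+}$ such products contribute nothing, so the bound is automatically independent of $\|u_0\|_{C^1}$. Your K\"ahler normal coordinates (making $\partial w$ vanish at $x$) and your ``test against $1$'' identity are both correct, but they reprove by hand the exact cancellation that Theorem~\ref{thm303} already supplies in one line. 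Likewise the mollification step is redundant: $u_0\in W^{2,p}$ makes the integration by parts in holomorphic coordinates valid as written, since only the mixed $\partial\bar\partial$ second derivative appears.

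For the $\partial\bar\partial u_0\in C^0$ statements your sketch is right: the main term now converges to $\partial\bar\partial u_0(z)$ (for $k=l=0$) or, after splitting off $\partial\bar\partial u_0(z)\int t^{k/4}\nabla_g^k b_g\,dV_g=0$ via Theorem~\ref{thm303}, is $o(1)$ for $k+l>0$; the error terms still vanish.
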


\begin{proof}
Given any $z \in M$, we work under local holomorphic coordinate in a neighborhood of $z$, say $U \subset M$, such that $\frac{1}{2} \delta_{i\bar j} \leq g_{i\bar j} \leq 2 \delta_{i\bar j}$. Repeating the computation $(\ref{eqn000062})$, we get that as $t \rightarrow 0^+$, 
\begin{equation}
\begin{split}
& t^{\frac{k}{4}}\nabla_g^k \frac{\partial^2 }{\partial z_i \partial \bar z_j} \big(S_g u_0(z; t) \big) \\
 = & \int_{U} t^{\frac{k}{4}} \nabla_g^k b_g(z, w; t) \frac{\partial^2  u_0 }{\partial z_i \partial \bar z_j} (w) \mathrm d V_g(w)\\
 & + u_0 (z) \big(t^{\frac{k}{4}} \nabla_g^k I_1\big) + \frac{\partial u_0 }{\partial \bar z_j } (z) \big(t^{\frac{k}{4}} \nabla_g^k I_2 \big) + \frac{\partial u_0 }{\partial  z_i} (z) \big(t^{\frac{k}{4}} \nabla_g^k I_3 \big) + o(1), 
 \end{split}
\end{equation}
where
\begin{equation}
\begin{split}
I_1 = & \int_{U} \frac{\partial^2}{\partial z_i \partial \bar z_j} b_g (z,w;t) \mathrm d V_g(w) +\int_{U} \frac{\partial }{\partial \bar w_j} \big( \frac{\partial}{\partial z_i} b_g (z,w;t) \mathrm d V_g(w) \big)\\
&+  \int_{U} \frac{\partial }{\partial w_i}\big(\frac{\partial}{\partial \bar z_j } b_g (z,w;t) \mathrm d V_g(w) \big)+ \int_{U} \frac{\partial^2}{\partial w_i \partial \bar w_j} \big( b_g (z,w;t) \mathrm d V_g(w) \big), \\
I_2 = & \int_{U} \frac{\partial }{\partial z_i} b_g(z,w;t) \mathrm d V_g(w) + \int_{U}  \frac{\partial }{\partial w_i} \big(  b_g(z,w;t) \mathrm d V_g(w) \big),\\
I_3 = & \int_{U} \frac{\partial }{\partial \bar z_j} b_g(z,w;t) \mathrm d V_g(w) + \int_{U}  \frac{\partial }{\partial \bar w_j} \big(  b_g(z,w;t) \mathrm d V_g(w) \big). \\ 
\end{split}
\end{equation}
By the same argument above, we have that $t^{\frac{k}{4}} |\nabla_g^k I_1|_g, t^{\frac{k}{4}} |\nabla_g^k I_2|_g, t^{\frac{k}{4}} |\nabla_g^k I_3|_g \rightarrow 0$ as $t \rightarrow 0^+$. Therefore we have as $t \rightarrow 0^+$
\begin{equation}
t^{\frac{k}{4}} |\nabla_g^k \frac{\partial^2 }{\partial z_i \partial \bar z_j} \big(S_g u_0(z; t) \big)|_g \leq  \big[\int_M t^{\frac{k}{4}} |\nabla_g^k b_g|_g(z, w; t) \mathrm d V_g(w) \big] \big\||\partial \bar \partial u_0|_g \big\|_{C^0(M)} + o(1)
\end{equation}
By Theorem $\ref{thm2-1}$, we have 
\begin{equation}
\limsup_{t \rightarrow 0^+} \big( t^{\frac{k}{4}}  \big\| |\nabla_g^k\partial \bar \partial \big( S_g u_0 (t) \big) |_g \big\|_{C^0(M)} \big) \leq C_{k,n} \big\||\partial \bar \partial u_0|_g\big\|_{C^0(M)}. 
\end{equation}
for some constant $C_{k,n} > 0$ depending on $k$ and dimension $n$ only. The bounds on $t$-derivatives is obtained by the relation $\frac{\partial }{\partial t} S_g u_0  = - \Delta_g^2 S_g u_0 $. This ends the proof. 

\end{proof}

\subsection{Nonhomogeneous problem}\label{sec3.3}

In this section, we fix a smooth Riemannian metric $g$ and study the nonhomogeneous problem
\begin{equation}\label{eqn7001}
\begin{split}
&( \frac{\partial }{\partial t} + \Delta_g^2 ) u(x; t) = f(x;t) \text{ for } (x, t) \in M \times (0,T), \\
& u(x; 0) = 0 \text{ for } x \in M. 
\end{split}
\end{equation}
where $f $ is a function on $M \times (0,T)$. Given the biharmonic heat kernel $b_g \in C^\infty\big(M \times M \times (0,T)\big)$ with respect to metric $g$, in principle, the solution to the nonhomogeneous problem above, if exists, should be given by the following integral, often called the \emph{volume potential}.  
\begin{equation}
V [f] (x, t) := \int_{0}^t \int_M b_g(x, y; t-s) f(y; s) \mathrm d V_g(y) \mathrm d s. 
\end{equation}
It is in fact an improper integral, so first of all we have to answer if this definition even makes sense at all. 

The regularity of $V[f]$ depends on regularity of the function $f$ of course. For instance, if $f \in C^{\alpha, \frac{\alpha}{4}}(M \times [0,T])$, which is the standard parabolic H\"older space(we refer to \cite{Si} for its precise definition), then we have $V[f] \in C^{4+\alpha, 1+\frac{\alpha}{4}}(M \times [0,T])$. This is the well-known parabolic Schauder estimate. There are vast literatures about Schauder estimate for second order parabolic equations(\cite{LSU},\cite{ADN},\cite{Gi},\cite{Si},\cite{Kr},\cite{Wa} and etc.). While for the higher order case, for any differential operator parabolic in the Petrowski sense with uniform H\"older continuous coefficients on domains of $\bR^n$, its fundamental solution was constructed in 1950's by employing a heavy machinery, called the parametrix method, which goes back to Levi(\cite{Le}). Based on the estimates of the fundamental solutions, Solonnikov(\cite{So}) showed the (regularity) interior parabolic Schauder estimate on domains of $\bR^n$ by studying the volume potential. We refer interested readers to Eidelman(\cite{Ei}), Friedman(\cite{Fr}) and references therein for a complete exposition of the related techniques developed for general (high order) parabolic equations.

In this section, we follow essentially the ideas in Eidelman's book(\cite[Property 10 on p108]{Ei}). However, instead of working on the standard parabolic H\"older spaces ($C^{4+\alpha, 1+\frac{\alpha}{4}}$ and $C^{\alpha, \frac{\alpha}{4}}$),  we'll introduce the \emph{weighted parabolic H\"older spaces}(see respectively spaces $X_T$  and $Y_T$ below). We then establish a Schauder type estimate between the weighted parabolic H\"older spaces. 

Recall, in the definition of standard parabolic H\"older space, $f\in C^{\alpha, \frac{\alpha}{4}}(M \times [0,T])$ means that for any fixed $t\in [0,T]$, $f(\cdot; t) \in C^{\alpha}(M)$; for any fixed $x \in M$, $f(x; \cdot)$ is locally H\"older continuous of exponent $\frac{\alpha}{4}$; moreover, the H\"older norm of $f$ at time $t$, namely
$$\|f(t)\|_{C^0(M)}  + [f(t)]_{C^\alpha(M)}+ \sup_{x \in M} \sup_{0< h <T-t} \frac{|f(x; t+h) - f(x; t)| }{|h|^{\frac{\alpha}{4}}}, $$
is uniformly bounded for all $t \in [0,T]$. The main idea of constructing our weighted spaces is to put appropriate powers of $t$ as weights on each term of the quantity above which allows them to blow up at certain rate when $t \rightarrow 0^+$. More precisely, we have the following definitions.

Fix $0< T < \infty$ and $0< \alpha < 1$. We are mainly interested in functions $f$'s lying in the following space.
\begin{equation}\label{space1}
Y_T = \{f \in C^{0}\big(M \times (0,T) \big) \big| \|f\|_{Y_T} < \infty. \},
\end{equation}
where
\begin{equation}
\begin{split}
\|f\|_{Y_T} =&  \sup_{t\in(0,T) } \big( t^{\frac{1}{2}} \|f(t)\|_{C^0(M)} + t^{\frac{1}{2} + \frac{\alpha}{4}} [f(t)]_{C^{\alpha}(M)} \big) \\
&+ \sup_{(x,t) \in M \times (0,T)} \sup_{0< h < T-t} t^{\frac{1}{2} + \frac{\alpha}{4}}\frac{|f(x; t+h) - f(x;t)|}{|h|^{\frac{\alpha}{4}}}. 
\end{split}
\end{equation} 
One can check easily that $Y_T$ is a Banach space with the norm $\|\cdot\|_{Y_T}$. We also introduce the following space of functions $u$'s on $M \times (0,T)$ which admit continuous derivatives up to fourth order w.r.t. $x \in M$ and continuous first derivative w.r.t. $t \in (0,T)$,  
\begin{equation}\label{space2}
X_T = \{u \big| u \text{ has the regularities described above and } \|u\|_{X_T} < \infty. \},
\end{equation}
where 
\begin{equation}
\begin{split}
\|u\|_{X_T} =& \sup_{t\in (0,T)} \big( \sum_{k=0}^4  t^{- \frac{1}{2} + \frac{k}{4}} \|\nabla^k  u(t)\|_{C^0(M)}+  t^{\frac{1}{2}+ \frac{\alpha}{4}} [\nabla^4 u(t)]_{C^{\alpha}(M)} \\
& + t^{\frac{1}{2}} \|\frac{\partial }{\partial t} u(t)\|_{C^{0}(M)} +  t^{\frac{1}{2}+\frac{\alpha}{4}} [\frac{\partial }{\partial t} u(t)]_{C^{\alpha}(M)} \big) \\
&+ \sup_{(x,t) \in M \times (0,T)} \sup_{0< h < T-t} t^{\frac{1}{2}+\frac{\alpha}{4}}\frac{|\nabla^4u (x; t+h)- \nabla^4 u(x;t)|_g}{|h|^{\frac{\alpha}{4}}}\\
& + \sup_{(x,t) \in M \times (0,T)} \sup_{0< h < T-t} t^{\frac{1}{2}+\frac{\alpha}{4}}\frac{|\frac{\partial }{\partial t} u (x; t+h)- \frac{\partial }{\partial t} u (x;t)|}{|h|^{\frac{\alpha}{4}}}. 
 \end{split}
\end{equation}
Clearly $X_T$ is a Banach space with norm $\|\cdot \|_{X_T}$. 

\begin{lem}
$\|\cdot\|_{X_T}$ above is equivalent to the following norm 
\begin{equation}
\begin{split}
\|u\|_{X_T}^{'} : = & \|u\|_{X_T} + \sum_{k= 0}^3 \sup_{(x,t) \in M \times (0,T)} \sup_{0< h < T-t} t^{- \frac{1}{2} + \frac{k}{4}+\frac{\alpha}{4}}\frac{|\nabla^k u (x; t+h)- \nabla^k u(x;t)|_g}{|h|^{\frac{\alpha}{4}}}.
\end{split}
\end{equation}

\end{lem}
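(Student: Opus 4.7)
The bound $\|u\|_{X_T}\le\|u\|_{X_T}'$ is immediate, since $\|u\|_{X_T}'$ equals $\|u\|_{X_T}$ plus four non-negative weighted time-H\"older seminorms (one for each $k\in\{0,1,2,3\}$). The content of the lemma is the reverse inequality, i.e., showing that for each such $k$ the quantity
\[
A_k(x,t,h) \;:=\; t^{-\frac{1}{2}+\frac{k}{4}+\frac{\alpha}{4}}\,h^{-\alpha/4}\,\bigl|\nabla^k u(x;t+h)-\nabla^k u(x;t)\bigr|_g
\]
is bounded by $C\,\|u\|_{X_T}$ uniformly in $(x,t)\in M\times(0,T)$ and $0<h<T-t$. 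The standard parabolic move is to split into the regimes $h\le t$ and $h>t$ and estimate by integration in time and by the triangle inequality respectively.

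\emph{Small step $h\le t$.} Set $v(x):=u(x;t+h)-u(x;t)$. Writing $v(x)=\int_t^{t+h}\partial_s u(x;s)\,ds$ and using $\|\partial_s u(s)\|_{C^0}\le s^{-1/2}\|u\|_{X_T}\le t^{-1/2}\|u\|_{X_T}$ on $[t,t+h]$ gives immediately $\|v\|_{C^0}\le h\,t^{-1/2}\|u\|_{X_T}$, which handles $k=0$ after applying $h^{1-\alpha/4}\le t^{1-\alpha/4}$. For $k\in\{1,2,3\}$, $u$ is not assumed to satisfy any PDE, so one cannot integrate $\partial_s\nabla^k u$ directly. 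Instead, combine the $C^0$ bound on $v$ with the triangle bound $\|\nabla^4 v\|_{C^0}\le 2t^{-1/2}\|u\|_{X_T}$ and apply a Gagliardo--Nirenberg-type interpolation in local harmonic coordinates,
\[
\|\nabla^k v\|_{C^0}\;\le\; C\,\|v\|_{C^0}^{1-k/4}\,\|\nabla^4 v\|_{C^0}^{k/4}\;\le\; C\,h^{1-k/4}\,t^{-1/2}\,\|u\|_{X_T},
\]
which is exactly balanced by the weight $t^{-\frac{1}{2}+\frac{k}{4}+\frac{\alpha}{4}}h^{-\alpha/4}$ under the constraint $h\le t$.

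\emph{Large step $h>t$.} Here $t$ and $t+h$ are comparable to $h$, so a triangle inequality together with the pointwise bound $\|\nabla^k u(s)\|_{C^0}\le s^{\frac{1}{2}-\frac{k}{4}}\|u\|_{X_T}$ is enough to dominate $|\nabla^k u(x;t+h) - \nabla^k u(x;t)|$. After multiplying by the weight and using $t<h<T$, the resulting expression reduces to a power of $h/t$ times a constant depending on $T$, which is controlled uniformly by $\|u\|_{X_T}$.

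The main technical step is the Gagliardo--Nirenberg interpolation in the small-step regime for $k\in\{1,2,3\}$: the constants must be uniform in the basepoint on $M$, which is arranged by working in a local $C^{3,\alpha}$-harmonic chart (Lemma \ref{lem202}) and invoking the classical Euclidean interpolation inequality. Once that is in place, the remaining verifications amount to bookkeeping of powers of $t$ and $h$ dictated by parabolic scaling.
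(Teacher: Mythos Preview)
Your small-step argument ($h\le t$) is correct and is presumably what the paper has in mind by ``follows directly from interpolations'' (the paper gives no further detail): interpolate between $\|v\|_{C^0}\le ht^{-1/2}\|u\|_{X_T}$ and $\|\nabla^4 v\|_{C^0}\le 2t^{-1/2}\|u\|_{X_T}$, keeping in mind that Gagliardo--Nirenberg on a compact manifold carries an additive lower-order term $C\|v\|_{C^0}$, which here only contributes a harmless $T$-dependent constant. Your large-step argument is also fine for $k=2,3$: there the weight $t^{-1/2+k/4+\alpha/4}$ has nonnegative exponent, so the triangle inequality together with $\|\nabla^k u(s)\|_{C^0}\le s^{1/2-k/4}\|u\|_{X_T}$ yields at most $2(t/h)^{\alpha/4}\|u\|_{X_T}\le 2\|u\|_{X_T}$ when $h>t$.

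For $k=0,1$, however, your large-step bound does not close: the expression reduces to a \emph{positive} power of $h/t$, which is unbounded as $t\to 0^+$ with $h$ fixed. This gap is not repairable, because the lemma as written is in fact false for those $k$. Take $u(x,t)=t^{1/2}$; one checks directly that $\|u\|_{X_T}\le 2$, yet the $k=0$ term of $\|u\|_{X_T}^{'}$ is
\[
\sup_{t\in(0,T)}\ \sup_{0<h<T-t}\ t^{-\frac12+\frac\alpha4}\,h^{-\frac\alpha4}\bigl[(t+h)^{1/2}-t^{1/2}\bigr],
\]
and with $h=T/2$ fixed and $t\to 0^+$ this behaves like $t^{-(2-\alpha)/4}\to\infty$. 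The paper's applications are unaffected: in Lemma~\ref{lem802} only the cases $k\ge 2$ are actually used (via $\nabla^j\partial\bar\partial\psi$ for $j=0,1,2$), and in the proof of Theorem~\ref{thm702} the time-H\"older of $F$ carries the larger $Y_T$-weight $t^{1/2+\alpha/4}$, under which the crude triangle-inequality bound goes through for every $k$. The correct statement restricts the additional sum to $k\in\{2,3\}$, and your argument is then complete.
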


\begin{proof}
This follows directly from interpolations. So we omit its proof. 
\end{proof}

We have the following theorem about the volume potential $V[f]$ for $f \in Y_T$. One can also interpret the following theorem as the (regularity) Schauder estimate between weighted parabolic H\"older spaces for the biharmonic heat equation.

\begin{thm}\label{thm701}
For fixed $0 < T < \infty$, if $f \in Y_T$, then $V[f] \in X_T$ and we have for some constant $C_{g,T}>0$ depending on $g$ and $T$, 
\begin{equation}
\|V[f]\|_{X_T} \leq C_{g,T} \|f\|_{Y_T}. 
\end{equation}
Moreover equation $( \frac{\partial }{\partial t } + \Delta_g^2 ) V[f] = f$ holds in the classical sense on $M \times (0,T)$ and when $t \rightarrow 0^+$, $V[f](t) \rightarrow 0$ in any $C^{1,\gamma}(M)$ for $0< \gamma <1$. 
\end{thm}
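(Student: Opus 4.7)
The plan is to prove Theorem \ref{thm701} by a parametrix-style analysis of the volume potential, exploiting the kernel bounds in Theorems \ref{thm301} and \ref{thm302} together with the cancellation identity of Theorem \ref{thm303}. The weights in $\|\cdot\|_{Y_T}$ and $\|\cdot\|_{X_T}$ are tuned precisely so that the singularity of the kernel at the corner $s=t$ of the integration domain is integrated against the correct power of $s^{-1/2}$ inherited from $f$, producing finite weighted H\"older seminorms on the output.

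For spatial derivatives of order $k = 0, 1, 2, 3$, I would differentiate under the integral. Theorem \ref{thm301} yields $\int_M |\nabla_x^k b_g(x,y; t-s)|_g \, \mathrm d V_g(y) \leq C(t-s)^{-k/4}$, and combining this with the pointwise bound $|f(y;s)| \leq s^{-1/2}\|f\|_{Y_T}$ gives
\[
|\nabla^k V[f](x,t)|_g \leq C\|f\|_{Y_T} \int_0^t (t-s)^{-k/4} s^{-1/2}\, \mathrm d s \leq C t^{\frac{1}{2} - \frac{k}{4}}\|f\|_{Y_T},
\]
which matches the corresponding weights in $\|\cdot\|_{X_T}$. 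For the critical order $k=4$ (and analogously for $\partial_t V[f]$, via $\partial_t b_g = -\Delta_{g,x}^2 b_g$) the direct bound gives a logarithmically divergent $s$-integral, so I invoke Theorem \ref{thm303} in the form $\int_M \nabla_x^k b_g(x,y;\tau)\,\mathrm d V_g(y) = \nabla_x^k\bigl(\int_M b_g(x,y;\tau)\,\mathrm d V_g(y)\bigr) = 0$ for $k \geq 1$, and rewrite
\[
\int_M \nabla_x^4 b_g(x,y;t-s)\, f(y;s)\, \mathrm d V_g(y) = \int_M \nabla_x^4 b_g(x,y;t-s)\,\bigl[f(y;s) - f(x;s)\bigr]\, \mathrm d V_g(y).
\]
Using the spatial H\"older control $|f(y;s)-f(x;s)| \leq s^{-\frac{1}{2}-\frac{\alpha}{4}}\|f\|_{Y_T}\,\rho(x,y)^\alpha$ and the Gaussian bound, the right-hand side is at most $C\,s^{-\frac{1}{2}-\frac{\alpha}{4}}(t-s)^{-1+\frac{\alpha}{4}}\|f\|_{Y_T}$, and the remaining $s$-integral converges to a Beta-function factor producing the required $t^{-1/2}$ bound.

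Next, for the spatial and time $C^\alpha$ seminorms in $\|\cdot\|_{X_T}$, I would follow the classical parametrix strategy of splitting the time domain at the parabolic scale. To estimate $\nabla^4 V[f](x_1,t) - \nabla^4 V[f](x_2,t)$, break $[0,t]$ into $[0,\,t - c\rho(x_1,x_2)^4]$, on which one applies the mean value theorem with the bound on $\nabla_x^5 b_g$, and $[t - c\rho(x_1,x_2)^4,\,t]$, on which one keeps the two pieces separate and applies the argument above at $x_1$ and $x_2$. A parallel split $s\in(0,t)$ versus $s\in(t,t+h)$ handles the time-H\"older seminorms for $V[f](x,t+h) - V[f](x,t)$, $\nabla^4 V[f]$ and $\partial_t V[f]$. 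Throughout, parabolic scaling forces the weight $t^{\frac{1}{2}+\frac{\alpha}{4}}$, which matches exactly what is built into $\|\cdot\|_{Y_T}$.

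For the PDE and the initial condition, I would introduce the regularized potential $V_\epsilon[f](x,t) = \int_0^{t-\epsilon}\!\int_M b_g(x,y;t-s)f(y;s)\,\mathrm d V_g(y)\,\mathrm d s$, for which differentiation under the integral is routine, and compute
\[
(\partial_t + \Delta_{g,x}^2)V_\epsilon[f](x,t) = \int_M b_g(x,y;\epsilon)\,f(y;t-\epsilon)\,\mathrm d V_g(y) \longrightarrow f(x,t)
\]
as $\epsilon \to 0^+$ by the defining property of $b_g$; the above estimates give uniform convergence of $V_\epsilon[f]$ and its derivatives on $M \times [\tau, T]$ for each $\tau > 0$, hence the equation holds classically on $M \times (0,T)$. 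The bound $|\nabla^k V[f](x,t)| \leq Ct^{1/2-k/4}\|f\|_{Y_T}$ for $k=0,1,2$ immediately forces $V[f](t) \to 0$ in $C^{1,\gamma}(M)$ for any $\gamma \in (0,1)$ by interpolation between $C^1$-smallness and controlled $C^2$-growth. The main technical obstacle is the bookkeeping in the $C^\alpha$ estimates, where each of the several seminorms in $\|\cdot\|_{X_T}$ must be shown to pick up precisely the weight dictated by parabolic scaling without losing any power of $t$; this is also what will ultimately allow the estimate to be made independent of the reference metric $g$ when $T$ is small, as needed for Theorem \ref{thm702}.
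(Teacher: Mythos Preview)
Your proposal is correct and follows the same parametrix/volume-potential strategy as the paper, but there are a few implementation differences worth noting. For the critical fourth-order term you invoke the exact cancellation $\int_M \nabla_x^4 b_g(x,y;\tau)\,\mathrm dV_g(y)=0$ from Theorem~\ref{thm303}; the paper instead keeps this term and bounds it via the $(D_x+D_y)$ structure of Theorem~\ref{thm302}, obtaining $\bigl|\int_M \nabla_x^4 b_g\,\mathrm dV_g\bigr|\le C(t-s)^{-3/4}$ before integrating in $s$. Your route is cleaner and shorter. For the spatial $C^\alpha$ seminorm of $\nabla^4 V[f]$ you propose splitting the time integral at the parabolic scale $t-c\rho(x_1,x_2)^4$ and applying the mean value theorem to $\nabla^5 b_g$ on the far piece; the paper instead performs a spatial near/far decomposition around the midpoint $\xi=\tfrac{x+x'}{2}$, splitting the $\xi$-integral over $B_r(\xi)$ and its complement into four pieces $P_1,\dots,P_4$. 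Both arguments close with the same weights, but the paper's version avoids needing $\nabla^5 b_g$ and handles the cross term $f(x_2;s)-f(x_1;s)$ via an explicit boundary integral over $\partial B_r(\xi)$ rather than via cancellation. Finally, for the PDE you use an $\epsilon$-truncated potential $V_\epsilon[f]$ and pass to the limit, while the paper works directly with difference quotients in $t$; both are standard and equivalent.
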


\begin{rmk}
Theorem $\ref{thm701}$ asserts the existence of solution $u = V[f] \in X_T$ to the nonhomogeneous problem $(\ref{eqn7001})$ if the right hand side function $f \in Y_T$. In fact, such solution is unique in $X_T$. Namely, if there exists an another $v \in X_T$ such that $( \frac{\partial }{\partial t } + \Delta_g^2 ) v = f$, then $v = V[f]$. This is because for any $t >0$ 
\begin{equation}
\frac{\mathrm d}{\mathrm d t} \int_M \big(v(t) - V[f](t)\big)^2 \mathrm d V_g = - \int_M  [\Delta_g \big( v(t) - V[f](t) \big)]^2 \mathrm d V_{g} \leq 0, 
\end{equation}
and as $t \rightarrow 0^+$, $ \int_M \big(v(t) - V[f](t)\big)^2 \mathrm d V_g \rightarrow 0$. 
\end{rmk}

\begin{rmk}
The constant $C_{g,T}>0$ in Theorem $\ref{thm701}$ depends on metric $g$ and $T$. For fixed smooth metric $g$, $C_{g,T}$ is bounded whenever $T$ is bounded and $C_{g,T} \rightarrow \infty$ as $T \rightarrow \infty$. For fixed $T < \infty$, $C_{g,T}$ depends on $g$ in an unclear way(might depend on high order derivatives of $g$, $\text{Vol}(M)$, injectivity radius and etc.) since it comes from various bounds on the biharmonic heat kernel $b$. 

However, when $T$ is sufficiently small, we can choose the constant $C_{g,T}$ uniformly bounded independent of $g,T$.
\end{rmk}

To be more precise on the last remark, we introduce the following theorem.

\begin{thm}\label{thm702}
Suppose $M$ is a closed manifold of dimension $n$ and $g$ is a smooth Riemannian metric on $M$. There exist constants $T_g >0$ and $C>0$ where $T_g$ depends on $\|\text{Ric}(g)\|_{C^{1,\alpha}}$ and the injectivity radius $i_0$, and $C=C_n$ is a dimensional constant, such that for any $0< T < T_g $, we have 
\begin{equation}
\|V[f]\|_{X_T} \leq C_n \|f\|_{Y_T}.
\end{equation}
\end{thm}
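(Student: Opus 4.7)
The plan is to leverage the parabolic scale invariance of the weighted norms $Y_T$ and $X_T$, together with the uniform harmonic-radius bound from Lemma \ref{lem202}, to reduce the proof to a Schauder estimate on a nearly-flat metric at unit scale. The key algebraic observation is that under the parabolic rescaling $\tilde x = \lambda x$, $\tilde t = \lambda^4 t$, together with $\tilde u(\tilde x,\tilde t) = \lambda^{-2} u(\lambda^{-1}\tilde x, \lambda^{-4}\tilde t)$, $\tilde f(\tilde x,\tilde t) = f(\lambda^{-1}\tilde x,\lambda^{-4}\tilde t)$, and $\tilde g(\tilde x) = g(\lambda^{-1}\tilde x)$, a direct bookkeeping gives $\|\tilde u\|_{X_{\lambda^4 T}(\tilde g)} = \|u\|_{X_T(g)}$ and $\|\tilde f\|_{Y_{\lambda^4 T}(\tilde g)} = \|f\|_{Y_T(g)}$. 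The weights $t^{-1/2+k/4}$ and $t^{1/2+\alpha/4}$ are precisely what is needed to make every individual term of the two norms (including the Hölder-in-time semi-norms) scale homogeneously.

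With scale invariance in hand, the strategy is as follows. Fix $Q>1$ sufficiently close to $1$ and invoke Lemma \ref{lem202} to obtain a uniform lower bound $r_1 = r_1(Q,\mathrm{Ric}(g),i_0)$ on the $C^{3,\alpha}$-harmonic radius. Choose $T_g$ so that $T_g^{1/4} \leq \epsilon_0 r_1$ for a small dimensional $\epsilon_0$ to be fixed. Given $0<T<T_g$, pick any point $p\in M$ and consider a harmonic chart of $g$-radius $r_1$ about $p$. Rescale by $\lambda = r_1 T^{-1/4}$, so that the chart becomes a Euclidean ball of radius $\lambda r_1 = r_1^2 T^{-1/4} \gg 1$, the time interval $(0,T)$ becomes $(0,1)$, and the rescaled metric $\tilde g$ satisfies $Q^{-1}\delta_{ij}\leq \tilde g_{ij}\leq Q\delta_{ij}$ with
\begin{equation*}
\sum_{l=1}^{3}\|D^l \tilde g_{ij}\|_{C^0} + [D^3\tilde g_{ij}]_{C^\alpha} \leq Q-1,
\end{equation*}
i.e., $\tilde g$ is $C^{3,\alpha}$-close to the Euclidean metric on a large ball, uniformly in the original data.

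Next, split the volume potential $V[f](x,t)$ into a local piece, with $y$ in the $g$-ball $B_{r_1/2}(x,g)$ and $s \in (0,t)$, and a far piece. For the far piece, the Gaussian-type estimate of Theorem \ref{thm301} bounds every derivative of the biharmonic heat kernel by $C t^{-(n+k)/4}\exp\{-\delta(T^{-1/4}r_1)^{4/3}\}$; since $T^{-1/4}r_1 \geq \epsilon_0^{-1}$ is large, this is super-polynomially small in $T$ and contributes at most $O(T^{N})$ to each component of $\|V[f]\|_{X_T}$, which is absorbed by $\|f\|_{Y_T}$ with a dimensional constant. After rescaling, the local piece becomes the volume potential on the large Euclidean ball with nearly-flat metric $\tilde g$ and rescaled right-hand side $\tilde f$. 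Here one applies the classical interior parabolic Schauder estimate for fourth-order parabolic operators with $C^{\alpha}$ coefficients on $\bR^n$ (as in Solonnikov and Eidelman, cf.\ \cite{Ei,Fr}), treating $\tilde g$ as a small $C^{3,\alpha}$-perturbation of the flat metric. Since the constant in this Euclidean Schauder estimate depends only on $n$, $\alpha$, and the $C^{3,\alpha}$ bound on $\tilde g$ (which is controlled by $Q$), this yields $\|V[\tilde f]\|_{X_1(\tilde g)}\leq C_n \|\tilde f\|_{Y_1(\tilde g)}$. Undoing the rescaling and summing the local contributions over a fixed dimensional number of overlapping charts produces the desired bound.

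The main technical difficulty I anticipate is not the Schauder estimate itself, but a clean passage between the global volume potential on $M$ and the local Euclidean analysis. One has to verify term by term in the definitions of $X_T$ and $Y_T$ that (i) the truncation error from cutting off outside the harmonic ball is genuinely controlled by the exponential tail of the biharmonic heat kernel at scale $t^{1/4}$, in particular for the Hölder-in-time semi-norm where one must compare times $t$ and $t+h$ with $0<h<T-t$, and (ii) the weighted norms transform under the parabolic rescaling as claimed, including the mixed space-time Hölder pieces and the interpolation equivalence in the preceding lemma. Once (i) and (ii) are in place, the scale invariance converts the locally-flat Schauder estimate into a uniform global one and yields the dimensional constant $C_n$.
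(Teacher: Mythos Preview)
Your identification of scale invariance of the weighted norms as the crucial mechanism is exactly right, and matches the paper's Proposition \ref{prop703}. However, your implementation via splitting the \emph{kernel} has a genuine gap. The ``local piece'' $\int_0^t\int_{B_{r_1/2}(x)} b_g(x,y;t-s)f(y;s)\,dV_g\,ds$ is \emph{not} a volume potential for any local operator: it still uses the global biharmonic heat kernel $b_g$ of $(M,g)$, and after rescaling you do not obtain the volume potential of a nearly-flat Euclidean problem to which you could apply a Schauder estimate. You would need to compare $b_g$ to a local parametrix and control the remainder, but the constants governing that remainder (and likewise the constants $C,\delta$ in Theorem \ref{thm301} that you invoke for the far piece) depend on $g$ in ways not controlled by $\|\mathrm{Ric}\|_{C^{1,\alpha}}$ and $i_0$ alone; see the remark following Theorem \ref{thm701}. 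So neither piece yields a bound with the claimed dependence of $T_g$ and $C_n$.

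The paper avoids this entirely by working at the level of the \emph{equation}, not the kernel. It cuts off the solution $u=V[f]$ in a harmonic chart to $v=\eta u$, writes $(\partial_t+\Delta^2)v=\eta f + (\Delta^2-\Delta_g^2)(\eta u)+[\Delta_g^2,\eta]u=:F$ with the \emph{flat} Euclidean bilaplacian, and applies the Euclidean weighted estimate $\|v\|_{X_T^0}\leq C_n\|F\|_{Y_T^0}$ (Proposition \ref{prop703}). The error terms in $F$ carry either a factor $(Q-1)$ from $\Delta^2-\Delta_g^2$ or factors $(r_1^{-1}T^{1/4})^k$ from derivatives of $\eta$ and $g$; choosing $Q$ close to $1$ (a dimensional choice) and then $T$ small relative to $r_1$ makes these absorbable into the left side. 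This never touches the $g$-dependent heat kernel constants, which is why the final constant is purely dimensional. Your rescaling intuition is essentially equivalent to tracking the combination $r_1^{-1}T^{1/4}$, but the correct place to localize is the solution via the PDE, not the kernel.
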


Next we'll prove Theorem $\ref{thm701}$ first and then Theorem $\ref{thm702}$ in following subsections.  

\begin{proof}
The proof goes somewhat towards verifying the validation of the volume potential $V[f]$. This proof follows a similar idea in proving the standard Schauder estimate but instead we work on the weighted parabolic H\"older spaces. To begin with, we have by the estimates of $b$ in Theorem $\ref{thm301}$, for some appropriate constants $C,\delta >0$ depending on $g,T$, 
\begin{equation}
\begin{split}
& | V[f] (x; t) | \\
 \leq &C \|f\|_{Y_T}\int_0^t s^{-\frac{1}{2}} \big( \int_M (t-s)^{-\frac{n}{4}} \exp\{- \delta \big((t-s)^{-\frac{1}{4}} \rho(x,y)\big)^{\frac{4}{3}}\} \mathrm d V_g(y) \big)  \mathrm d s. \\
 \leq & C' t^{\frac{1}{2}} \|f\|_{Y_T}.
\end{split}
\end{equation}
Last step is because
\begin{equation}
\begin{split}
&  \int_M (t-s)^{-\frac{n}{4}} \exp\{- \delta  \big((t-s)^{- \frac{1}{4}} \rho(x, y)\big)^{\frac{4}{3}} \} \mathrm d V_g(y)\\
 \leq & C \big\{ \int_{ B_{r_0}(0) } (t-s)^{-\frac{n}{4}}  \exp\{-  2 ^{-\frac{4}{3}}\delta  \big((t-s)^{- \frac{1}{4}}|w|\big)^{\frac{4}{3}} \} \mathrm d w\\
  &+ ( t- s)^{-\frac{n}{4}} \exp\{- \delta \big((t-s)^{- \frac{1}{4}} r_0\big)^{\frac{4}{3}} \} (\int_{M - B_{r_0}(x, g)} \mathrm d V_g)   \big\} <\infty. 
\end{split}
\end{equation}
For similar reasons, we have for $0 \leq k \leq 3$,
\begin{equation}\label{eqn7002}
\nabla^k V[f] (x; t) = \int_0^t \int_M \nabla^k b_g(x, y; t-s) f(y; s) \mathrm d V_g(y) \mathrm d s,
\end{equation}
and
\begin{equation}
\|\nabla^{k} V[f](t)\|_{C^0(M)} \leq t^{\frac{1}{2} - \frac{k}{4}} C_k \|f\|_{Y_T}, 
\end{equation}
for some constant $C_k$ depending on $g, T$ and $k$. 

While for $k=4$, formula $(\ref{eqn7002})$ doesn't hold simply because $\nabla^4 b (x, \cdot; t- \cdot)$ may not be integrable on $M \times (0,t)$. Instead, for $h \ll 1$ we compute in local coordinates, 
\begin{equation}\label{eqn7003}
\begin{split}
& \nabla^3 V[f] (x + h e_l; t ) - \nabla^3 V[f](x; t) \\
= & \int_0^t \int_M \int_0^h \frac{\partial }{\partial x_l} \nabla^3 b_g(x+ \tau e_l , y; t-s)  f(y; s) \mathrm d \tau \mathrm d V_g(y) \mathrm ds, \\
= & \int_0^t \int_M \int_0^h \frac{\partial }{\partial x_l} \nabla^3 b_g(x+ \tau e_l , y; t-s) \big(  f(y; s) - f(x+ \tau e_l; s) \big) \mathrm d \tau \mathrm d V_g(y) \mathrm ds \\
& +  \int_0^t \int_M \int_0^h \frac{\partial }{\partial x_l} \nabla^3 b_g(x+ \tau e_l , y; t-s) f(x+ \tau e_l;s )  \mathrm d \tau \mathrm d V_g(y) \mathrm ds. 
\end{split}
\end{equation}
Regarding to each term above we have 
\begin{equation} \label{eqn7005}
\begin{split}
& \int_0^t \int_M  \big| \frac{\partial }{\partial x_l}  \nabla^3 b_g(x+ \tau e_l , y; t-s) \big(  f(y; s) - f(x+ \tau e_l; s) \big) \big| \mathrm d V_g(y) \mathrm ds \\
\leq & C \|f\|_{Y_T} \int_0^t \int_M (t-s)^{-\frac{n+4}{4}} \exp\{- \delta \big((t-s)^{-\frac{1}{4}} \rho(x + \tau e_l ,y) \big)\}  s^{-\frac{1}{2}- \frac{\alpha}{4}} \rho(x+ \tau e_l, y)^\alpha \mathrm d V_g(y) \mathrm d s,\\
\leq &  C'  \|f\|_{Y_T} \int_0^t s^{-\frac{1}{2}- \frac{\alpha}{4}}  \int_M (t-s)^{-\frac{n+4- \alpha}{4}} \exp\{- \delta'  \big((t-s)^{-\frac{1}{4}} \rho(x + \tau e_l ,y) \big)\}   \mathrm d V_g(y) \mathrm d s,\\
\leq & C''  \|f\|_{Y_T} \int_0^t s^{-\frac{1}{2}- \frac{\alpha}{4}} (t-s)^{-\frac{4-\alpha}{4}} \mathrm d s, \\
\leq & C''' \|f\|_{Y_T}  t^{-\frac{1}{2}}.
\end{split}
\end{equation}
Also, by Theorem $\ref{thm302}$, in local coordinates $x \in V $ such that $\bar V \subset U$, we have
\begin{equation}\label{eqn7005}
\begin{split}
&\int_M \frac{\partial }{\partial x_l} \nabla^3 b_g(x+ \tau e_l , y; t-s)  \mathrm d V_g(y) \\
 =& \int_U (\frac{\partial }{\partial x_l} + \frac{\partial }{\partial y_l }) \nabla^3 b_g(x+ \tau e_l, y; t-s)  \mathrm  d V_g(y) +  \int_U  \nabla^3 b_g(x+ \tau e_l, y; t-s) \frac{\partial }{\partial y_l } ( \mathrm  d V_g(y) )\\
 & + \int_{\partial U} \nabla^3 b_g(x+ \tau e_l , y; t-s)  \mathrm d \sigma_l(y) + \int_{M - U } \frac{\partial }{\partial x_l} \nabla^3 b_g(x+ \tau e_l , y; t-s)  \mathrm d V_g(y). 
 \end{split} 
\end{equation}
Thus by Theorem $\ref{thm302}$, 
\begin{equation}
| \int_M \frac{\partial }{\partial x_l} \nabla^3 b_g(x+ \tau e_l , y; t-s)  \mathrm d V_g(y) | \leq  C (t-s)^{-\frac{3}{4}} + C \leq C (t-s)^{- \frac{3}{4}},
\end{equation}
when $t \ll 1$. Therefore
\begin{equation}\label{eqn7006}
\begin{split}
&\int_0^t | \big( \int_M  \frac{\partial }{\partial x_l} \nabla^3 b_g(x+ \tau e_l , y; t-s)    \mathrm d V_g(y) \big) f(x+ \tau e_l;s )| \mathrm ds\\
\leq & C \|f\|_{Y_T} \int_0^t (t-s)^{-\frac{3}{4}} s^{- \frac{1}{2}} ds \leq C'  \|f\|_{Y_T} t^{-\frac{1}{4}}. 
\end{split}
\end{equation}
Thus each integral in $(\ref{eqn7003})$ is integrable. By Fubini's theorem, we can change the orders of integrations and then
\begin{equation}
\begin{split}
& \nabla^3 V[f] (x + h e_l; t ) - \nabla^3 V[f](x; t) \\
= &\int_0^h \{ \int_0^t \int_M  \frac{\partial }{\partial x_l} \nabla^3 b_g(x+ \tau e_l , y; t-s) \big(  f(y; s) - f(x+ \tau e_l; s) \big) \mathrm d V_g(y) \mathrm ds \\
& +  \int_0^t \big( \int_M  \frac{\partial }{\partial x_l} \nabla^3 b_g(x+ \tau e_l , y; t-s)  \mathrm d V_g(y) \big) f(x+ \tau e_l;s )  \mathrm ds\} \mathrm d \tau . 
\end{split}
\end{equation}

In order to show that $\nabla^3 V[f] (x; t)$ is differentiable at $x$, it suffices to show that the integrand in last integral denoted as $P(x+ \tau e_l ;  t) \rightarrow P(x ; t)$ as $\tau \rightarrow 0$. In fact we will show below for any $x, x' \in M$
\begin{equation}\label{eqn7004}
\begin{split}
& | P(x; t) | \leq C \|f\|_{Y_T} t^{-\frac{1}{2}}, \\
& | P(x ; t) - P(x' ; t) | \leq C\|f\|_{Y_T} t^{- \frac{1}{2} - \frac{\alpha}{4}}  \rho(x, x' )^\alpha, 
\end{split}
\end{equation} 
for some constant $C>0$ depending on $g, T$ only. The first estimate in $(\ref{eqn7004})$ follows from repeating arguments in $(\ref{eqn7005})$ and $(\ref{eqn7006})$. As for the second estimate in $(\ref{eqn7004})$, we need to do some extra work. Without loss of generality, we can assume that $\rho(x, x') \ll 1$ and thus we can work within a single local coordinate chart $V$ such that $\bar V \subset U$. Set $r = |x-x'|$ and $\xi = \frac{x+x'}{2}$. Compute
\begin{equation}
\begin{split}
 & P(x ; t) - P(x' ; t) \\
 = & \int_0^t \int_{B_{r}(\xi)}  \frac{\partial }{\partial x_l} \nabla^3 b_g(x , y; t-s) \big(  f(y; s) - f(x; s) \big) \mathrm d V_g(y) \mathrm ds \\
 &-  \int_0^t \int_{B_{r}(\xi)}  \frac{\partial }{\partial x_l} \nabla^3 b_g(x' , y; t-s) \big(  f(y; s) - f(x'; s) \big) \mathrm d V_g(y) \mathrm ds \\
 &+  \int_0^t \int_{M - B_{r}(\xi)}  \big[ \frac{\partial }{\partial x_l} \nabla^3 b_g(x , y; t-s) \big(  f(y; s) - f(x; s) \big) \\
 & - \frac{\partial }{\partial x_l} \nabla^3 b_g(x' , y; t-s) \big(  f(y; s) - f(x' ; s) \big)  \big] \mathrm d V_g(y) \mathrm ds\\
 &+ \int_0^t \big[ \big( \int_M  \frac{\partial }{\partial x_l} \nabla^3 b_g(x , y; t-s)  \mathrm d V_g(y) \big) f(x;s ) \\
 &- \big( \int_M  \frac{\partial }{\partial x_l} \nabla^3 b_g(x' , y; t-s)  \mathrm d V_g(y) \big) f(x' ;s ) \big]   \mathrm ds, \\
 := & P_1 + P_2 + P_3+ P_4.  
 \end{split}
\end{equation}

Next we estimate each of these $P_i$'s. For $P_1$, we break it into two integrals from $0$ to $\frac{t}{2}$ and from $\frac{t}{2}$ to $t$ and then
\begin{equation}
\begin{split}
|P_1| \leq & C\|f\|_{Y_T} t^{-\frac{1}{2} - \frac{\alpha}{4}}  \int_{B_{2r}(x)}  |x-y|^\alpha \int_{\frac{t}{2}}^{t}  (t-s)^{- \frac{n+4}{4}} \exp\{- \delta \big((t-s)^{-\frac{1}{4}} |x-y|\big)^{\frac{4}{3}}\}  \mathrm ds \mathrm d y \\
 & + C\|f\|_{Y_T} t^{-1} \int_0^{\frac{t}{2}} s^{-\frac{1}{2} -\frac{\alpha}{4}} \mathrm ds \int_{B_{2r}(x)}\big((t-s)^{-\frac{1}{4}}|x-y|\big)^n \exp\{- \delta \big((t-s)^{-\frac{1}{4}} |x-y|\big)^{\frac{4}{3}}\}  |x-y|^{\alpha- n}  \mathrm d y, \\
\leq &C\|f\|_{Y_T} t^{-\frac{1}{2} - \frac{\alpha}{4}}  \int_{B_{2r}(x)} |x-y|^{ \alpha - n} \mathrm d y \int_0^{+ \infty} \tau^{n-1} \exp\{- \delta \tau^{\frac{4}{3}}\}   \mathrm d \tau  \\
 & + C\|f\|_{Y_T} t^{-1} \int_0^{\frac{t}{2}} s^{-\frac{1}{2} -\frac{\alpha}{4}} \mathrm ds \int_{B_{2r}(x)}     |x-y|^{\alpha- n}  \mathrm d y. \\
\end{split}
\end{equation}
In the last step we used a change of variable from $s$ to $\tau$,  $\tau = (t-s)^{-\frac{1}{4}} |x-y|$ where $y \neq x$ is fixed when integrating w.r.t. $s$. Thus we have that 
\begin{equation}
|P_1| \leq C \|f\|_{Y_T} t^{-\frac{1}{2} -\frac{\alpha}{4}} r^\alpha. 
\end{equation}
$P_2$ satisfies the same estimate following exactly the same argument for $P_1$. 

For $P_3$, we can rewrite it as
\begin{equation}
\begin{split}
P_3 = & \int_0^t \int_{M - B_{r}(\xi)}  \big[  \big( \frac{\partial }{\partial x_l} \nabla^3 b_g(x , y; t-s) - \frac{\partial }{\partial x_l} \nabla^3 b_g(x' , y; t-s)\big) \big(  f(y; s) - f(x; s) \big) \\
 & +  \frac{\partial }{\partial x_l} \nabla^3 b_g(x' , y; t-s) \big(  f(x' ; s) - f(x ; s) \big)  \big] \mathrm d V_g(y) \mathrm ds, \\ 
 = & \int_0^t \int_{M - B_{r}(\xi)}  \big[  \big( \frac{\partial }{\partial x_l} \nabla^3 b_g(x , y; t-s) - \frac{\partial }{\partial x_l} \nabla^3 b_g(x' , y; t-s)\big) \big(  f(y; s) - f(x; s) \big) \mathrm d V_g(y) \mathrm ds\\
 & + \int_0^t  \big( \int_{M - B_{r}(\xi)} \frac{\partial }{\partial x_l} \nabla^3 b_g(x' , y; t-s) \mathrm d V_g(y) \big) \big(  f(x' ; s) - f(x ; s) \big)   \mathrm ds, \\
 :=& P_{3a} + P_{3b}. 
\end{split}
\end{equation} 
One can break each integral above into two integrals on $U - B_{r}(\xi)$ and on $M - U$ respectively. Since $y \in M- U$ stays a fixed distance away(namely, $\text{dist}(V,\partial U)$) from $x$ and $x'$, one can check easily that the integral on $M-U$ in $P_{3a}$ and $P_{3b}$ satisfies estimate $(\ref{eqn7004})$. While for the integral on $U - B_{r}(\xi)$ of $P_{3a}$, notice that since $y \notin B_r(\xi)$, we have for any $\eta \in B_{\frac{r}{2}}(\xi)$, $|y - \eta|$ and $|y -\xi|$ are comparable, and more precisely we have $\frac{1}{2}|y- \xi| \leq |y- \eta| \leq \frac{3}{2} |y -\xi| $. Thus the absolute value of the integral on $U - B_{r}(\xi) $ in $P_{3a}$ is less or equal to
\begin{equation}
\begin{split}
& C \|f\|_{Y_T} \int_0^t \int_{|y -\xi| \geq r} s^{-\frac{1}{2} - \frac{\alpha}{4}}  (t-s)^{-\frac{n+5}{4}} |x-x'| \exp\{-\delta \big( (t-s)^{- \frac{1}{4}} |\xi- y|\big)^{\frac{4}{3}} \} |\xi- y|^\alpha \mathrm d y \mathrm ds\\
\leq & C \|f\|_{Y_T} |x-x'| \big\{\int_0^{\frac{t}{2}} \int_{|y -\xi| \geq r} s^{-\frac{1}{2} - \frac{\alpha}{4}}  (t-s)^{-1} \big((t-s)^{-\frac{1}{4}} |\xi- y| \big)^{n+1} \\
&  \times \exp\{-\delta \big( (t-s)^{- \frac{1}{4}} |\xi- y|\big)^{\frac{4}{3}} \} |\xi- y|^{\alpha -(n+1)} \mathrm d y \mathrm ds \\
& + t^{-\frac{1}{2} - \frac{\alpha}{4}}  \int_{|y-\xi| \geq r} |\xi- y|^\alpha \int_{\frac{t}{2}}^{t} (t-s)^{-\frac{n+5}{4}}  \exp\{-\delta \big( (t-s)^{- \frac{1}{4}} |\xi- y|\big)^{\frac{4}{3}} \} \mathrm ds \mathrm d y  \big\}\\
\leq & C \|f\|_{Y_T}  r \big(  t^{-1} \int_0^{\frac{t}{2}} s^{-\frac{1}{2} - \frac{\alpha}{4}} \mathrm ds  \int_{|w| \geq r} |w |^{\alpha -(n+1)} \mathrm d w \\
& + t^{-\frac{1}{2} - \frac{\alpha}{4}} \int_{|w|\geq r} | w |^{\alpha - (n+1)} \mathrm d w \int_0^{+\infty} \tau^{n} \exp\{- \delta \tau^{\frac{4}{3}}\} \mathrm d \tau \big) \leq  C \|f\|_{Y_T} t^{-\frac{1}{2} -\frac{\alpha}{4}} r^{\alpha}. 
\end{split}
\end{equation}
The absolute value of the integral on $U - B_r(\xi)$ of $P_{3b}$ is less or equal to
\begin{equation}
\begin{split}
& C  r^\alpha \|f\|_{Y_T} \int_0^\frac{t}{2}  (t-s)^{-1} s^{-\frac{1}{2} -\frac{\alpha}{4}} \int_{U} (t-s)^{-\frac{n}{4}} \exp\{ - \delta ((t-s)^{-\frac{1}{4}} \rho(x', y))^{\frac{4}{3}}\} \mathrm d V_{g}(y)   \mathrm ds\\
&+ C r^{\alpha}\|f\|_{Y_T} t^{-\frac{1}{2} - \frac{\alpha}{4}} \int_{\frac{t}{2}}^t  \big| \int_{U - B_{r}(\xi)} \frac{\partial }{\partial x_l} \nabla^3 b_g(x' , y; t-s) \mathrm d V_g(y) \big|   \mathrm ds
\end{split}
\end{equation}
We focus on the later integral. Compute 
\begin{equation}
\begin{split}
& \int_{U - B_{r}(\xi)} \frac{\partial }{\partial x_l} \nabla^3 b_g(x' , y; t-s) \mathrm d V_g(y) \\
= & \int_{U - B_{r}(\xi)} ( \frac{\partial }{\partial x_l} + \frac{\partial }{\partial y_l} ) \nabla^3 b_g(x' , y; t-s) \mathrm d V_g(y) +  \int_{U - B_{r}(\xi)}  \nabla^3 b_g(x' , y; t-s)  \frac{\partial }{\partial y_l} \big(\mathrm d V_g(y) \big)\\
& + \int_{\partial U} \nabla^3 b_g(x' , y; t-s) \mathrm d \sigma_l(y) + \int_{\partial B_{r}(\xi)} \nabla^3 b_g(x' , y; t-s) \mathrm d \sigma_l(y). 
\end{split}
\end{equation}
Thus
\begin{equation}
\begin{split}
& \int_{\frac{t}{2}}^{t } \big| \int_{U - B_{r}(\xi)} \frac{\partial }{\partial x_l} \nabla^3 b_g(x' , y; t-s) \mathrm d V_g(y) \big| \mathrm ds \\
\leq & C  \int_{\frac{t}{2}}^t \int_{U}(t-s)^{-\frac{n+3}{4}}  \exp\{ - \delta ((t-s)^{-\frac{1}{4}} \rho(x', y))^{\frac{4}{3}}\} \mathrm d V_{g}(y)   \mathrm ds \\
& +C \int_{\frac{t}{2}}^t \int_{\partial U} (t-s)^{-\frac{n+3}{4}}  \exp\{ - \delta ((t-s)^{-\frac{1}{4}} \text{dist}(V, \partial U))^{\frac{4}{3}}\} \mathrm d \sigma_l(y)   \mathrm ds\\
& + C \int_{\partial B_r(\xi)} \big( \int_{\frac{t}{2}}^t  (t-s)^{-\frac{n+3}{4}}  \exp\{ - \delta ((t-s)^{-\frac{1}{4}} |\xi- y|)^{\frac{4}{3}}\}     \mathrm ds \big)  \mathrm d \sigma_l (y)\\
\leq & C t^{\frac{1}{4}} + C t + C \int_{\partial B_r(\xi)} |\xi- y |^{- (n-1) } \mathrm d \sigma_l (y) \int_0^{+\infty} \tau^{n-2} \exp\{ - \delta \tau^{\frac{4}{3}}\} \mathrm d \tau \leq C' . 
\end{split}
\end{equation}
Therefore, we have the integral on $U-B_{r}(\xi)$ of $P_{3b}$ less or equal to
\begin{equation}
\begin{split}
C \|f\|_{Y_T} t^{-1} r^\alpha \int_0^{\frac{t}{2}} s^{-\frac{1}{2} - \frac{\alpha}{4}} \mathrm ds + C\|f\|_{Y_T} t^{-\frac{1}{2} -\frac{\alpha}{4}} r^\alpha \leq C \|f\|_{Y_T} t^{-\frac{1}{2} -\frac{\alpha}{4}} r^\alpha. 
\end{split}
\end{equation}
Therefore combining results above we have $|P_3| \leq C \|f\|_{Y_T} t^{-\frac{1}{2} -\frac{\alpha}{4}} r^\alpha$ for some constant $C>0$ depending on $g, T$. 

For $P_4$, we have by virtue of computation $(\ref{eqn7005})$,
\begin{equation}\label{eqn7102}
\begin{split}
& | \int_M \frac{\partial }{\partial x_l} \nabla^3 b_g(x , y; t-s)  \mathrm d V_g(y) | \leq C (t-s)^{-\frac{3}{4}}, \\
& | \int_M \frac{\partial }{\partial x_l} \nabla^3 b_g(x , y; t-s)  \mathrm d V_g(y) -  \int_M \frac{\partial }{\partial x_l} \nabla^3 b_g(x'  , y; t-s)  \mathrm d V_g(y)|\\
& \leq  C (t-s)^{-\frac{3+\alpha}{4}} |x- x'|^\alpha. 
\end{split}
\end{equation}
Also we have $|f(x; s)| \leq s^{-\frac{1}{2}} \|f\|_{Y_T}$ and $|f(x;s) - f(x';s)| \leq s^{-\frac{1}{2} -\frac{\alpha}{4}} \|f\|_{Y_T} |x- x'|^\alpha$ for $0< s <T$. Putting these estimates together in $P_4$, we get that
\begin{equation}
|P_4| \leq C t^{- \frac{1}{4} - \frac{\alpha}{4}} \|f\|_{Y_T} r^\alpha \leq C  t^{- \frac{1}{2} - \frac{\alpha}{4}} \|f\|_{Y_T} r^\alpha. 
\end{equation}
Thus combining results of $P_1, P_2, P_3, P_4$, we have shown that for $x,x' \in V$ with $\rho(x,x') \ll 1$ 
\begin{equation}
|P(x;t) - P(x' ; t) | \leq C t^{-\frac{1}{2}  - \frac{\alpha}{4}}  \|f\|_{Y_T} r^\alpha. 
\end{equation}
Consequently, we have $V[f]$ is differentiable w.r.t. $x$ up to fourth order and 
\begin{equation}
\begin{split}
\nabla^4 V[f] (x; t) = & \int_0^t \int_M   \nabla^4 b_g(x , y; t-s) \big(  f(y; s) - f(x; s) \big) \mathrm d V_g(y) \mathrm ds \\
&+  \int_0^t \big( \int_M  \nabla^4 b_g(x, y; t-s)  \mathrm d V_g(y) \big) f(x;s )  \mathrm ds. 
\end{split}
\end{equation}
Moreover, we have that $ \sup_{t \in (0,T)} \big( t^{\frac{1}{2}} \|\nabla^4 V[f](t)\|_{C^0(M)}+ t^{\frac{1}{2} +\frac{\alpha}{4}}\big[\nabla^4 V[f](t)\big]_{C^\alpha(M)} \big) \leq C \|f\|_{Y_T}$ for some constant $C>0$ depending on $g,T$. 

Next we will prove that $\nabla^4 V[f](x, t)$ is H\"older continuous w.r.t. $t$. Consider for $0< h \ll t$ say $h < \min\{\frac{t}{2}, T - t\}$, 
\begin{equation}
\begin{split}
& \nabla^4 V[f] (x; t+h ) - \nabla^4 V[f] (x; t) \\
=  & \int_t^{t+h} \int_M \nabla^4 b_g(x,y; t+ h -s ) \big( f(y; s) - f(x;s) \big) \mathrm d V_g(y)   \mathrm ds  \\
& + \int_t^{t+h}  \big( \int_M \nabla^4 b_g(x,y; t+ h -s )  \mathrm d V_g(y) \big) f(x;s)    \mathrm ds \\
& + \int_0^h  \big\{ \int_0^t \int_M  \frac{\partial}{\partial t } \nabla^4 b_g(x , y; t+\tau-s) \big(  f(y; s) - f(x; s) \big) \mathrm d V_g(y) \mathrm ds \\
&+  \int_0^t \big( \int_M  \frac{\partial }{\partial t} \nabla^4 b_g(x, y; t+ \tau -s)  \mathrm d V_g(y) \big) f(x;s )  \mathrm ds \big\} \mathrm d \tau\\
:= & J_1 + J_2+J_3.
\end{split}
\end{equation}

We have 
\begin{equation}
\begin{split}
|J_1| &  = |\int_{0}^h \int_M \nabla^4 b_g(x,y; \tau ) \big( f(y; t+h - \tau) - f(x;t+h -\tau) \big) \mathrm d V_g(y)   \mathrm d\tau| \\
& \leq C \|f\|_{Y_T} \int_0^h \int_M \tau^{-\frac{n+4}{4}} \exp\{ - \delta (\tau^{-\frac{1}{4}} \rho(x, y))^{\frac{4}{3}}\} t^{-\frac{1}{2}- \frac{\alpha}{4}} \rho(x, y)^\alpha \mathrm dV_g(y) \mathrm d \tau\\
& \leq C\|f\|_{Y_T}  t^{-\frac{1}{2}- \frac{\alpha}{4}} \int_0^h \int_M \tau^{-\frac{n+4- \alpha}{4}} \exp\{ - \delta' (\tau^{-\frac{1}{4}} \rho(x, y))^{\frac{4}{3}}\}  \mathrm dV_g(y) \mathrm d \tau\\
& \leq C \|f\|_{Y_T}  t^{-\frac{1}{2}- \frac{\alpha}{4}} \int_0^h \tau^{-\frac{4- \alpha}{4}}  \mathrm d \tau \leq C \|f\|_{Y_T}  t^{-\frac{1}{2}- \frac{\alpha}{4}} h^{\frac{\alpha}{4}}. 
\end{split}
\end{equation}
By virtue of $(\ref{eqn7102})$, 
\begin{equation}
|J_2| \leq C \|f\|_{Y_T} \int_{t}^{t+h} (t+h-s)^{-\frac{3}{4}} s^{-\frac{1}{2}} \mathrm ds \leq C \|f\|_{Y_T} t^{- \frac{1}{2}} h^{\frac{1}{4}} \leq C\|f\|_{Y_T} t^{-\frac{1}{2} -\frac{\alpha}{4}} h^{\frac{\alpha}{4}}. 
\end{equation}
For $J_3$, we have
\begin{equation}
\begin{split}
|J_3| \leq & C\|f\|_{Y_T} \int_0^h \big\{ \int_{0}^{t} \int_M (t+\tau -s)^{- \frac{n+8}{4}} \exp\{ - \delta \big((t+\tau -s)^{-\frac{1}{4}} \rho(x, y)\big)^{\frac{4}{3}} \} s^{-\frac{1}{2}- \frac{\alpha}{4}} \rho(x, y)^\alpha \mathrm d V_g(y) \mathrm ds\\
&   + \int_{0}^t \int_M (t+\tau -s)^{-\frac{n+7}{4}} \exp\{ - \delta \big((t+\tau -s)^{-\frac{1}{4}} \rho(x, y)\big)^{\frac{4}{3}} \} s^{-\frac{1}{2}} \mathrm d V_g(y) \mathrm d s \big\} \mathrm d \tau \\
\leq & C\|f\|_{Y_T} \int_0^h \big\{ \int_{0}^{t} \int_M (t+\tau -s)^{- \frac{n+8-\alpha}{4}} \exp\{ - \delta'  \big((t+\tau -s)^{-\frac{1}{4}} \rho(x, y)\big)^{\frac{4}{3}} \} s^{-\frac{1}{2}- \frac{\alpha}{4}}  \mathrm d V_g(y) \mathrm ds\\
&   + \int_{0}^t \int_M (t+\tau -s)^{-\frac{n+7}{4}} \exp\{ - \delta \big((t+\tau -s)^{-\frac{1}{4}} \rho(x, y)\big)^{\frac{4}{3}} \} s^{-\frac{1}{2}} \mathrm d V_g(y) \mathrm d s \big\} \mathrm d \tau \\
\leq & C\|f\|_{Y_T} \int_0^h \big\{ \int_{0}^{t} (t+\tau -s)^{- \frac{8-\alpha}{4}} s^{-\frac{1}{2}- \frac{\alpha}{4}}  \mathrm ds   + \int_{0}^t  (t+\tau -s)^{-\frac{7}{4}} s^{-\frac{1}{2}} \mathrm d s \big\} \mathrm d \tau \\
\leq & C \|f\|_{Y_T} (t^{-\frac{8-\alpha}{4}} t^{\frac{1}{2} -\frac{\alpha}{4}} h + t^{-\frac{1}{2}- \frac{\alpha}{4}} h^{\frac{\alpha}{4}} + t^{-\frac{7}{4}} t^{\frac{1}{2}} h + t^{-\frac{1}{2}} h^{\frac{1}{4}})\\
\leq & C\|f\|_{Y_T} (t^{-\frac{3}{2}} h + t^{-\frac{1}{2} -\frac{\alpha}{4}} h^{\frac{\alpha}{4}}) \leq C \|f\|_{Y_T}  t^{-\frac{1}{2} -\frac{\alpha}{4}} h^{\frac{\alpha}{4}}
\end{split}
\end{equation}

Thus we can get that for $0<h \ll t$ say $h < \min\{\frac{t}{2}, T - t\}$
\begin{equation}
\begin{split}
& | \nabla^4 V[f] (x; t+h ) - \nabla^4 V[f] (x; t) | \\
\leq & |J_1| +|J_2|+|J_3| \leq C \|f\|_{Y_T} t^{-\frac{1}{2} - \frac{\alpha}{4}} h^{\frac{\alpha}{4}}. 
\end{split}
\end{equation}

Therefore 
\begin{equation}
\begin{split}
&\sup_{(x, t) \in M \times (0,T)}\sup_{0< h < T-t} t^{\frac{1}{2} + \frac{\alpha}{4}}\frac{|\nabla^4 V [f] (x; t+h) - \nabla^4 V[f] (x;t)|_g}{|h|^{\frac{\alpha}{4}}}\\
\leq & \sup_{(x, t) \in M \times (0,T)}\sup_{0< h < \min\{\frac{t}{2}, T-t\}} t^{\frac{1}{2} + \frac{\alpha}{4}}\frac{|\nabla^4 V [f] (x; t+h) - \nabla^4 V[f] (x;t)|_g}{|h|^{\frac{\alpha}{4}}} \\
&+ \sup_{(x, t) \in M \times (0,T)}\sup_{\min\{\frac{t}{2}, T-t\} < h < T-t} t^{\frac{1}{2} + \frac{\alpha}{4}}\frac{|\nabla^4 V [f] (x; t+h) - \nabla^4 V[f] (x;t)|_g}{|h|^{\frac{\alpha}{4}}} \\
\leq & C \|f\|_{Y_T} +  C \sup_{(x, t) \in M \times (0,T)}\sup_{\min\{\frac{t}{2}, T-t\} < h < T-t} t^{\frac{1}{2}} (|\nabla^4 V[f](x; t+h)|_g + |\nabla^4 V[f](x; t)|_g),\\
\leq& C \|f\|_{Y_T}
\end{split}
\end{equation}

Lastly, we will show that 
\begin{equation}\label{eqn7100}
\begin{split}
\frac{\partial}{\partial t} V[f](x; t)  = & f(x; t) + \int_0^t \int_M  \frac{\partial}{\partial t} b_g(x , y; t-s) \big(  f(y; s) - f(x; s) \big) \mathrm d V_g(y) \mathrm ds \\
&+  \int_0^t \big( \int_M  \frac{\partial}{\partial t} b_g(x, y; t-s)  \mathrm d V_g(y) \big) f(x;s )  \mathrm ds.
\end{split}
\end{equation}
Suppose the formula above is true. As a consequence, we have that $\frac{\partial}{\partial t} V[f](x; t)  + \Delta_g^2 V[f](x; t) = f(x;t)$ holds in the classical sense and then the estimates on H\"older norms of $\frac{\partial }{\partial t } V[f]$ w.r.t. $x$ and $t$ follow from estimates on $\nabla^4 V[f]$ above. Thus we have $V[f] \in X_T$ and $\|V[f]\|_{X_T} \leq C_{g,T} \|f\|_{Y_T}$. 

Thus, next we focus on proving the formula $(\ref{eqn7100})$ for $\frac{\partial}{\partial t} V[f] $. Consider for $0< h \ll t$,
\begin{equation}
\begin{split}
 & \frac{V[f](x; t+h) - V[f](x;t)}{h} \\
 = &  \frac{1}{h} \int_t^{t+h} \int_M b_g(x, y; t+h-s) f(y; s) \mathrm dV_g(y) \mathrm d s   \\
 &+ \int_0^t \int_M \frac{1}{h} \big(b_g(x , y; t+h-s) - b_g(x, y; t-s) \big) \big(  f(y; s) - f(x; s) \big) \mathrm d V_g(y) \mathrm ds \\
&+  \int_0^t \big( \int_M  \frac{1}{h} \big(b_g(x , y; t+h-s) - b_g(x, y; t-s) \big)  \mathrm d V_g(y) \big) f(x;s )  \mathrm ds.
\end{split}
\end{equation}
As $h \rightarrow 0^+$, the later two integrals above(from $0$ to $t$) go to the later two integrals in formula $(\ref{eqn7100})$ respectively. One can easily prove this using the mean value theorem and also the continuity of the resulting integrand w.r.t $t$ by rewriting $\frac{\partial }{\partial t} b = - \Delta_g^2 b$. While for the first integral in $(\ref{eqn7100})$ we shall see it goes to $f(x;t)$ as $h \rightarrow 0^+$. This is because by a change of variable it equals to
\begin{equation}
\begin{split}
& \frac{1}{h} \int_0^h \int_M b_g(x, y; \tau) f(y; t+h - \tau) \mathrm d V_g(y) \mathrm d \tau, \\
=  & \frac{1}{h} \int_0^h \int_M b_g(x, y; \tau) f(y; t) \mathrm d V_g(y) \mathrm d \tau \\
&+ \frac{1}{h} \int_0^h \int_M b_g(x, y; \tau) \big( f(y; t+h - \tau) - f(y; t) \big) \mathrm d V_g(y) \mathrm d \tau, \\
=& f(x; t) + o(1),
\end{split}
\end{equation}
as $h \rightarrow 0^+$. This ends the proof of Theorem $\ref{thm701}$. 

\end{proof}

\subsection{Uniform estimate on nonhomogeneous equations when $T$ is small.} \label{sec3.4}

In this subsection, we'll prove Theorem $\ref{thm702}$. The idea of its proof is to compare the solution to the biharmonic heat equation on manifolds locally with the solution to the biharmonic heat equation on $\bR^n$ via appropriate cutoffs . 

First of all, let's study the properties of the biharmonic heat equation on $\bR^n$. On $\bR^n$, we can introduce Banach spaces $X_T^0, Y_Y^0$ of functions on $\bR^n \times (0,T)$ similar to spaces $X_T, Y_T$ of functions on $M \times (0,T)$ defined in the previous subsection(One just replaces $M$ with $\bR^n$ and uses the Euclidean metric instead of metric $g$ on $M$ in the definition of $X_T, Y_T$.). We have the following proposition.

\begin{prop}\label{prop703}
Given $f \in Y_T^0$, there exists a unique solution $u \in X_T^0$ to the biharmonic heat equation on $\bR^n$, namely $(\frac{\partial }{\partial t} + \Delta^2 ) u = f$ where $\Delta$ is the standard Laplacian on $\bR^n$. Moreover, there exists a dimensional constant $C_n >0$ such that $$\|u\|_{X_T^0} \leq C_n \|f\|_{Y_T^0}. $$
\end{prop}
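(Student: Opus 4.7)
My plan is to construct the solution as the Euclidean volume potential, establish a Schauder-type estimate by running the proof of Theorem~\ref{thm701} in the flat setting, and then upgrade the constant to a purely dimensional one by parabolic scaling.

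First, I would define $u := V[f]$ by
$$u(x,t) = \int_0^t \int_{\mathbb{R}^n} b_0(x,y; t-s)\, f(y;s)\, dy\, ds,$$
where $b_0$ is the Euclidean biharmonic heat kernel, explicit up to a dimensional profile and satisfying Gaussian-type derivative bounds with purely dimensional constants. I would then run through the proof of Theorem~\ref{thm701} verbatim with $M = \mathbb{R}^n$ and $g$ the Euclidean metric. In this setting the argument simplifies considerably: one works in a single global coordinate chart, so the boundary integrals over $\partial U$ and the far-field contributions over $M\setminus U$ that appear throughout the proof of Theorem~\ref{thm701} simply vanish, and every remaining estimate reduces to a Gaussian-type integral against $b_0$ whose constant depends only on $n$ and $T$. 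This yields $V[f]\in X_T^0$, the identity $(\partial_t + \Delta^2)V[f] = f$ in the classical sense, and an inequality
$$\|V[f]\|_{X_T^0} \leq C(T,n)\, \|f\|_{Y_T^0}.$$

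Second, I would eliminate the $T$-dependence by parabolic scaling. For $\lambda > 0$ set $\tilde u(x,t) := u(\lambda x, \lambda^4 t)$ and $\tilde f(x,t) := \lambda^4 f(\lambda x, \lambda^4 t)$, so that $(\partial_t + \Delta^2)\tilde u = \tilde f$. A direct term-by-term check, exploiting precisely the weights $t^{-1/2+k/4}$ on $\nabla^k u$, $t^{1/2+\alpha/4}$ on $[\nabla^4 u]_{C^\alpha}$ and $\partial_t u$, $t^{1/2}$ on $f$, and $t^{1/2+\alpha/4}$ on $[f]_{C^\alpha}$, shows that each seminorm transforms by a uniform factor $\lambda^2$:
$$\|\tilde u\|_{X_T^0} = \lambda^2 \|u\|_{X_{\lambda^4 T}^0}, \qquad \|\tilde f\|_{Y_T^0} = \lambda^2 \|f\|_{Y_{\lambda^4 T}^0}.$$
Applying the previous inequality to $(\tilde u, \tilde f)$ and canceling the common $\lambda^2$ gives $\|u\|_{X_{\lambda^4 T}^0} \leq C(T,n)\, \|f\|_{Y_{\lambda^4 T}^0}$ for every $\lambda, T > 0$. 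Since $\lambda, T$ are free while the inequality depends only on the product $T' := \lambda^4 T$, fixing any $T_0 > 0$ and letting $\lambda$ vary shows that the constant $C(T_0, n) = C_n$ works uniformly for every $T'$, and is thus a purely dimensional constant.

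Uniqueness in $X_T^0$ follows from a spatially cut-off energy estimate: if $v\in X_T^0$ solves $(\partial_t + \Delta^2)v = 0$, then $\|v(t)\|_{C^0(\mathbb{R}^n)}\leq C t^{1/2}\to 0$ as $t\to 0^+$, so multiplying by $v\chi_R^2$ for a smooth spatial cutoff $\chi_R$, integrating over $\mathbb{R}^n\times(0,t)$, and using the weighted bounds on $\nabla^k v$ for $k\le 4$ to control the commutator terms as $R\to\infty$, one concludes $v\equiv 0$. The main obstacle is really the bookkeeping in the first two steps: verifying that each boundary and far-field term in the proof of Theorem~\ref{thm701} vanishes or contributes only a dimensional constant on $\mathbb{R}^n$, and that every seminorm in $X_T^0, Y_T^0$ scales by the same power of $\lambda$. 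Neither is conceptually deep, but the reduction to $C_n$ is the whole point of having designed the weights to respect parabolic scaling.
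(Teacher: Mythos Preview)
Your proposal is correct and matches the paper's approach essentially line for line: construct $u$ as the Euclidean volume potential, invoke the computations of Theorem~\ref{thm701} in the flat setting to obtain $\|u\|_{X_T^0}\le C(T,n)\|f\|_{Y_T^0}$, and then remove the $T$-dependence by parabolic scaling. The paper parametrizes the scaling slightly differently, setting $u_\lambda(x,t)=\lambda^{-1/2}u(\lambda^{1/4}x,\lambda t)$ and $f_\lambda(x,t)=\lambda^{1/2}f(\lambda^{1/4}x,\lambda t)$ so that the norms are exactly preserved ($\|u_\lambda\|_{X_T^0}=\|u\|_{X_{\lambda T}^0}$ rather than picking up a common $\lambda^2$), but this is only a cosmetic difference from your version.
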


\begin{proof}
The existence and uniqueness of solution $u \in X_T^0$ follows the same argument we did in last subsection. Actually, in this simpler case on $\bR^n$, one can derive an explicit formula of the biharmonic heat kernel with respect to the Euclidean metric directly using the Fourier transformation.  

We want to point out that the constant $C_n >0$ is independent of $T$. This is because of the scaling invariance property of the biharmonic heat equation on $\bR^n$. Given any $\lambda >0$, we define $u_\lambda $ and $f_\lambda$ as
\begin{equation}
u_\lambda (x; t) = \lambda^{-\frac{1}{2}} u(\lambda^{\frac{1}{4}} x ; \lambda t), f_{\lambda} (x; t) = \lambda^{\frac{1}{2}}f(\lambda^{\frac{1}{4}}x; \lambda t). 
\end{equation}
One can check easily that 
\begin{equation}
\|u_{\lambda}\|_{X_{T}^0} = \|u \|_{X_{\lambda T}^0}, \|f_{\lambda}\|_{Y_{T}^0} = \|f\|_{Y_{\lambda T}^0}. 
\end{equation}
And moreover,
\begin{equation}
( \frac{\partial  }{\partial t} + \Delta^2 ) u_\lambda = f_\lambda \Leftrightarrow ( \frac{\partial  }{\partial t} + \Delta^2)  u = f. 
\end{equation}
As a consequence, for any $T>0$, $$\|u\|_{X_T^0 } = \|u_T\|_{X_1^0} \leq C_n \|f_T\|_{Y_1^0} = C_n \|f\|_{Y_T^0}. $$
This ends the proof of Proposition $\ref{prop703}$. 
\end{proof}

Next using Proposition $\ref{prop703}$ we will prove Theorem $\ref{thm702}$. 
\begin{proof}
Given $Q >1$, at any $p \in M$, we will work within the geodesic ball $B_{r_1}(p, g)$ under its local harmonic coordinate $\{x_i\}_{i=1}^n$ where $r_1 > 0$ is the lower bound of $C^{3,\alpha}$-harmonic radius of $(M, g)$ by Lemma $\ref{lem202}$. Suppose function $\eta \in C_0^\infty\big(B_{r_1}(p, g)\big)$ with $\eta =1$ on $B_{\frac{1}{2}r_1} (p, g)$. Denote $u = V[f] \in X_T$ to be the unique solution in $X_T$ of nonhomogeneous problem $(\ref{eqn7001})$ with right hand side $f \in Y_T$ by Theorem $\ref{thm701}$. Then we consider function $v(x; t) := \eta(x) u(x; t)$ in local harmonic coordinates $\{x_j\}_{j=1}^n$. By direct computation, we have
\begin{equation}\label{eqn7008}
( \frac{\partial }{\partial t} + \Delta^2 ) v= \eta f + \Delta^2 (\eta u) - \eta \Delta_g^2 u, 
\end{equation}
where $\Delta$ is the standard Laplacian given by $\sum_{i=1}^n \frac{\partial^2 }{\partial x_i^2}$. Denote $\nabla$ and subscript ``$,i$ " by covariant derivatives w.r.t. $x_i$ under metric $g$. Denote $D$ and subscript ``$i$" with no comma by ordinary derivatives in local coordinates $\{x_i\}_{i=1}^n$. Thus we can rewrite the right side of $(\ref{eqn7008})$ as
\begin{equation}
\begin{split}
F : =  & \eta f + \eta ( \delta^{ij}\delta^{kl} u_{ijkl} - g^{ij} g^{kl} u_{, ijkl} ) + \sum_{k=0}^3 D^{4-k} \eta \star D^{k} u, \\
= & \eta f + \eta(\delta^{ij} \delta^{kl} - g^{ij} g^{kl}) u_{,ijkl} + ( D g + D\eta) \star \nabla^3 u \\
&+ (D^2 g + Dg \star Dg+D^2 \eta + D \eta \star D g) \star \nabla^2 u \\
&+ (D^3 g + D^2 g\star D g + Dg \star D g \star D g + D^3 \eta  + D^2 \eta \star D g \\
&+ D\eta \star D^2 g + D\eta\star D g\star D g) \star \nabla u + D^4 \eta \star u,   
\end{split}
\end{equation}
where ``$\star$" abbreviates for multiplications of smooth functions with compact support and also for proper contractions. We have the bounds for any $1 \leq k \leq 3$
\begin{equation}
|D^k \eta| \leq C r_1^{-k}, |D^4 \eta| \leq C r_1^{-4}, |D^k g| \leq (Q-1) r_1^{-k}, 
\end{equation}
and
\begin{equation}
[D^4 \eta]_{C^\alpha} \leq C r_1^{-4 - \alpha}, [D^3 g]_{C^{\alpha}} \leq (Q-1) r_1^{-3 -\alpha}. 
\end{equation}
By direct computations, we have that 
\begin{equation}
\begin{split}
 \|F(t)\|_{C^0(\bR^n)}  \leq &  C_Q \big[\|f(t)\|_{C^0(M)} + (Q-1) \|\nabla^4 u(t)\|_{C^0(M)}+  \sum_{k=0}^3 r_{1}^{-(4-k)}\|\nabla^{k} u(t)\|_{C^0(M)}\big], \\
[F(t)]_{C^{\alpha}(\bR^n)}  \leq & C_Q \big[ r_{1}^{-\alpha} \|f(t)\|_{C^0(M)} + [f(t)]_{C^\alpha(M)} + (Q-1) [\nabla^4 u(t)]_{C^\alpha(M)} + r_1^{-\alpha}\|\nabla^4 u(t)\|_{C^0(M)}\\
&+ \sum_{k=0}^3( r_{1}^{-(4-k)- \alpha}\|\nabla^{k} u(t)\|_{C^0(M)} + r_{1}^{-(4-k) }[\nabla^{k} u(t)]_{C^\alpha(M)} )\big],\\
\end{split}
\end{equation}
and 
\begin{equation}
\begin{split}
& \sup_{(x,t) \in \bR^n \times (0,T) }\sup_{0<h<T-t} t^{\frac{1}{2}+ \frac{\alpha}{4}} \frac{|F(x; t+h) -F(x; t)|}{|h|^{\frac{\alpha}{4}}}\\
\leq & C_Q \big[\|f\|_{Y_T} + (Q-1) \|u\|_{X_T} +  \sum_{k=0}^3 r_1^{-(4-k)} T^{\frac{4-k}{4}} \|u\|_{X_T}\big].
\end{split}
\end{equation}
Thus we have
\begin{equation}\label{eqn7010}
\begin{split}
\|F\|_{Y_T^0} = & \sup_{t \in (0,T)} \big(t^{\frac{1}{2}} \|F(t)\|_{C^0(\bR^n )} + t^{\frac{1}{2}+ \frac{\alpha}{4}} [F(t)]_{C^\alpha(\bR^n)} \big) \\
& +  \sup_{(x,t) \in \bR^n \times (0,T) }\sup_{0<h<T-t} t^{\frac{1}{2}+ \frac{\alpha}{4}} \frac{|F(x; t+h) -F(x; t)|}{|h|^{\frac{\alpha}{4}}} \\
\leq & C_Q \big[ \big(1+ (r_1^{-1}T^{\frac{1}{4}})^{\alpha} \big)\|f\|_{Y_T} + (Q-1)\|u\|_{X_T} +(r_1^{-1} T^{\frac{1}{4}})^\alpha \|u\|_{X_T} \\
& + \big(\sum_{k=0}^3 (r_1^{-1}T^{\frac{1}{4}})^{4- k+\alpha} + (r_1^{-1} T^{\frac{1}{4}})^{4-k} \big)\|u\|_{X_T} \big] ,
\end{split}
\end{equation}
for some constant $C_Q>0$ bounded whenever $Q$ is bounded. Thus $F \in Y_{T}^0$.

Next we examine $v =\eta u$ closely. We have $v \in X_T^0$ since by computation, 
\begin{equation}
\begin{split}
&D v =( D \eta )u + \eta \star \nabla u,\\
& D^2 v  = (D^2 \eta) u +( D \eta + D g) \star  \nabla u + \eta \nabla^2 u,\\
& D^3 v = (D^3 \eta) u + (D^2 \eta + D^2 g + D \eta  \star D g + D  g \star D g) \star \nabla u \\
& + (D \eta + D g) \star \nabla^2 u +  \eta \nabla^3 u, \\
& D^4 v = (D^4 \eta) u  + ( D^3 \eta + D^2 \eta \star  Dg + D \eta \star D^2 g + D \eta \star Dg \star D g + D^3g \\
&+ D^2 g \star D g+ D g \star D g \star D g) \star  \nabla u + (D^2 \eta + D \eta \star D g) \star \nabla^2 u \\
&+ D \eta \star \nabla^3 u + \eta \nabla^4 u, \\
& \frac{\partial }{\partial t} v = \eta \frac{\partial  }{\partial t} u. 
\end{split}
\end{equation}
and consequently we have for some constant $C_Q >0$ depending on $Q$, 
\begin{equation}
\|v\|_{X_T^0} \leq C_Q \big(\sum_{k=0}^4 (r_1^{-1} T^{\frac{1}{4}})^k +( r_1^{-1}T^{\frac{1}{4}}  )^{k+\alpha} \big)\|u\|_{X_T} < \infty. 
\end{equation}
Therefore by Proposition $\ref{prop703}$, we have 
\begin{equation}\label{eqn7101}
\|v\|_{X_T^0} \leq C_n \|F\|_{Y_T^0}. 
\end{equation}

On the other hand, by our construction, $ v = u$ on $B_{\frac{1}{2}r_1} (p, g)$ since $\eta =1$ on $B_{\frac{1}{2}r_1} (p, g)$. Thus we have
\begin{equation}
\|u\|_{X_T\big(B_{\frac{1}{2}r_1}(p, g)\big)} = \|v\|_{X_T\big(B_{\frac{1}{2}r_1}(p, g)\big)}, 
\end{equation}
where the norm $\|\cdot\|_{X_T(U)}$ above is defined by simply replacing $M$ with $U \subset M$ in the definition of $\|\cdot \|_{X_T}$ on $M$. Since
\begin{equation}
\begin{split}
&\nabla v = Dv, \\
& \nabla^2 v = D^2 v + D g \star D v, \\
& \nabla^3 v = D^3 v + D g \star D^2 v + (D^2 g + D g \star D g) \star D v, \\
& \nabla^4 v = D^4 v + Dg \star D^3 v + ( D^2 g + Dg \star D g) \star D^2 v \\
& + ( D^3 g + D^2 g \star D g +  Dg \star Dg \star D g ) \star D v, 
\end{split}
\end{equation} 
we have that 
\begin{equation}\label{eqn7011}
\|u\|_{X_T\big(B_{\frac{1}{2}r_1}(p, g)\big)} = \|v\|_{X_T\big(B_{\frac{1}{2}r_1} (p, g)\big)} \leq C_Q(\sum_{k=0}^3 (r_1^{-1} T^{\frac{1}{4}})^k + (r_1^{-1} T^{\frac{1}{4}})^{k+\alpha} ) \|v\|_{X_T^0}
\end{equation}

Combining estimates $(\ref{eqn7010})$, $(\ref{eqn7101})$, $(\ref{eqn7011})$ and using a finite covering argument, we have 
\begin{equation}
\|u\|_{X_T} \leq C_Q (1+ \epsilon ) \big[ \|f\|_{Y_T} + (Q-1) \|u\|_{X_T} + \epsilon \|u\|_{X_T}\big]. 
\end{equation}
where $C_Q$ is a constant depending on $n$ and $Q >1$, bounded whenever $Q $ is bounded and $\epsilon = \epsilon(r_1^{-1}T^{\frac{1}{4}}) \rightarrow 0 $ as $r_1^{-1}T^{\frac{1}{4}} \rightarrow 0$. Therefore, we could simply choose $Q >1$ sufficiently close to $1$ such that 
$$ 2 C_Q (Q-1) < \frac{1}{4}.$$
Note after chosen $Q>1$, $r_1>0$ will be determined as in Lemma $\ref{lem202}$. Then we choose $T>0$ sufficiently small accordingly such that
\begin{equation}
\begin{split}
& \epsilon(r_1^{-1}T^{\frac{1}{4}}) < 1, \\
&2 C_Q\epsilon(r_1^{-1} T^{\frac{1}{4}}) < \frac{1}{4}. 
\end{split}
\end{equation} 
Thus $\|u\|_{X_T} \leq 4 C_Q \|f\|_{Y_T}$ by absorbing the right hand side term involving $\|u\|_{X_T}$ to the left. In fact, it is clear that $Q $ chosen as above will be a dimensional constant so does $C_Q$. Consequently we have $T = T_g$ chosen as above depends on the $C^{3,\alpha}$-harmonic radius of $(M, g)$, which eventually depends on geometry quantities $\|\text{Ric}(g)\|_{C^{1,\alpha}(M)}$, injectivity radius $i_0$ and dimension $n$ by Lemma $\ref{lem202}$. This ends the proof of Theorem $\ref{thm702}$. 

\end{proof}

\section{Short time existence of the Calabi flow.}\label{sec4}

Suppose $(M, g, J)$ is a smooth closed K\"ahler manifold of complex dimension $n$. In this section, we want to study the short time existence of the Calabi flow with initial data $u_0$ such that 
\begin{equation}\label{eqn8003}
\partial \bar \partial u_0 \in L^\infty(M) \text{ and } \big\||\partial \bar \partial u_0|_g\big\|_{L^\infty(M)} \ll 1.
\end{equation}
We'll prove the following theorem.

\begin{thm}\label{thm802}
There exists a dimensional constant $\delta > 0$ such that for any $u_0$ with $\big\||\partial \bar \partial u_0|_g \big\|_{L^\infty(M)} < \delta$ there exists a constant $T_{g, u_0}>0$ such that the Calabi flow with initial value $u_0$ admits a short time solution $\varphi$ on $M \times (0,T_{g, u_0})$. By short time solution with initial data $u_0$, we mean that $\varphi$ is continuously differentiable up to fourth order w.r.t. $x\in M$ and first order w.r.t. $t \in (0, T_{g, u_0})$ such that $\varphi$ satisfies $(\ref{eqn8001})$ and $\lim_{t \rightarrow 0^+ } \|\varphi(t) - u_0\|_{C^{1,\gamma}(M)}  = 0$ for any $\gamma \in (0,1)$. 

Moreover, we have $\varphi = S_g u_0 + \psi$ for some $\psi \in X_{T_{g, u_0}}$ and
$$\limsup_{t \rightarrow 0^+} \|\psi\|_{X_t} \leq C_n \big\||\partial \bar \partial u_0|_g\big\|_{L^\infty(M)}, $$ 
for some dimensional constant $C_n > 0$. If we further assume $\partial \bar \partial u_0 \in C^0(M)$, then 
$$\lim_{t \rightarrow 0^+} \|\psi\|_{X_t} = 0, $$
and consequently
$$\lim_{t \rightarrow 0^+} \big(\|\varphi(t)- u_0\|_{C^1(M)} + \|\partial \bar \partial \varphi(t) - \partial \bar \partial u_0\|_{C^0(M)} \big)= 0.$$ 

\end{thm}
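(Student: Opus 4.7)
The plan is to write the Calabi flow solution as $\varphi = S_g u_0 + \psi$, where $S_g u_0$ is the biharmonic heat extension of $u_0$ (Section \ref{sec3.2}), and to construct $\psi \in X_T$ via a Banach fixed point argument. Rewriting the flow as
\begin{equation*}
(\partial_t + \Delta_g^2)\varphi \;=\; Q(\varphi) - \underline R, \qquad Q(\varphi) := R_\varphi + \Delta_g^2\varphi,
\end{equation*}
and using that $S_g u_0$ solves the homogeneous biharmonic heat equation, $\psi$ must satisfy $\psi(0)=0$ together with $(\partial_t+\Delta_g^2)\psi = Q(S_g u_0+\psi) - \underline R$. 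A Taylor expansion of $R_\varphi = -g_\varphi^{i\bar j}\partial_i\partial_{\bar j}\log\det g_\varphi$ in $\partial\bar\partial\varphi$ yields the decomposition $Q(\varphi) = R_g + L_g(\varphi) + N(\varphi)$, where $L_g(\varphi) = -R^{i\bar j}\partial_i\partial_{\bar j}\varphi$ is the lower order piece of the linearized scalar curvature (coefficients from $\operatorname{Ric}(g)$) and $N(\varphi)$ is a smooth function of $\partial\bar\partial\varphi$ (well defined as long as $\omega_\varphi>0$) contracted into bilinear expressions in $\nabla\partial\bar\partial\varphi$ and $\nabla^2\partial\bar\partial\varphi$; in particular $N$ vanishes quadratically at $\varphi=0$.

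The core step is to show that $\Phi(\psi):=V[Q(S_g u_0+\psi)-\underline R]$ is a contraction of a small ball in $X_T$ into itself. By the uniform Schauder estimate Theorem \ref{thm702} this reduces to controlling the right-hand side in the $Y_T$-norm. Theorem \ref{thm3.9} yields $|\nabla_g^k\partial\bar\partial S_g u_0(t)|_g \leq C_{k,n}\delta\, t^{-k/4}$ for small $t$, and by inspection of the $X_T$-norm one has the matching scaling $|\nabla_g^k\partial\bar\partial\psi(t)|_g \lesssim \|\psi\|_{X_T}\, t^{-k/4}$ for $k=0,1,2$. Substituting into the bilinear tail gives the schematic bound
\begin{equation*}
|N(S_g u_0+\psi)(t)|_g \;\lesssim\; (\delta + \|\psi\|_{X_T})^2 \, t^{-1/2},
\end{equation*}
which matches exactly the $t^{1/2}$ weight in the $Y_T$-norm; the $C^\alpha$-in-space and H\"older-in-time pieces are bounded by the same product-rule bookkeeping against the analogous seminorms built into $X_T$. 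The constant $R_g - \underline R$ and the linear piece $L_g(\varphi)$ contribute at most $C(g) T^{1/2}$ and $C(g) T^{1/2}(\delta+\|\psi\|_{X_T})$ respectively to $\|\cdot\|_{Y_T}$, both vanishing as $T\to 0$.

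Combining these estimates with analogous Lipschitz bounds gives
\begin{equation*}
\|\Phi(\psi_1)-\Phi(\psi_2)\|_{X_T} \;\leq\; C_n\bigl(\delta + \|\psi_1\|_{X_T}+\|\psi_2\|_{X_T}\bigr)\|\psi_1-\psi_2\|_{X_T} + C(g)T^{1/2}\|\psi_1-\psi_2\|_{X_T}.
\end{equation*}
Choosing $\delta$ a small dimensional constant, setting $M:=3C_n\delta$, and then shrinking $T=T_{g,u_0}$ until the $g$-dependent remainders absorb below $M/3$ and the induced Lipschitz constant of $\Phi$ drops below $1/2$, the ball $\{\|\psi\|_{X_T}\leq M\}$ is invariant and $\Phi$ is a contraction. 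Smallness of $\delta$ and $M$ keeps $|\partial\bar\partial\varphi|_g$ well below $1/2$, so $\omega_\varphi>0$ throughout and $N$ is smooth. Theorem \ref{thm701} then promotes the fixed point $\psi$ to a classical solution of the nonhomogeneous biharmonic heat equation, so $\varphi = S_g u_0+\psi$ solves the Calabi flow classically. The refined assertion $\limsup_{t\to 0^+}\|\psi\|_{X_t}\leq C_n\delta$ follows by rerunning the fixed point argument on shrinking subintervals $[0,t]$ and using the sharp form of the bound in Theorem \ref{thm3.9} at $t=0^+$; when $\partial\bar\partial u_0\in C^0(M)$ the same reference upgrades the $\limsup$ to a genuine limit and propagates through the scheme to yield $\|\psi\|_{X_t}\to 0$. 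Finally, the convergence $\varphi(t)\to u_0$ in $C^{1,\gamma}$ (resp.\ the stronger $C^1 + C^0(\partial\bar\partial)$ convergence when $\partial\bar\partial u_0\in C^0$) follows by combining the corresponding statements for $S_g u_0$ (Theorems \ref{thm001} and \ref{thm3.9}) with $\|\psi(t)\|_{C^0}\leq t^{1/2}\|\psi\|_{X_T}$, $\|\nabla_g^2\psi(t)\|_{C^0}\leq \|\psi\|_{X_T}$, and interpolation. The main obstacle is making the schematic estimate on $Q$ rigorous on every component of the $Y_T$-norm, particularly the $C^\alpha$-in-space and parabolic H\"older-in-time seminorms; the exponents of $X_T$, $Y_T$ were engineered in Section \ref{sec3.3} precisely so that this bookkeeping closes, but verifying that each product of $\nabla^k\partial\bar\partial$-factors from $S_g u_0$ and $\psi$ lands in the correct weighted scale is the technical crux of the argument.
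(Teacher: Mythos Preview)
Your proposal is correct and follows essentially the same approach as the paper: write $\varphi = S_g u_0 + \psi$, recast the Calabi flow as a nonhomogeneous biharmonic heat equation for $\psi$, and run a Banach fixed point argument on the map $\psi \mapsto V[Q(S_g u_0 + \psi) - \underline R]$ using Theorem~\ref{thm702} together with Theorem~\ref{thm3.9}. The paper organizes the nonlinearity slightly differently---it works directly with the explicit expression \eqref{eqn8005} for $R(\varphi)$ and estimates $\|R(\varphi_1)-R(\varphi_2)\|_{Y_T}$ term by term (Lemma~\ref{lem802}), arriving at the same Lipschitz bound $C\eta + C(g,T)$ with $C(g,T)\to 0$ as $T\to 0$---whereas you first split off the constant and linear-in-$\partial\bar\partial\varphi$ pieces and isolate a quadratically vanishing remainder $N$; but the resulting estimates, the choice of $\delta$ as a dimensional constant, and the iteration starting from $\psi_0=0$ (the paper's Lemma~\ref{lem803}) are the same.
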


Notice that we can write Calabi flow as a biharmonic heat equation with right hand side involving up to fourth derivatives of $\varphi$. More precisely, we rewrite equation $(\ref{eqn8001})$ as
\begin{equation}\label{eqn8002}
\frac{\partial \varphi }{\partial t} + \Delta_g^2 \varphi = R(\varphi), 
\end{equation}
where 
\begin{equation}\label{eqn8005}
\begin{split}
R (\varphi)  : = & R_\varphi - \underline R + \Delta_g^2 \varphi \\
= & - (g_\varphi^{i\bar j} g_\varphi^{k\bar l} - g^{i\bar j} g^{k \bar l}) \varphi_{, i\bar j k\bar l} + g_\varphi^{i\bar j} g_{\varphi}^{k\bar p} g_\varphi^{q \bar l} \varphi_{,\bar p q i} \varphi_{, k\bar l \bar j} +  g_{\varphi}^{i\bar j}  ( \text{Ric}_g )_{i\bar j } - \underline R.
\end{split}
\end{equation}

Thus in the rest of the section, we will work on equation $(\ref{eqn8002})$. Given initial data $u_0$ with $\partial \bar \partial u_0 \in L^\infty(M)$, we shall seek its solution in the following form 
\begin{equation}
\varphi = \psi + S_g u_0, 
\end{equation}
for some function $\psi \in X_T$ satisfying $\limsup_{t \rightarrow 0^+} \|\psi\|_{X_t}  \leq C_n \big\||\partial \bar \partial u_0|_g\big\|_{L^\infty(M)}$ for some dimensional constant $C_n > 0$. Note that by Theorem $\ref{thm3.9}$, $\varphi$ defined above satisfies that $\lim_{t \rightarrow 0^+} \|\varphi(t) - u_0\|_{C^{1,\gamma}(M)}  = 0$ for any $\gamma \in (0,1)$.
Moreover, if for some $\psi \in X_T$, $\varphi = \psi+  S_g u_0 $ solves equation $(\ref{eqn8002})$, then equivalently $\psi$ satisfies 
\begin{equation}\label{eqn8004}
\frac{\partial \psi }{\partial t} + \Delta_g^2 \psi = R(\psi + S_g u_0 ). 
\end{equation}
We have the following lemma on $R(\psi + S_g u_0)$. 

\begin{lem}\label{lem802}

\begin{enumerate}

\item[i)] There exists a constant $\delta_1 = \delta_1( n)>0$ such that given any initial data $u_0$ satisfying $\partial \bar \partial u_0 \in L^\infty(M)$ and $\big\||\partial \bar \partial u_0|_g\big\|_{L^\infty (M)} < \delta_1$, there exists a constant $T_1>0$ depending on $g$ and $u_0$ such that for any $0< T \leq T_1$ and any $\xi \in X_T$ with $\|\xi \|_{X_T} < \delta_1$, we have $R(\xi + S_gu_0) \in Y_T$.

\item[ii)] Given any $\epsilon > 0$, there exists a constant $\delta_2 = \delta_2( \epsilon, n)>0$ such that given any initial data $u_0$ satisfying $\partial \bar \partial u_0 \in L^\infty(M)$ and $\big\||\partial \bar \partial u_0|_g\big\|_{L^\infty (M)} < \delta_2$, there exists a constant $T_2>0$ depending on $\epsilon$, $g$ and $u_0$ such that for any $0< T \leq T_2$ and any $\xi_1, \xi_2 \in \{\|\xi\|_{X_T} < \delta_2 \}$ then we have, 
\begin{equation}
 \|R(\xi_1 + S_g u_0)  - R (\xi_2 + S_g u_0)\|_{Y_T} 
 \leq \epsilon \|\xi_1 - \xi_2 \|_{X_T}.
\end{equation} 
\end{enumerate}

\end{lem}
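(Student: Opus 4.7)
The plan is to estimate each summand in
\begin{equation*}
R(\varphi) = -\big(g_\varphi^{i\bar j}g_\varphi^{k\bar l} - g^{i\bar j}g^{k\bar l}\big)\varphi_{,i\bar jk\bar l} + g_\varphi^{i\bar j}g_\varphi^{k\bar p}g_\varphi^{q\bar l}\varphi_{,\bar pqi}\varphi_{,k\bar l\bar j} + g_\varphi^{i\bar j}(\mathrm{Ric}_g)_{i\bar j} - \underline R
\end{equation*}
in the $Y_T$ norm when $\varphi = \xi + S_g u_0$, then differentiate in $\xi$ for part (ii). First, by Theorem \ref{thm3.9} I would choose $T_1 = T_1(g, u_0) > 0$ so that for $0 < t < T_1$,
\begin{equation*}
t^{k/4}\big\||\nabla_g^k\partial\bar\partial S_g u_0(t)|_g\big\|_{C^0(M)} \le 2\, C_{k/4,n}\big\||\partial\bar\partial u_0|_g\big\|_{L^\infty(M)}, \quad 0 \le k \le 2,
\end{equation*}
plus analogous H\"older-in-space bounds via interpolation. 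Combined with the elementary consequence of the $X_T$ definition $\|\nabla_g^k\xi(t)\|_{C^0(M)} \le t^{1/2-k/4}\|\xi\|_{X_T}$ for $0 \le k \le 4$, a small enough dimensional $\delta_1$ ensures $\tfrac12 g < g_\varphi < 2 g$ pointwise, so $g_\varphi^{-1}$ expands as an absolutely convergent series in $g^{-1}\partial\bar\partial\varphi$ with coefficients depending only on $g$.

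For (i), I would match each term against the parabolic weights built into $Y_T$. The fourth-order term has coefficient $|g_\varphi^{i\bar j}g_\varphi^{k\bar l}-g^{i\bar j}g^{k\bar l}|_g \le C |\partial\bar\partial\varphi|_g$ and multiplies $|\varphi_{,i\bar jk\bar l}|_g \le C t^{-1/2}\big(\|\xi\|_{X_T} + \||\partial\bar\partial u_0|_g\|_{L^\infty}\big)$, giving precisely the required $t^{-1/2}$ decay; the quadratic-in-third-derivatives term inherits the same rate from $|\nabla^3\varphi|_g \le C t^{-1/4}\big(\|\xi\|_{X_T}+\||\partial\bar\partial u_0|_g\|_{L^\infty}\big)$; the Ricci term is uniformly bounded, contributing only $O(T^{1/2})$. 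Spatial H\"older seminorms are handled by repeated use of $[fh]_{C^\alpha} \le \|f\|_{C^0}[h]_{C^\alpha} + [f]_{C^\alpha}\|h\|_{C^0}$, with each factor weighted by the $X_T$ norm or by Theorem \ref{thm3.9}; the key rates are $[\nabla^4\varphi(t)]_{C^\alpha} \lesssim t^{-1/2-\alpha/4}$, $[\nabla^3\varphi(t)]_{C^\alpha} \lesssim t^{-1/4-\alpha/4}$, and $[\partial\bar\partial\varphi(t)]_{C^\alpha} \lesssim t^{-\alpha/4}$. The temporal H\"older seminorm is treated identically, relying on the $\partial_t$-components of the $X_T$ norm and on the equivalent norm $\|\cdot\|_{X_T}'$ which supplies interpolated time-increment bounds on $\nabla^k u$ for $k \le 3$ at weight $t^{-1/2+k/4+\alpha/4}$.

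For (ii), I would write
\begin{equation*}
R(\xi_1 + S_g u_0) - R(\xi_2 + S_g u_0) = \int_0^1 \tfrac{d}{ds} R(\xi_s + S_g u_0)\, ds, \qquad \xi_s := s\xi_1 + (1-s)\xi_2,
\end{equation*}
and expand the integrand as a finite sum of multilinear functionals of $\xi_1 - \xi_2$ and its derivatives up to order four, with coefficients polynomial in $g_{\xi_s+S_g u_0}^{-1}$, $\partial\bar\partial(\xi_s + S_g u_0)$, and their derivatives. Each summand falls into one of three types: (a) coefficient already of size $O(\delta_2 + \||\partial\bar\partial u_0|_g\|_{L^\infty}) \le 2\delta_2$ (from the difference $g_\varphi^{-1} - g^{-1}$ hitting $\nabla^4(\xi_1 - \xi_2)$); (b) product of two third-order derivatives, giving a $t^{-1/2}$ rate with one factor bounded in $C^0$ by $\delta_2$; or (c) purely lower-order terms that pick up an extra power of $T^{1/2}$, $T^{1/4}$, or $T^{\alpha/4}$. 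Choosing $\delta_2$ sufficiently small to beat the dimensional constants and then $T_2$ small enough to absorb the remaining $T$-powers yields the desired contraction bound $\epsilon\|\xi_1 - \xi_2\|_{X_T}$.

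The main obstacle is the temporal H\"older seminorm applied to the quadratic term in third derivatives: $\nabla^3\varphi$ sits right at the borderline rate $t^{-1/4}$, so differencing in time must be applied to both factors simultaneously, and one must verify the mixed difference genuinely decays at the $t^{-1/2-\alpha/4}$ rate required by $Y_T$. This is precisely what forces the use of the equivalent norm $\|\cdot\|_{X_T}'$ introduced just before Theorem \ref{thm701}, which controls time increments of $\nabla^k u$ for $k \le 3$ by interpolating between the explicit time-H\"older bounds on $\nabla^4 u$ and $\partial_t u$ and the uniform spatial bounds on lower derivatives.
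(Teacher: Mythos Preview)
Your proposal is correct and follows essentially the same approach as the paper: both set up the a priori weighted bounds on $\nabla_g^k\partial\bar\partial\varphi$ for $0\le k\le 2$ via Theorem~\ref{thm3.9} plus the $X_T$ norm, then estimate each term of $R(\varphi)$ in $Y_T$ directly, and for (ii) linearize the difference using the interpolated path $\varphi_\tau$ (the paper expands $R(\varphi_1)-R(\varphi_2)$ algebraically and applies the integral formula only to $g_{\varphi_1}^{-1}-g_{\varphi_2}^{-1}$, which is equivalent to your $\int_0^1 \tfrac{d}{ds}R\,ds$). Your identification of the temporal H\"older control on the $|\nabla^3\varphi|^2$ term via the equivalent norm $\|\cdot\|_{X_T}'$ is exactly what the paper uses, though there it is phrased as the direct bound $|\nabla_g^k\partial\bar\partial\varphi(x;t+h)-\nabla_g^k\partial\bar\partial\varphi(x;t)|_g \le \tfrac34 h^{\alpha/4}t^{-k/4-\alpha/4}$.
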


We postpone the the proof of Lemma $\ref{lem802}$ for now and continue to describe solution $\psi$ of equation $(\ref{eqn8004})$. For any $\||\partial \bar \partial u_0|_g\|_{L^\infty(M)} < \delta_1$, $T < T_1$ and $\|\xi\|_{X_T} < \delta_1$ as in Lemma $\ref{lem802}$, we have $R(\xi + S_g u_0) \in Y_T$ and thus we can define $\Psi_{u_0} (\xi):= V[R(\xi +S_g u_0)] $, which is a well defined map from $\{\|\xi\|_{X_T} < \delta_1\}$ to $X_T$. If $\|\psi\|_{X_T} < \delta_1$, then by the uniqueness of solution to the nonhomogeneuous problem $(\ref{eqn8004})$, we have $\psi = \Psi_{u_0}(\psi)$. In other words, in order to find a solution to $(\ref{eqn8004})$, it suffices to seek a fixed point of the map $\Psi_{u_0}: \{\|\xi\|_{X_T} \leq \delta_1 \} \rightarrow X_T$. 

Denote $C_n $ and $T_g$ the constants introduced in Theorem $\ref{thm702}$. Choose $\epsilon = \frac{1}{2C_n}$ in Lemma $\ref{lem802}$ ii) and denote $\delta_3 =\min\{ \delta_{2}(\frac{1}{2C_n}, n), \delta_1\}$, and then we have for any $\||\partial \bar \partial u_0|_g\|_{L^\infty(M)}<\delta_3$, $T \leq \min\{ T_g , T_2(\frac{1}{2C_n}, u_0, g), T_1(u_0, g)\}$ and $\xi_1, \xi_2 \in \{\|\xi\|_{X_T} < \delta_3\}$, 
\begin{equation}
\begin{split}
\| \Psi_{u_0}(\xi_1) -\Psi_{u_0}(\xi_2) \|_{X_T} \leq  C_n  \|R(\xi_1 + S_g u_0)  - R (\xi_2 + S_g u_0)\|_{Y_T}  
\leq \frac{1}{2} \|\xi_1- \xi_2\|_{X_T}.
\end{split}
\end{equation}

To summarize, we got a dimensional constant $\delta_3 > 0$ such that for any $\partial \bar \partial u_0 \in L^\infty(M)$ with $\||\partial \bar \partial u_0|_g\|_{L^\infty} < \delta_3$, there exists a constant $T_3 = T_3(g,u_0) > 0$ such that for any $T \leq T_3$ the map $\Psi_{u_0}:  \{\|\xi\|_{X_T} \leq \delta_3 \} \rightarrow X_T$ is a contraction. Once we have the local contracting property of $\Psi_{u_0}$, to find its fixed point, it all suffices to construct an iterating sequence with first two terms lying in the contracting region. Consequently the whole sequence will lie in the contracting region as well and converge to the fixed point. We tend to choose $\psi_0 = 0$ and $\psi_1 = \Psi_{u_0}(\psi_0)$ as the first two terms of the iterating sequence $\psi_k = \Psi_{u_0}(\psi_{k-1})$ for $k \geq 1$. In order for the sequence $\{\psi_{k}\}$ to have the property described above, it suffices to show that $\|\psi_1\|_{X_T} < \frac{1}{2} \delta_3$ for some $T>0$. In fact we have the following result.

\begin{lem}\label{lem803}
For $\| |\partial \bar \partial u_0|_g\|_{L^\infty(M)} < \delta_1$ as in Lemma $\ref{lem802}$, we have
\begin{equation}
\limsup_{t\rightarrow 0^+}\|R(S_g u_0)\|_{Y_{t}} \leq C\| |\partial \bar \partial u_0|_g\|_{L^\infty(M)}, 
\end{equation}
for some dimensional constant $C>0$ and consequently we have
\begin{equation}
\limsup_{t \rightarrow 0^+}\|\psi_1\|_{X_{t}} \leq C C_n\| |\partial \bar \partial u_0|_g\|_{L^\infty(M)} .
\end{equation}
If we further assume $\partial \bar \partial u_0 \in C^0(M)$, then 
\begin{equation}
\lim_{t \rightarrow 0^+} \|R(S_g u_0)\|_{Y_t} = 0, 
\end{equation}
and
\begin{equation}
\lim_{t \rightarrow 0^+} \|\psi_1 \|_{X_t} = 0. 
\end{equation}
\end{lem}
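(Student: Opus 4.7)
The plan is to estimate $R(S_g u_0)$ term by term in the $Y_T$-norm using the pointwise bounds of Theorem $\ref{thm3.9}$ together with an interpolation argument, and then invoke Theorem $\ref{thm702}$ to pass from the $Y_T$-estimate on $R(S_g u_0)$ to the $X_T$-estimate on $\psi_1 = V[R(S_g u_0)]$. Write $A := \big\||\partial \bar\partial u_0|_g\big\|_{L^\infty(M)}$. Theorem $\ref{thm3.9}$ supplies, for every $k,l \geq 0$, the $\limsup$ bound
\begin{equation*}
t^{l+k/4}\big\||(\partial_t)^l \nabla_g^k \partial\bar\partial S_g u_0(t)|_g\big\|_{C^0(M)} \leq C_{l,k,n} A
\end{equation*}
as $t\to 0^+$. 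The standard interpolation $[F]_{C^\alpha} \leq C \|F\|_{C^0}^{1-\alpha}\|\nabla F\|_{C^0}^\alpha$ then yields the spatial H\"older bound $[\nabla_g^k\partial\bar\partial S_g u_0]_{C^\alpha} \leq C A\,t^{-k/4-\alpha/4}$, with an analogous bound for the time-H\"older seminorm obtained from the $\partial_t$ estimate.

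Now I plug these estimates into the decomposition $(\ref{eqn8005})$ with $\varphi = S_g u_0$. For the first term $(g_\varphi^{i\bar j}g_\varphi^{k\bar l} - g^{i\bar j}g^{k\bar l})\varphi_{,i\bar jk\bar l}$, the Neumann series for $g_\varphi^{-1}$ (valid since $A$ is small) gives $|g_\varphi^{-1}g_\varphi^{-1} - g^{-1}g^{-1}|_g = O(A)$ while the fourth derivative factor is $O(A t^{-1/2})$; combining with the interpolated H\"older bound and the product rule gives $O(A^2 t^{-1/2})$ in $C^0$ and $O(A^2 t^{-1/2-\alpha/4})$ in $C^\alpha$. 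The second term $g_\varphi^{i\bar j}g_\varphi^{k\bar p}g_\varphi^{q\bar l}\varphi_{,\bar pqi}\varphi_{,k\bar l\bar j}$ is handled the same way: each third derivative is $O(At^{-1/4})$ in $C^0$ and $O(At^{-1/4-\alpha/4})$ in $C^\alpha$, yielding the same blow-up rates after the product rule. These rates are exactly absorbed by the weights $t^{1/2}$ and $t^{1/2+\alpha/4}$ in the definition of $Y_T$, so together they contribute at most $C A^2$ to $\|R(S_g u_0)\|_{Y_t}$ as $t\to 0^+$. The remaining piece $g_\varphi^{i\bar j}(\mathrm{Ric}_g)_{i\bar j} - \underline R = R_g - \underline R + O(A)$ has $C^0$ and $C^\alpha$ norms of fixed size depending only on $g$, so the weights $t^{1/2}$, $t^{1/2+\alpha/4}$ force this contribution to $0$ in the $\limsup$. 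The time-H\"older piece of $\|\cdot\|_{Y_t}$ is estimated analogously using the $\partial_t$ bounds of Theorem $\ref{thm3.9}$. Altogether $\limsup_{t\to 0^+}\|R(S_g u_0)\|_{Y_t} \leq C A$.

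For the second assertion, since $\psi_1 = V[R(S_g u_0)]$ Theorem $\ref{thm702}$ gives $\|\psi_1\|_{X_t}\leq C_n\|R(S_g u_0)\|_{Y_t}$ once $t$ is small relative to the $C^{3,\alpha}$-harmonic radius, and taking $\limsup$ yields $\limsup_{t\to 0^+}\|\psi_1\|_{X_t}\leq C C_n A$. Under the stronger assumption $\partial\bar\partial u_0 \in C^0(M)$, Theorem $\ref{thm3.9}$ upgrades every relevant $\limsup$ with $k+l>0$ to a $\lim$ equal to $0$; the fixed $g$-dependent contributions from $R_g-\underline R$ and $\mathrm{Ric}_g$ vanish as before, while the $A^2$-type contributions become $o(1)$, so $\|R(S_g u_0)\|_{Y_t}\to 0$ and another application of Theorem $\ref{thm702}$ delivers $\|\psi_1\|_{X_t}\to 0$. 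The main obstacle is simply bookkeeping: there are many terms and seminorms, and in each case the $t$-weight has to match the blow-up rate exactly, the most delicate instance being the time-H\"older piece of the fourth-derivative term, which requires a $|h|\leq t/2$ versus $|h|>t/2$ split in the same spirit as the proof of Theorem $\ref{thm701}$.
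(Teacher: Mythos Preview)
Your proposal is correct and follows essentially the same route as the paper. The paper's own proof is extremely terse: it simply points back to the computation $(\ref{eqn8010})$ carried out in the proof of Lemma~\ref{lem802} (which already includes the interpolation step and the time-H\"older split you describe), specializes to $\varphi = S_g u_0$, tracks the factors of $A=\||\partial\bar\partial u_0|_g\|_{L^\infty}$ via Theorem~\ref{thm3.9}, and then invokes Theorem~\ref{thm702} for $\psi_1$; your write-up is a faithful expansion of exactly that argument.
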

 
Choose $\delta = \frac{\delta_3 }{4CC_n}$, a dimensional constant, then by Lemma $\ref{lem803}$, for any $\partial \bar \partial u_0  \in L^\infty(M) $ with $\||\partial \bar \partial u_0|_g\|_{L^\infty(M)} < \delta$, there exists a constant $T = T_{g, u_0} \ll T_3 $ such that $\|\psi_1\|_{X_T} < \frac{1}{2} \delta_3$. Thus, all the subsequent $\psi_k$'s will lie in the contracting region $\{\|\xi\|_{X_T} < \delta_3\}$ simply because by induction $\|\psi_k\|_{X_T} \leq \sum_{i=0}^{k-1} \|\psi_{i+1} - \psi_i\|_{X_T} \leq (\sum_{i=0}^{k-1} \frac{1}{2^k})\|\psi_{1}\|_{X_T} < \delta_3$. Consequently, $\psi_k \rightarrow \psi_\infty$ in $X_T$ as $k \rightarrow \infty$ where $\psi_\infty$ is the desired fixed point of $\Psi_{u_0}$. Moreover, by Lemma $\ref{lem803}$, one can easily show that $\limsup_{t\rightarrow 0^+} \|\psi_\infty \|_{X_t}  \leq 2C C_n \||\partial \bar \partial u_0|_g\|_{L^\infty(M)}$ and when $\partial \bar \partial u_0 \in C^0(M)$ $\lim_{t\rightarrow 0^+} \|\psi_\infty \|_{X_t} = 0$, since for any $k \geq 1$, $\|\psi_k \|_{X_t} \leq (\sum_{i=0}^{k-1} \frac{1}{2^k})\|\psi_{1}\|_{X_t} \leq 2 \|\psi_1\|_{X_t}$. 

By our discussion up to this point, in order to prove Theorem $\ref{thm802}$, we just need to show Lemma $\ref{lem802}$ and Lemma $\ref{lem803}$.

\begin{proof}
Proof of Lemma $\ref{lem802}$. Given $\partial \bar \partial u_0 \in L^\infty(M)$, by Theorem $\ref{thm3.9}$ we have as $t \rightarrow 0^+$, for any nonnegative integers $k, l$ 
\begin{equation}
\begin{split}
&  t^{\frac{k}{4} + l }\| |(\frac{\partial}{\partial t} )^l \nabla_g^k \partial \bar \partial S_g u_0(t)|_g\|_{C^0(M)} \leq  C_{l+ \frac{k}{4} , n} \||\partial \bar \partial u_0|_g\|_{L^\infty(M)} + o(1). \\
 \end{split}
 \end{equation}
 By interpolations, we have as $t \rightarrow 0^+$, for any $k, l \geq 0$, 
\begin{equation}
\begin{split}
& t^{\frac{k + \alpha}{4} + l } [ (\frac{\partial}{\partial t} )^l  \nabla_g^k \partial \bar \partial S_g u_0(t)]_{C^\alpha (M)} \leq C_{l+ \frac{k+\alpha}{4} , n} \||\partial \bar \partial u_0|_g\|_{L^\infty(M)} + o(1). \\
\end{split}
\end{equation}
Moreover, 
\begin{equation}
\begin{split}
 &\sup_{t \in (0,T)} t^{\frac{k+\alpha}{4} + l } \sup_{0< h < T-t}  h^{-\frac{\alpha}{4}} |(\frac{\partial}{\partial t} )^l \nabla_g^k \partial \bar \partial S_g u_0(x; t+h)- (\frac{\partial}{\partial t} )^l \nabla_g^k \partial \bar \partial S_g u_0(x; t)|_g \\
  \leq & C_{l+ \frac{k+\alpha}{4} , n} \||\partial \bar \partial u_0|_g\|_{L^\infty(M)} +o(1), 
 \end{split}
\end{equation}
as $T \rightarrow 0^+. $

Thus, there exists a dimensional constant $0 < \delta_1 \ll 1$ such that for any $\||\partial \bar\partial u_0|_g\|_{L^\infty(M)} < \delta_1$ there exists $T_1 = T_1(g, u_0)$ such that for any $t <T_1$, $0 \leq k \leq 2 $ and $ 0 < h <  T_1 - t$, 
\begin{equation}
\begin{split}
 &\| |\nabla_g^k \partial \bar \partial S_g u_0(t)|_g \|_{C^0(M)}  \leq \frac{1}{2} t^{-\frac{k}{4}}, [\nabla_g^k \partial \bar \partial S_g u_0(t)]_{C^\alpha(M)} \leq \frac{1}{2}   t^{-\frac{k}{4} - \frac{\alpha}{4}}, \\
& \sup_{x \in M} |\nabla_g^k \partial \bar \partial S_g u_0(x; t+h)- \nabla_g^k \partial \bar \partial S_g u_0(x; t)|_g   \leq  \frac{1}{2} h^{\frac{\alpha}{4}}  t^{-\frac{k}{4} - \frac{\alpha}{4}}.
\end{split}
\end{equation}
 
Furthermore, for any $T \leq T_1$ and any $\|\xi\|_{X_{T}} < \delta_1 $, we have $\varphi = \xi + S_g u_0$ satisfying for $t < T$, $0\leq k \leq 2$ and $0 < h < T - t$, 
\begin{equation}
\begin{split}
& \| |\nabla_g^k \partial \bar \partial \varphi (t) |_g \|_{C^0(M)} \leq \frac{3}{4} t^{-\frac{k}{4}}, [\nabla_g^k \partial \bar \partial \varphi (t)]_{C^\alpha(M)} \leq \frac{3}{4}   t^{-\frac{k}{4} - \frac{\alpha}{4}}, \\
& \sup_{x \in M} |\nabla_g^k \partial \bar \partial \varphi (x; t+h)- \nabla_g^k \partial \bar \partial \varphi (x; t)|_g \leq  \frac{3}{4} h^{\frac{\alpha}{4}}  t^{-\frac{k}{4} - \frac{\alpha}{4}}.
\end{split}
\end{equation}

With these preparations on $\varphi$, by formula $(\ref{eqn8005})$ we have that for some dimensional constant $C>0$
\begin{equation}
\begin{split}
\|R(\varphi) (t)\|_{C^0(M)} \leq&  C \big( \| |\nabla_g^2 \partial \bar \partial  \varphi(t)|_g\|_{C^0(M)} + \||\nabla_g \partial \bar \partial \varphi(t)|_g\|^2_{C^0(M)} \\
&+ \|\text{Ric}_g\|_{C^0(M)} + |\underline R| \big)\\
\leq & C t^{-\frac{1}{2}} + C \|\text{Ric}_g\|_{C^0(M)}, 
\end{split}
\end{equation}
and
\begin{equation}
\begin{split}
[R(\varphi) (t)]_{C^\alpha(M)} \leq & C \big( [\nabla_g^2\partial \bar \partial  \varphi(t)]_{C^\alpha(M)} + ([\omega]_{C^\alpha(M)} + [\partial \bar \partial \varphi(t)]_{C^\alpha(M)}) \||\nabla_g^2 \partial \bar \partial \varphi(t)|_g\|_{C^0(M)} \\
&+ [\nabla_g \partial \bar \partial \varphi(t)]_{C^\alpha(M)} \||\nabla_g \partial \bar \partial \varphi(t)|_g\|_{C^0(M)} 
+ [\omega_{\varphi(t)}]_{C^\alpha(M)} \||\nabla_g \partial \bar \partial \varphi(t)|_g\|_{C^0(M)}^2 \\
&+ [\omega]_{C^\alpha(M)} \|\text{Ric}_g\|_{C^0(M)} + [\text{Ric}_g]_{C^\alpha(M)} 
+ \| \text{Ric}_g\|_{C^0(M)} [\partial \bar \partial \varphi(t)]_{C^\alpha(M)}  \big)\\
\leq & C t^{- \frac{1}{2}- \frac{\alpha}{4}}  + C t^{-\frac{1}{2}} [\omega]_{C^\alpha(M)}+C [\omega]_{C^\alpha(M)} \|\text{Ric}_g\|_{C^0(M)} + C [\text{Ric}_g]_{C^\alpha(M)}\\
&  + C t^{-\frac{\alpha}{4}} \|\text{Ric}_g\|_{C^0(M)}.
\end{split}
\end{equation}

Also we have for $0< h < T-t$
\begin{equation}
\begin{split}
& |R(\varphi)(x;t+h) - R(\varphi)(x; t)|\\
 \leq & C \big(|\nabla_g^2 \partial \bar \partial \varphi(x; t+h) - \nabla_g^2 \partial \bar \partial \varphi(x; t)|_g + |\partial \bar \partial \varphi(x; t+h) -\partial \bar \partial \varphi(x; t)  |_g |\nabla_g^2 \partial \bar \partial \varphi(x; t) |_g \\
 & + |\nabla_g \partial \bar \partial \varphi(x; t+h) - \nabla_g \partial \bar \partial \varphi(x; t)  |_g |\nabla_g \partial \bar \partial \varphi(x; t)|_g + |\nabla_g \partial \bar \partial \varphi(x; t+h) - \nabla_g \partial \bar \partial \varphi(x; t)  |_g |\nabla_g \partial \bar \partial \varphi(x; t+h)|_g \\
 & +  |\partial \bar \partial \varphi(x; t+h) -\partial \bar \partial \varphi(x; t)  |_g |\nabla_g \partial \bar \partial \varphi(x; t)|_g^2 + |\partial \bar \partial \varphi(x; t+h) -\partial \bar \partial \varphi(x; t)  |_g \|\text{Ric}_g\|_{C^0(M)} \big),\\
 \leq & C h^{\frac{\alpha}{4}} \big( t^{-\frac{1}{2} -\frac{\alpha}{4}} + t^{-\frac{\alpha}{4}} \|\text{Ric}_g\|_{C^0(M)}\big). 
\end{split}
\end{equation}
Therefore combing computations above, we have for $T \leq T_1$, 
\begin{equation}\label{eqn8010}
\begin{split}
\|R(\varphi)\|_{Y_T} = & \sup_{t\in (0,T)} ( t^{\frac{1}{2}} \|R(\varphi)(t)\|_{C^0(M)} + t^{\frac{1}{2} +\frac{\alpha}{4}} [R(\varphi)(t)]_{C^\alpha(M)}) \\
& + \sup_{(x, t) \in M \times (0,T)} \sup_{0<h<T-t} t^{\frac{1}{2} + \frac{\alpha}{4}} \frac{|R(\varphi)(x;t+h) - R(\varphi)(x; t)|}{h^\frac{\alpha}{4}}\\
\leq & C + C T^{\frac{\alpha}{4}} [\omega]_{C^\alpha(M)} + C T^{\frac{1}{2}}\|\text{Ric}_g\|_{C^0(M)} + C T^{\frac{1}{2}+ \frac{\alpha}{4}} [\omega]_{C^\alpha(M) } \|\text{Ric}_g\|_{C^0(M)}\\
&  + C T^{\frac{1}{2}+ \frac{\alpha}{4}} [\text{Ric}_g]_{C^\alpha(M)} < \infty.
\end{split}
\end{equation}

Therefore $R(\varphi) \in Y_T$ for $u_0, T$ and $\xi \in X_T$ satisfying assumptions described above. 

Following the discussion at the beginning of proof of Lemma $\ref{lem802}$, given any $0< \eta \ll 1$ such that there exists a constant $\delta_2 = \delta_2(\eta, n)> 0$ such that for any $\||\partial \bar \partial u_0|_g\|_{L^\infty(M)} < \delta_2$ there exists constant $T_2 = T_2(u_0, g)$ such that for any $T \leq T_2$ and any $\|\xi\|_{X_T} < \delta_2 $, we have $\varphi= \xi + S_g u_0$ satisfying for any $t < T$, $0 \leq k \leq 2$ and $0< h < T -t$, 
\begin{equation}\label{eqn7200}
\begin{split}
& \|\nabla_g^k \partial \bar \partial \varphi (t) \|_{C^0(M)} \leq \eta t^{-\frac{k}{4}}, [\nabla_g^k \partial \bar \partial \varphi (t)]_{C^\alpha(M)} \leq \eta   t^{-\frac{k}{4} - \frac{\alpha}{4}}, \\
& \sup_{x \in M} |\nabla_g^k \partial \bar \partial \varphi (x; t+h)- \nabla_g^k \partial \bar \partial \varphi (x; t)|_g \leq \eta h^{\frac{\alpha}{4}}  t^{-\frac{k}{4} - \frac{\alpha}{4}}.
\end{split}
\end{equation}
We'll specify $\eta> 0$ later. 

Suppose $T \leq T_2$ and $\| \xi_i \|_{X_T} < \delta_2 $($i=1,2$) and we denote $\varphi_i = \xi_i + S_g u_0$($i=1,2$). Thus $R(\varphi_i) \in Y_T$($i=1,2$) and we compute their difference as below
\begin{equation}\label{eqn7201}
\begin{split}
 & R(\varphi_1) - R(\varphi_2) \\
 = &- (g_{\varphi_1}^{i\bar j} g_{\varphi_1}^{k\bar l} - g^{i\bar j} g^{k \bar l}) (\xi_1 -\xi_2)_{, i\bar j k\bar l} - \varphi_{2, i\bar j k\bar l} (g_{\varphi_1}^{i\bar j} g_{\varphi_1}^{k\bar l} - g_{\varphi_2}^{i\bar j} g_{\varphi_2}^{k \bar l})  \\
& + g_{\varphi_1}^{i\bar j} g_{\varphi_1}^{k\bar p} g_{\varphi_1}^{q \bar l}  \varphi_{1,\bar p q i}( \xi_{1} -   \xi_{2})_{, k\bar l \bar j} + g_{\varphi_1}^{i\bar j} g_{\varphi_1}^{k\bar p} g_{\varphi_1}^{q \bar l}  \varphi_{2,\bar p q i}( \xi_{1} -   \xi_{2})_{, k\bar l \bar j}   \\
& + \varphi_{2,\bar p q i} \varphi_{2, k\bar l \bar j} (g_{\varphi_1}^{i\bar j} g_{\varphi_1}^{k\bar p} g_{\varphi_1}^{q \bar l} - g_{\varphi_2}^{i\bar j} g_{\varphi_2}^{k\bar p} g_{\varphi_2}^{q \bar l})+  ( \text{Ric}_g )_{i\bar j } (g_{\varphi_1}^{i\bar j}  -g_{\varphi_2}^{i\bar j} ) .
\end{split}
\end{equation} 
In the expression above, we arrange each term as a product of functions and the derivatives of $(\xi_1 - \xi_2)$, where we use the formula below to understand terms which are not written explicitly as derivatives of $(\xi_1- \xi_2)$, 
\begin{equation}
g^{-1}_{\varphi_1} - g^{-1}_{\varphi_2} = - \sqrt{-1} \int_0^1 g_{\varphi_\tau}^{-1} \big( \partial \bar \partial (\xi_1 -\xi_2)\big)g_{\varphi_\tau}^{-1} \mathrm d \tau, 
\end{equation}
for $\varphi_\tau = \tau \varphi_1 +(1 -\tau) \varphi_2$. Note $\varphi_\tau$ for all $\tau \in [0,1]$ satisfy $(\ref{eqn7200})$. Therefore we have for some dimensional constant $C>0$
\begin{equation}\label{eqn7300}
\begin{split}
 \|\big( g_{\varphi_1}^{-1} - g_{\varphi_2}^{-1}\big) (t)\|_{C^0(M)} \leq  & C \| \partial \bar \partial (\xi_1 - \xi_2) (t)\|_{C^0(M)} \\
 \leq & C \|\xi_1 -\xi_2\|_{X_T},\\
[\big( g_{\varphi_1}^{-1} - g_{\varphi_2}^{-1}\big) (t)]_{C^\alpha(M)} \leq & C [\partial \bar \partial (\xi_1 - \xi_2)(t)]_{C^\alpha(M)} \\
&+ C\| \partial \bar \partial (\xi_1 - \xi_2)(t)\|_{C^0(M)} ([\omega]_{C^\alpha(M)} + t^{-\frac{\alpha}{4}}), \\
\leq & C( t^{-\frac{\alpha}{4}} + [\omega]_{C^\alpha(M)} )\|\xi_1 - \xi_2 \|_{X_T}
\end{split}
\end{equation}
and for $0< h < T-t$, 
\begin{equation}\label{eqn7301}
\begin{split}
 &  \sup_{x\in M} | \big( g_{\varphi_1}^{-1} - g_{\varphi_2}^{-1}\big)(x; t+h) - \big( g_{\varphi_1}^{-1} - g_{\varphi_2}^{-1}\big)(x; t) |_g\\
 \leq & C \sup_{x\in M} | \partial \bar \partial ( \xi_1 - \xi_2 )(x; t+h) - \partial \bar \partial ( \xi_1 - \xi_2 )(x; t) |_g\\
 & + C h^{\frac{\alpha}{4}} t^{-\frac{\alpha}{4}} \|\partial \bar \partial ( \xi_1 - \xi_2 )(t)\|_{C^0(M)} , \\
 \leq & C h^{\frac{\alpha}{4}}t^{-\frac{\alpha}{4}} \|\xi_1 - \xi_2\|_{X_T}
\end{split}
\end{equation}
By induction, one can show estimates $(\ref{eqn7300})$ and $(\ref{eqn7301})$ holds for any $(g_{\varphi_1}^{-1})^{\otimes k} - (g_{\varphi_2}^{-1})^{\otimes k}$ for integer $k \geq 1$. 

By a similar computation on $\big((g_{\varphi_1}^{-1})^{\otimes 2} - (g^{-1})^{\otimes 2}\big)$, we have
\begin{equation}
\begin{split}
 \|\big((g_{\varphi_1}^{-1})^{\otimes 2} - (g^{-1})^{\otimes 2}\big) (t)\|_{C^0(M)} \leq  & C \eta,\\
[\big((g_{\varphi_1}^{-1})^{\otimes 2} - (g^{-1})^{\otimes 2}\big) (t)]_{C^\alpha(M)} \leq & C \eta (t^{-\frac{\alpha}{4}} + [\omega]_{C^\alpha(M)}). \\
\end{split}
\end{equation}
Also, we have for $0< h < T-t$, $i =1,2$ and $k \geq 1$
\begin{equation}
\begin{split}
\sup_{x\in M} | \big( (g_{\varphi_i}^{-1})^{\otimes k}\big)(x; t+h) - \big( (g_{\varphi_i}^{-1})^{\otimes k} \big)(x; t) |_g\leq  C \eta  h^{\frac{\alpha}{4}} t^{-\frac{\alpha}{4}}  . 
\end{split}
\end{equation}

With these preparations, by formula $(\ref{eqn7201})$ we have
\begin{equation}
\begin{split}
& \|\big( R(\varphi_1)- R(\varphi_2) \big) (t)\|_{C^0(M)}\\
 \leq & C \eta  \big( \|\nabla_g^4 (\xi_1 - \xi_2)(t)\|_{C^0(M)} +  t^{-\frac{1}{2}} \|\nabla_g^2 (\xi_1 - \xi_2)(t)\|_{C^0(M)} + t^{-\frac{1}{4}} \|\nabla_g^3(\xi_1 - \xi_2)(t)\|_{C^0(M)} \big) \\
 &+ C \|\text{Ric}_g\|_{C^0(M)} \|\nabla_g^2 (\xi_1 - \xi_2)(t)\|_{C^0(M)}, \\
 \leq &  C \eta t^{-\frac{1}{2}} \|\xi_1 - \xi_2 \|_{X_T} + C \|\text{Ric}_g\|_{C^0(M)} \| \xi_1 - \xi_2\|_{X_T}, 
\end{split}
\end{equation}
and 
\begin{equation}
\begin{split}
& [\big( R(\varphi_1)- R(\varphi_2) \big) (t)]_{C^\alpha(M)} \\
\leq & C \eta\big([\nabla_g^4(\xi_1 - \xi_2)(t)]_{C^\alpha(M)}+ ( [\omega]_{C^\alpha(M)} + t^{-\frac{\alpha}{4}} ) \|\nabla_g^4(\xi_1 - \xi_2)(t)\|_{C^0(M)}\\
&+ t^{-\frac{1}{2}}( t^{ - \frac{\alpha}{4}} + [\omega]_{C^\alpha(M)} )\| \xi_1 - \xi_2\|_{X_T}  + t^{-\frac{1}{4}} [\nabla_g^3 (\xi_1 - \xi_2)(t)]_{C^\alpha (M)} \\
&+ t^{-\frac{1}{4}}( [\omega]_{C^\alpha(M)} + t^{-\frac{\alpha}{4}} ) \|\nabla_g^3(\xi_1- \xi_2)(t)\|_{C^0(M)} + t^{-\frac{1}{2}} ( [\omega]_{C^\alpha(M)} + t^{-\frac{\alpha}{4}}) \| (\xi_1 - \xi_2)\|_{X_T}   \big) \\
& + C \big( ([\omega]_{C^\alpha(M)} + t^{-\frac{\alpha}{4}}) \|\text{Ric}_g\|_{C^0(M)} \| \xi_1 -\xi_2\|_{X_T}  + [\text{Ric}_g]_{C^\alpha(M)} \| \xi_1 -\xi_2\|_{X_T} \big),\\
\leq & C \eta t^{-\frac{1}{2}} \big(t^{ -\frac{\alpha}{4}} + [\omega]_{C^\alpha(M)}  \big) \|\xi_1 - \xi_2 \|_{X_T} \\
& + C \big(t^{-\frac{\alpha}{4}} \|\text{Ric}_g\|_{C^0(M)} + [\omega]_{C^\alpha(M)} \|\text{Ric}_g\|_{C^0(M)} + [\text{Ric}_g]_{C^\alpha(M)}\big) \|\xi_1 - \xi_2\|_{X_T}. 
\end{split}
\end{equation}

Also for $0 < h < T-t$, we have
\begin{equation}
\begin{split}
& |\big( R(\varphi_1)- R(\varphi_2) \big) (x; t+h) - \big( R(\varphi_1)- R(\varphi_2) \big)(x; t) | \\
\leq & C \eta \big( |\nabla_g^4 (\xi_1- \xi_2)(x;  t+h) - \nabla_g^4 (\xi_1- \xi_2)(x;  t)|_g + h^{\frac{\alpha}{4}} t^{-\frac{\alpha}{4}} \|\nabla_g^4 (\xi_1 - \xi_2)(t)\|_{C^0(M)} \\
&+ h^{\frac{\alpha}{4}} t^{-\frac{1}{2}-\frac{\alpha}{4}} \|\xi_1 - \xi_2\|_{X_T} + t^{-\frac{1}{4}} |\nabla_g^3 (\xi_1- \xi_2)(x;  t+h) - \nabla_g^3 (\xi_1- \xi_2)(x;  t)|_g \\
& +h^{\frac{\alpha}{4}} t^{-\frac{1}{4}-\frac{\alpha}{4}} \|\nabla_g^3 (\xi_1 - \xi_2)(t)\|_{C^0(M)} + h^{\frac{\alpha}{4}} t^{-\frac{1}{2}-\frac{\alpha}{4}} \|\xi_1 - \xi_2\|_{X_T} \big)\\
&+C \big( h^{\frac{\alpha}{4}} t^{-\frac{\alpha}{4}}\|\text{Ric}_g \|_{C^0(M)} \|\xi_1 - \xi_2\|_{X_T} \big), \\
\leq &C  h^{\frac{\alpha}{4}} \big(\eta  t^{-\frac{1}{2} - \frac{\alpha}{4}}+  \|\text{Ric}_g\|_{C^0(M)} t^{-\frac{\alpha}{4}} \big) \|\xi_1 - \xi_2\|_{X_T}.
\end{split}
\end{equation}

Thus combing computations above, we have for some dimensional constant $C>0$
\begin{equation}\label{eqn8200}
\begin{split}
& \|R(\varphi_1) - R(\varphi_2)\|_{Y_T}\\
 \leq & C \eta \|\xi_1 - \xi_2\|_{X_T}  + C  \big(T^{\frac{\alpha}{4}} [\omega]_{C^\alpha(M)}+ T^{\frac{1}{2}} \|\text{Ric}_g\|_{C^0(M)} + T^{\frac{1}{2}+\frac{\alpha}{4}} [\text{Ric}_g]_{C^\alpha(M)}\\
 & +T^{\frac{1}{2}+\frac{\alpha}{4}} [\omega]_{C^\alpha(M)} \|\text{Ric}_g\|_{C^0(M)}  \big)\|\xi_1- \xi_2\|_{X_T}. 
\end{split}
\end{equation}

In the formula above, it's clear that to make the constant in front of $\|\xi_1 - \xi_2\|_{X_T}$ smaller than given $\epsilon$, one can choose $\eta = \frac{\epsilon}{2C} $ in the beginning. Then the constants $\delta_2 = \delta_2(\epsilon, n)> 0$ will be determined accordingly. Also we choose $T_2$ even smaller such that 
$$T_2^{\frac{\alpha}{4}} [\omega]_{C^\alpha(M)}+ T_2^{\frac{1}{2}} \|\text{Ric}_g\|_{C^0(M)} + T_2^{\frac{1}{2}+\frac{\alpha}{4}} [\text{Ric}_g]_{C^\alpha(M)} +T_2^{\frac{1}{2}+\frac{\alpha}{4}} [\omega]_{C^\alpha(M)} \|\text{Ric}_g\|_{C^0(M)}< \frac{\epsilon}{2}.$$ 
Thus it ends the proof of Lemma $\ref{lem802}$.

\end{proof}

\begin{proof}
Proof of Lemma $\ref{lem803}$. By virtue of the computation $(\ref{eqn8010})$ for $\|R(\varphi)\|_{Y_T}$ above, we plug in $\varphi = S_g u_0$. By Theorem $\ref{thm3.9}$, one can check that 
$$\limsup_{t\rightarrow 0^+}\|R(S_g u_0)\|_{Y_{t}} \leq C\| |\partial \bar \partial u_0|_g\|_{L^\infty(M)}, $$
for some dimensional constant $C> 0$ and if $\partial \bar \partial u_0 \in C^0(M)$ then 
$$\lim_{t \rightarrow 0^+} \|R(S_g u_0)\|_{Y_t} = 0.$$ 
The estimate on $\psi_1 = V [R(S_g u_0)]$ follows from Theorem $\ref{thm702}$. Thus it ends the proof of the lemma.

\end{proof}

\section{Smoothness and uniqueness  of the Calabi flow when $t>0$.}\label{sec5}

In this section, we will show that the short time solution of the Calabi flow obtained in last section is smooth whenever $t > 0$. Given the short time solution $\varphi$ in Theorem $\ref{thm802}$ of Calabi flow, notice that $\varphi$ restricted to any subinterval $[\epsilon, T]$($\epsilon>0$) belongs to the standard parabolic H\"older space. Moreover, restricting to $[\epsilon, T]$, the leading term of scalar curvature function $R_\varphi$, namely $g_{\varphi}^{i\bar j} g_{\varphi}^{k\bar l} \nabla_{i\bar j k\bar l} \varphi$, now becomes an elliptic operator on $\varphi$ with uniformly H\"older continuous coefficients $g_{\varphi}^{i\bar j} g_{\varphi}^{k\bar l} $. Thus by rewriting Calabi flow equation as
\begin{equation}
(\frac{\partial }{\partial t} + g_{\varphi}^{i\bar j} g_{\varphi}^{k\bar l} \nabla_{i\bar j k\bar l} ) \varphi = \text{ lower order terms of } \varphi, 
\end{equation}
we can use the standard parabolic Schauder estimate to improve the regularity of $\varphi $. More precisely, we will prove the following theorem.

\begin{thm}\label{thmsmooth}
The short time solution of the Calabi flow with initial data $u_0$ obtained in Theorem $\ref{thm802}$, denoted as function $\varphi$ on $M \times (0,T)$, is smooth whenever $t>0$. Namely we have $\varphi \in C^{\infty}\big(M \times (0,T) \big)$. If $\partial \bar \partial u_0 \in L^\infty(M)$ then we have for any $k, l \geq 0$, 
\begin{equation}\label{eqn9002}
\limsup_{t \rightarrow 0^+} t^{l+ \frac{k}{4}}\| |(\frac{\partial }{\partial t})^l \nabla_g^k \partial \bar \partial \varphi(t)|_g\|_{C^0(M)} \leq C_{l + \frac{k}{4}, n} \||\partial \bar \partial u_0|_g\|_{L^\infty(M)} ,
\end{equation}
for some constant $C_{l + \frac{k}{4}, n} >0 $ depending on $l +\frac{k}{4}$ and dimensional $n$ only. If $\partial \bar \partial u_0 \in C^0(M)$,  we have for any $k, l \geq 0$ with $l + k > 0$ 
\begin{equation}\label{eqn9003}
\lim_{t \rightarrow 0^+} t^{l+ \frac{k}{4}}\||(\frac{\partial }{\partial t})^l \nabla^k \partial \bar \partial \varphi(t)|_g\|_{C^0(M)} = 0. 
\end{equation}
\end{thm}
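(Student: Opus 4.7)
The plan is to establish smoothness of $\varphi$ for $t>0$ by a classical parabolic Schauder bootstrap on each subinterval $[\epsilon,T]$, and then to derive the quantitative blow-up bounds $(\ref{eqn9002})$--$(\ref{eqn9003})$ by combining the decomposition $\varphi = S_g u_0 + \psi$ of Theorem \ref{thm802} with a parabolic rescaling that exploits the built-in scaling invariance of the weighted spaces $X_T$ and $Y_T$.

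For smoothness, fix $\epsilon\in(0,T)$. Since $\varphi\in X_T$, on $M\times[\epsilon,T]$ the K\"ahler metric $g_\varphi$ is uniformly equivalent to $g$ with $g_\varphi^{i\bar j}\in C^{\alpha,\alpha/4}$. Rewrite the Calabi flow $(\ref{eqn8002})$ as
\begin{equation*}
\Bigl(\frac{\partial}{\partial t} + g_\varphi^{i\bar j} g_\varphi^{k\bar l}\nabla_{i\bar j k\bar l}\Bigr)\varphi \;=\; F\bigl(\varphi,\nabla\varphi,\nabla^2\varphi,\nabla^3\varphi\bigr),
\end{equation*}
so that the leading fourth-order operator is Petrowski parabolic with $C^{\alpha,\alpha/4}$-coefficients, while $F$ is smooth in its arguments and uses only spatial derivatives of order at most three. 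The classical Schauder estimate for higher-order linear parabolic equations (Solonnikov) upgrades $\varphi$ to $C^{4+\alpha,1+\alpha/4}\bigl(M\times[2\epsilon,T]\bigr)$; differentiating the equation and iterating promotes this to $\varphi\in C^\infty\bigl(M\times(0,T)\bigr)$.

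For the low-order asymptotic bounds, the $S_g u_0$-part of $(\ref{eqn9002})$--$(\ref{eqn9003})$ is Theorem \ref{thm3.9}. For $\psi$, by the very definition of the $X_t$-norm one has $t^{k/4}\bigl\|\,|\nabla_g^k\partial\bar\partial\psi(t)|_g\,\bigr\|_{C^0}\leq\|\psi\|_{X_t}$ for $k=0,1,2$ and similarly for the first $t$-derivative; the equation $\partial_t\psi = -\Delta_g^2\psi + R(\varphi)$ together with the $Y_t$-bound on $R(\varphi)$ from the estimates in Lemma \ref{lem802} converts mixed $\partial_t$-derivatives to spatial ones. Since $\limsup_{t\to 0^+}\|\psi\|_{X_t}\leq C_n\bigl\||\partial\bar\partial u_0|_g\bigr\|_{L^\infty(M)}$ (and vanishes when $\partial\bar\partial u_0\in C^0$ by Theorem \ref{thm802}), this disposes of $(\ref{eqn9002})$--$(\ref{eqn9003})$ in the range $l+k/4\leq 2$. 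For arbitrary $l,k$ I will run a parabolic rescaling at a small time $t_0$. Fix $p\in M$, use the $C^{3,\alpha}$-harmonic coordinates on $B_{r_1}(p,g)$ from Lemma \ref{lem202}, and set
\begin{equation*}
\tilde\varphi(x,s) \;:=\; t_0^{-1/2}\,\varphi\bigl(t_0^{1/4}x,\, t_0 s\bigr),\qquad \tilde g_{ij}(x)\;:=\;g_{ij}\bigl(t_0^{1/4}x\bigr),
\end{equation*}
on $B_{t_0^{-1/4}r_1/2}(0)\times[1/2,1]$. The rescaled metric $\tilde g$ tends smoothly to the Euclidean metric on any fixed compact set as $t_0\to 0^+$ (with $C^{3,\alpha}$-harmonic radius $t_0^{-1/4}r_1\to\infty$), $\tilde\varphi$ solves the Calabi flow for $\tilde g$, and the pair $(X_T,Y_T)$ is scale-invariant under this transformation, so $\|\tilde\psi\|_{X_{[1/2,1]}}$ and $\|R(\tilde\varphi)\|_{Y_{[1/2,1]}}$ are bounded by $C_n\bigl\||\partial\bar\partial u_0|_g\bigr\|_{L^\infty}$ (and are $o(1)$ when $\partial\bar\partial u_0\in C^0$). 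Running the bootstrap of the first step on the unit slab $[1/2,1]$, the constants depend only on $\|\tilde\varphi\|_{X_1}$ and on the harmonic-radius control of $\tilde g$, hence are independent of $t_0$; unrolling the scaling converts $C^k$-bounds on $(\partial_s)^l D^k\partial\bar\partial\tilde\varphi(\cdot,1)$ into the desired $t_0^{l+k/4}$-weighted bounds on $(\partial_t)^l\nabla_g^k\partial\bar\partial\varphi(\cdot,t_0)$, and passing to $\limsup$ (or $\lim$) as $t_0\to 0^+$ yields $(\ref{eqn9002})$ and $(\ref{eqn9003})$.

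The smoothness step is essentially routine, and the low-order asymptotics read off the definitions. The main obstacle, and the heart of the matter, is securing the $t_0$-uniformity of the bootstrap constants in the rescaled picture; this is precisely what motivated the weighted spaces $X_T$, $Y_T$ and the uniform Schauder estimate of Theorem \ref{thm702}, and combined with the harmonic-radius blow-up of $\tilde g$ they drive the argument through at every order without loss.
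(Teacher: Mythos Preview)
Your proposal is correct and follows essentially the same route as the paper: smoothness via a standard parabolic Schauder bootstrap on $[\epsilon,T]$, and the quantitative blow-up bounds via the parabolic rescaling $\varphi_\lambda(z,t)=\lambda^{-1/2}\varphi(\lambda^{1/4}z,\lambda t)$ together with the decomposition $\varphi=S_gu_0+\psi$, interior Schauder on the unit slab, and induction on the order. The only cosmetic difference is that the paper works in local holomorphic coordinates (natural for $\partial\bar\partial$) rather than the real $C^{3,\alpha}$-harmonic coordinates you invoke, and it writes the rescaled equation explicitly as a linear equation for $\psi_\lambda$ before differentiating; neither affects the substance of the argument.
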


\begin{proof}

By our discussion above, the short time solution $\varphi \in C^\infty \big(M \times (0,T) \big)$. Next we focus on proving estimates $(\ref{eqn9002})$ and $(\ref{eqn9003})$. 
 
Given any $z \in M$, we work under local holomorphic coordinate in a neighborhood of $z$, say $B_{r_0}(0) \cong U \subset M$, such that $\frac{1}{2} \delta_{i\bar j} \leq g_{i\bar j} \leq 2 \delta_{i\bar j}$. Given $\lambda \ll 1$, consider the following parabolic rescaling,

\begin{equation}
g_{\lambda} (z) = g(\lambda^{\frac{1}{4}}z), 
\varphi_\lambda (z, t) = \lambda^{-\frac{1}{2}} \varphi(\lambda^{\frac{1}{4}} z, \lambda t).
\end{equation}

Thus for $\varphi$ restricted to $B_{\lambda^{\frac{1}{4}}}(0) \times [\frac{\lambda}{2}, \lambda]$, we have $\varphi_\lambda \in C^\infty(B_{1}(0) \times [\frac{1}{2}, 1])$ satisfying
\begin{equation}
\frac{\partial }{\partial t} \varphi_\lambda = R_{g_{ \varphi_\lambda}} - \underline R, 
\end{equation}
where we denote $g_{\varphi_\lambda} : = g_\lambda + \sqrt{-1} \partial \bar \partial \varphi_\lambda$.

By Theorem $\ref{thm802}$, we have $\varphi = S_g u_0 + \psi$. Denote $\psi_\lambda$ and $(S_g u_0)_\lambda$ by corresponding smooth functions after rescaling, then we can rewrite the calabi flow equation as the following linear equation on $\psi_\lambda \in C^\infty(B_1(0) \times [\frac{1}{2},1])$
\begin{equation}
\frac{\partial}{\partial t} \psi_\lambda + g_{\varphi_\lambda}^{i \bar j} g_{\varphi_\lambda}^{k\bar l} \psi_{\lambda, i\bar j k \bar l }= f, 
\end{equation}
where 
\begin{equation}
f = g_{\varphi_\lambda}^{i\bar j} g_{\varphi_\lambda}^{k\bar p} g_{\varphi_\lambda}^{q \bar l} \varphi_{\lambda,\bar p q i} \varphi_{\lambda, k\bar l \bar j} +  g_{\varphi_\lambda}^{i\bar j}  ( \text{Ric}_{g_\lambda} )_{i\bar j } - \underline R + (\frac{\partial}{\partial t}  + g_{\varphi_\lambda}^{i \bar j} g_{\varphi_\lambda}^{k\bar l} ) (S_g u_0)_{\lambda, i\bar j k \bar l }
\end{equation}
Taking $\nabla_{g_\lambda}$ on the equation above, we have
\begin{equation}
\begin{split}
\frac{\partial}{\partial t} \nabla_{g_\lambda} \psi_{\lambda} + g_{\varphi_\lambda}^{i \bar j} g_{\varphi_\lambda}^{k\bar l} (\nabla_{g_\lambda} \psi_{\lambda})_{, i\bar j k \bar l } = &\nabla_{g_\lambda} f - (\nabla_{g_\lambda}g_{\varphi_\lambda}^{i \bar j}) g_{\varphi_\lambda}^{k\bar l} \psi_{\lambda, i\bar j k \bar l } - g_{\varphi_\lambda}^{i \bar j} (\nabla_{g_\lambda} g_{\varphi_\lambda}^{k\bar l} )\psi_{\lambda, i\bar j k \bar l } \\
&+ \sum_{k=1}^4 g_{\varphi_\lambda}^2 \star \nabla_{g_\lambda}^k \psi_\lambda.
\end{split}
\end{equation}

Denote the right hand side above by $F_\lambda$, by Theorem $\ref{thm3.9}$ and Theorem $\ref{thm802}$ one can check that when $\partial \bar \partial u_0 \in L^\infty(M)$
$$\|F_\lambda\|_{C^{\alpha, \frac{\alpha}{4}}\big(B_1(0) \times [\frac{1}{2},1]\big)} \leq C \||\partial \bar \partial u_0|_g\|_{L^\infty(M)} + o(1), $$
for some dimensional constant $C>0$ and when $\partial \bar \partial u_0 \in C^0(M)$,
$$\|F_\lambda\|_{C^{\alpha, \frac{\alpha}{4}}\big(B_1(0) \times [\frac{1}{2},1]\big)} =  o(1), $$
as $\lambda \rightarrow 0$. Also the elliptic coefficients $g_{\varphi_\lambda}^{i \bar j} g_{\varphi_\lambda}^{k\bar l} \in C^{\alpha, \frac{\alpha}{4}}(\big(B_1(0) \times [\frac{1}{2},1]\big))$ and as $\lambda \rightarrow 0$, we have
$$\|g_{\varphi_\lambda}^{i \bar j} g_{\varphi_\lambda}^{k\bar l} - \delta^{i\bar j} \delta^{k\bar l}\|_{C^{\alpha, \frac{\alpha}{4}}\big(B_1(0) \times [\frac{1}{2},1]\big)} \leq C \||\partial \bar \partial u_0|_g\|_{L^\infty(M)} + o(1) \ll 1. $$ 
Thus by the standard interior Schauder estimate, we have
\begin{equation}
\begin{split}
& \|\nabla_{g_\lambda} \psi_\lambda\|_{C^{4+\alpha, 1+\frac{\alpha}{4}}\big(B_{\frac{1}{2}}(0) \times [\frac{1}{4},1]\big)}\\
\leq & C \|F_\lambda\|_{C^{\alpha, \frac{\alpha}{4}} \big(B_1(0) \times [\frac{1}{2},1]\big)} + C \|\nabla_{g_\lambda}\psi_\lambda\|_{L^\infty\big(B_1(0) \times [\frac{1}{2},1]\big)}\\ 
\leq & C \|F_\lambda\|_{C^{\alpha, \frac{\alpha}{4}} \big(B_1(0) \times [\frac{1}{2},1]\big)} + C \|\psi\|_{X_\lambda}
\end{split}
\end{equation} 

Scale everything back and by a covering argument we have for $\partial \bar \partial u_0 \in L^\infty (M)$
\begin{equation}
\limsup_{t \rightarrow 0^+}t^{\frac{3}{4}}\| |\nabla_g^5 \psi (t) |_g\|_{C^0(M)}  \leq C \||\partial \bar \partial u_0|_g\|_{L^\infty(M)}, 
\end{equation}
and for $\partial \bar \partial u_0  \in C^0(M)$, we have
\begin{equation}
\lim_{t \rightarrow 0^+}t^{\frac{3}{4}}\| |\nabla_g^5 \psi (t) |_g\|_{C^0(M)}  = 0, 
\end{equation}

Together with bounds on $S_g u_0$ in Theorem $\ref{thm3.9}$, we proved the desired estimate for $\nabla_g^3 \partial \bar \partial \varphi(t)$ for $t $ sufficiently small. By induction, one can show $(\ref{eqn9002})$ and $(\ref{eqn9003})$.

\end{proof}

We mention the uniqueness of the Calabi flow. In general the nonlinear parabolic flow might not have uniqueness for rough initial data. But the Calabi flow enjoys a remarkable property, proved by Calabi-Chen, that it shrinks the distance of any (smooth) curves. This property will directly imply the uniqueness of the Calabi flow.  Such a uniqueness also holds for the weak Calabi flow in the sense of Streets. 

Now we give a proof of Theorem \ref{kminimizer}. In particular we need a notion of the weak Calabi flow, studied by J. Streets \cite{streets}. Streets' method is based on the convexity of $K$-energy \cite{BB} (see \cite{CLP} also) and some general results of Mayer \cite{Mayer} on the gradient flow on metric spaces with nonpositive curvature in the sense of Alexanderov. 

\begin{proof}[Proof of Theorem \ref{kminimizer}]
Streets proved \cite{streets, streets1}, among others,  that the weak Calabi flow coincides with the smooth one, if the later exists. His proof can be directly carried over to our solutions of the Calabi flow with initial metric in $L^\infty$. A general result by Mayer \cite{Mayer} implies that a $K$-energy minimizer is a stationary solution along the weak Calabi flow. By our result on the Calabi flow with rough initial data, if the initial metric is a $K$-energy minimizer $\omega_u$ in a $L^\infty$ $\delta$-neighborhood of $\omega$, then $\omega_{u(t)}$ is smooth for $t>0$. On the other hand, $\omega_{u(t)}$ is stationary, and hence $\omega_{u(t)}=\omega_{u}$ is a smooth minimizer; that is a constant scalar curvature metric. This completes the proof. 
\end{proof}

\appendix

\section{The global parametrix}\label{secA}

The problem of defining the global parametrix of $(\frac{\partial }{\partial t} + \Delta_g^2)$ on closed manifolds is a local matter. Given any point $p\in M$ and $r_0>0$ defined as in Lemma $\ref{lem201}$, we first restrict ourselves in a small geodesic ball $B_{r_0}(p, g)$ centered at $p$. Under the normal coordinate $B_{r_0}(0) \cong B_{r_0}(p, g)$, 
\begin{equation}
\Delta_g^2 u(x) = g^{ij}(x) g^{kl}(x) u_{ijkl} + \sum_{|\alpha| \leq 3} A_{\alpha}(x) D_x^\alpha u. 
\end{equation}
For any $\xi \in B_{r_0}(0)$, set
\begin{equation}\label{eqn3005}
\begin{split}
& P_0(\xi, D_x) u = g^{ij}(\xi) g^{kl}(\xi) u_{ijkl} , \\
& P_1(\xi, D_x) u  = \sum_{|\alpha| \leq 3} A_{\alpha}(\xi) D_x^\alpha u , 
\end{split}
\end{equation}
so that $\Delta_g^2 u (x)  = P_0(x, D_x) u + P_1(x, D_x) u $. 

Consider an auxiliary parabolic equation on $\bR^n $
\begin{equation}\label{eqn3001}
\frac{\partial u}{\partial t} + P_{0}(\xi, D_x) u = 0
\end{equation}
with constant coefficients $g^{ij}(\xi) g^{kl}(\xi)$ depending on a parameter $\xi \in B_{r_0}(0)$. The heat kernel of equation $(\ref{eqn3001})$ can be constructed explicitly using Fourier transform on $\bR^n$. We summarize the results from \cite{Fr} in the following proposition. 

\begin{prop}\label{prop301}
Fix $ 0<T < \infty$, for any $\xi \in B_{r_0}(0)$ there exists a smooth function $G(\cdot; \cdot; \xi) \in C^{\infty}\big(\bR^n \times (0,T) \big)$ such that 

\begin{enumerate}

\item[i)] For any integer $k \geq 0$ and multi index $\alpha \geq 0$, 
\begin{equation}
|\partial_t^k D_x^\alpha G(x;t; \xi) | \leq C t^{- \frac{n+4k +|\alpha|}{4}} \exp \{ - \delta (t^{- \frac{1}{4}}|x|)^{\frac{4}{3}}\},  
\end{equation}
where $C, \delta>0$ depend on $T$, $4k+|\alpha|$ and the upper and lower bounds of $g^{ij}(\xi)g^{kl}(\xi)$. 

\item[ii)] $G(\cdot; \cdot; \xi)$ satisfies equation $(\ref{eqn3001})$ with parameter $\xi$.

\item[iii)] For any bounded continuous function $u$ on $\bR^n$, 
\begin{equation}
\lim_{t \rightarrow 0^+} \int_{\bR^n} G(x- y; t; \xi) u(y) \mathrm d y  = u(x). 
\end{equation}
The convergence is uniform for all $x$ in bounded subset of $\bR^n$ and for all $\xi \in B_{r_0}(0)$. 

\item[iv)] $G(x;t; \xi)$ varies smoothly with respect to parameter $\xi \in B_{r_0}(0)$ and we have for any integer $k \geq 0$ and multi indices $\alpha, \beta \geq 0$, 
\begin{equation}
|\partial_t^k D_x^\alpha D_\xi^\beta G(x; t; \xi)| \leq C'  t^{- \frac{n+4k +|\alpha|}{4}} \exp \{ - \delta'  (t^{- \frac{1}{4}}|x|)^{\frac{4}{3}}\}, 
\end{equation}
where $C', \delta' >0$ depends on $T$, $4k+ |\alpha|$ and bounds on derivatives up to order $|\beta|$ of $g^{ij} g^{kl}$ on $B_{r_0}(0)$. 

\end{enumerate}
\end{prop}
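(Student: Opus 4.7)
The plan is to construct $G(\cdot;\cdot;\xi)$ explicitly via Fourier transform and derive every property from the resulting formula. Writing
\begin{equation*}
q_\xi(\eta) := g^{ij}(\xi) g^{kl}(\xi)\, \eta_i \eta_j \eta_k \eta_l = \bigl(g^{ij}(\xi)\eta_i\eta_j\bigr)^2,
\end{equation*}
I would set
\begin{equation*}
G(x;t;\xi) := (2\pi)^{-n}\int_{\bR^n} e^{\,i x\cdot \eta - t\, q_\xi(\eta)}\, d\eta.
\end{equation*}
By Lemma~\ref{lem201}, $g^{ij}(\xi)\eta_i\eta_j \geq c_0 |\eta|^2$ uniformly for $\xi \in B_{r_0}(0)$, so $q_\xi(\eta) \geq c_0^2 |\eta|^4$ and the integral converges absolutely and defines a smooth function on $\bR^n \times (0,\infty) \times B_{r_0}(0)$. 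Property (ii) is immediate from differentiation under the integral sign since $(\partial_t + P_0(\xi,D_x)) e^{ix\cdot\eta - tq_\xi(\eta)} = 0$. Property (iii) follows from the parabolic scaling $G(x;t;\xi) = t^{-n/4} G(t^{-1/4}x;1;\xi)$ together with $\int_{\bR^n} G(\cdot;1;\xi)\, dx = \hat G(0;1;\xi) = 1$, which exhibits $G(\cdot;t;\xi)$ as an approximate identity uniformly in $\xi$ on $B_{r_0}(0)$.

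The heart of the matter is the decay estimate in (i), which I would obtain by the classical contour-deformation argument of Eidelman. Since $q_\xi$ extends to an entire function on $\bC^n$, Cauchy's theorem permits shifting the contour from $\bR^n$ to $\bR^n - i\tau\omega$ for any unit vector $\omega$ and any $\tau > 0$:
\begin{equation*}
G(x;t;\xi) = (2\pi)^{-n} e^{\,\tau\, x\cdot\omega} \int_{\bR^n} e^{\,i x\cdot\eta}\, e^{-t\, q_\xi(\eta - i\tau\omega)}\, d\eta.
\end{equation*}
Choosing $\omega = -x/|x|$ makes the prefactor $e^{-\tau|x|}$. A direct expansion of $q_\xi(\eta-i\tau\omega)$ yields the uniform lower bound
\begin{equation*}
\mathrm{Re}\, q_\xi(\eta - i\tau\omega) \geq c_1 |\eta|^4 - C_1 \tau^4,
\end{equation*}
with $c_1, C_1 > 0$ depending only on the upper and lower bounds of $g^{ij}(\xi)$ on $B_{r_0}(0)$. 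Consequently
\begin{equation*}
|G(x;t;\xi)| \leq C\, t^{-n/4}\, e^{-\tau|x| + C_1 t \tau^4},
\end{equation*}
and optimizing in $\tau$ gives $\tau \sim (|x|/(C_1 t))^{1/3}$ and the exponential factor $\exp\{-\delta(t^{-1/4}|x|)^{4/3}\}$ for some $\delta > 0$ (the Legendre-dual exponent of $4$ being $4/3$). The derivative bounds $\partial_t^k D_x^\alpha G$ reduce to the same analysis after inserting into the integrand the polynomial factor $(-q_\xi(\eta - i\tau\omega))^k (i(\eta - i\tau\omega))^\alpha$, which is dominated by $C(|\eta|^{4k+|\alpha|} + \tau^{4k+|\alpha|})$; these polynomial factors are absorbed into the Gaussian-type decay, and the rescaling $\eta = t^{-1/4}\zeta$ extracts the prefactor $t^{-(n+4k+|\alpha|)/4}$.

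Property (iv) is obtained by differentiating under the integral in $\xi$: each application of $D_\xi$ produces a factor $-t\, D_\xi q_\xi(\eta - i\tau\omega)$, which is a polynomial of degree $\leq 4$ in $\eta - i\tau\omega$ whose coefficients are controlled by $\|D^\beta(g^{ij}g^{kl})\|_{C^0(B_{r_0}(0))}$. Iterating $|\beta|$ times produces a polynomial of bounded degree times the original exponential, and repeating the contour-shift analysis yields the claimed estimate, with $C', \delta'$ now depending on derivatives of $g^{ij}g^{kl}$ of order up to $|\beta|$. The main technical obstacle is bookkeeping in the contour-shift step, namely verifying the quadratic-in-$\eta$ and quartic-in-$\tau$ lower bound $\mathrm{Re}\, q_\xi(\eta - i\tau\omega) \geq c_1|\eta|^4 - C_1\tau^4$ with constants uniform over $\xi \in B_{r_0}(0)$; once that is in hand, the parabolic scaling organises all remaining estimates at once.
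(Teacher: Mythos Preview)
Your proposal is correct and is precisely the classical Fourier-analytic construction with Eidelman's contour-shift argument; the paper does not give its own proof here but simply cites Friedman~\cite{Fr} (p.~241, Theorem~1 and p.~250, Lemma~4) and Eidelman~\cite{Ei} (p.~66), whose arguments are exactly what you have written out. Your sketch of the key lower bound $\mathrm{Re}\,q_\xi(\eta-i\tau\omega)\ge c_1|\eta|^4-C_1\tau^4$ via Cauchy--Schwarz and Young, the Legendre-dual optimisation in~$\tau$, and the absorption of polynomial factors (including the $t\,D_\xi q_\xi$ terms for~(iv)) by slightly decreasing~$\delta$ are all accurate, so there is nothing to add.
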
	

\begin{proof}
This proposition is a summary of results in \cite{Fr}(Page 241, Theorem 1 and Page 250, Lemma 4.). For the uniform convergence in iii), we refer to \cite{Ei} (Page 66).  Therefore we omit the proof here. 
\end{proof}

Next we patch these locally built heat kernels described in Proposition $\ref{prop301}$ together using a partition of unity and appropriate cutoff functions on $M$. Suppose $r_0 > 0$ is as in Lemma $\ref{lem201}$. Suppose $\{ B_{\frac{r_0}{2}}(p_\nu, g) \}$ is a finite open cover of $M$ and $\{\phi_\nu\}$ is a partition of unity subordinate to $\{ B_{\frac{r_0}{2}}(p_\nu, g) \}$, namely we assume that for each $\nu $, $\phi_\nu \in C^{\infty}_0 \big(B_{\frac{3 r_0}{4}}(p_\nu, g)\big)$ with $\phi_\nu >0 $ on $B_{\frac{r_0}{2}}(p_\nu, g)$ and that $\sum_\nu \phi_\nu = 1 $ on M. Suppose $\{\psi_\nu\}$ is a sequence of smooth functions on $M$ such that for each $\nu$, $\psi_\nu \in C^\infty_0\big(B_{\frac{7}{8}r_0}(p_\nu, g)\big)$ and $\psi_\nu = 1$ on $B_{\frac{3 r_0}{4}}(p_\nu, g)$. We define \emph{the global parametrix} of $(\frac{\partial }{\partial t} + \Delta_g^2)$ as following 
\begin{equation}\label{eqn3004}
Z(x, y; t) = \sum_\nu \psi_\nu(x) G_\nu(x-y; t; y) \phi_\nu(y) (\det g_\nu (y))^{-\frac{1}{2}},  
\end{equation}
where $G_\nu$ denotes the heat kernel introduced in Proposition $\ref{prop301}$ of the auxiliary equation $(\ref{eqn3001})$ with coefficients given by the leading term of operator $\Delta_g^2$ under the local normal coordinates $B_{r_0}(0) \cong B_{r_0}(p_\nu, g)$.

We summarize the properties of the global parametrix as following.

\begin{prop}\label{prop302}
Following the discussion above, we have

\begin{enumerate}

\item[i)] $Z \in C^{\infty}\big(M\times M \times (0,T)\big)$ and for any integers $k, p, q \geq 0$, 
\begin{equation}\label{eqn3003}
|\partial_t^k \nabla_x^p \nabla_y^q Z(x, y; t)|_g \leq C t^{-\frac{n+4k +p + q}{4}} \exp\{- \delta \big(t^{-\frac{1}{4}} \rho(x, y) \big)^{\frac{4}{3}}\}, 
\end{equation}
where $C,\delta > 0$ depends on $T$, $g$ and $4k+p+q$. 

\item[ii)] Denote 
\begin{equation}
K(x, y; t) = -( \frac{\partial }{\partial t} + \Delta_g^2) Z(x, y; t), 
\end{equation}
where $\Delta_g^2$ is taken on variable $x$. For any integers $k, p, q \geq 0$, we have
\begin{equation}\label{eqn3007}
|\partial_t^k \nabla_x^p \nabla_y^q K(x, y; t)|_g \leq C' t^{-\frac{n+3+4k+p+q}{4}}  \exp\{- \delta' \big(t^{-\frac{1}{4}} \rho(x, y) \big)^{\frac{4}{3}}\}, 
\end{equation} 
where $C',\delta' > 0$ depends on $T$, $g$ and $4k+p+q$.

\item[iii)] For any function $u \in C^0(M)$, we have
\begin{equation}
\lim_{t \rightarrow 0^+} \int_M Z(x, y; t) u(y) \mathrm d V_g(y) = u(x)
\end{equation}
uniformly for all $x \in M$. 

\item[iv)] For each $\nu$, under normal coordinate $B_{r_0}(0) \cong B_{r_0}(p_\nu, g)$, we have for any $x, y \in B_{r_0}(0)$ and multi indices $\alpha, \beta \geq 0$, 
\begin{equation}\label{eqn3012}
\begin{split}
& |D_x^\beta (D_x + D_y)^\alpha Z(x, y; t) | \leq C t^{-\frac{n+ |\beta|}{4}} \exp\{- \delta \big(t^{-\frac{1}{4}} \rho(x, y) \big)^{\frac{4}{3}} \}, 
 \end{split}
\end{equation}
and 
\begin{equation}\label{eqn3014}
\begin{split}
& |D_x^\beta (D_x + D_y)^\alpha K (x, y; t) | \leq C' t^{-\frac{n+ 3+ |\beta|}{4}} \exp\{- \delta'  \big(t^{-\frac{1}{4}} \rho(x, y) \big)^{\frac{4}{3}} \}, 
\end{split}
\end{equation}
where $C, C', \delta, \delta' >0$ are constants depending on $T, g$ and $|\alpha|+ |\beta|$. 

\end{enumerate}
\end{prop}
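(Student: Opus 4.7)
The four parts all flow from the Gaussian estimates of Proposition~\ref{prop301} combined with the partition of unity structure of $Z$. A common setup: whenever a summand $\psi_\nu(x)G_\nu(x-y;t;y)\phi_\nu(y)(\det g_\nu(y))^{-1/2}$ is nonzero, both $x$ and $y$ lie in $B_{r_0}(p_\nu,g)$, so all computations happen in a single normal chart of $p_\nu$, where Lemma~\ref{lem201} lets us interchange $|x-y|$ and $\rho(x,y)$ freely (up to a factor $2$, which is absorbed into the constants $\delta$).

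For Part (i), smoothness is immediate because each factor in each summand is smooth on the relevant chart. The estimate follows by applying the Leibniz rule to $\partial_t^k\nabla_x^p\nabla_y^q$: derivatives on the cutoffs, volume weights, and Christoffel symbols contribute bounded factors, while derivatives on $G_\nu(x-y;t;y)$ of total order $4k+p+q$ are controlled by Proposition~\ref{prop301}(i),(iv). Part (iv) is the same argument in local coordinates but exploits the key cancellation
\begin{equation*}
(D_{x_i}+D_{y_i})G_\nu(x-y;t;y) = D_{\xi_i}G_\nu(x-y;t;y),
\end{equation*}
so the combined derivative $(D_x+D_y)^\alpha$ converts into $D_\xi^\alpha G_\nu$, which by Proposition~\ref{prop301}(iv) is bounded at rate $t^{-n/4}$ without any loss of $t^{-1/4}$ per derivative. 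The remaining $D_x^\beta$ then gives exactly the $t^{-(n+|\beta|)/4}$ of~\eqref{eqn3012}.

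For Part (ii), which is the main technical step, I will decompose $\Delta_g^2 = P_0(x,D_x)+P_1(x,D_x)$ as in~\eqref{eqn3005} and use the defining property $(\partial_t+P_0(y,D_x))G_\nu(\cdot;\cdot;y)=0$ to write
\begin{equation*}
(\partial_t+\Delta_g^2)\bigl[\psi_\nu(x)G_\nu(x-y;t;y)\bigr] = \psi_\nu(x)\bigl([P_0(x,D_x)-P_0(y,D_x)]G_\nu + P_1(x,D_x)G_\nu\bigr) + [\Delta_g^2,\psi_\nu]G_\nu.
\end{equation*}
The coefficients of $P_0(x,D_x)-P_0(y,D_x)$ are $O(\rho(x,y))$, which combined with the $t^{-(n+4)/4}$ bound on $D_x^4G_\nu$ yields $t^{-(n+3)/4}(t^{-1/4}\rho(x,y))e^{-\delta(\cdot)^{4/3}}$; the polynomial prefactor is absorbed into the exponential at the cost of a slightly smaller $\delta$. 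The $P_1(x,D_x)G_\nu$ term involves at most three $x$-derivatives, giving $t^{-(n+3)/4}$ directly. The commutator $[\Delta_g^2,\psi_\nu]G_\nu$ is supported where $x$ and $y$ are at geodesic distance at least $r_0/8$ (since $\nabla\psi_\nu$ vanishes on a neighborhood of $\operatorname{supp}\phi_\nu$ after the pairing), so it decays faster than any polynomial in $t$ and is absorbed into the Gaussian bound using $\operatorname{diam}(M)<\infty$. Higher derivatives $\partial_t^k\nabla_x^p\nabla_y^q K$ follow by differentiating this identity and repeating the argument, exploiting that $(D_x+D_y)$-derivatives of $P_0(x,D_x)-P_0(y,D_x)$ preserve the diagonal vanishing.

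For Part (iii), writing $dV_g$ in local coordinates reduces the integral to $\sum_\nu\psi_\nu(x)\int_{\mathbb{R}^n}G_\nu(x-y;t;y)\phi_\nu(y)u(y)\,dy$. I will split $G_\nu(x-y;t;y) = G_\nu(x-y;t;x)+[G_\nu(x-y;t;y)-G_\nu(x-y;t;x)]$; the error is controlled by the mean value theorem and the $D_\xi G_\nu$ bound from Proposition~\ref{prop301}(iv), yielding an $O(t^{1/4})$ contribution after integration over $y$. For the main term, Proposition~\ref{prop301}(iii) (with frozen parameter $\xi=x$) gives uniform convergence to $\phi_\nu(x)u(x)$. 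Summing and using $\psi_\nu\equiv 1$ on $\operatorname{supp}\phi_\nu$ together with $\sum_\nu\phi_\nu=1$ yields the limit $u(x)$ uniformly. The main obstacle throughout is Part (ii): one must carefully track the single factor of $\rho(x,y)$ produced by $P_0(x,D_x)-P_0(y,D_x)$ to preserve the $t^{-3/4}$ improvement, and verify that neither the cutoff commutator nor the higher-order Leibniz terms in $\nabla_x^p\nabla_y^q K$ ever spoil it; once this bookkeeping is done, the remaining parts are essentially algebraic.
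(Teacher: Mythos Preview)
Your treatment of Parts (i)--(iii) is essentially the paper's proof: Leibniz plus Proposition~\ref{prop301} for (i); the split $G_\nu(x-y;t;y)=G_\nu(x-y;t;x)+[\text{error}]$ with Proposition~\ref{prop301}(iii),(iv) for (iii); and the decomposition of $K_\nu$ via $(P_0(x,D_x)-P_0(y,D_x))G_\nu + P_1G_\nu + [\Delta_g^2,\psi_\nu]G_\nu$ for (ii). One cosmetic difference: for the commutator term you appeal to the spatial separation between $\operatorname{supp}(\nabla\psi_\nu)$ and $\operatorname{supp}(\phi_\nu)$, while the paper simply observes that these terms carry at most three $x$-derivatives on $G_\nu$ and are therefore already $O(t^{-(n+3)/4})$. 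Both arguments are valid.

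Part (iv) has a real gap. Your cancellation $(D_{x_i}+D_{y_i})G_\nu(x-y;t;y)=D_{\xi_i}G_\nu(\,\cdot\,;\,\cdot\,;\xi)|_{\xi=y}$ handles only the summand $Z_\nu$ attached to the chart you are working in. But the estimate~\eqref{eqn3012} is for the full $Z=\sum_\mu Z_\mu$, and for $\mu\neq\nu$ with overlapping balls the function $G_\mu$ depends on $x,y$ through the $\mu$-chart coordinates $u=\sigma(x)$, $v=\sigma(y)$, where $\sigma$ is the transition map. In the $\nu$-chart one has
\[
D_{x_l}+D_{y_l}
= \tfrac{\partial\sigma^k}{\partial x_l}(y)\bigl(D_{u_k}+D_{v_k}\bigr)
+ \Bigl(\tfrac{\partial\sigma^k}{\partial x_l}(x)-\tfrac{\partial\sigma^k}{\partial x_l}(y)\Bigr)D_{u_k},
\]
so each $(D_x+D_y)$ produces a ``good'' $(D_u+D_v)$-piece plus a ``bad'' $D_u$-piece; the latter costs $t^{-1/4}$ but carries an $O(\rho(x,y))$ factor that must then be absorbed into the Gaussian. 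Iterating this yields the combinatorial formula the paper records as \eqref{eqn00003}--\eqref{eqn3015}, and only after that bookkeeping does one obtain $|D_x^\beta(D_x+D_y)^\alpha Z_\mu|\leq Ct^{-(n+|\beta|)/4}e^{-\delta(\cdot)^{4/3}}$ in the $\nu$-chart. As written, your argument establishes~\eqref{eqn3012} for $Z_\nu$ only, not for $Z$; the same omission affects~\eqref{eqn3014} for $K$.
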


\begin{proof}

In this proof, we denote for each $\nu$, 
\begin{equation}
Z_\nu (x, y; t) = \psi_\nu(x) G_\nu(x-y; t; y) \phi_\nu(y) (\det g_\nu (y))^{-\frac{1}{2}}.
\end{equation}
Thus $Z = \sum_\nu Z_\nu$.

First we prove i). $Z \in C^{\infty}\big(M\times M \times (0,T)\big)$ since each term $Z_\nu$ in the finite sum $(\ref{eqn3004})$ is a smooth function on $M\times M \times (0,T)$. For each $\nu$, we have by Proposition $\ref{prop301}$ i) and iv), for any $x, y \in B_{r_0}(0) \cong B_{r_0}(p_\nu, g)$
\begin{equation}\label{eqn3002}
\begin{split}
 |\partial_t^k \nabla_x^p \nabla_y^q Z_\nu (x,y; t)|_g & \leq C_\nu t^{-\frac{n+4k +p + q}{4}} \exp\{- \delta_\nu \big(t^{-\frac{1}{4}} |x- y| \big)^{\frac{4}{3}}\}\\
&\leq C_\nu t^{-\frac{n+4k +p + q}{4}} \exp\{- 2^{-\frac{4}{3}}\delta_\nu \big(t^{-\frac{1}{4}} \rho(x, y) \big)^{\frac{4}{3}}\}. 
\end{split}
\end{equation}
Constants $C_\nu, \delta_\nu > 0$ depend on $g$, $T$ and $4k + p+ q$. If $x$ or $y$ $\notin  B_{r_0}(p_\nu, g)$, then inequality $(\ref{eqn3002})$ trivially holds. Thus for any $x, y \in M$, inequality $(\ref{eqn3002})$ is true. By taking $C = \sum_\nu C_\nu$ and $\delta  = \min_\nu 2^{-\frac{4}{3}} \delta_\nu$, we can get estimate $(\ref{eqn3003})$.

Secondly, we prove ii). Denote 
\begin{equation}
K_\nu(x, y;t) = - (\frac{\partial }{\partial t} + \Delta_g^2) Z_\nu(x, y; t)
\end{equation}
and thus $K = \sum_\nu K_\nu$. For each $\nu$, since $Z_\nu(\cdot, \cdot; t)$ is supported in $B_{r_0}(p_\nu, g) \times B_{r_0} (p_\nu, g)$, so is $K_\nu$. Then it suffices to compute $K_\nu $ in normal coordinate $B_{r_0}(0) \cong B_{r_0}(p_\nu, g)$. In local normal coordinate $B_{r_0}(0) \cong B_{r_0}(p_\nu,g)$, we have 
\begin{equation}
\Delta_g^2 u   = g_\nu^{ij}(x) g_\nu^{kl}(x) u_{ijkl} + \sum_{|\alpha| \leq 3} A_{\nu,\alpha} (x) D_x^\alpha u. 
\end{equation}
Similar to $(\ref{eqn3005})$ we set for $\xi \in B_{r_0}(0)$, 
\begin{equation}
\begin{split}
& P_{\nu,0} (\xi, D_x) u = g_\nu^{ij}(\xi) g_\nu^{kl}(\xi) u_{ijkl}, \\
& P_{\nu,1}(\xi, D_x) = \sum_{|\alpha| \leq 3} A_{\nu,\alpha} (\xi) D_x^\alpha u. 
\end{split}
\end{equation}
By direct computation, we have 
\begin{equation}\label{eqn3006}
\begin{split}
K_\nu (x, y; t) & = - \big( \det g_\nu (y)\big)^{-\frac{1}{2}} \phi_\nu(y ) \big[ \psi_\nu (x) (\frac{\partial }{\partial t} + \Delta_g^2) G_\nu (x-y; t; y) \\
& + \sum_{i=1}^3 \nabla_x^{i}\psi_\nu(x) \star \nabla_x^{4-i} G_\nu(x-y; t; y) \big]\\
& = - \big( \det g_\nu (y)\big)^{-\frac{1}{2}} \phi_\nu(y ) \big[  \psi_\nu(x) (P_{\nu, 0}(x, D_x) - P_{\nu, 0}(y, D_x)) G_\nu (x-y; t; y) \\
&+  \psi_\nu (x) P_{\nu,1}(x, D_x)  G_\nu (x-y; t; y) + \sum_{i=1}^3 \nabla_x^{4-i}\psi_\nu(x) \star \nabla_x^{i} G_\nu(x-y; t; y) \big]. 
\end{split}
\end{equation}
Last step is because $\partial_t G_\nu(x-y; t; y) = - P_{\nu,0} (y, D_x) G_\nu(x-y; t; y)$ by Proposition $\ref{prop301}$ ii). By computation $(\ref{eqn3006})$, we have that for any $x, y \in B_{r_0}(0) \cong B_{r_0}(p, g)$
\begin{equation}
|K_\nu(x, y; t)| \leq C_{\nu } \big[ |x- y| \cdot   t^{-\frac{n+4}{4}} \exp\{ -  \delta_{\nu} (t^{-\frac{1}{4}} |x -y |)^{\frac{4}{3}} \} + t^{-\frac{n+3}{4}} \exp\{ -  \delta_{\nu} (t^{-\frac{1}{4}} |x -y |)^{\frac{4}{3}} \} \big], 
\end{equation}
for appropriate constants $C_\nu, \delta_\nu >0$ depending on $g, T$. By choosing smaller $\delta'_\nu < \delta_\nu$ and appropriate $C'_\nu \gg C_\nu$, we get
\begin{equation}\label{eqn3008}
\begin{split}
|K_\nu(x, y; t)| & \leq C'_{\nu }  t^{-\frac{n+3}{4}} \exp\{ -  \delta'_{\nu} (t^{-\frac{1}{4}} |x -y |)^{\frac{4}{3}} \} \\
& \leq  C'_{\nu }  t^{-\frac{n+3}{4}} \exp\{ -  2^{-\frac{4}{3}}\delta'_{\nu} (t^{-\frac{1}{4}} \rho(x,y))^{\frac{4}{3}} \}.
\end{split}
\end{equation}
In fact, inequality $(\ref{eqn3008})$ holds for any $x, y \in M$ since it's trivially true outside the support of $K_\nu(\cdot, \cdot; t)$.
Similarly for each $\nu$, by taking derivatives on both hand sides of the equation $(\ref{eqn3006})$ and using Proposition $\ref{prop301}$ i) and iv), we can get the estimate $(\ref{eqn3007})$ of $\partial_t^k \nabla_x^p \nabla_y^q K_\nu(x,y;t)$ for any $(x, y; t) \in M\times M \times (0,T)$. Thus, $K = \sum_\nu K_\nu$ satisfies inequality $(\ref{eqn3007})$ for appropriate constants $C', \delta'> 0$ depending on $T$, $g$ and $4k+p+q$.

Thirdly, we prove iii). For $u \in C^{0}(M)$, it suffices to show that for each $\nu$, 
\begin{equation}
\lim_{t \rightarrow 0^+} \int_{M} Z_\nu(x, y; t) u(y)  \mathrm d  V_g(y) = \phi_{\nu}(x) u(x), 
\end{equation}
uniformly for all $x \in M$. Since $Z_\nu(\cdot, \cdot; t)$ is supported in $B_{r_0}(p_\nu, g) \times B_{r_0}(p_\nu, g)$, under local normal coordinate $x, y \in B_{r_0}(0) \cong B_{r_0}(p_\nu, g)$, it is equivalent to show that 
\begin{equation}
\lim_{t \rightarrow 0^+} \int_{\bR^n} Z_\nu(x, y; t) u(y)  (\det g_\nu(y))^{\frac{1}{2}} \mathrm d  y = \phi_{\nu}(x) u(x), 
\end{equation}
uniformly for all $x \in B_{r_0}(0)$. We compute
\begin{equation}
\begin{split}
& \int_{\bR^n} Z_\nu(x, y; t) u(y) \sqrt{\det g_\nu(y)} \mathrm d y \\
& = \psi_\nu(x) \big[ \int_{\bR^n} G_\nu(x-y; t; x) (\phi_\nu u) (y ) \mathrm d y 
 + \int_{\bR^n} \big( G_\nu(x-y; t; y) - G_\nu(x-y; t; x) \big) (\phi_\nu u) (y ) \mathrm d y \big] \\
& =\psi_\nu(x)( I_1 + I_2 ) . 
\end{split}
\end{equation}
By Proposition $\ref{prop301}$ iii), we know that $I_1 \rightarrow (\phi_\nu u)(x)$ uniformly for all $x \in B_{r_0}(0)$ as $t \rightarrow 0^+$. By Proposition $\ref{prop301}$ iv), we have that for appropriate constants $C ,\delta >0$
\begin{equation}
\begin{split}
|I_2| & \leq \int_{\bR^n} C t^{-\frac{n}{4}} \exp \{ - \delta (t^{- \frac{1}{4}} |x- y|)^{\frac{4}{3}} \} |x- y| (\phi_\nu u)(y) \mathrm d y\\
& \leq C t^{\frac{1}{4}}  \int_{\bR^n} \exp \{- \delta | w |^{\frac{4}{3}}\} |w | (\phi_\nu u)( x + t^{\frac{1}{4}} w) \mathrm d w\\
& \leq C t^{\frac{1}{4}} \sup_{B_{r_0}(0)} |\phi_\nu u|. 
\end{split}
\end{equation}
Notice that $\psi_\nu = 1$ on the support of $\phi_\nu$ and thus $\psi_\nu \phi_\nu = \phi_\nu$. Then we have that
\begin{equation}
\lim_{t \rightarrow 0^+} \int_{\bR^n} Z_\nu(x, y; t) u(y) \sqrt{\det g_\nu(y)} \mathrm d y = \psi_\nu(x) \phi_{\nu}(x) u(x) = (\phi_\nu u)(x). 
\end{equation}
The convergence is uniform for all $x\in B_{r_0}(0) \cong B_{r_0} (p_\nu ,g)$. Thus
\begin{equation}
\lim_{t \rightarrow 0^+} \int_{M} Z(x, y; t) u(y)  \mathrm d V_g(y) = \sum_\nu (\phi_\nu u)(x) = u(x), 
\end{equation}
and the convergence is uniform for all $x\in M$. 

Lastly, we prove iv). Fix $\nu$ and for $x, y \in B_{r_0}(p_\nu, g)$
\begin{equation}
Z(x, y; t)  =  Z_\nu(x, y; t)+ \sum_{\mu \neq \nu} Z_\mu (x, y; t)   . 
\end{equation}

We first consider $Z_\nu(x, y; t)$ in local normal coordinate $B_{r_0}(0) \cong B_{r_0}(p_\nu, g)$. Compute for any multi indices $\alpha, \beta \geq 0$
\begin{equation}\label{eqn3008}
\begin{split}
& D_x^\beta (D_x + D_y)^\alpha Z_\nu(x, y; t) \\
= & \sum_{\beta_1 + \beta_2 = \beta} \sum_{\alpha_1 + \alpha_2 + \alpha_3 = \alpha} \big(D_x^{\beta_1+ \alpha_1}  \psi_\nu(x) \big)\big(D_y^{\alpha_2} (\frac{\phi_\nu(y)}{\sqrt{\det g_\nu(y)}}) \big) D_x^{\beta_2} (D_x+ D_y)^{\alpha_3} G_\nu(x-y; t; y) \\
= &  \sum_{\alpha_1 + \alpha_2 + \alpha_3 = \alpha} \big(D_x^{\beta_1 + \alpha_1} \psi_\nu(x) \big)\big(D_y^{\alpha_2} (\frac{\phi_\nu(y)}{\sqrt{\det g_\nu(y)}}) \big) \big( D_x^{\beta_2} D_\xi^{\alpha_3} G_\nu(x-y; t; \xi) \big) |_{\xi = y}. 
\end{split} 
\end{equation}
Using Proposition $\ref{prop301}$ iv), we can get estimate for any multi indices $\alpha, \beta \geq 0$,
\begin{equation}\label{eqn3011}
|D_x^\beta (D_x + D_y)^\alpha Z_\nu(x, y; t)| \leq C t^{-\frac{n+ |\beta|}{4}} \exp\{- \delta (t^{-\frac{1}{4}} \rho(x, y))^{\frac{4}{3}}\}. 
\end{equation}
$C, \delta> 0$ are appropriate constants depending on $g$,$T$ and $|\alpha|+ |\beta|$. 

We then consider $Z_\mu (x, y; t)$ for $\mu \neq \nu$. By definition, $Z_\mu$'s are defined using local normal coordinate for $u, v \in B_{r_0}(0) \xrightarrow{\sigma_\mu} B_{r_0}(p_\mu ,g)$
\begin{equation}
Z_\mu(\sigma_\mu(u) , \sigma_\mu(v); t) = \psi_\mu(\sigma_\mu(u)) G_\mu(u - v;  t; v) \phi_\mu (\sigma_\mu(v)) \big(\det g_\mu(v)\big)^{-\frac{1}{2}}. 
\end{equation} 
By virtue of $(\ref{eqn3008})$ and $(\ref{eqn3011})$, for any $u, v \in B_{r_0}(0) \cong B_{r_0}(p_\mu, g)$ and any multi indices $\alpha, \beta \geq 0$, we have estimates
\begin{equation}
|D_u^\beta (D_u + D_v)^\alpha Z_\mu (\sigma_\mu(u), \sigma_\mu(v); t)| \leq C t^{-\frac{n+ |\beta|}{4}} \exp\{- \delta (t^{-\frac{1}{4}} \rho(u,v))^{\frac{4}{3}}\}, 
\end{equation}
for appropriate constants $C, \delta > 0$ depending on $g, T$ and $|\alpha|+ |\beta|$.

If $B_{r_0}(p_\mu, g)\cap B_{r_0} (p_\nu, g) \neq \varnothing$ for some $\mu \neq \nu$, then $Z_\mu(x, y; t)$ might have nontrivial contributions to $Z(x, y; t)$ for $x, y \in B_{r_0}(p_\mu, g)\cap B_{r_0} (p_\nu, g)$. We denote the normal coordinate in $B_{r_0}(p_\nu, g)$ as $B_{r_0}(0) \xrightarrow{\sigma_\nu} B_{r_0}(p_\nu, g)$ and denote the coordinate change $\sigma = \sigma_\mu^{-1} \sigma_{\nu} : \sigma_\nu^{-1}(B_{r_0}(p_\mu, g)\cap B_{r_0} (p_\nu, g)) \rightarrow \sigma_{\eta}^{-1} (B_{r_0}(p_\mu, g)\cap B_{r_0} (p_\nu, g))$. Suppose $u = \sigma(x)$ and $v =\sigma(y)$, then we have
\begin{equation}
\begin{split}
\frac{\partial }{\partial x_l} + \frac{\partial }{\partial y_l} &=  \frac{\partial \sigma^k}{\partial \xi_l}(x) \frac{\partial }{\partial u_k} + \frac{\partial \sigma^k}{\partial \xi_l}(y) \frac{\partial }{\partial v_k} \\
&= \big( \frac{\partial \sigma^k}{\partial \xi_l}(\sigma^{-1}(u)) - \frac{\partial \sigma^k}{\partial \xi_l}(\sigma^{-1}(v)) \big)  \frac{\partial }{\partial u_k} + \frac{\partial \sigma^k}{\partial \xi_l}(\sigma^{-1}(v)) \big( \frac{\partial }{\partial u_k}+ \frac{\partial }{\partial v_k} \big).
\end{split}
\end{equation}
By induction, we can prove that
\begin{equation}\label{eqn00003}
(D_x + D_y)^\alpha = \sum_{|\gamma|+ |\eta| \leq |\alpha|} \big(\Pi_{l=1}^{|\gamma|} (f_{\gamma,\eta,l} (u) - f_{\gamma,\eta, l}(v) )  \big) \star D_u^\gamma (D_u + D_v)^\eta, 
\end{equation}
where $f_{\gamma, \eta, l} $'s are smooth functions depending on the coordinate change $\sigma$ and the ``$\star$" abbreviate for product with smooth functions on $u, v$ as well as proper contractions. And consequently we have
\begin{equation}\label{eqn3015}
D_x^{\beta} (D_x + D_y)^\alpha = \sum_{|\gamma'|+|\eta'| \leq |\beta|} \sum_{|\gamma|+ |\eta| \leq |\alpha|} \big(D_u^{\eta'}\Pi_{l=1}^{|\gamma|} (f_{\gamma,\eta,l} (u) - f_{\gamma,\eta, l}(v) )  \big) \star D_u^{\gamma + \gamma'} (D_u + D_v)^\eta. 
\end{equation}
Thus we have for $x, y \in \sigma_\nu^{-1}(B_{r_0}(p_\mu, g)\cap B_{r_0} (p_\nu, g))$ and multi indices $\alpha, \beta \geq 0$, 
\begin{equation}
\begin{split}
& |D_x^\beta (D_x + D_y)^\alpha Z_\mu (\sigma_\nu(x), \sigma_\nu(y); t) | \\
& = |\sum_{|\gamma'|+|\eta'| \leq |\beta|} \sum_{|\gamma|+ |\eta| \leq |\alpha|} \big(D_u^{\eta'}\Pi_{l=1}^{|\gamma|} (f_{\gamma,\eta,l} (u) - f_{\gamma,\eta, l}(v) )  \big) \\
& \star D_u^{\gamma + \gamma'} (D_u + D_v)^\eta Z_\mu(\sigma_\mu(u), \sigma_\mu(v) ; t) |\\
& \leq C \sum_{k + l \leq |\beta|} \big( \sum_{i+ j \leq |\alpha|, i < l }  t^{-\frac{n+i+k}{4}} \exp\{- \delta (t^{-\frac{1}{4}} \rho(u,v))^{\frac{4}{3}}\} \\
& +  \sum_{i+ j \leq |\alpha|, i \geq l } \rho(u,v)^{i-l} t^{-\frac{n+i+k}{4}} \exp\{- \delta (t^{-\frac{1}{4}} \rho(u,v))^{\frac{4}{3}}\} \big)\\
& \leq  C' t^{-\frac{n+ |\beta|}{4}} \exp\{- \delta' (t^{-\frac{1}{4}} \rho(u, v))^{\frac{4}{3}}\}.
\end{split}
\end{equation}
Since $\rho(u, v) = \rho(\sigma(x), \sigma(y)) = \rho(x, y)$, we have
\begin{equation}\label{eqn3013}
|D_x^\beta (D_x + D_y)^\alpha Z_\mu (\sigma_\nu(x), \sigma_\nu(y); t) |  \leq  C' t^{-\frac{n+ |\beta|}{4}} \exp\{- \delta'  (t^{-\frac{1}{4}} \rho(x, y))^{\frac{4}{3}}\}.
\end{equation}
$C',\delta' >0$ are appropriate constants depending on $g, T$ and $|\alpha|+ |\beta|$. Then combining $(\ref{eqn3011})$ and $(\ref{eqn3013})$, we have estimate $(\ref{eqn3012})$ for any $x, y \in B_{r_0}(0) \cong B_{r_0}(p_\nu, g)$ and any multi indices $\alpha, \beta \geq 0$. 

For the estimate $(\ref{eqn3014})$ of $K$, we can similarly first consider $K_\nu$ for $x, y \in B_{r_0}(0) \cong B_{r_0}(p_\nu, g)$. By taking derivatives $D_x^\beta (D_x + D_y)^\alpha$ on $(\ref{eqn3006})$ and applying Proposition $\ref{prop301}$ iv), we can prove that
\begin{equation}
 |D_x^\beta (D_x + D_y)^\alpha K_\nu (x, y; t) | \leq C' t^{-\frac{n+ 3+ |\beta|}{4}} \exp\{- \delta'  \big(t^{-\frac{1}{4}} \rho(x, y) \big)^{\frac{4}{3}} \}
\end{equation}
For $\mu \neq \nu$ but $B_{r_0}(p_\mu, g)\cap B_{r_0} (p_\nu, g) \neq \varnothing$, by virtue of $(\ref{eqn3015})$, we can prove that for $x, y \in \sigma_\nu^{-1}(B_{r_0}(p_\mu, g)\cap B_{r_0} (p_\nu, g)) \subset B_{r_0}(0)$
\begin{equation}
 |D_x^\beta (D_x + D_y)^\alpha K_\mu (x, y; t) | \leq C' t^{-\frac{n+ 3+ |\beta|}{4}} \exp\{- \delta'  \big(t^{-\frac{1}{4}} \rho(x, y) \big)^{\frac{4}{3}} \}
\end{equation}
Thus combing results above, we have shown $(\ref{eqn3014})$ for for any $x, y \in B_{r_0}(0) \cong B_{r_0}(p_\nu, g)$ and any multi indices $\alpha, \beta \geq 0$.
\end{proof}

\section{Construction of biharmonic heat kernel on closed manifolds}\label{secB}

Denote the global parametrix constructed in the previous section as $Z(x, y; t)$. Using the method of Levi, in this section we will construct the biharmonic heat kernel in the form
\begin{equation}\label{eqn504}
b_g(x, y; t) = Z(x, y; t) + \int_{0}^t \int_M Z(x, \xi; t-s) \Psi (\xi, y; s) \mathrm d V_g(\xi) \mathrm d s
\end{equation}
for some function $\Psi \in C^0 \big(M \times M \times (0,T) \big)$ with $\Psi(\cdot, \cdot; t) \in C^\infty(M \times M)$ for any $0<t <T$ satisfying Apriori restrictions: 
\begin{enumerate}
\item[i)] For any $x, y \in M$ and integers $p, q \geq 0$, 
\begin{equation}\label{eqn502}
| \nabla_x^p \nabla_y^q \Psi (x, y; t)|_g \leq C t^{-\frac{n+3+p+q }{4}} \exp\{- \delta \big(t^{-\frac{1}{4}} \rho(x, y)\big)^{\frac{4}{3}}\}.
\end{equation}
$C, \delta > 0$ are appropriate constants depending on $g, T$ and $p+q$. 
\item[ii)] For each $\nu$, in local normal coordinate $x, y \in B_{r_0}(0) \cong B_{r_0}(p_\nu, g)$, we have for any multi indices $\alpha, \beta \geq 0$
\begin{equation}\label{eqn503}
|D_x^{\beta} (D_x + D_y)^\alpha \Psi(x,y;t)| \leq  C t^{-\frac{n+3+|\beta|}{4}} \exp\{- \delta \big(t^{-\frac{1}{4}}\rho(x,y)\big)^{\frac{4}{3}}\}. 
\end{equation}
$C, \delta > 0$ are appropriate constants depending on $g, T$ and $|\alpha|+ |\beta|$.
\end{enumerate}
The main purpose in this section is to seek an appropriate function $\Psi$ with Apriori restrictions i) and ii) such that $b_g(x, y; t)$ defined in $(\ref{eqn504})$ is the biharmonic heat kernel with estimates described in Theorem $\ref{thm301}$ and Theorem $\ref{thm302}$. We will specify the function $\Psi$ and prove that it satisfies the Apriori bounds i) and ii) later. But first of all, let's explain why we seek the biharmonic heat kernel $b_g(x, y; t)$ in the form of $(\ref{eqn504})$. It amounts to study improper integral of the following form,
\begin{equation}
(A * B)(x, y; t) := \int_0^t \int_M A(x, \xi ; t-s) b_g(\xi, y; s)\mathrm d V_g(\xi) \mathrm d s. 
\end{equation}
We summarize its property in the following lemma. 

\begin{lem}\label{lem503}
Suppose $A, B $ are functions on $M \times M \times (0,T)$ such that for some $\alpha, \beta > 0$
\begin{equation}
\begin{split}
|A(x, y; t)| \leq C_1 t^{-\frac{n+4 - \alpha}{4}} \exp\{ - \delta (t^{-\frac{1}{4}} \rho(x, y))^{\frac{4}{3}}\},  \\
|b_g(x, y; t)| \leq C_2 t^{-\frac{n+4 - \beta}{4}} \exp\{ - \delta_2 (t^{-\frac{1}{4}} \rho(x, y))^{\frac{4}{3}}\}. 
\end{split}
\end{equation}
Constants $\delta, \delta_2, C_1, C_2 >0$ are independent of $x, y, t$. Then $A * B$ is convergent and for any $0< \epsilon < 1$
\begin{equation}\label{eqn510}
|(A*B)(x, y; t)| \leq C(\epsilon) t^{-\frac{n+4 - (\alpha +\beta)}{4}}  \exp\{  -(1 - \epsilon) \delta (t^{-\frac{1}{4}} \rho(x, y))^{\frac{4}{3}}\}, 
\end{equation}
where $\delta = \min\{\delta, \delta_2\}$ and $C(\epsilon) >0 $ depends on $\epsilon, C_1, C_2, \delta, \delta_2, \alpha, \beta$.
\end{lem}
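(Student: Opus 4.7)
The plan is to extract the Gaussian decay in $\rho(x,y)$ from the product of the two sub-Gaussian kernels by an elementary subadditivity identity, and then control the polynomial prefactor with a standard split of the time integral at $s=t/2$.

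First I would establish the inequality
\[
(t-s)^{-1/3}\rho_1^{4/3} + s^{-1/3}\rho_2^{4/3}\ \geq\ t^{-1/3}(\rho_1+\rho_2)^{4/3},\qquad s\in(0,t),\ \rho_1,\rho_2\geq 0.
\]
This follows by minimizing the left-hand side in $s$: the unique interior critical point is $s^\ast = t\rho_2/(\rho_1+\rho_2)$, direct substitution recovers the right-hand side exactly, and the second derivative is visibly positive so this is the minimum. Combining with the triangle inequality $\rho(x,y)\leq \rho(x,\xi)+\rho(\xi,y)$ and setting $\delta_{\min}:=\min\{\delta,\delta_2\}$ gives
\[
\delta(t-s)^{-1/3}\rho(x,\xi)^{4/3}+\delta_2 s^{-1/3}\rho(\xi,y)^{4/3}\ \geq\ \delta_{\min}\,t^{-1/3}\rho(x,y)^{4/3}.
\]
Writing $e^{-A}=e^{-(1-\epsilon)A}e^{-\epsilon A}$ with $A$ the left-hand side and applying the inequality only to the $(1-\epsilon)$-piece extracts a factor $\exp\{-(1-\epsilon)\delta_{\min}t^{-1/3}\rho(x,y)^{4/3}\}$ which depends only on $x,y,t$ and can be pulled outside the integral defining $A\ast B$.

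It then remains to bound
\[
I(t) := \int_0^t (t-s)^{-(n+4-\alpha)/4}\,s^{-(n+4-\beta)/4}\int_M e^{-\epsilon\delta(t-s)^{-1/3}\rho(x,\xi)^{4/3}}e^{-\epsilon\delta_2 s^{-1/3}\rho(\xi,y)^{4/3}}\,dV_g(\xi)\,ds
\]
by $C\,t^{-(n+4-\alpha-\beta)/4}$. I split $I=I_1+I_2$ at $s=t/2$. On $I_1$ I discard the $\rho(x,\xi)$-exponential and bound the remaining spatial integral by the standard estimate
\[
\int_M e^{-c\, s^{-1/3}\rho(\xi,y)^{4/3}}\,dV_g(\xi)\ \leq\ C(g,T,c)\, s^{n/4}\qquad (0<s\leq T),
\]
proved via the change of variables $\xi\mapsto s^{-1/4}\exp_y^{-1}\xi$ in a geodesic ball of fixed radius, together with a crude $\operatorname{Vol}(M)$-comparison outside that ball. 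Since $(t-s)\geq t/2$ on $I_1$, the resulting $s$-integral reduces to $\int_0^{t/2}s^{-(4-\beta)/4}ds\lesssim t^{\beta/4}$, which combined with $(t-s)^{-(n+4-\alpha)/4}\leq C t^{-(n+4-\alpha)/4}$ gives $I_1\leq C\,t^{-(n+4-\alpha-\beta)/4}$. The symmetric argument (dropping the $\rho(\xi,y)$-factor and integrating the $\rho(x,\xi)$-factor) bounds $I_2$ the same way. The hypotheses $\alpha,\beta>0$ are precisely what makes both time integrals finite.

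The only genuinely nonroutine step is the subadditivity inequality above. For the Gaussian heat kernel (exponent $2$) the analogue is an \emph{equality} and is the content of the Chapman--Kolmogorov identity; for the biharmonic sub-Gaussian exponent $4/3$ the best one can hope for is an inequality, and one must sacrifice an arbitrarily small $\epsilon$ of the Gaussian decay to absorb the polynomial prefactors of $A$ and $B$. This is the role of the parameter $\epsilon$ in the statement and the source of the $\epsilon$-dependence of the constant $C(\epsilon)$.
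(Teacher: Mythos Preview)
Your proof is correct and follows essentially the same approach as the paper: extract the decay in $\rho(x,y)$ via the subadditivity inequality, peel off a $(1-\epsilon)$-fraction of the exponential, then split the time integral at $t/2$ and bound the spatial integral on each half. The only substantive addition is that you actually prove the key inequality $(t-s)^{-1/3}\rho_1^{4/3}+s^{-1/3}\rho_2^{4/3}\geq t^{-1/3}(\rho_1+\rho_2)^{4/3}$ by explicit minimization in $s$, whereas the paper simply asserts it.
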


Given the parametrix $Z$ described in Proposition $\ref{prop302}$, then we can show the property of $(Z * \Psi)$ as the following. 

\begin{prop}\label{prop501}
Suppose $\Psi \in C^0 \big( M\times M\times (0,T) \big) $ such that 
\begin{equation}\label{eqn505}
\begin{split}
& |\Psi(x, y; t)| \leq C  t^{-\frac{n+3}{4}} \exp\{ - \delta (t^{-\frac{1}{4}} \rho(x, y))^{\frac{4}{3}}\},   \\
\end{split}
\end{equation}
and that for some $0< \gamma < 1$
\begin{equation}\label{eqn506}
\begin{split}
& |\Psi(x, y; t) - \Psi(x', y; t)| \\
&\leq C'  \big(\rho(x' , x)\big)^\gamma t^{-\frac{n+3 + \gamma}{4}} \max \big\{\exp\{ - \delta'  (t^{-\frac{1}{4}} \rho(x, y))^{\frac{4}{3}}\} , \exp\{ - \delta' (t^{-\frac{1}{4}} \rho(x', y))^{\frac{4}{3}}\}  \big\}.
\end{split}
\end{equation}
$ C, C',\delta, \delta' >0$ are constants independent of $x, x', y, t$. Then $(Z * \Psi)$ is fourth order continuously differentiable with respect to $x$ and first order continuously differentiable with respect to $t$. The $x$-derivatives of $(Z * \Psi)$ up to third order are obtained by formal differentiation under the integral signs and the rest of derivatives are given by

\begin{equation}\label{eqn00010}
\begin{split}
\nabla_x^4 (Z * \Psi) (x, y; t) = 
& \int_{0}^{\frac{t}{2}}  \int_M (\nabla_x^4 Z) (x, \xi; t-s) \Psi(\xi, y; s)  \mathrm d V_g(\xi) \mathrm d s\\
&+  \int_{\frac{t}{2}}^{t}  \int_M (\nabla_x^4 Z) (x, \xi; t-s) [\Psi(\xi, y; s) - \Psi(x, y;s)] \mathrm d V_g(\xi) \mathrm d s\\
& + \int_{\frac{t}{2}}^t \big(\int_M (\nabla_x^4 Z) (x, \xi; t-s) \mathrm d V_g(\xi) \big)  \Psi(x, y;s) \mathrm ds, \\
\frac{\partial }{\partial t} (Z * \Psi) (x, y; t) = & \Psi(x, y; t) +\int_{0}^{\frac{t}{2}}  \int_M (\frac{ \partial }{\partial t}Z) (x, \xi; t-s) \Psi(\xi, y; s)  \mathrm d V_g(\xi) \mathrm d s\\
&+ \int_{\frac{t}{2}}^t  \int_M (\frac{ \partial }{\partial t}Z) (x, \xi; t-s) [\Psi(\xi, y; s) - \Psi(x, y;s)] \mathrm d V_g(\xi) \mathrm d s\\
& + \int_{\frac{t}{2}}^t \big(\int_M (\frac{\partial }{\partial t}Z) (x, \xi; t-s) \mathrm d V_g(\xi) \big)  \Psi(x, y;s) \mathrm ds.
\end{split}
\end{equation}

\end{prop}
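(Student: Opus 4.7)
The plan is to establish Proposition \ref{prop501} by handling derivatives of order $0$--$3$, the fourth $x$-derivative, and the time derivative in turn. For $0\le p\le 3$, Proposition \ref{prop302}(i) gives $|\nabla_x^p Z(x,\xi;t-s)|\le C(t-s)^{-(n+p)/4}\exp(-\delta((t-s)^{-1/4}\rho(x,\xi))^{4/3})$, and combined with hypothesis $(\ref{eqn505})$, Lemma \ref{lem503} (taking $\alpha=4-p$, $\beta=1$) shows the integrand $\nabla_x^p Z(x,\xi;t-s)\Psi(\xi,y;s)$ is absolutely integrable on $M\times(0,t)$. Dominated convergence then justifies differentiating under the integral sign, and continuity of the resulting integrals in $(x,t)$ is immediate from the same bounds.

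For the fourth $x$-derivative, the naive bound $(t-s)^{-(n+4)/4}$ is not integrable near $s=t$, so I split the $s$-integral at $t/2$ and differentiate $\nabla_x^3(Z*\Psi)$ piecewise. On $[0,t/2]$, since $t-s\ge t/2$, differentiation under the integral is direct. On $[t/2,t]$, I insert the H\"older decomposition
\[
\Psi(\xi,y;s)=\bigl[\Psi(\xi,y;s)-\Psi(x,y;s)\bigr]+\Psi(x,y;s).
\]
For the first piece, the H\"older assumption $(\ref{eqn506})$ supplies an extra $\rho(x,\xi)^\gamma$, absorbed into the Gaussian via the elementary bound $\rho^\gamma e^{-\delta(t-s)^{-1/4}\rho^{4/3}}\le C(t-s)^{\gamma/4}e^{-\delta'(t-s)^{-1/4}\rho^{4/3}}$; after $\xi$-integration the $s$-integrand is of order $(t-s)^{-(4-\gamma)/4}s^{-(n+3+\gamma)/4}$, integrable on $[t/2,t]$ since $\gamma>0$, so differentiation in $x$ goes through by dominated convergence. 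For the second piece $\Psi(x,y;s)\cdot\int_M \nabla_x^4 Z(x,\xi;t-s)\,dV_g(\xi)$, I need a uniform-in-$\tau$ bound on $\int_M \nabla_x^4 Z(x,\xi;\tau)\,dV_g(\xi)$. Using the partition of unity, the identity $D_x=(D_x+D_\xi)-D_\xi$ in local coordinates, and integration by parts in $\xi$ (no boundary terms, since the cutoffs $\psi_\nu,\phi_\nu$ are compactly supported), all pure $D_\xi$ derivatives can be transferred onto bounded smooth factors; what remains is an integral of $(D_x+D_\xi)^\alpha Z$ times a smooth compactly supported function, bounded by $C\tau^{-n/4}\exp(\ldots)$ via Proposition \ref{prop302}(iv). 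The $\xi$-integration then yields the required $O(1)$ bound, so the second piece contributes at most $C\int_{t/2}^t s^{-(n+3)/4}\,ds<\infty$.

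The $t$-derivative follows the same pattern. Leibniz differentiation with respect to the upper limit produces a boundary term $\lim_{s\to t^-}\int_M Z(x,\xi;t-s)\Psi(\xi,y;s)\,dV_g(\xi)$, equal to $\Psi(x,y;t)$ by Proposition \ref{prop302}(iii) (mass concentration of $Z$ as $t-s\to 0^+$) together with continuity of $\Psi(\cdot,y;t)$. Since $|\partial_t Z|$ obeys the same $(t-s)^{-(n+4)/4}$ bound as $\nabla_x^4 Z$, the remaining term $\int_0^t\int_M\partial_t Z\cdot\Psi\,dV_g\,ds$ is handled by the identical splitting argument of the previous paragraph.

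The main obstacle I foresee is the integration-by-parts step controlling $\int_M \nabla_x^4 Z(x,\xi;\tau)\,dV_g(\xi)$: one must track how the partition of unity cutoffs and the coordinate transitions encoded in $(\ref{eqn00003})$ interact with iterated applications of $D_x=(D_x+D_\xi)-D_\xi$, and verify that each of the resulting terms is controlled by Proposition \ref{prop302}(iv) after absorbing derivatives of cutoffs and volume-form factors. Everything else reduces to bookkeeping with the Gaussian convolution estimates already proved in Lemma \ref{lem503} and Proposition \ref{prop302}.
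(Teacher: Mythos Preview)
Your outline matches the paper's strategy closely: the same split at $t/2$, the same H\"older decomposition $\Psi(\xi,y;s)=[\Psi(\xi,y;s)-\Psi(x,y;s)]+\Psi(x,y;s)$, the same use of the $(D_x+D_\xi)$ identity and integration by parts to tame $\int_M\nabla_x^4 Z\,dV_g$, and the same appeal to the mass-concentration property of $Z$ for the boundary term in $\partial_t$. The ingredients are all correct.

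There is, however, one genuine imprecision in your justification of the fourth $x$-derivative on $[t/2,t]$. You write that ``differentiation in $x$ goes through by dominated convergence'' once the H\"older decomposition is in place. But the bracket $[\Psi(\xi,y;s)-\Psi(x,y;s)]$ carries $x$-dependence that is only H\"older, not differentiable; so you cannot simply differentiate $\nabla_x^3 Z(x,\xi;t-s)[\Psi(\xi,y;s)-\Psi(x,y;s)]$ under the integral by dominating its $x$-derivative---that derivative does not exist as a classical object. What the paper does (and what you should make explicit) is instead to write the increment
\[
\nabla_x^3(Z*\Psi)(x+he_l,y;t)-\nabla_x^3(Z*\Psi)(x,y;t)=\int_0^h Q_1(x+\tau e_l,y;t)\,d\tau,
\]
where $Q_1$ is exactly the three-piece candidate formula in $(\ref{eqn00010})$ evaluated at the shifted point, and then to prove that $Q_1$ is H\"older continuous in its first argument (the paper refers back to the $P_1,\ldots,P_4$ argument in the proof of Theorem~\ref{thm701} for this). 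Continuity of $Q_1$ then gives $\partial_{x_l}\nabla_x^3(Z*\Psi)=Q_1$ by the fundamental theorem of calculus. Your absolute-integrability bounds are precisely what is needed to show $Q_1$ is well-defined and continuous, so nothing is lost---you just need to reframe the final step from ``dominated convergence'' to ``difference quotient plus continuity of the candidate.'' The same remark applies verbatim to the $\partial_t$ part. (Incidentally, the paper is content with the weaker bound $\bigl|\int_M\nabla_x^4 Z(x,\xi;\tau)\,dV_g(\xi)\bigr|\le C\tau^{-3/4}$, obtained from a single integration by parts, rather than the $O(1)$ you claim from iterating; either suffices.)
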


Note that if we assume $\Psi \in C^0\big(M \times M \times (0,T)\big)$ and $\Psi(\cdot, \cdot; t) \in C^\infty(M \times M)$ for any $0<t< T$, then Apriori restriction i) implies assumptions $(\ref{eqn505})$ and $(\ref{eqn506})$ in Proposition $\ref{prop501}$. However, using estimates $(\ref{eqn505})$ and $(\ref{eqn506})$ alone, we can show partial differentiability of $Z*\Psi$ as in Proposition $\ref{prop501}$ at most. If we take advantage of the full differentiability of $\Psi$ by assuming Apriori restrictions i) and ii), then we can indeed show the full differentiability of $Z*\Psi$. More precisely, we introduce the following proposition.

\begin{prop}\label{prop502}
Suppose that $\Psi $ is a function on $M \times M \times (0,T)$ such that $\Psi(\cdot, \cdot; t) \in C^\infty(M \times M)$ for any $0<t <T$ satisfies Apriori restrictions i) and ii). Then $(Z * \Psi)(\cdot, \cdot; t) \in C^\infty (M \times M )$ for any $0 < t<T$. Moreover, we have
\begin{enumerate}
\item[i)] For any $x, y \in M$ and integers $p, q \geq 0$, $\nabla_x^p \nabla_y^q (Z *\Psi) (x, y; \cdot) \in C^0\big((0,T)\big)$ and
\begin{equation}
| \nabla_x^p \nabla_y^q (Z *\Psi) (x, y; t)|_g \leq C t^{-\frac{n-1+p+q }{4}} \exp\{- \delta \big(t^{-\frac{1}{4}} \rho(x, y)\big)^{\frac{4}{3}}\}.
\end{equation}
$C, \delta > 0$ are appropriate constants depending on $g, T$ and $p+q$. 
\item[ii)] For each $\nu$, in local normal coordinate $x, y \in B_{r_0}(0) \cong B_{r_0}(p_\nu, g)$, we have for any multi indices $\alpha, \beta \geq 0$
\begin{equation}
|D_x^{\beta} (D_x + D_y)^\alpha (Z *\Psi)(x,y;t)| \leq  C t^{-\frac{n-1+|\beta|}{4}} \exp\{- \delta \big(t^{-\frac{1}{4}}\rho(x,y)\big)^{\frac{4}{3}}\}. 
\end{equation}
$C, \delta > 0$ are appropriate constants depending on $g, T$ and $|\alpha|+ |\beta|$.
\end{enumerate}
\end{prop}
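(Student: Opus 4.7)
The plan is to establish smoothness of $(Z * \Psi)(\cdot,\cdot;t)$ by an inductive differentiation-under-the-integral argument, coupled with a \emph{transfer of derivatives} identity that converts ``bad'' derivatives falling on $\Psi$ into (a) derivatives in the ``good'' $(D_\xi + D_y)$ direction on $\Psi$, which cost nothing by Apriori restriction (ii), and (b) pure $\xi$-derivatives, which are then moved onto $Z$ by integration by parts on the closed manifold $M$ (hence no boundary contributions). Combined with Lemma \ref{lem503} and, when needed, the split-at-$s=t/2$ plus H\"older-difference trick from Proposition \ref{prop501}, this yields bounds on every $\nabla_x^p \nabla_y^q (Z * \Psi)$ and every $D_x^\beta(D_x+D_y)^\alpha (Z * \Psi)$ with the exponents claimed.

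For part (i), I would first observe that $|(Z * \Psi)(x,y;t)| \leq C t^{-(n-1)/4}\exp(-\delta (t^{-1/4}\rho(x,y))^{4/3})$ follows immediately from Lemma \ref{lem503} with $\alpha=4$ and $\beta=1$. To control $\nabla_x^p \nabla_y^q (Z * \Psi)$ I would differentiate formally under the integral and expand
\begin{equation*}
\nabla_y^q \Psi(\xi,y;s) = \sum_{l=0}^q \binom{q}{l} (-1)^l (\nabla_\xi + \nabla_y)^{q-l} \nabla_\xi^l \Psi(\xi,y;s).
\end{equation*}
Integration by parts in $\xi$ on $M$ moves the $\nabla_\xi^l$ off $\Psi$ and onto the smooth factor $Z(x,\xi;t-s)\,dV_g(\xi)$, producing finitely many terms of the form
\begin{equation*}
\int_0^t \!\int_M \big[\nabla_x^p \nabla_\xi^{l'} Z(x,\xi;t-s)\big]\,\tilde\Psi(\xi,y;s)\, dV_g(\xi)\, ds,\qquad l' \leq q,
\end{equation*}
where $\tilde\Psi$ still satisfies the $s^{-(n+3)/4}$ Gaussian bound (by Apriori (ii) together with the smoothness of the density and standard Christoffel corrections). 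When $p+l' \leq 3$ Lemma \ref{lem503} bounds this by $C t^{-(n-1+p+l')/4}\leq C t^{-(n-1+p+q)/4}$. For $p+l' \geq 4$ I would invoke the splitting trick of Proposition \ref{prop501}, iterated finitely many times, which uses the H\"older continuity of $\tilde\Psi$ in $\xi$ supplied by Apriori (i). Continuity in $t$ of the resulting expression follows by dominated convergence.

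For part (ii), I would work inside a fixed normal chart $B_{r_0}(0)\cong B_{r_0}(p_\nu,g)$ and use the pointwise identity
\begin{equation*}
(D_x + D_y)\big[Z(x,\xi;t-s)\,\Psi(\xi,y;s)\big] = \big[(D_x + D_\xi)Z\big]\Psi + Z\big[(D_\xi + D_y)\Psi\big] - D_\xi(Z\Psi),
\end{equation*}
together with integration by parts in $\xi$ (producing only a smooth density correction $\omega = D_\xi\log\sqrt{\det g}$), to obtain
\begin{equation*}
(D_x + D_y)(Z * \Psi) = \int_0^t\!\!\int_M \big\{[(D_x+D_\xi)Z]\Psi + Z[(D_\xi+D_y)\Psi] - Z\Psi\,\omega\big\}\,dV_g(\xi)\,ds.
\end{equation*}
By Proposition \ref{prop302} iv), $(D_x+D_\xi)Z$ obeys the same Gaussian bound as $Z$ itself, and by Apriori (ii), $(D_\xi + D_y)\Psi$ obeys the same bound as $\Psi$. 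Hence Lemma \ref{lem503} gives each integrand a $t^{-(n-1)/4}$ bound. Induction on $|\alpha|$ then handles all $(D_x+D_y)^\alpha$ derivatives with no loss in $t$; the additional $D_x^\beta$ factors are treated exactly as in part (i), using the split-at-$t/2$ argument for $|\beta|\geq 4$.

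The main obstacle will be the combinatorial bookkeeping. Each step of transfer-and-integration-by-parts spawns several cross-terms via Leibniz on $Z\sqrt{\det g}$ and from iterating the identity, and one must verify inductively that in every resulting term the combined order of $D_x$ and $D_\xi$ falling on $Z$ either stays within the range $\leq 3$ covered by Lemma \ref{lem503} or is within finite reach of finitely many applications of the splitting trick. A subsidiary technical point is reconciling the ordinary derivatives used in part (ii) with the covariant derivatives in part (i), which is routine but must be done carefully because the Christoffel symbols of $g$ do not vanish in general coordinates.
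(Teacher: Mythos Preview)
Your strategy—transfer derivatives via the $(D_\xi+D_y)$ identity, integrate by parts in $\xi$, and split the time integral at $t/2$—is precisely the mechanism the paper uses (its proof simply refers back to Lemmas \ref{lemma1} and \ref{lemma2}), but two steps in your execution are genuine gaps rather than bookkeeping. First, in part (ii) the operators $(D_x+D_\xi)$ and $(D_\xi+D_y)$ only make sense when $\xi$ lies in the same chart as $x,y$, yet the $\xi$-integral runs over all of $M$; the paper fixes this by inserting a cutoff $\psi_\eta(\xi)$ supported in the chart, so that on $\mathrm{supp}\,\psi_\eta$ one performs the transfer and integration by parts, while on $\mathrm{supp}(1-\psi_\eta)$ the separation $\rho(\xi,y)\geq \tfrac14 r_0$ lets the exponential factor absorb any power of $s^{-1}$ coming from straight differentiation. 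Your covariant-derivative route in part (i) still needs exactly this near/far dichotomy to bound $(\nabla_\xi+\nabla_y)^m\Psi$ globally by $Cs^{-(n+3)/4}\exp\{-\delta(s^{-1/4}\rho(\xi,y))^{4/3}\}$.

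Second, and more seriously, ``the splitting trick of Proposition \ref{prop501}, iterated finitely many times'' does not close once the total order on $Z$ reaches $p+l'\geq 5$. One pass of the H\"older-difference device gains only $(t-s)^{\gamma/4}$ with $\gamma<1$, so $\int_{t/2}^t (t-s)^{-(p+l'-\gamma)/4}\,ds$ still diverges, and subtracting higher-order Taylor polynomials of $\tilde\Psi$ generates moment integrals no better behaved. What the paper does instead is treat the $[t/2,t]$ half \emph{symmetrically}: after the change of variable $\tau=t-s$, it is $\Psi$ that carries the large-time argument $t-\tau\geq t/2$ and, by Apriori (i), can absorb arbitrarily many $\xi$-derivatives at cost only $t^{-k/4}$; one then transfers the $x$-derivatives off $Z$ using $(D_x+D_\xi)$ and Proposition \ref{prop302}(iv), integrates by parts onto $\Psi$, and localizes with a cutoff around $x$. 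This symmetric transfer on the second half of the time interval is the missing ingredient in your argument.
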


We postpone the proofs of Lemma $\ref{lem503}$, Proposition $\ref{prop501}$ and Proposition $\ref{prop502}$ but use them to study the property of $b_g(x, y;t)$ defined in the form of $(\ref{eqn504})$ first. Suppose that $\Psi $ is a function on $M \times M \times (0,T)$ satisfying the assumption $(\ref{eqn505})$ in Proposition $\ref{prop501}$. For any $u \in C^0(M)$, we have
\begin{equation}
\begin{split}
& \int_M b_g(x, y; t) u(y) \mathrm d V_g(y) \\
 =&  \int_M Z(x, y; t) u(y) \mathrm d V_g(y) + \int_M (Z * \Psi) (x, y; t) u(y) \mathrm d V_g(y)\\
 = & I_1 +I_2. 
\end{split}
\end{equation}
By the Proposition $\ref{prop302}$ iii), we know that $I_1 \rightarrow u(x)$ as $t \rightarrow 0^+$ uniformly for all $x \in M$. For $I_2$, we have by Lemma $\ref{lem503}$
\begin{equation}
\begin{split}
|I_2| & \leq C t^{\frac{1}{4}}  \|u\|_{C^0(M)}\int_M    t^{-\frac{n}{4}} \exp\{ - \delta (t^{-\frac{1}{4}} \rho(x, y))^{\frac{4}{3}}\} \mathrm d V_g(y), \\
& \leq C t^{\frac{1}{4}}  \|u\|_{C^0(M)} \big( \int_{M - B_{r_0}(x, g)} t^{-\frac{n}{4}} \exp\{ - \delta r_0^{\frac{4}{3}} t^{-\frac{1}{3}}\} \mathrm d V_g \\
& + \int_{B_{r_0}(0)} t^{-\frac{n}{4}} \exp\{- 2^{-\frac{4}{3}}\delta (t^{-\frac{1}{4}} w )^{\frac{4}{3}}\} \mathrm dw \big), \\
& \leq C t^{\frac{1}{4}}  \|u\|_{C^0(M)}. 
\end{split}
\end{equation}
Thus $I_2 \rightarrow 0$ as $t \rightarrow 0^+$ uniformly for all $x \in M$ and then 
\begin{equation}
\lim_{t \rightarrow 0^+} \int_M b_g(x, y; t) u(y) \mathrm d V_g(y) = u(x)
\end{equation}
uniformly for all $x \in M$.

If in addition, we assume that $\Psi \in C^0 \big(M \times M \times (0,T)\big)$ satisfies assumption $(\ref{eqn506})$, then by Propostion $\ref{prop501}$, $b_g(x, y; t)$ has enough differentiability for derivatives in $( \frac{\partial}{\partial t} + \Delta_g^2)b $ to make sense. Apply $- (\frac{\partial}{\partial t} + \Delta_g^2)$ on $(\ref{eqn504})$ and we have
\begin{equation}
\begin{split}
- ( \frac{\partial}{\partial t} + \Delta_g^2) b_g(x, y; t) =&  K(x, y; t) - \Psi(x, y; t) \\
& + \int_{0}^{\frac{t}{2}} \int_M K(x, \xi; t-s) \Psi(\xi, y; s)  \mathrm d V_g(\xi) \mathrm ds\\
&+ \int_{\frac{t}{2}}^t \int_M K(x, \xi; t-s) [\Psi(\xi, y; s) - \Psi(x, y;s)] \mathrm d V_g(\xi) \mathrm ds \\
& + \int_{\frac{t}{2}}^t \big( \int_M K(x, \xi;  t-s)\mathrm d V_g(\xi)  \big)   \Psi(x, y;s) \mathrm ds\\
=& K(x, y; t) - \Psi(x, y; t) + \int_0^t \int_M K(x, \xi; t-s) \Psi(\xi, y; s)  \mathrm d V_g(\xi) \mathrm ds.
\end{split}
\end{equation}
Thus in order for $b_g(x,y; t)$ to satisfy equation $(\ref{eqn31})$, we shall seek $\Psi \in C^0\big(M \times M \times (0,T)\big)$ satisfying $(\ref{eqn505})$ and $(\ref{eqn506})$ such that it solves the integral equation
\begin{equation}\label{eqn507}
\Psi(x, y; t) = K(x, y; t) + \int_0^t \int_M K(x, \xi; t-s) \Psi(\xi, y; s)  \mathrm d V_g(\xi) \mathrm ds.
\end{equation}

Assume for now that $\Psi \in C^0\big(M \times M \times (0,T)\big)$ satisfying $(\ref{eqn505})$ and $(\ref{eqn506})$ is a solution to the integral equation above. Then $b$ has enough differentiability and it satisfy equation $(\ref{eqn31})$ in the classical sense. If we further assume that $\Psi(\cdot, \cdot; t) \in C^\infty(M \times M)$ for any $0<t<T$  satisfies Apriori restrictions i) and ii), then by Proposition $\ref{prop502}$ we can show that $b_g(\cdot, \cdot; t) \in C^\infty (M \times M )$ for any $0<t<T$ with estimates on its partial derivatives of $x,y \in M$ as stated in Theorem $\ref{thm301}$ and Theorem $\ref{thm302}$. To see that $b_g(x, y;t)$ is smooth in $t$, we need to use equation $(\ref{eqn31})$, namely $\frac{\partial b}{\partial t}(x,y;t) = - \Delta_g^2 b_g(x,y;t)$. Consider for $|h| \ll 1$
\begin{equation}
\begin{split}
&\frac{\partial b}{\partial t} (x, y; t+h) - \frac{\partial b}{\partial t} (x, y; t), \\
=&  -\Delta_g^2 \big( b_g(x,y; t+h) - b_g(x,y;t)\big), \\
=&  -\Delta_g^2 \int_0^h \frac{\partial b}{\partial t}(x, y; t+\tau)\mathrm d \tau, \\
= & \int_0^h (- \Delta_g^2)^2 b_g(x, y; t+\tau) \mathrm d \tau. 
\end{split}
\end{equation}
By Proposition $\ref{prop502}$, we have $(- \Delta_g^2)^2 b_g(x, y; \cdot)$ is continuous on $(0,T)$. Thus $\frac{\partial b}{\partial t}$ is differentiable with respect to $t$ for $0<t<T$. One can repeat the same argument above for $\frac{\partial^k b }{\partial t^k}$ inductively and conclude that $b$ is smooth in $t$.

Therefore, by our discussion up to this point, in order to prove Theorem $\ref{thm301}$ and Theorem $\ref{thm302}$, it suffices to seek $\Psi \in C^0 \big(M \times M \times (0,T)\big)$ with $\Psi(\cdot, \cdot; t) \in C^\infty(M\times M)$ for any $0 <t<T$ satisfying Apriori restrictions i) and ii) and moreover $\Psi$ solves the integral equation $(\ref{eqn507})$. 

Next, we construct such a solution $\Psi$ that fulfills all requirements above using Neumann series. We define
\begin{equation}
K_1 (x, y; t) : = K(x, y; t), 
\end{equation}
and inductively for all integer $m \geq 2$ we define
\begin{equation}
K_m(x, y; t) = (K * K_{m-1})(x, y; t). 
\end{equation}
We shall see
\begin{equation}\label{eqn508}
\Psi = \sum_{m=1}^\infty K_m, 
\end{equation}
is a solution to $(\ref{eqn507})$ that fulfills all the requirements above. Despite the convergence issues of the improper integrals and the infinite sum involved in the construction of $\Psi$, formally speaking, it should be a solution of the integral equation $(\ref{eqn507})$. We shall prove the following proposition.

\begin{prop}\label{prop503}
$\Psi$ in $(\ref{eqn508})$ is a continuous function on $M \times M \times (0,T)$ and for any $0 <t< T$, $\Psi(\cdot, \cdot; t) \in C^{\infty}(M \times M )$ satisfies Apriori restrictions i) and ii). Moreover, $\Psi$ satisfies the integral equation $(\ref{eqn507})$. 
\end{prop}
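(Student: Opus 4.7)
The plan is to study the Neumann series $\Psi = \sum_{m=1}^\infty K_m$ via an inductive pointwise estimate on each iterated kernel, then read off continuity, the integral equation, and the Apriori bounds from this. By Proposition \ref{prop302} ii), the base case $K_1 = K$ obeys
\begin{equation*}
|K_1(x,y;t)| \leq C\, t^{-(n+3)/4} \exp\{-\delta(t^{-1/4}\rho(x,y))^{4/3}\},
\end{equation*}
which is exactly the hypothesis of Lemma \ref{lem503} with $\alpha=1$. I would apply Lemma \ref{lem503} iteratively to $K_m = K*K_{m-1}$, with a summable sequence of losses $\epsilon_m = 2^{-m}$, to prove by induction
\begin{equation*}
|K_m(x,y;t)| \leq A_m\, t^{-(n+3-(m-1))/4}\exp\{-\delta_\infty(t^{-1/4}\rho(x,y))^{4/3}\},
\end{equation*}
where $\delta_\infty := \delta\prod_m(1-\epsilon_m) > 0$ and $A_m$ is a product of Beta-function factors coming from time integrals $\int_0^t(t-s)^{a-1}s^{b-1}ds = B(a,b)\,t^{a+b-1}$. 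The identity $B(a,b) = \Gamma(a)\Gamma(b)/\Gamma(a+b)$ then shows $A_m$ decays factorially in $m$ (of order $C^m/\Gamma(m/4)$), so that once $m > n+3$ the time singularity is absorbed and the series $\sum_m K_m$ converges absolutely and uniformly on $M\times M\times[\epsilon,T]$ for every $\epsilon > 0$. The pointwise bound on $\Psi$ at $p=q=0$ required by Apriori restriction i) then follows because $K_1$ dominates the tail as $t\to 0^+$.

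Once $\Psi$ is defined, continuity on $M\times M\times(0,T)$ is immediate from uniform convergence of continuous partial sums on compact subsets. The integral equation $(\ref{eqn507})$ is obtained by passing to the limit $N\to\infty$ in the identity $S_N := \sum_{m=1}^N K_m = K + K*S_{N-1}$, which is justified by dominated convergence using the uniform bound just derived. This already yields a continuous $\Psi$ satisfying the integral equation and the required size estimate.

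For the full Apriori restrictions i) and ii), my plan is to avoid term-by-term differentiation of the series and instead bootstrap directly from the self-consistency equation $\Psi = K + K*\Psi$. Since $K$ is smooth in $(x,y)$ with the strong derivative bounds of Proposition \ref{prop302} ii) and iv), every $x$-derivative applied to $K*\Psi$ can be placed on the $K$-factor. This produces integrable kernels after splitting the time integral into $[0,t/2]$ and $[t/2,t]$: on the second piece one peels off $\Psi(x,y;s)$ using the H\"older continuity of $\Psi$ in its first slot (which is itself produced by a previous step of the bootstrap), exactly as in the argument used to prove Proposition \ref{prop501}. Each $y$-derivative falls onto the $\Psi$-factor, and the resulting bound is propagated by induction on the total differentiation order. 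The tangential derivatives $(D_x+D_y)^\alpha$ in Apriori restriction ii) are reorganized in local coordinates by writing $(D_x+D_y) = (D_x+D_\xi) + (D_\xi+D_y)$ inside the integrand: the first operator acts on $K$ and gains from Proposition \ref{prop302} iv), the second acts on $\Psi$ (induction hypothesis), and any residual $D_\xi$-terms are removed by integration by parts against $dV_g(\xi)$, as in the handling of cross-chart contributions in the proof of Proposition \ref{prop302} iv).

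The principal obstacle is the bookkeeping required to keep $\delta_\infty > 0$ and to keep the combinatorial constants summable after derivatives are inserted: each differentiation adds only a finite number of extra powers of $t^{-1/4}$ independent of $m$, so the factorial decay of $A_m$ still dominates, but this must be verified carefully. A subtler point is the borderline case where $|\beta|$ equals the maximal allowed order in restriction ii); there the splitting $[0,t/2]\cup[t/2,t]$ together with H\"older continuity of $\Psi$ and the vanishing-moment-type identities already exploited in Proposition \ref{prop501} (see $(\ref{eqn00010})$) are exactly what close the estimate, so the same mechanism will be invoked one level deeper in the induction.
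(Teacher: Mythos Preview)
Your treatment of the convergence of $\sum_m K_m$, the passage to the integral equation, and continuity in $t$ is essentially the paper's argument and is fine.

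The gap is in your plan for the Apriori restrictions. The bootstrap $\Psi = K + K*\Psi$ lets you put $x$-derivatives on $K$, but it gives you nothing for $y$-derivatives: since $K(x,\xi;t-s)$ is independent of $y$, the operator $\Phi\mapsto K*\Phi$ does not smooth in $y$, and your statement that ``each $y$-derivative falls onto the $\Psi$-factor, and the resulting bound is propagated by induction'' is circular. At the induction step you would need $K*(\nabla_y^q\Psi)$, but (a) you have not yet shown $\nabla_y^q\Psi$ exists, and (b) even if you had the expected bound $|\nabla_y^q\Psi|\lesssim t^{-(n+3+q)/4}\exp\{\cdots\}$, for $q\geq 1$ this falls outside the range of Lemma~\ref{lem503} (the time integral near $s=0$ behaves like $\int_0 s^{-1}\,ds$). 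The same obstruction blocks the tangential estimates; and note that your identity $(D_x+D_y)=(D_x+D_\xi)+(D_\xi+D_y)$ is simply false.

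What the paper does instead is work term by term on $K_m=K*K_{m-1}$. The key device (Lemma~\ref{lemma1}) is to write $D_yK_{m-1}(\xi,y;s)=(D_y+D_\xi)K_{m-1}-D_\xi K_{m-1}$ and integrate the second piece by parts onto the left factor $K(x,\xi;t-s)$. This converts every $y$-derivative into either a tangential derivative on $K_{m-1}$ (which is good by the inductive version of restriction~ii)) or an $x$/$\xi$-derivative on $K$. Thus the $y$-regularity comes from the \emph{tangential structure of the inner factor}, not from any smoothing of the convolution, and this is exactly why the argument must be run inductively on $m$ (Lemmas~\ref{lemma2}--\ref{lemma3}) before summing. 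Your bootstrap from the single equation $\Psi=K+K*\Psi$ collapses this structure and cannot recover it.
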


We then would start our proofs in the following order: Lemma $\ref{lem503}$, Proposition $\ref{prop503}$, Proposition $\ref{prop501}$ and lastly Proposition $\ref{prop502}$. 

\begin{proof}
Proof of Lemma $\ref{lem503}$. We have that
\begin{equation}
\begin{split}
& | (A * B)(x, y; t) |  \\
\leq & C_1 C_2  \int_0^t  \int_M (t-s)^{-\frac{ n + 4 -\alpha}{4}} s^{-\frac{n + 4 - \beta}{4}}  \exp\{- \delta \big[ ( \frac{ \rho(x, \xi)}{(t-s)^{\frac{1}{4}}})^{\frac{4}{3}} + (\frac{\rho(\xi, y)}{s^{\frac{1}{4}}})^{\frac{4}{3}} \big]\} \mathrm d V_g(\xi) \mathrm d s, 
 \end{split}
\end{equation}
where $\delta = \min\{\delta, \delta_2\}$. Since for any $x, y, \xi \in M$ and $0 < s < t$
\begin{equation}
\frac{\rho(x, \xi)^{\frac{4}{3}} }{(t-s)^{\frac{1}{3}}} + \frac{\rho(\xi, y)^{\frac{4}{3}}}{s^{\frac{1}{3}}} \geq \frac{\rho(x, y)^{\frac{4}{3}}}{t^{\frac{1}{3}}},  
\end{equation}
then we have for any $0 < \epsilon \ll 1$,
\begin{equation}
\begin{split}
& | (A * B)(x, y; t) |  \\
\leq & C \int_0^t  \int_M (t-s)^{-\frac{ n + 4 -\alpha}{4}} s^{-\frac{n + 4 - \beta}{4}}  \exp\{- \delta \epsilon \big[ ( \frac{ \rho(x, \xi)}{(t-s)^{\frac{1}{4}}})^{\frac{4}{3}} + (\frac{\rho(\xi, y)}{s^{\frac{1}{4}}})^{\frac{4}{3}} \big]\} \mathrm d V_g(\xi) \mathrm d s \\
& \times \exp\{ - (1- \epsilon)\delta (t^{-\frac{1}{4}} \rho(x, y))^{\frac{4}{3}} \}. 
 \end{split}
\end{equation}
Break the integral term in previous step into two integrals from $0$ to $\frac{t}{2}$ and from $\frac{t}{2}$ to $t$ and we have
\begin{equation}
\begin{split}
&\int_0^t  \int_M (t-s)^{-\frac{ n + 4 -\alpha}{4}} s^{-\frac{n + 4 - \beta}{4}}  \exp\{- \delta \epsilon \big[ ( \frac{ \rho(x, \xi)}{(t-s)^{\frac{1}{4}}})^{\frac{4}{3}} + (\frac{\rho(\xi, y)}{s^{\frac{1}{4}}})^{\frac{4}{3}} \big]\} \mathrm d V_g(\xi) \mathrm d s \\
\leq & C t^{-\frac{n+4- \alpha}{4}}  \int_0^{\frac{t}{2}} s^{-\frac{4- \beta}{4}} \int_M s^{-\frac{n}{4}} \exp\{- \delta \epsilon (s^{- \frac{1}{4}} \rho(\xi, y))^{\frac{4}{3}} \} \mathrm d V_g(\xi) \mathrm ds \\
+ &C t^{-\frac{n+4- \beta}{4}}  \int_{\frac{t}{2}}^{t} (t-s)^{-\frac{4- \alpha}{4}} \int_M (t-s)^{-\frac{n}{4}} \exp\{- \delta \epsilon \big( (t- s)^{- \frac{1}{4}} \rho(\xi, x)\big)^{\frac{4}{3}} \} \mathrm d V_g(\xi) \mathrm ds, \\
\leq & C(\epsilon) t^{-\frac{n+4 - (\alpha+ \beta)}{4}}. 
\end{split}
\end{equation}
Last step is because
\begin{equation}
\begin{split}
&  \int_M s^{-\frac{n}{4}} \exp\{- \delta \epsilon (s^{- \frac{1}{4}} \rho(\xi, y))^{\frac{4}{3}} \} \mathrm d V_g(\xi)\\
 \leq & C \big( \int_{ B_{r_0}(0) } s^{-\frac{n}{4}}  \exp\{-  2 ^{-\frac{4}{3}}\delta \epsilon (s^{- \frac{1}{4}}|w|)^{\frac{4}{3}} \} \mathrm d w\\
  &+ \int_{M - B_{r_0}(y, g)} s^{-\frac{n}{4}} \exp\{- \delta \epsilon (s^{- \frac{1}{4}} r_0)^{\frac{4}{3}} \} \mathrm d V_g(\xi) \big)
 \leq  C(\epsilon), 
\end{split}
\end{equation}
and similarly 
\begin{equation}
\int_M (t-s)^{-\frac{n}{4}} \exp\{- \delta \epsilon \big( (t- s)^{- \frac{1}{4}} \rho(\xi, x)\big)^{\frac{4}{3}} \} \mathrm d V_g(\xi) \leq C(\epsilon). 
\end{equation}
Thus we get estimate $(\ref{eqn510})$ for $(A * B)$ and it ends the proof of Lemma $\ref{lem503}$. 
\end{proof}

\begin{proof}
Proof of Proposition $\ref{prop503}$. We first show that $\Psi(x,y;t) = \sum_{m=1}^\infty K_m(x,y;t)$ is absolutely convergent. By Proposition $\ref{prop302}$ ii), we have
\begin{equation}
|K(x, y; t)| \leq C_1 t^{-\frac{n+3}{4}}  \exp\{- \delta \big(t^{-\frac{1}{4}} \rho(x, y) \big)^{\frac{4}{3}}\}
\end{equation}
for appropriate constants $C_1, \delta >0$ depending on $g, T$. Thus by Lemma $\ref{lem503}$, we have for any $1 \leq m \leq n+4$, 
\begin{equation}\label{eqn521}
|K_m (x, y; t)| \leq C_2 t^{-\frac{n+4 - m}{4}}  \exp\{- \delta_2 \big(t^{-\frac{1}{4}} \rho(x, y) \big)^{\frac{4}{3}}\}, 
\end{equation}
for appropriate constants $\delta_2 < \delta$ and $C_2 \gg C_1$ depending on $g, T$. For $m \geq n+4$, we can prove by induction that
\begin{equation}\label{eqn520}
|K_m (x, y;  t)| \leq C_2 \frac{C_3^{m - (n+4)}}{\Gamma(\frac{m-n}{4})} t^{-\frac{n+4 - m}{4}} \exp\{- \delta_2 \big(t^{-\frac{1}{4}} \rho(x, y) \big)^{\frac{4}{3}}\} 
\end{equation}
where $\Gamma(\cdot)$ denotes the Gamma function
\begin{equation}
\Gamma(t) : = \int_0^\infty x^{t-1} e^{-x} \mathrm d x. 
\end{equation}
and $C_3 > 0$ is a constant given by
\begin{equation}
C_3 = C_1 \Gamma(\frac{1}{4}) \sup_{x \in M, t \in (0,T)} \int_{M} t^{-\frac{n}{4}} \exp\{- (\delta -\delta_2) ( \frac{ \rho(x, \xi)}{t^{\frac{1}{4}}})^{\frac{4}{3}}\} \mathrm d V_g(\xi) < \infty.
\end{equation}
When $m = n+4$, we see that $(\ref{eqn520})$ holds. We assume for some $m \geq n+4$ estimate $(\ref{eqn520})$ holds, we next show for $m+1$ estimate $(\ref{eqn520})$ is also valid. By definition $K_{m+1} = K * K_m $ and thus we have
\begin{equation}
\begin{split}
& | K_{m+1}(x, y; t) |  \\
\leq & C_1 C_2 \frac{C_3^{m - (n+4)}}{\Gamma(\frac{m-n}{4})}  \int_0^t  \int_M (t-s)^{-\frac{ n + 3}{4}} s^{-\frac{n +4-m}{4}} \exp\{- \delta  ( \frac{ \rho(x, \xi)}{(t-s)^{\frac{1}{4}}})^{\frac{4}{3}} \}\\
&\times \exp\{ - \delta_2 (\frac{\rho(\xi, y)}{s^{\frac{1}{4}}})^{\frac{4}{3}}\} \mathrm d V_g(\xi) \mathrm d s, \\
\leq & C_1 C_2 \frac{C_3^{m - (n+4)}}{\Gamma(\frac{m-n}{4})} \exp\{- \delta_2 (\frac{\rho(x, y)}{t^{\frac{1}{4}}})^{\frac{4}{3}}\}
\int_0^t   (t-s)^{-\frac{3}{4}} s^{-\frac{n +4-m}{4}}  \\
& \times \int_M (t-s)^{-\frac{n}{4}}\exp\{- (\delta -\delta_2) ( \frac{ \rho(x, \xi)}{(t-s)^{\frac{1}{4}}})^{\frac{4}{3}} \} \mathrm d V_g(\xi) \mathrm d s,  \\
\leq & C_2 \frac{C_3^{m+1 - (n+4)}}{\Gamma(\frac{m-n}{4}) \Gamma(\frac{1}{4})} \exp\{- \delta_2 (\frac{\rho(x, y)}{t^{\frac{1}{4}}})^{\frac{4}{3}}\} t^{-\frac{n+4-(m+1)}{4}} \int_{0}^1 (1-s)^{\frac{1}{4}-1} s^{\frac{m-n}{4}-1} \mathrm ds,\\
\leq & C_2 \frac{C_3^{m+1 - (n+4)}}{\Gamma(\frac{m+1-n}{4}) } t^{-\frac{n+4-(m+1)}{4}} \exp\{- \delta_2 (\frac{\rho(x, y)}{t^{\frac{1}{4}}})^{\frac{4}{3}}\} . 
\end{split}
 \end{equation} 
Thus we have shown estimate $(\ref{eqn520})$ for any $m \geq n+4$. Combing estimates $(\ref{eqn521})$ and $(\ref{eqn520})$, we have
\begin{equation}
|\Psi(x, y; t) | \leq  C_2 t^{-\frac{n+3}{4}}  \exp\{- \delta_2 \big(t^{-\frac{1}{4}} \rho(x, y) \big)^{\frac{4}{3}}\} \big( \sum_{m=1}^{n+4} T^{\frac{m-1}{4}} + \sum_{m = n+5}^\infty \frac{C_3^{m- (n+4)} T^{\frac{m-1}{4}}}{\Gamma(\frac{m-n}{4})}  \big).
\end{equation}
Since $\Gamma (k ) = (k-1)!$ for integer $k \geq 1$, we have that the infinite sum in the last step is finite. Therefore, $\Psi(x, y;t)$ is well-defined and for appropriate constants $C,\delta >0$ only depending on $g, T$ we have
\begin{equation}
|\Psi(x, y; t) | \leq  C t^{-\frac{n+3}{4}}  \exp\{- \delta \big(t^{-\frac{1}{4}} \rho(x, y) \big)^{\frac{4}{3}}\}.
\end{equation}

We have for any integer $N \leq 1$,
\begin{equation}
\sum_{m=1}^{N+1} K_m = K  + K * \sum_{m=1}^N K_m.
\end{equation} 
Since $|\sum_{m=1}^N K_m(x,y; t)| \leq C t^{-\frac{n+3}{4}} \exp\{- \delta \big(t^{-\frac{1}{4}} \rho(x, y) \big)^{\frac{4}{3}}\}$ on $M \times M \times (0,T)$ for any integer $N$, by the dominate convergence theorem, we let $N \rightarrow \infty$ in the equation above and get that $\Psi$ satisfies the integral equation $(\ref{eqn507})$. 

In order to show that $\Psi(x,y; \cdot)$ is continuous on $(0,T)$, by integral equation $(\ref{eqn507})$ it suffices to show the integral $(K*\Psi)(x, y; \cdot)$ is continuous on $(0,T)$. Consider for $0< h \ll t$
\begin{equation}
\begin{split}
& (K*\Psi)(x, y; t+h) -(K*\Psi) (x, y; t)\\
=& \int_t^{t+h} \int_M K (x, \xi; t+h -s) \Psi(\xi, y; s) \mathrm d V_g(\xi) \mathrm d s \\
& + \int_0^t \int_M \big( K(x,\xi; t+h-s) - K(x,\xi; t-s) \big) \Psi(\xi, y; s) \mathrm d V_g(\xi) \mathrm d s.\\
\end{split}
\end{equation}
It's clear that the first integral is bounded by $C t^{-\frac{n+3}{4}} h^{\frac{1}{4}}$. While the second integral is bounded by
\begin{equation}
\begin{split}
& \int_0^h \int_0^t \int_M  |\frac{\partial }{\partial t} K(x, \xi; t+\tau-s) \Psi(\xi, y; s)|  \mathrm d V_g(\xi) \mathrm d s \mathrm d \tau.\\
\end{split}
\end{equation}
We can break the integral from $0$ to $t$ into $0$ to $\frac{t}{2}$ and from $\frac{t}{2}$ to $t$ and estimate them separately using ideas developed in Lemma $\ref{lem503}$. Then we can get that the second integral above is bounded by $C t^{-\frac{n+6}{4}} h + C t^{-\frac{n+3}{4}} h^{\frac{1}{4}}$. To sum up we get that 
\begin{equation}
|(K*\Psi)(x, y; t+h) -(K*\Psi) (x, y; t)| \leq C t^{-\frac{n+3}{4}} h^{\frac{1}{4}} + C t^{-\frac{n+6}{4}} h. 
\end{equation}
Therefore $(K*\Psi)(x, y; \cdot)$ and consequently $\Psi(x,y; \cdot) = K(x,y; \cdot)  + (K*\Psi)(x,y; \cdot)$ are continuous on $(0,T)$. In fact by our actual computation we have shown that they are H\"older continuous on $(0,T)$. 

Next we prove the differentiability of $\Psi$ with respect to space variables $x, y$. We first examine $K_2 = K * K$ closely. Set
\begin{equation}
\begin{split}
I(x, y; t) = \int_{0}^{\frac{t}{2}} \int_M K(x, \xi; t-s) K(\xi, y; s) \mathrm d V_g(\xi) \mathrm ds, \\
J(x, y; t) = \int_{\frac{t}{2}}^t \int_M K(x, \xi; t-s) K(\xi, y; s) \mathrm d V_g(\xi) \mathrm ds, 
\end{split}
\end{equation}
so that $K_2 = I + J$. It suffices to study $I$ since by a change of variable $\tau = t-s$
\begin{equation}
J(x, y; t) = \int_{0}^{\frac{t}{2}} \int_M  K(\xi, y; t-\tau) K(x, \xi ; \tau) \mathrm d V_g(\xi) \mathrm d \tau.
\end{equation}

Regarding to the differentiability of $I$, we have the following.

\begin{lem}\label{lemma1}
Suppose $x \in M$ and $y \in B_{\frac{1}{2}r_0} (p_\eta, g)$ for some $\eta$. Then for any integer $k\geq 0$ and multi index $\beta \geq 0$
\begin{equation}\label{eqn00001}
\begin{split}
&\nabla_x^k D_y^\beta I(x, y; t) \\
= &
\int_0^{\frac{t}{2}}  \int_M \sum_{\beta_1 +\beta_2 = \beta} D_\xi^{\beta_1}\big(\nabla_x^k K(x, \xi; t-s) \psi_\eta(\xi) \mathrm d V_g(\xi)\big) (D_y + D_\xi)^{\beta_2} K(\xi, y; s) \mathrm d s \\
& +  \int_{0}^{\frac{t}{2}} \int_M \nabla_x^k K(x, \xi; t-s) (1- \psi_\eta(\xi)) D_y^{\beta} K(\xi, y; s) \mathrm d V_g(\xi) \mathrm d s,
\end{split}
\end{equation}
where $\psi_\eta \in C^\infty_0(B_{\frac{7}{8}r_0} (p_\eta, g)) $ with $\psi_\eta = 1$ on $B_{\frac{3}{4}r_0}(p_\eta, g)$, $D_y$ and $D_\xi$ are ordinary derivatives in local coordinate $B_{r_0}(0) \cong B_{r_0}(p_\nu, g)$. Given that ${\bigcup}_{\nu}B_{\frac{1}{2}r_0}(p_\nu, g) = M$, we have $I(\cdot, \cdot; t) \in C^{\infty}(M \times M)$ for $0<t < T$. 

\end{lem}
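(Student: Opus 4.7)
The $x$-derivatives are comparatively painless: for $s\in(0,t/2)$ we have $t-s\ge t/2$, so by Proposition \ref{prop302} i) the factor $\nabla_x^k K(x,\xi;t-s)$ is bounded uniformly in $\xi$ by $C(t) e^{-\delta(t^{-1/4}\rho(x,\xi))^{4/3}}$, and differentiation under the integral sign is immediately justified. Thus the whole issue is the behavior of $K(\xi,y;s)$ as $s\to 0^+$: the bound $(\ref{eqn3007})$ blows up at the rate $s^{-(n+3+|\beta|)/4}$ after $|\beta|$ naive $y$-derivatives, and already for $|\beta|\geq 2$ this integral would diverge at $s=0$ without further work. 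So the goal is to trade naive $D_y$ derivatives on $K(\xi,y;s)$ for the ``tangential'' combinations $(D_y+D_\xi)^{\beta_2}$, which by the sharper estimate $(\ref{eqn3014})$ in Proposition \ref{prop302} iv) still obey the bound $s^{-(n+3)/4}e^{-\delta(s^{-1/4}\rho(\xi,y))^{4/3}}$ with \emph{no} extra power coming from $|\beta_2|$.

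The partition is carried out by the cutoff $\psi_\eta$ around $y\in B_{\frac{1}{2}r_0}(p_\eta,g)$: write $1=\psi_\eta(\xi)+(1-\psi_\eta(\xi))$ inside the $\xi$-integral. On the support of $1-\psi_\eta$ we have $\rho(\xi,y)\ge r_0/4$, so $(\ref{eqn3007})$ gives $|D_y^\beta K(\xi,y;s)|\le C$ uniformly in $s\in(0,t/2)$; hence on this piece we may just differentiate under the integral sign, which is exactly the second integral in $(\ref{eqn00001})$.

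On the support of $\psi_\eta$ we have $\xi,y\in B_{r_0}(p_\eta,g)$, so both sit in the same normal chart and the ordinary Euclidean derivatives $D_y$, $D_\xi$ make sense. There we invoke the algebraic identity
\begin{equation*}
D_y^\beta=\sum_{\beta_1+\beta_2=\beta}c_{\beta_1,\beta_2}\,D_\xi^{\beta_1}(D_y+D_\xi)^{\beta_2},
\end{equation*}
which is just the binomial expansion of $D_y=(D_y+D_\xi)-D_\xi$ (the combinatorial constants $c_{\beta_1,\beta_2}$ are absorbed into the lemma's notation). The $(D_y+D_\xi)^{\beta_2}$ factor is kept on $K(\xi,y;s)$ and enjoys the sharp bound from $(\ref{eqn3014})$; the $D_\xi^{\beta_1}$ factor is then integrated by parts onto the remaining piece $\nabla_x^k K(x,\xi;t-s)\psi_\eta(\xi)\,\mathrm dV_g(\xi)$, with no boundary contributions since $\psi_\eta$ has compact support inside the chart. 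This yields the first integral in $(\ref{eqn00001})$.

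The main technical obstacle is showing that the differentiated integrand is absolutely integrable so that the interchange of differentiation and integration (and the successive integrations by parts) is legitimate. For the $\psi_\eta$-piece this is where the gain from $(\ref{eqn3014})$ is essential: after integration by parts the inner integrand is controlled by $(t-s)^{-(n+3+|\beta_1|)/4}e^{-\delta((t-s)^{-1/4}\rho(x,\xi))^{4/3}}\cdot s^{-(n+3)/4}e^{-\delta(s^{-1/4}\rho(\xi,y))^{4/3}}$, and a convolution estimate in the style of Lemma \ref{lem503} (restricted to $s\in(0,t/2)$, where $t-s\asymp t$ hides the $x$-derivative blow-up) gives a finite bound depending only on $t$, $g$, and $k+|\beta|$. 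The proof then proceeds by induction on $|\beta|$: at each step one verifies that the formally differentiated expression converges uniformly on compact sets of $M\times B_{\frac{1}{2}r_0}(p_\eta,g)\times(0,T)$, so that standard theorems on differentiation under the integral apply. Since $k$ and $|\beta|$ are arbitrary and the open sets $B_{\frac{1}{2}r_0}(p_\eta,g)$ cover $M$, the same argument applied with the roles of $x,y$ unchanged gives $I(\cdot,\cdot;t)\in C^\infty(M\times M)$ for each fixed $t\in(0,T)$.
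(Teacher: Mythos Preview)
Your plan is essentially the paper's own: split via $\psi_\eta$, on the far piece use the exponential decay coming from $\rho(\xi,y)\ge r_0/4$, on the near piece rewrite $D_y=(D_y+D_\xi)-D_\xi$, throw $D_\xi$ onto the other factor by integration by parts, and invoke the tangential estimate $(\ref{eqn3014})$ so that no extra power of $s^{-1/4}$ appears; then induct on $|\beta|$.

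The one place where the paper is more careful than your write-up is the justification of ``differentiation under the integral.'' On the $\psi_\eta$-piece the \emph{raw} derivative $\partial_{y_l}K(\xi,y;s)$ obeys only the bound $s^{-(n+4)/4}e^{-\delta(s^{-1/4}\rho(\xi,y))^{4/3}}$, and after the $\xi$-integration this leaves $\sim s^{-1}$, which is \emph{not} integrable on $(0,t/2)$. So there is no integrable majorant for the un-manipulated integrand, and the standard dominated-convergence theorem for differentiation does not apply directly. The paper therefore proceeds via explicit difference quotients: it writes
\[
\nabla_x^k I(x,y+he_l;t)-\nabla_x^k I(x,y;t)=\int_0^h\int_0^{t/2}\int_M(\cdots)\,\mathrm d\tau,
\]
performs the splitting and integration by parts \emph{inside} this finite triple integral (where Fubini is available because everything is now absolutely integrable), and then shows the resulting inner integrand $Q(x,y+\tau e_l;t)$ is H\"older in $y$, so the limit $h\to 0$ exists and equals $Q(x,y;t)$. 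Your sentence ``the formally differentiated expression converges uniformly \dots\ so that standard theorems on differentiation under the integral apply'' papers over exactly this step: convergence of the \emph{post-manipulation} expression does not by itself legitimize the manipulation. Once you replace that appeal by the difference-quotient argument, your proof matches the paper's.
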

\begin{proof}
Proof of lemma $\ref{lemma1}$. First of all, since $s \leq \frac{t}{2}$, 
\begin{equation}
|\nabla_x^m K(x, \xi; t-s)|_g \leq C_m (t-s)^{-\frac{n+3+m}{4}} \leq C_m t^{-\frac{n+3+m}{4}}.
\end{equation}
Thus for any integer $k \geq 0$, 
\begin{equation}
\nabla_x^k I(x, y; t) = \int_0^{\frac{t}{2}} \int_M \nabla_x^k K(x, \xi; t-s) K(\xi, y; s) \mathrm d V_g(\xi) \mathrm d s.
\end{equation}
Given $y \in B_{\frac{1}{2} r_0} (p_\eta, g)$, under local coordinate $B_{r_0} (0) \cong B_{r_0}(p_\eta, g)$, we consider the partial derivative of $\nabla_x^k I(x, y; t) $ with respect to $y_l$. Note that $\frac{\partial }{\partial y_l}$ doesn't commute with the integral sign simply because $\frac{\partial}{\partial y_l} K(\cdot, y; \cdot)$ may not be integrable on $M \times (0, \frac{t}{2})$. Alternatively, we get back to the original definition of derivative, for $h \ll 1$
\begin{equation}
\begin{split}
& \nabla_x^k I(x, y + h e_l; t) - \nabla_x^k I(x, y; t) \\
=& \int_0^{\frac{t}{2}} \int_M \nabla_x^k K(x, \xi; t-s) (\int_0^h\frac{\partial}{\partial y_l} K(\xi, y+ \tau  e_l; s) \mathrm d \tau) \mathrm d V_g(\xi) \mathrm d s, \\
= & \int_0^{\frac{t}{2}} \int_0^h \int_M \nabla_x^k K(x, \xi; t-s) \psi_{\eta}(\xi)\frac{\partial}{\partial y_l} K(\xi, y+ \tau  e_l; s) \mathrm d V_g(\xi) \mathrm d \tau \mathrm d s \\
& +\int_0^{\frac{t}{2}} \int_0^h \int_M \nabla_x^k K(x, \xi; t-s) \big(1-\psi_{\eta}(\xi)\big)\frac{\partial}{\partial y_l} K(\xi, y+ \tau  e_l; s) \mathrm d V_g(\xi) \mathrm d \tau \mathrm d s, \\
= & \int_0^{\frac{t}{2}} \int_0^h \int_M \nabla_x^k K(x, \xi; t-s) \psi_{\eta}(\xi) (\frac{\partial}{\partial y_l} + \frac{\partial}{\partial \xi_l}) K(\xi, y+ \tau  e_l; s) \mathrm d V_g(\xi) \mathrm d \tau \mathrm d s \\
& + \int_0^{\frac{t}{2}} \int_0^h  \int_M \frac{\partial }{\partial \xi_l}\big(\nabla_x^k K(x, \xi; t-s) \psi_{\eta}(\xi)  \mathrm d V_g(\xi) \big) K(\xi, y+ \tau  e_l; s)\mathrm d \tau \mathrm d s \\
& +\int_0^{\frac{t}{2}} \int_0^h \int_M \nabla_x^k K(x, \xi; t-s) \big(1-\psi_{\eta}(\xi)\big)\frac{\partial}{\partial y_l} K(\xi, y+ \tau  e_l; s) \mathrm d V_g(\xi) \mathrm d \tau \mathrm d s. \\
\end{split}
\end{equation}
One can check that each term in last expression is integrable by Proposition $\ref{prop302}$ ii) and iv). Therefore by Fubini's theorem, 
\begin{equation}
\begin{split}
& \nabla_x^k I(x, y + h e_l; t) - \nabla_x^k I(x, y; t) \\
= & \int_0^h \big\{  \int_0^{\frac{t}{2}} \int_M \nabla_x^k K(x, \xi; t-s) \psi_{\eta}(\xi) (\frac{\partial}{\partial y_l} + \frac{\partial}{\partial \xi_l}) K(\xi, y+ \tau  e_l; s) \mathrm d V_g(\xi) \mathrm d s \\
& + \int_0^{\frac{t}{2}}   \int_M \frac{\partial }{\partial \xi_l}\big(\nabla_x^k K(x, \xi; t-s) \psi_{\eta}(\xi)  \mathrm d V_g(\xi) \big) K(\xi, y+ \tau  e_l; s) \mathrm d s \\
& +\int_0^{\frac{t}{2}}\int_M \nabla_x^k K(x, \xi; t-s) \big(1-\psi_{\eta}(\xi)\big)\frac{\partial}{\partial y_l} K(\xi, y+ \tau  e_l; s) \mathrm d V_g(\xi)  \mathrm d s \big\} \mathrm d \tau. 
\end{split}
\end{equation}
Denote the integrand as $Q(x, y+ \tau e_l; t)$ so that the last integral can be written as $\int_0^h Q(x, y+ \tau e_l; t) \mathrm d \tau$. We have that for any $x\in M,  y, y' \in B_{\frac{1}{2}r_0}(p_\eta, g)$ and $0< t <T$
\begin{equation}
|Q(x, y'; t) -Q(x, y; t)| \leq C_t \rho(y, y')^\gamma, 
\end{equation}
where $\gamma \in (0,1)$ and $C_t$ depending on $t$, $k$, $\gamma$ but independent of $x, y, y'$. This is because by Proposition $\ref{prop302}$ ii) and iv) one can derive that for appropriate constant $\delta, C >0$
\begin{equation}
\begin{split}
& |K(\xi, y; s)- K(\xi, y';s)| \\
&\leq C s^{-\frac{n+3+ \gamma}{4}} \rho(y,y')^\gamma \max\{\exp \{- \delta  (s^{-\frac{1}{4}} \rho(\xi, y))^{\frac{4}{3}}\},  \exp \{- \delta  (s^{-\frac{1}{4}} \rho(\xi, y'))^{\frac{4}{3}}\}\},\\
& |(\frac{\partial}{\partial y_l} + \frac{\partial}{\partial \xi_l})K(\xi, y; s)- (\frac{\partial}{\partial y_l} + \frac{\partial}{\partial \xi_l}) K(\xi, y';s)| \\
&\leq C s^{-\frac{n+3+ \gamma}{4}} \rho(y,y')^\gamma \max\{\exp \{- \delta  (s^{-\frac{1}{4}} \rho(\xi, y))^{\frac{4}{3}}\},  \exp \{- \delta  (s^{-\frac{1}{4}} \rho(\xi, y'))^{\frac{4}{3}}\}\},\\
& |\frac{\partial}{\partial y_l} K(\xi, y; s)- \frac{\partial}{\partial y_l} K(\xi, y';s)| \\
&\leq C s^{-\frac{n+4+ \gamma}{4}} \rho(y,y')^\gamma \max\{\exp \{- \delta  (s^{-\frac{1}{4}} \rho(\xi, y))^{\frac{4}{3}}\},  \exp \{- \delta  (s^{-\frac{1}{4}} \rho(\xi, y'))^{\frac{4}{3}}\}\}.
\end{split}
\end{equation} 
Thus following the discussion above, we have $Q(x, y+\tau e_l; t) \rightarrow Q(x, y; t)$ as $\tau \rightarrow 0$. Therefore
\begin{equation}
\begin{split}
& \frac{\partial }{\partial y_l} \nabla_x^k I(x, y; t)  = Q(x, y; t),\\
& =    \int_0^{\frac{t}{2}} \int_M \nabla_x^k K(x, \xi; t-s) \psi_{\eta}(\xi) (\frac{\partial}{\partial y_l} + \frac{\partial}{\partial \xi_l}) K(\xi, y; s) \mathrm d V_g(\xi) \mathrm d s \\
& + \int_0^{\frac{t}{2}}   \int_M \frac{\partial }{\partial \xi_l}\big(\nabla_x^k K(x, \xi; t-s) \psi_{\eta}(\xi)  \mathrm d V_g(\xi) \big) K(\xi, y; s) \mathrm d s \\
& +\int_0^{\frac{t}{2}}\int_M \nabla_x^k K(x, \xi; t-s) \big(1-\psi_{\eta}(\xi)\big)\frac{\partial}{\partial y_l} K(\xi, y; s) \mathrm d V_g(\xi)  \mathrm d s .
\end{split}
\end{equation} 
One can finish the rest of the proof by induction on multi index $\beta \geq 0$ using similar arguments described above. This ends the proof of Lemma $\ref{lemma1}$. 

\end{proof}

By the lemma $\ref{lemma1}$, we get that $K_2(\cdot, \cdot; t) \in C^\infty(M \times M)$ for $0< t <T$. Moreover, we can further derive bounds on its derivatives as below. For $K_1 = K$ and an integer $r \geq 0$, by Proposition $\ref{prop302}$ ii) and iv), there exist constants $C_{1,r}, \delta_{1, r} >0$ such that:

\begin{enumerate}
\item[i)] For any $x, y \in M$ and integers $p, q \geq 0$ with $p+q \leq r$, 
\begin{equation}
| \nabla_x^p \nabla_y^q K_1 (x, y; t)|_g \leq C_{1, r} t^{-\frac{n+3+p+q }{4}} \exp\{- \delta_{1, r} \big(t^{-\frac{1}{4}} \rho(x, y)\big)^{\frac{4}{3}}\}.
\end{equation}
\item[ii)] For each $\nu$, in local normal coordinate $x, y \in B_{r_0}(0) \cong B_{r_0}(p_\nu, g)$, we have for any multi indices $\alpha, \beta \geq 0$ with $|\alpha|+|\beta| \leq r$
\begin{equation}
|D_x^{\beta} (D_x + D_y)^\alpha K_1 (x,y;t)| \leq  C_{1, r} t^{-\frac{n+3+|\beta|}{4}} \exp\{- \delta_{1, r} \big(t^{-\frac{1}{4}}\rho(x,y)\big)^{\frac{4}{3}}\}. 
\end{equation}
\end{enumerate}

\begin{lem}\label{lemma2}
Given $C_{1, r}, \delta_{1, r}>0$ as above, there exist constants $\delta_{2,r} < \delta_{1,r}$ and $C_{2,r} \gg C_{1, r}$ such that:
\begin{enumerate}
\item[i)] For any $x, y \in M$ and integers $p, q \geq 0$ with $p+q \leq r$, 
\begin{equation}
| \nabla_x^p \nabla_y^q K_2 (x, y; t)|_g \leq C_{2,r} t^{-\frac{n+2+p+q }{4}} \exp\{- \delta_{2,r} \big(t^{-\frac{1}{4}} \rho(x, y)\big)^{\frac{4}{3}}\}.
\end{equation}
\item[ii)] For each $\nu$, in local normal coordinate $x, y \in B_{r_0}(0) \cong B_{r_0}(p_\nu, g)$, we have for any multi indices $\alpha, \beta \geq 0$ with $|\alpha|+|\beta| \leq r$
\begin{equation}
|D_x^{\beta} (D_x + D_y)^\alpha K_2 (x,y;t)| \leq  C_{2,r} t^{-\frac{n+2+|\beta|}{4}} \exp\{- \delta_{2,r} \big(t^{-\frac{1}{4}}\rho(x,y)\big)^{\frac{4}{3}}\}. 
\end{equation}
\end{enumerate}
\end{lem}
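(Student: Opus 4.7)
The proof follows the template of Lemma~\ref{lemma1}, iterated to handle arbitrary multi-indices in both $x$ and $y$. Split $K_2(x,y;t)=I(x,y;t)+J(x,y;t)$ with $I$ integrating over $s\in[0,t/2]$ and $J$ over $s\in[t/2,t]$. The change of variable $\tau=t-s$ converts $J$ into an integral of the same form as $I$ with the two factors of $K$ swapped, so it suffices to establish the asserted bounds for $D_x^\beta(D_x+D_y)^\alpha I(x,y;t)$ (with $|\alpha|+|\beta|\le r$) and read off the bound for $J$ by symmetry.

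On $I$ we have $t-s\in[t/2,t]$, so $K(x,\xi;t-s)$ is uniformly smooth at scale $t^{1/4}$: any pure $D_x$ or $D_\xi$ derivative of it is bounded, by Proposition~\ref{prop302}~(ii)--(iv), by $C\,t^{-(n+3+\#\text{derivatives})/4}\exp\{-\delta(t^{-1/4}\rho(x,\xi))^{4/3}\}$. The $D_y$-derivatives, which act only on $K(\xi,y;s)$, are processed exactly as in Lemma~\ref{lemma1}: fix a normal chart $B_{r_0}(0)\cong B_{r_0}(p_\eta,g)$ containing $y$ with the usual cutoff $\psi_\eta$, use the identity $D_{y_l}=(D_{y_l}+D_{\xi_l})-D_{\xi_l}$ inside $\psi_\eta$, integrate the $-D_{\xi_l}$ by parts onto the first factor and onto $\psi_\eta\sqrt{\det g_\eta}$, and rewrite any resulting $D_{\xi_l}$ on $K(x,\xi;t-s)$ as $(D_{x_l}+D_{\xi_l})-D_{x_l}$ to redistribute it between a cost-free tangential derivative and a genuine $D_x$-derivative on the first factor. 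Outside the support of $\psi_\eta$ we have $\rho(\xi,y)\ge r_0/4$, and both $K(\xi,y;s)$ and its $y$-derivatives are exponentially small in $s^{-1}$ there, so the $(1-\psi_\eta)$ piece is a harmless remainder. Iterating this device for each component of $\alpha$ and $\beta$ expresses $D_x^\beta(D_x+D_y)^\alpha I$ as a finite linear combination of terms of the schematic form
\[
\int_0^{t/2}\!\int_M\!\bigl[D_x^{\beta_1}(D_x+D_\xi)^{\alpha_1}K(x,\xi;t-s)\bigr]\bigl[(D_\xi+D_y)^{\alpha_2}K(\xi,y;s)\bigr]\,\chi(\xi)\,\mathrm{d}V_g(\xi)\,\mathrm{d}s,
\]
with $|\beta_1|\le|\beta|$ and $\chi$ smooth and compactly supported.

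Each such term is then handled by Lemma~\ref{lem503}. The crucial observation is that on $[0,t/2]$ one has $t-s\asymp t$, so the first factor $(t-s)^{-(n+3+|\beta_1|)/4}$ may simply be absorbed into $C\,t^{-(n+3+|\beta_1|)/4}$; no positivity condition on the first-factor exponent is needed, even when $|\beta_1|\ge 1$. The $\xi$-integration produces the Gaussian $\exp\{-\delta_{2,r}(t^{-1/4}\rho(x,y))^{4/3}\}$ with a slightly smaller $\delta_{2,r}<\delta_{1,r}$, by the triangle-inequality splitting used in the proof of Lemma~\ref{lem503}, while the remaining $s$-integration $\int_0^{t/2}s^{-3/4}\,\mathrm{d}s\asymp t^{1/4}$ delivers the one-power gain. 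Combining these gives
\[
\bigl|D_x^{\beta}(D_x+D_y)^{\alpha}I(x,y;t)\bigr|\le C_{2,r}\,t^{-(n+2+|\beta|)/4}\exp\bigl\{-\delta_{2,r}(t^{-1/4}\rho(x,y))^{4/3}\bigr\},
\]
and combined with the symmetric bound for $J$ this proves (ii). Part~(i) follows immediately by expanding $\nabla_x^p\nabla_y^q$ in normal coordinates as $D_x^pD_y^q$ plus lower-order terms with bounded Christoffel symbols, and noting that $|x-y|\asymp\rho(x,y)$ in a normal chart (Lemma~\ref{lem201}). The main technical nuisance will be verifying that the integration-by-parts identities are compatible across overlapping normal-coordinate charts at $y$; this is the same bookkeeping already carried out in Proposition~\ref{prop302}~(iv) via the transition-function expansion $(\ref{eqn00003})$, and becomes routine once one localizes around $y$ through $\psi_\eta$.
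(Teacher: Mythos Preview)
Your proposal is essentially correct and mirrors the paper's argument closely: the $I+J$ split with the change of variable $\tau=t-s$, the Lemma~\ref{lemma1}-style integration by parts transferring $D_y$ to $D_\xi$ through the cutoff $\psi_\eta$, the tangential rewriting producing $(D_x+D_\xi)^{\alpha_1}$ and $(D_y+D_\xi)^{\alpha_2}$ on the two factors, and the convolution bound via Lemma~\ref{lem503} are exactly what the paper does. Your remark about the overlapping-chart bookkeeping via~(\ref{eqn00003}) is also on target; the paper spends a page on precisely that case analysis.

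There is one small gap in your final step. Statement~(ii) is only for $x,y$ lying in a \emph{single} normal chart $B_{r_0}(p_\nu,g)$, whereas (i) is global. Your schematic term $D_x^{\beta_1}(D_x+D_\xi)^{\alpha_1}K(x,\xi;t-s)$ presupposes that $x$ and $\xi$ share a chart; when $\rho(x,y)$ is large this is not available, and expanding $\nabla_x^p\nabla_y^q$ ``in normal coordinates'' does not reduce to (ii). The paper handles (i) independently: it applies formula~(\ref{eqn00001}) from Lemma~\ref{lemma1} directly, keeping $\nabla_x^k$ covariant (hence global) and localizing only in $\xi$ around $y$. The $D_\xi^{\beta_1}$ that lands on $\nabla_x^k K(x,\xi;t-s)$ is bounded by Proposition~\ref{prop302}~(ii) as $C\,t^{-(n+3+k+|\beta_1|)/4}$, which already yields $t^{-(n+2+p+q)/4}$ after the $s$-integration; the extra $(D_x+D_\xi)$ rewriting is not needed for (i). Your derivation actually passes through this formula before the rewriting step, so the fix is simply to stop one step earlier when proving (i).
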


\begin{proof}
Again, it suffices to work on $I$. Fix arbitrary $\delta < \delta_{1, r}$. Following equation $(\ref{eqn00001})$ and the ideas of the proof in Lemma $\ref{lem503}$, we have for $k+|\beta| \leq r$,
\begin{equation}
\begin{split}
& |\nabla_x^k D_y^\beta I(x, y; t)|_g \\
\leq & C t^{-\frac{n+3+k+|\beta|}{4}}\exp\{- \delta (t^{-\frac{1}{4} } \rho(x, y))^{\frac{4}{3}}\}\\
&\times \int_{B_{r_0}(p_\eta, g) \times (0, \frac{t}{2})} s^{-\frac{n+3}{4}} \exp\{- (\delta_{1,r} - \delta) (s^{-\frac{1}{4} } \rho(\xi, y))^{\frac{4}{3}}\} \mathrm d V_g(\xi) \mathrm d s\\
&+ C t^{-\frac{n+3+k}{4}}  \exp\{- \delta (t^{-\frac{1}{4} } \rho(x, y))^{\frac{4}{3}}\} \\
& \times \int_{(M - B_{\frac{3}{4}r_0}(p_\eta, g) )\times (0, \frac{t}{2})} s^{-\frac{n+3+|\beta|}{4}} \exp\{- (\delta_{1,r} - \delta) (s^{-\frac{1}{4} } \frac{1}{4} r_0)^{\frac{4}{3}}\} \mathrm d V_g(\xi) \mathrm d s, \\
\leq & C'  t^{-\frac{n+2+k+|\beta|}{4}}\exp\{- \delta (t^{-\frac{1}{4} } \rho(x, y))^{\frac{4}{3}}\}, 
\end{split}
\end{equation}
for $x \in M$ and $y \in B_{\frac{1}{2}r_0} (p_\eta, g)$ for some $\eta$. Constant $C' \gg C_{1, r}$ depends on $C_{1, r}, g, T, r$ and $(\delta_{1,r} -\delta)$. Since $\{B_{\frac{1}{2}r_0}(p_\eta, g)\} $ covers $M$, estimate i) for $K_2$ follows.

Next we focus on proving ii). For each $\nu$, we consider $x, y \in B_{r_0}(0) \cong B_{r_0}(p_\nu, g)$. If $y \in B_{\frac{r_0}{2} }(p_\nu, g)$, then by Lemma $\ref{lemma1}$, we have for any multi indices $\alpha, \beta \geq 0$ with $|\alpha|+|\beta| \leq r$, 
\begin{equation}
\begin{split}
&D_x^\beta (D_x + D_y)^{\alpha} I(x, y; t) \\
= &
\int_0^{\frac{t}{2}}  \int_M \sum_{\alpha_1+\alpha_2+ \alpha_3 =\alpha} D_x^\beta (D_x+D_\xi)^{\alpha_1} K(x, \xi; t-s) \times D_{\xi}^{\alpha_2}(\psi_\nu (\xi) \mathrm d V_g(\xi))\\
&\times (D_y + D_\xi)^{\alpha_3} K(\xi, y; s) \mathrm d s \\
& +  \int_{0}^{\frac{t}{2}} \int_M \sum_{\alpha_1+\alpha_2 = \alpha}D_{x}^{\alpha_1+\beta} K(x, \xi; t-s) (1- \psi_\nu(\xi)) D_y^{\alpha_2} K(\xi, y; s) \mathrm d V_g(\xi) \mathrm d s.
\end{split}
\end{equation}
Thus we have
\begin{equation}\label{eqn00002}
\begin{split}
& |D_x^\beta (D_x + D_y)^{\alpha} I(x, y; t) |\\
 \leq& C t^{-\frac{n+2 +|\beta|}{4}} \exp\{- \delta (t^{-\frac{1}{4}} \rho(x, y))^{\frac{4}{3}}\}+ C t^{-\frac{n+3+|\alpha|+|\beta|}{4}}  \exp\{- \delta (t^{-\frac{1}{4} } \rho(x, y))^{\frac{4}{3}}\} \\
& \times \int_{(M - B_{\frac{3}{4}r_0}(p_\eta, g) )\times (0, \frac{t}{2})} s^{\frac{|\alpha|-3}{4}} \big(s^{-\frac{n+2|\alpha|}{4}} \exp\{- (\delta_{1,r} - \delta) (s^{-\frac{1}{4} } \frac{1}{4} r_0)^{\frac{4}{3}}\} \big) \mathrm d V_g(\xi) \mathrm d s, \\
\leq & C'  t^{-\frac{n+2+|\beta|}{4}}\exp\{- \delta (t^{-\frac{1}{4} } \rho(x, y))^{\frac{4}{3}}\}.
\end{split}
\end{equation}
Constant $C' >0$ depends on $C_{1, r}, g, T, r$ and $(\delta_{1,r} -\delta)$. 

If $y \notin B_{\frac{1}{2}r_0}(p_\nu, g)$, say $y \in B_{\frac{1}{2}r_0}(p_\eta, g) \cap B_{r_0}(p_\nu, g)$ for some $\eta \neq \nu$ and moreover if $x \in B_{r_0}(p_\eta, g) \cap B_{r_0}(p_\nu, g)$, we denote $u,v \in B_{r_0}(0) \cong B_{r_0}(p_\eta, g)$ such that $u = \sigma(x)$ and $v = \sigma(y)$ where $\sigma$ denotes the coordinate transition function. Since $v = \sigma(y ) \in B_{\frac{1}{2} r_0}(p_\eta, g)$ by virtue of the estimate $(\ref{eqn00002})$, we have for $|\alpha|+ |\beta| \leq r$
\begin{equation}
|D_u^\beta (D_u + D_v)^{\alpha} I(u, v; t) | \leq C  t^{-\frac{n+2+|\beta|}{4}}\exp\{- \delta (t^{-\frac{1}{4} } \rho(u, v))^{\frac{4}{3}}\}. 
\end{equation}
Constant $C >0$ depends on $C_{1, r}, g, T, r$ and $(\delta_{1,r} -\delta)$. Using formula $(\ref{eqn00003})$, we have for $|\alpha| + |\beta| \leq r$, 
\begin{equation}
|D_x^\beta (D_x + D_y)^{\alpha} I(x, y; t) | \leq  C'  t^{-\frac{n+2+|\beta|}{4}}\exp\{- \delta' (t^{-\frac{1}{4} } \rho(x, y))^{\frac{4}{3}}\},
\end{equation}
where $\delta' < \delta$ and $C' \gg C$ depending on transition function $\sigma$. 

Suppose $y \in B_{\frac{1}{2}r_0}(p_\eta, g) \cap B_{r_0}(p_\nu, g)$ for some $\eta \neq \nu$ but $x \notin B_{r_0}(p_\eta, g)$. Since $v =\sigma (y)$ we have
\begin{equation}
\frac{\partial }{\partial y_l } = \sum_{k} \frac{\partial \sigma^k}{\partial y_l}(v) \frac{\partial }{\partial v_k},
\end{equation}
and by induction for some smooth functions $f_{\gamma}^\alpha$ on $v$, 
\begin{equation}
D_y^\alpha = \sum_{|\gamma| \leq |\alpha|} f^\alpha_\gamma(v) D_v^\gamma.
\end{equation}
Thus
\begin{equation}
\begin{split}
&D_x^\beta (D_x + D_y)^{\alpha} I(x, y; t) \\
= &
\int_0^{\frac{t}{2}}  \int_M \sum_{\alpha_1+ \alpha_2 =\alpha, |\gamma| \leq |\alpha_2|, \gamma_1 +\gamma_2 = \gamma}D_x^{\beta+\alpha_1} D_w^{\gamma_1}  (K(x, w; t-s) \psi_\eta (w) \mathrm d V_g(w))\\
&\times f_\gamma^{\alpha_2}(v) (D_v + D_w)^{\gamma_2} K(w, v; s) \mathrm d s \\
& +  \int_{0}^{\frac{t}{2}} \int_M \sum_{\alpha_1+\alpha_2 = \alpha}D_{x}^{\alpha_1+\beta} K(x, \xi; t-s) (1- \psi_\eta(\xi)) D_y^{\alpha_2} K(\xi, y; s) \mathrm d V_g(\xi) \mathrm d s.
\end{split}
\end{equation}
Consequently, we have
\begin{equation}
\begin{split}
& |D_x^\beta (D_x + D_y)^{\alpha} I(x, y; t) |\\
 \leq& C t^{-\frac{n+3 +|\beta|}{4}} \exp\{- \delta (t^{-\frac{1}{4}} \rho(x, y))^{\frac{4}{3}}\}\\
 & \times \int_{B_{\frac{7}{8}r_0}(p_\eta, g) \times (0, \frac{t}{2})} (t-s)^{-\frac{|\alpha|}{4}} \exp\{ - (\delta_{1,r} - \delta)\big((t-s)^{-\frac{1}{4}} \frac{1}{8} r_0 \big)^{\frac{4}{3}}\} \\
 &\times s^{-\frac{n+3}{4}} \exp\{-(\delta_{1, r} -\delta) (s^{-\frac{1}{4}} \rho(y, \xi))^{\frac{4}{3}}\} \mathrm d V_g(\xi) \mathrm d s \\
 &+ C t^{-\frac{n+3+|\alpha|+|\beta|}{4}}  \exp\{- \delta (t^{-\frac{1}{4} } \rho(x, y))^{\frac{4}{3}}\} \\
& \times \int_{(M - B_{\frac{3}{4}r_0}(p_\eta, g) )\times (0, \frac{t}{2})} s^{\frac{|\alpha|-3}{4}} \big(s^{-\frac{n+2|\alpha|}{4}} \exp\{- (\delta_{1,r} - \delta) (s^{-\frac{1}{4} } \frac{1}{4} r_0)^{\frac{4}{3}}\} \big) \mathrm d V_g(\xi) \mathrm d s, \\
\leq & C'  t^{-\frac{n+2+|\beta|}{4}}\exp\{- \delta (t^{-\frac{1}{4} } \rho(x, y))^{\frac{4}{3}}\}.
\end{split}
\end{equation}
Constant $C' >0$ depends on $C_{1, r}, g, T, r$ and $(\delta_{1,r} -\delta)$. To sum up, we can choose arbitrary $\delta_{2, r} < \delta_{1,r}$ and then constant $C_{2,r} \gg C_{1,r}$ will be determined by estimates above. This ends the proof of the lemma $\ref{lemma2}$. 
\end{proof}

Given $K_2(\cdot, \cdot; t) \in C^\infty(M \times M)$ for $0< t <T$ with estimates on its derivatives as in Lemma $\ref{lemma2}$, using the same argument one can show similar results for $K_3 = K * K_2$. In fact, by induction, one can prove that $K_m (\cdot, \cdot ; t) \in C^\infty(M\times M)$ for $0 < t < T$ and for any integer $r \geq 0$ there exists constants $\delta_{m, r}, C_{m, r} >0$ such that:
\begin{enumerate}
\item[i)] For any $x, y \in M$ and integers $p, q \geq 0$ with $p+q \leq r$, 
\begin{equation}
| \nabla_x^p \nabla_y^q K_m (x, y; t)|_g \leq C_{m,r} t^{-\frac{n+(4-m)+p+q }{4}} \exp\{- \delta_{m,r} \big(t^{-\frac{1}{4}} \rho(x, y)\big)^{\frac{4}{3}}\}.
\end{equation}
\item[ii)] For each $\nu$, in local normal coordinate $x, y \in B_{r_0}(0) \cong B_{r_0}(p_\nu, g)$, we have for any multi indices $\alpha, \beta \geq 0$ with $|\alpha|+|\beta| \leq r$
\begin{equation}
|D_x^{\beta} (D_x + D_y)^\alpha K_m (x,y;t)| \leq  C_{m,r} t^{-\frac{n+(4-m)+|\beta|}{4}} \exp\{- \delta_{m,r} \big(t^{-\frac{1}{4}}\rho(x,y)\big)^{\frac{4}{3}}\}. 
\end{equation}
\end{enumerate}

Lastly, we finish the proof of Proposition $\ref{prop503}$ by showing for fixed $0 < t <T$ and any integers $p,q \geq 0$, $\sum_{m=1}^\infty \nabla_{x}^p \nabla_y^q K_m(x,y ; t)$ is uniformly convergent for $x, y \in M$. Therefore one can interchange the orders of differentiation and summation and thus $\Psi(\cdot, \cdot ; t) = \sum_{m=1}^\infty K_m(\cdot, \cdot; t) \in C^\infty(M \times M)$.

However, the estimates on $K_m$'s that we have shown above are not sufficient to ensure uniform convergence of $\sum_{m=1}^\infty \nabla_{x}^p \nabla_y^q K_m(x,y ; t)$. It is because for fixed $r$, the sequence of positive constants $\delta_{m, r}$'s that we constructed inductively is strictly decreasing, namely
\begin{equation}
\delta_{1,r} > \delta_{2, r} > \cdots > \delta_{m, r} > \cdots. 
\end{equation}
Consequently constants $C_{m, r}$'s grow very fast with respect to $m$ under which we are unable to show the convergence of $\sum_{m=1}^\infty \nabla_{x}^p \nabla_y^q K_m(x,y ; t)$. When $r = 0$, we have shown a refined estimate for $K_m$'s at the beginning of the proof of Proposition $\ref{prop503}$, namely for $m \geq n+4$, we have $\delta_{m, 0} = \delta_{n+4, 0}$ and $C_{m,0} \leq C^m \Gamma(\frac{m-n}{4})^{-1}$. Similarly when $r>0$ we also need a refined estimate on derivatives up to $r$th-order of $K_m$'s for large $m$. More precisely, we introduce the following lemma.

\begin{lem}\label{lemma3}
Given any integer $r>0$, we set $N = n+4 + r$. Denote $\delta_3 = \delta_{r+4,r}$ and $C_3 = C_{r+4, r}$. Also denote $\delta_4 = \min\{\delta_{m, r} | m=1,2\cdots, N+2r+7\}$ and $C_4 = \max\{C_{m, r}| m=1,2\cdots, N+2r+7\}$. Clearly $\delta_4< \delta_3$ and $C_4 > C_3$. We have for any integers $p, q \geq 0$ with $p+q \leq r$, integer $l \geq 1$ and $a \in \{N, N+1, \cdots, N+r+3\}$, 
\begin{equation}
|\nabla_x^p \nabla_y^q K_{a+ (r+4) l } (x, y; t)|_g \leq \frac{C_4 D^{l-1}}{(l-1)!} t^{-\frac{n+4 + p+q - (a+(r+4)l) }{4}} \exp\{- \delta_4 (t^{-\frac{1}{4}} \rho(x, y))^{\frac{4}{3}}\}.
\end{equation}
Constant $D>0$ is given by
\begin{equation}
D = \frac{4C_3}{ r+4} \sup_{(x, t) \in M \times (0,\infty)} \int_{M} t^{-\frac{n}{4}} \exp\{- (\delta_3 -\delta_4) \big(t^{-\frac{1}{4}} \rho(x,\xi)\big)^{\frac{4}{3}}\} \mathrm d V_g(\xi) < \infty.
\end{equation}

\end{lem}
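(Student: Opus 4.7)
The plan is to argue by induction on $l$. The base case $l=1$ is essentially already contained in the estimates inductively obtained for the $K_m$ before this lemma: the index $a+(r+4)$ lies in $\{N+r+4,\dots,N+2r+7\}$, so by the very definition of $C_4$ and $\delta_4$ we get
\begin{equation*}
|\nabla_x^p \nabla_y^q K_{a+(r+4)}(x,y;t)|_g \leq C_4\, t^{-(n+4+p+q-(a+(r+4)))/4} \exp\{-\delta_4 (t^{-1/4}\rho(x,y))^{4/3}\},
\end{equation*}
which matches the claim since $D^{0}/0!=1$.

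For the inductive step, I write $K_{a+(r+4)(l+1)}=K_{r+4}*K_{a+(r+4)l}$ using the associativity of the convolution $*$. Because $p,q\le p+q\le r$, both $\nabla_x^p K_{r+4}$ and $\nabla_y^q K_{a+(r+4)l}$ satisfy estimates of type i) with decay exponents $\delta_3$ and $\delta_4$ respectively (the latter by the inductive hypothesis). A standard differentiation-under-the-integral argument, mirroring the one carried out in Proposition~\ref{prop501} (splitting the $s$-integral at $t/2$ if needed), yields
\begin{equation*}
\nabla_x^p\nabla_y^q K_{a+(r+4)(l+1)}(x,y;t)=\int_0^t\!\!\int_M (\nabla_x^p K_{r+4})(x,\xi;t-s)\,(\nabla_y^q K_{a+(r+4)l})(\xi,y;s)\,\mathrm dV_g(\xi)\,\mathrm ds.
\end{equation*}
Then, using the triangle bound $\rho(x,\xi)^{4/3}/(t-s)^{1/3}+\rho(\xi,y)^{4/3}/s^{1/3}\ge \rho(x,y)^{4/3}/t^{1/3}$ from Lemma~\ref{lem503}, I factor the product of Gaussians as
\begin{equation*}
\exp\{-\delta_3(\cdot)\}\exp\{-\delta_4(\cdot)\}\le \exp\{-\delta_4(t^{-1/4}\rho(x,y))^{4/3}\}\exp\{-(\delta_3-\delta_4)((t-s)^{-1/4}\rho(x,\xi))^{4/3}\},
\end{equation*}
extracting the target Gaussian in $\rho(x,y)/t^{1/4}$ and leaving a residual Gaussian in $\xi$.

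The $\xi$-integral of the residual Gaussian produces, by the very definition of $D$, a factor $(t-s)^{n/4}\cdot (r+4)D/(4C_3)$; inserting the remaining polynomial time factors gives an integrand of the form $(t-s)^{(r-p)/4}\,s^{-(n+4+q-(a+(r+4)l))/4}$ with prefactor $\tfrac{C_4 D^{\,l}(r+4)}{4(l-1)!}$. The remaining time integral is a beta integral equal to $t^{A+B+1}\,B(A+1,B+1)$ with $A=(r-p)/4\ge 0$ and $B=(a+(r+4)l-n-4-q)/4>0$; a direct computation shows $A+B+1=-(n+4+p+q-(a+(r+4)(l+1)))/4$, matching the required time exponent. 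The crux is the sharp bound $B(A+1,B+1)\le \int_0^1(1-u)^B\,\mathrm du=\tfrac{1}{B+1}\le\tfrac{4}{(r+4)l}$, which uses $B+1\ge 1+(r+4)l/4$ (from $a\ge N=n+r+4$ and $q\le r$). This provides exactly the factor $\tfrac{4}{(r+4)l}$ that converts $\tfrac{D^{\,l}(r+4)}{4(l-1)!}$ into $\tfrac{D^{\,l}}{l!}$, closing the induction. The hardest part is this precise bookkeeping: the normalizing constant $\tfrac{4}{r+4}$ built into $D$ is what makes the beta factor absorb cleanly, and verifying that the time exponent coming out of the beta integral lands on $-(n+4+p+q-(a+(r+4)(l+1)))/4$ requires tracking six different integers at once; once that is done, the inequality $B(A+1,B+1)\le 1/(B+1)$ (valid since $A\ge 0$) furnishes the factorial growth and the Neumann series $\Psi=\sum_m K_m$ will converge after differentiation.
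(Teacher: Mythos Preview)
Your proof is correct and follows essentially the same approach as the paper's: both argue by induction on $l$, write $K_{a+(r+4)(l+1)}=K_{r+4}*K_{a+(r+4)l}$, differentiate under the integral (which is justified here simply because all integrands are absolutely integrable, so the splitting at $t/2$ is unnecessary), peel off the target Gaussian via the triangle-type inequality, absorb the $\xi$-integral into the constant $D$, and bound the resulting beta integral by $1/(B+1)\le \tfrac{4}{(r+4)l}$ to gain the factorial. Your bookkeeping of the exponents and the role of the normalization $4/(r+4)$ in $D$ is exactly what the paper does, just spelled out a bit more explicitly.
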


\begin{proof}
We prove this lemma by induction on $l \geq 1$.  When $l=1$, it is clearly true. We suppose for $l-1$ this lemma is true and next we show for $l$ it is also valid. Note $K_{a+(r+4)l} = K_{r+4} * K_{a+(r+4)(l-1)}$. Moreover, we have for any $p, q \geq 0$ with $p+q \leq r$,
\begin{equation}
\nabla_x^p \nabla_y^q K_{a+ (r+4)l}(x, y;t) =\int_0^t \int_M \nabla_{x}^p K_{r+4} (x, \xi; t-s) \nabla_y^q K_{a + (r+4)(l-1)}(\xi, y; s) \mathrm d V_g{\xi} \mathrm d s, 
 \end{equation}
since every term is integrable. Therefore,
\begin{equation}
\begin{split}
&|\nabla_x^p \nabla_y^q K_{a+ (r+4)l}(x, y;t) |_g \\
\leq&  C_3 C_4 D^{l-2} \frac{1}{(l-2)!}\int_0^t \int_M (t-s)^{-\frac{n}{4}} \exp\{- (\delta_3 -\delta_4) \big((t-s)^{-\frac{1}{4}} \rho(x,\xi)\big)^{\frac{4}{3}}\}\mathrm d V_g(\xi) \\ 
& \times (t-s)^{-\frac{4+p-(r+4)}{4}}s^{-\frac{n+4+q -\big(a + (r+4)(l-1)\big)}{4}}  \mathrm ds \times \exp\{- \delta_4 (t^{\frac{1}{4}} \rho(x, y))^{\frac{4}{3}}\}\\
\leq & C_4 \frac{D^{l-1}}{(l-2)!} \times t^{-\frac{n+4+p+q - \big(a+ (r+4) l\big)}{4}}  \exp\{- \delta_4 (t^{\frac{1}{4}} \rho(x, y))^{\frac{4}{3}}\}\\
& \times \frac{r+4}{4}\int_0^1 (1-s)^{\frac{r-p}{4} } s^{\frac{a- (n +4 + q) + (r+4)(l-1)}{4}}  \mathrm ds\\
 \leq &  \frac{C_4 D^{l-1}}{(l-1)!} t^{-\frac{n+4 + p+q - (a+(r+4)l) }{4}} \exp\{- \delta_4 (t^{-\frac{1}{4}} \rho(x, y))^{\frac{4}{3}}\}.
\end{split}
\end{equation}
This ends the proof of Lemma $\ref{lemma3}$.
\end{proof}

Given any integer $r>0$, we have that for any $p,q \geq 0$ with $p+q \leq r$, $\sum_{m=1}^\infty \nabla_x^p \nabla_y^q K_m(x, y; t)$ is uniformly convergent since by Lemma $\ref{lemma3}$
\begin{equation}
\begin{split}
& \sum_{m=1}^\infty |\nabla_x^p \nabla_y^q K_m(x, y; t)| \\
\leq & C_4 t^{-\frac{n+3 +p +q}{4}} \exp\{- \delta_4 (t^{-\frac{1}{4}} \rho(x, y))^{\frac{4}{3}}\} \big(\sum_{m=1}^{N+r+3} T^{\frac{m-1}{4}} +\sum_{a=N}^{N+r+3} T^{\frac{a+r+3}{4}} \sum_{l=1}^\infty \frac{(T^{r+4}D)^{l-1}}{(l-1)!}\big),\\
 \leq &  C t^{-\frac{n+3 +p +q}{4}} \exp \{- \delta(t^{-\frac{1}{4}} \rho(x, y))^{\frac{4}{3}} \},
\end{split}
\end{equation}
for appropriate constants $\delta =\delta_4$ and $C \gg C_4$ depending on $r, g,T$. Thus we have $\Psi = \sum_{m=1}^\infty K_m$ satisfies the Apriori restriction i). As for Apriori restriction ii), we have for any multi indices $\alpha,\beta \geq 0$ with $|\alpha|+|\beta| \leq r$ and for any $x, y \in B_{r_0}(0) \cong B_{r_0}(p_\nu, g)$, 
\begin{equation}
\begin{split}
& D_{x}^\beta (D_x+D_y)^\alpha \Psi (x, y; t) \\
= & \sum_{m=1}^{N+r+3} D_{x}^\beta (D_x+D_y)^\alpha K_m(x, y; t) + \sum_{\alpha_1+ \alpha_2 = \alpha} \sum_{a =N}^{N+r+3} \sum_{l=1}^\infty D_x^{\beta + \alpha_1} D_y^{\alpha_2} K_{a+ (r+4)l}(x, y; t).
\end{split}
\end{equation}
Thus
\begin{equation}
\begin{split}
 & |D_{x}^\beta (D_x+D_y)^\alpha \Psi (x, y; t)| \\
\leq & C_4 t^{-\frac{n+3+|\beta|}{4}} \exp\{- \delta_4 (t^{-\frac{1}{4}} \rho(x, y))^{\frac{4}{3}}\} \big(\sum_{m=1}^{N+r+3} T^{\frac{m-1}{4}} + C \sum_{a=N}^{N+r+3} T^{\frac{a+r+3 - |\alpha|}{4}} \sum_{l=1}^\infty \frac{(T^{r+4}D)^{l-1}}{(l-1)!}\big),\\
\leq & C' t^{-\frac{n+3+|\beta|}{4}} \exp\{- \delta (t^{-\frac{1}{4}} \rho(x, y))^{\frac{4}{3}}\}
\end{split}
\end{equation}
for constant $\delta= \delta_4$ and $C' \gg C_4$ depending on $g, T, r$. Thus it finishes the proof of Proposition $\ref{prop503}$. 

\end{proof}

We then start to prove Proposition $\ref{prop501}$. 

\begin{proof}
Proof of Proposition $\ref{prop501}$. Suppose $\Psi$ satisfies $(\ref{eqn505})$, then clearly we have for $k \leq 3$
\begin{equation}
\nabla_x^k (Z * \Psi)(x, y; t) = \int_0^t \int_M \nabla_x^k Z(x, \xi; t-s) \Psi(\xi, y; s) \mathrm d V_g(\xi) \mathrm ds, 
\end{equation}
since every term is integrable. While for the fouth derivatives($\nabla_x^4$ and $\frac{\partial }{\partial t}$) this formula no longer makes sense, since $\nabla_x^4 Z$ and $\frac{\partial }{\partial t} Z$ are not integrable. Therefore next we focus on deriving formula $(\ref{eqn00010})$ of fourth derivatives. Consider for $h \ll 1$ and $x \in B_{\frac{1}{2} r_0}(0)\cong B_{\frac{1}{2}r_0}(p_\nu, g)$ for some $\nu$, 
\begin{equation}
\begin{split}
& \nabla_x^3 (Z* \Psi) (x + h e_l, y; t) - \nabla_x^3 (Z* \Psi) (x, y; t) \\
=&  \int_0^t \int_M \int_0^h \frac{\partial }{\partial x_l}  \nabla_x^3 Z(x + \tau e_l , \xi; t-s) \Psi(\xi, y; s) \mathrm d \tau \mathrm d V_g(\xi) \mathrm ds, \\
=&  \int_0^{\frac{t}{2}} \int_M \int_0^h \frac{\partial }{\partial x_l}  \nabla_x^3 Z(x + \tau e_l , \xi; t-s) \Psi(\xi, y; s) \mathrm d \tau \mathrm d V_g(\xi) \mathrm ds\\
& + \int_{\frac{t}{2}}^t \int_M \int_0^h \frac{\partial }{\partial x_l}  \nabla_x^3 Z(x + \tau e_l , \xi; t-s) \big( \Psi(\xi, y; s) - \Psi(x + \tau e_l, y; s) \big) \mathrm d \tau \mathrm d V_g(\xi) \mathrm ds \\
& + \int_{\frac{t}{2}}^t \int_0^h \big( \int_M  \frac{\partial }{\partial x_l}  \nabla_x^3 Z(x + \tau e_l , \xi; t-s) \mathrm d V_g(\xi) \big)  \Psi(x + \tau e_l, y; s)  \mathrm d \tau \mathrm ds,  \\
= & \int_{0}^h \{ \int_{0}^{\frac{t}{2}} \int_M  \frac{\partial }{\partial x_l}  \nabla_x^3 Z(x + \tau e_l , \xi; t-s) \Psi(\xi, y; s)   \mathrm d V_g(\xi) \mathrm ds\\
& +\int_{\frac{t}{2}}^t \int_M  \frac{\partial }{\partial x_l}  \nabla_x^3 Z(x + \tau e_l , \xi; t-s) \big(\Psi(\xi, y; s) - \Psi(x+ \tau e_l, y; s) \big)  \mathrm d V_g(\xi) \mathrm ds \\
& + \int_{\frac{t}{2}}^t  \big( \int_M  \frac{\partial }{\partial x_l}  \nabla_x^3 Z(x + \tau e_l , \xi; t-s) \mathrm d V_g(\xi) \big)  \Psi(x + \tau e_l, y; s)  \mathrm ds \} \mathrm d \tau. 
\end{split}
\end{equation}
As we did before, it suffices to show that the integrand denoted by $Q_1(x+\tau e_l, y; t) \rightarrow Q_1(x, y; t)$ as $\tau \rightarrow 0$. In fact we have for any $x, x' \in B_{\frac{1}{2}r_0}(0)$, any $y \in M$ and $0<t<T$, 
\begin{equation}
| Q_1(x, y; t) - Q_1(x' , y; t) | \leq C t^{-\frac{n+3 + \gamma }{4}} \rho(x, x')^{\gamma}. 
\end{equation}
One can prove the above estimate by repeating the argument in the proof of Theorem $\ref{thm701}$ with appropriate modifications. Thus we have the formula of fourth derivatives of $(Z*\Psi)$ with respect to $x$ as in $(\ref{eqn00010})$.

As for the derivative of $(Z*\Psi)$ with respect to $t$, we consider $0< h \ll t$,
\begin{equation}
\begin{split}
& (Z*\Psi)(x, y; t+h) - (Z*\Psi)(x, y; t)\\
= & \int_t^{t+h} \int_M Z(x, \xi; t+h-s) \Psi(\xi, y; s) \mathrm d V_g(\xi) \mathrm ds \\
&+ \int_0^t \int_M \big(Z(x, \xi; t+h-s) - Z(x, \xi; t-s)\big) \Psi(\xi, y; s) \mathrm d V_g(\xi) \mathrm d s.
\end{split}
\end{equation}
By a change of variable $\tau = t+h-s$, the first integral can be written as
\begin{equation}
\begin{split}
&\int_0^h \int_M Z(x,\xi; \tau) \Psi(\xi, y; t+h-\tau) \mathrm d V_g(\xi) \mathrm d \tau\\
=& \int_0^h \{ \int_M Z(x,\xi; \tau) \Psi(\xi, y; t) \mathrm d V_g(\xi)\\
& + \int_M Z(x,\xi; \tau) \big( \Psi(\xi, y; t+h-\tau)  - \Psi(\xi, y; t) \big) \mathrm d V_g(\xi) \} \mathrm d \tau. 
\end{split}
\end{equation}
Since $\Psi \in C^0\big(M \times M \times (0,T)\big)$, by Proposition $\ref{prop302}$ iii) we have the integrand goes to $\Psi(x, y; t)$ uniformly for $\tau < h$ as $h \rightarrow 0$. Thus when $h \rightarrow 0$, the first integral equals to $h \big( \Psi(x, y; t) + o(1) \big) $. While the second integral equals to
\begin{equation}
\begin{split}
& \int_0^h \{ \int_0^{\frac{t}{2}} \int_M (\frac{\partial }{\partial t} Z) (x,\xi; t+\tau -s) \Psi(\xi, y; s) \mathrm dV_g(\xi) \mathrm d s \\
&+ \int_{\frac{t}{2}}^t  \int_M (\frac{ \partial }{\partial t}Z) (x, \xi; t+\tau-s) [\Psi(\xi, y; s) - \Psi(x, y;s)] \mathrm d V_g(\xi) \mathrm d s\\
& + \int_{\frac{t}{2}}^t \big(\int_M (\frac{\partial }{\partial t}Z) (x, \xi; t+\tau-s) \mathrm d V_g(\xi) \big)  \Psi(x, y;s) \mathrm ds \} \mathrm d \tau. 
\end{split}
\end{equation}
Again it suffices to show that the integrand $Q_2(x, y; t+ \tau) \rightarrow Q_2(x, y; t)$ as $\tau \rightarrow 0$. In fact we have for $x, y\in M$ and $\tau \ll t$
\begin{equation}
\begin{split}
& Q_2(x,y; t+\tau) - Q_2(x,y; t) \\
=  & \int_0^\tau\{ \int_0^{\frac{t}{2}} \int_M (\frac{\partial^2 }{\partial t^2} Z) (x,\xi; t+\epsilon -s) \Psi(\xi, y; s) \mathrm dV_g(\xi) \mathrm d s \\
&+ \int_{\frac{t}{2}}^t  \int_M (\frac{ \partial^2 }{\partial t^2}Z) (x, \xi; t+\epsilon-s) [\Psi(\xi, y; s) - \Psi(x, y;s)] \mathrm d V_g(\xi) \mathrm d s\\
& + \int_{\frac{t}{2}}^t \big(\int_M (\frac{\partial^2 }{\partial t^2}Z) (x, \xi; t+\epsilon-s) \mathrm d V_g(\xi) \big)  \Psi(x, y;s) \mathrm ds\} \mathrm d \epsilon.
\end{split}
\end{equation}
Following a similar argument we used to prove the continuity of $\Psi$ with respect to $t$, we can show that
\begin{equation}
| Q_2(x,y; t+\tau) - Q_2(x,y; t) | \leq C (t^{-\frac{n+7}{4}} |\tau| + t^{-\frac{n+3 + \gamma}{4}} |\tau|^{\frac{\gamma}{4}}). 
\end{equation}
 Combining results about first and second integral above, we have that $(Z*\Psi)$ is differentiable with respect to $t$ and its derivative is given by formula $(\ref{eqn00010})$. Thus it ends the proof of Proposition $\ref{prop501}$. 
\end{proof}

Lastly, we prove Proposition $\ref{prop502}$. 

\begin{proof}
The proof of Proposition $\ref{prop502}$. Since $Z$ satisfies i) iv) in Proposition $\ref{prop302}$ and $\Psi$ satisfies Apriori restrictions i) ii), clearly $(Z*\Psi) (\cdot, \cdot; t) \in C^{\infty}(M \times M)$ for any $0<t<T$. And estimates i) ii) on derivatives of $(Z*\Psi)$ with respect to $x, y$ in Proposition $\ref{prop502}$ hold. One can prove this using ideas developed in the proofs of Lemma $\ref{lemma1}$ and Lemma $\ref{lemma2}$. 

The only thing left to show here is that $\nabla_x^p \nabla_y^q (Z*\Psi)(x,y; \cdot) \in C^0 \big((0,T)\big)$. Suppose $x \in B_{\frac{r_0}{2}}(p_\nu, g)$ and $y \in B_{\frac{r_0}{2}}(p_\eta, g)$. Then we have for $0 <  h \ll t$
\begin{equation}
\begin{split}
 &\nabla_x^p \nabla_y^q (Z*\Psi)(x,y; t+h) - \nabla_x^p \nabla_y^q (Z*\Psi)(x,y; t)\\
=& \int_0^{\frac{t}{2}} \int_{B_{r_0}(p_{\eta}, g)} \big( \int_0^h \frac{\partial }{\partial t}E_1(x,\xi; t+\tau-s) \mathrm d \tau \big)\star F_1(\xi, y; s)\mathrm d V_g(\xi) \mathrm ds \\
&+ \int_0^{\frac{t}{2}} \int_{M - B_{\frac{3}{4} r_0} (p_\eta, g)} \big( \int_0^h \frac{\partial }{\partial t}E_2(x,\xi; t+\tau-s) \mathrm d \tau \big)\star F_2(\xi, y; s) \mathrm d V_g(\xi) \mathrm ds\\
&+ \int_{\frac{t}{2}}^{t} \int_{B_{r_0}(p_\nu, g)} \big( \int_0^h \frac{\partial }{\partial t}E_3(x,\xi; t+\tau-s) \mathrm d \tau \big)\star F_3 (\xi, y; s)\mathrm d V_g(\xi) \mathrm ds \\
&+  \int_{\frac{t}{2}}^{t} \int_{M - B_{\frac{3}{4} r_0} (p_\nu, g)} \big( \int_0^h \frac{\partial }{\partial t}E_2(x,\xi; t+\tau-s) \mathrm d \tau \big)\star F_2(\xi, y; s) \mathrm d V_g(\xi) \mathrm ds\\
&+ \int_{t}^{t+h} \int_{B_{r_0}(p_\nu, g)}  E_3(x, \xi; t+h -s) \star F_3 (\xi, y; s) \mathrm d V_g(\xi) \mathrm ds\\
&+  \int_{t}^{t+h} \int_{M - B_{\frac{3}{4} r_0} (p_\nu, g)}E_2(x, \xi; t+h-s) \star F_2(\xi, y; s)\mathrm d V_g(\xi) \mathrm ds.
\end{split}
\end{equation}
``$A \star B$" abbreviate for finite sum of the products of the form $ \sum _{A, B} A \times B \times$ (smooth functions in $\xi$). $E_i$'s and $F_i's$ are partial derivatives of $Z$ and $\Psi$ with respect to $x,y$ respectively. They satisfy estimates 
\begin{equation}
\begin{split}
&|\frac{\partial }{\partial t} E_1(x,y; t)| \leq C t^{-\frac{n+4+p+q}{4}} \exp\{- \delta (t^{-\frac{1}{4}} \rho(x,y))^{\frac{4}{3}} \},\\
&|\frac{\partial }{\partial t} E_2(x,y; t)| \leq C t^{-\frac{n+4+p}{4}} \exp\{- \delta (t^{-\frac{1}{4}} \rho(x,y))^{\frac{4}{3}} \},\\
&|\frac{\partial }{\partial t} E_3(x,y; t)| \leq C t^{-\frac{n+4}{4}} \exp\{- \delta (t^{-\frac{1}{4}} \rho(x,y))^{\frac{4}{3}} \},\\
&|E_2(x,y; t)| \leq C t^{-\frac{n+p}{4}} \exp\{- \delta (t^{-\frac{1}{4}} \rho(x,y))^{\frac{4}{3}} \},\\
&| E_3(x,y; t)| \leq C t^{-\frac{n}{4}} \exp\{- \delta (t^{-\frac{1}{4}} \rho(x,y))^{\frac{4}{3}} \},\\
&|F_1(x,y; t)| \leq C t^{-\frac{n+3}{4}} \exp\{- \delta (t^{-\frac{1}{4}} \rho(x,y))^{\frac{4}{3}} \},\\
&|F_2(x,y; t)| \leq C t^{-\frac{n+3+q}{4}} \exp\{- \delta (t^{-\frac{1}{4}} \rho(x,y))^{\frac{4}{3}} \},\\
&|F_3(x,y; t)| \leq C t^{-\frac{n+3+p+q}{4}} \exp\{- \delta (t^{-\frac{1}{4}} \rho(x,y))^{\frac{4}{3}} \}.\\
\end{split}
\end{equation}
One can estimate each integral using bounds above and get
\begin{equation}
\begin{split}
&|\nabla_x^p \nabla_y^q (Z*\Psi)(x,y; t+h) - \nabla_x^p \nabla_y^q (Z*\Psi)(x,y; t)| \\
\leq & C( t^{-\frac{n+3 +p+q}{4} } h + |\text{Ln}   t|  t^{-\frac{n+3 +p+q}{4} } h +   t^{-\frac{n+3 +p+q}{4} } h(1-\text{Ln} h) ). 
\end{split}
\end{equation}
Thus we see that $\nabla_x^p \nabla_y^q (Z*\Psi)(x,y; \cdot) \in C^0 \big((0,T)\big)$. This ends the proof of Proposition $\ref{prop502}$.
\end{proof}

\end{document}